\DeclareMathOperator{\id}{\ensuremath{id}\xspace}
\DeclareMathOperator{\hhh}{\ensuremath{Hom}\xspace}
\DeclareMathOperator{\eee}{\ensuremath{Ext}\xspace}
\DeclareMathOperator{\sym}{\ensuremath{Sym}\xspace}
\DeclareMathOperator{\Ind}{\ensuremath{Ind}\xspace}
\DeclareMathOperator{\Res}{\ensuremath{{Res}_R^{R \rtimes \Gamma}\xspace}}
\DeclareMathOperator{\hgt}{\ensuremath{ht}\xspace}
\DeclareMathOperator{\ch}{\ensuremath{ch}\xspace}
\DeclareMathOperator{\chrc}{\ensuremath{char}\xspace}
\DeclareMathOperator{\ad}{\ensuremath{ad}\xspace}
\DeclareMathOperator{\Ad}{\ensuremath{Ad}\xspace}
\DeclareMathOperator{\Mod}{\ensuremath{Mod}\xspace}
\DeclareMathOperator{\End}{\ensuremath{End}\xspace}
\DeclareMathOperator{\Aut}{\ensuremath{Aut}\xspace}
\DeclareMathOperator{\wt}{\ensuremath{wt}\xspace}
\DeclareMathOperator{\tv}{\ensuremath{T_{\bf V}}\xspace}
\newcommand{\mf}[1]{\ensuremath{\mathfrak{#1}}\xspace}
\newcommand{\mfg}{\ensuremath{\mathfrak{g}}\xspace}
\newcommand{\spl}{\ensuremath{\mf{sl}_2}\xspace}
\newcommand{\usl}{\ensuremath{\mf{U}(\spl)}\xspace}
\newcommand{\usln}{\ensuremath{\mf{U}(\spl^{\oplus n})}\xspace}
\newcommand{\dd}{\ensuremath{\Delta}\xspace}
\newcommand{\la}{\ensuremath{\lambda}\xspace}
\newcommand{\aaa}{\ensuremath{\alpha}\xspace}
\newcommand{\nn}{\ensuremath{\mathbb{Z}_{\geqslant 0}}\xspace}
\newcommand{\N}{\ensuremath{\mathbb N}\xspace}
\newcommand{\Z}{\ensuremath{\mathbb Z}\xspace}
\newcommand{\C}{\ensuremath{\mathbb C}\xspace}
\newcommand{\calh}{\ensuremath{\mathcal{H}}\xspace}
\newcommand{\calo}{\ensuremath{\mathcal{O}}\xspace}
\newcommand{\caloo}{\ensuremath{\mathcal{O}_\N}\xspace}
\newcommand{\calc}{\ensuremath{\mathcal{C}}\xspace}
\newcommand{\cald}{\ensuremath{\mathcal{D}}\xspace}
\newcommand{\calp}{\ensuremath{\mathcal{P}}\xspace}
\newcommand{\Hom}{\ensuremath{\hhh_{A \rtimes \G}}\xspace}
\newcommand{\hhom}{\ensuremath{\hhh_\calo}\xspace}
\newcommand{\Ext}{\ensuremath{\eee_\calo}\xspace}
\newcommand{\bfg}{\ensuremath{(\Mod-B)^{fg}}\xspace}
\newcommand{\ag}{\ensuremath{A \rtimes \Gamma}\xspace}
\newcommand{\G}{\ensuremath{\Gamma}\xspace}
\newcommand{\g}{\ensuremath{\gamma}\xspace}
\newcommand{\gpm}{\ensuremath{\gamma^{\pm 1}}\xspace}
\newcommand{\hbg}{\ensuremath{(H \otimes B_+) \rtimes \Gamma}\xspace}
\newcommand{\sgx}{\ensuremath{S_\Gamma(x)}\xspace}
\newcommand{\gsl}{\ensuremath{\Gamma^{S^3_A(\lambda)}}\xspace}
\newcommand{\groto}{\ensuremath{{\rm Grot}_{\mathcal{O}}}\xspace}
\newcommand{\one}[1]{\ensuremath{{#1}_{(1)}}\xspace}
\newcommand{\two}[1]{\ensuremath{{#1}_{(2)}}\xspace}
\newcommand{\vi}{\ensuremath{\varepsilon}\xspace}
\newcommand{\tangle}[1]{\ensuremath{\langle #1 \rangle}\xspace}
\newcommand{\w}{\ensuremath{{\bf w}}\xspace}
\newcommand{\x}{\ensuremath{{\bf x}}\xspace}
\newcommand{\y}{\ensuremath{{\bf y}}\xspace}
\newcommand{\pfilt}{\ensuremath{\mathcal{F}(\dd)}\xspace}
\newcommand{\comment}[1]{}
\theoremstyle{plain}
\newtheorem{theorem}{Theorem}
\newtheorem{prop}{Proposition}
\newtheorem{lemma}{Lemma}
\newtheorem{cor}{Corollary}
\theoremstyle{definition}
\newtheorem{remark}{Remark}
\newtheorem{definition}{Definition}
\newtheorem{propdef}{Proposition-Definition}
\newtheorem{stand}{Standing Assumption}
\numberwithin{theorem}{section}
\numberwithin{equation}{section}
\numberwithin{lemma}{section}
\numberwithin{prop}{section}
\numberwithin{cor}{section}
\numberwithin{definition}{section}
\numberwithin{propdef}{section}
\numberwithin{stand}{section}
\numberwithin{remark}{section}
\begin{document}
\title{Functoriality of the BGG Category \calo}
\author{Apoorva Khare}
\date{\today}
\address{Department of Mathematics, Yale University, PO Box 208283, New
Haven, CT 06520, USA}
\dedicatory{Dedicated to the memory of Israel Moiseevich Gelfand}
\email{\small \tt apoorva.khare@yale.edu}
\subjclass[2000]{Primary: 16D90; Secondary: 16S35, 17B10}
\keywords{BGG Category $\calo$, (Hopf) RTA, skew group ring, block
decomposition}

\begin{abstract}
This article aims to contribute to the study of algebras with triangular
decomposition over a Hopf algebra, as well as the BGG Category $\calo$.
We study functorial properties of $\calo$ across various setups. The
first setup is over a skew group ring, involving a finite group $\G$
acting on a regular triangular algebra $A$. We develop Clifford theory
for $A \rtimes \G$, and obtain results on block decomposition, complete
reducibility, and enough projectives. $\calo$ is shown to be a highest
weight category when $A$ satisfies one of the ``Conditions (S)"; the BGG
Reciprocity formula is slightly different because the duality functor
need not preserve each simple module.

Next, we turn to tensor products of such skew group rings; such a product
is also a skew group ring. We are thus able to relate four different
types of Categories $\calo$; more precisely, we list several conditions,
each of which is equivalent in any one setup, to any other setup - and
which yield information about $\calo$.
\end{abstract}
\maketitle

\section{Introduction}

The results of this article relate the representation theories of various
algebras; thus, they are ``functorial" in a sense. However, one can apply
them to certain well-understood algebras, to get results in other setups.
For example, we show the following result in Proposition \ref{Pss} and
after Remark \ref{R1}; for details and more results in this setting, also
look after Remark \ref{R1}.

\begin{prop}\label{Pexample}
Given a complex semisimple Lie algebra $\mfg$, denote by $P^+_{\mfg}$ its
set of dominant integral weights, and $R_{\mfg}$ the {\em wreath product
algebra} $S_n \wr \mf{Ug}$ (see \cite[\S 6]{Mac}). Then the category of
finite-dimensional $R_{\mfg}$-modules contains at least
``$P^+_{\mfg}$-many" simple objects, and is semisimple.
\end{prop}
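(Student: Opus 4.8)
The plan is to recognize $R_{\mfg}$ as a skew group ring over a tensor power of the regular triangular algebra $U(\mfg)$, and then to combine Weyl's complete reducibility theorem with the complete-reducibility results for $\ag$ and for tensor products obtained earlier (Proposition \ref{Pss} and the discussion after Remark \ref{R1}). Following \cite[\S 6]{Mac} one has $R_{\mfg} = S_n \wr U(\mfg) = U(\mfg)^{\otimes n} \rtimes S_n$, and since $U(\mfg)^{\otimes n} \cong U(\mfg^{\oplus n})$ this is precisely $\ag$ with $A = U(\mfg)^{\otimes n}$ and $\G = S_n$ acting by permuting the tensor factors. Each factor $U(\mfg)$ is a (Hopf) regular triangular algebra via its standard triangular decomposition, so $A$ is a tensor product of RTAs (the tensor-product setup of the paper, with trivial groups on the factors), and $R_{\mfg}$ sits inside the ``tensor product of skew group rings'' framework; in particular every finite-dimensional $R_{\mfg}$-module is an object of the associated Category $\calo$, being automatically weight-semisimple, locally finite over the positive part, and finitely generated.

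First I would establish semisimplicity on the $A$-side. By Weyl's theorem the category of finite-dimensional $U(\mfg)$-modules is semisimple, with simple objects the highest weight modules $L(\la)$, $\la \in P^+_{\mfg}$; hence the category of finite-dimensional $A = U(\mfg)^{\otimes n}$-modules is semisimple, with simple objects the outer tensor products $L(\la_1) \boxtimes \cdots \boxtimes L(\la_n)$, $\la_i \in P^+_{\mfg}$ (one restricts a finite-dimensional $A$-module to each tensor factor to see it decomposes; alternatively this is the tensor-product equivalence of the paper). Now I would invoke the complete-reducibility part of the $\ag$-theory: because $\C[\G]$ is semisimple, the order $n!$ of $\G$ being invertible in $\C$, the semisimplicity of finite-dimensional $A$-modules is inherited by finite-dimensional $\ag$-modules. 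Concretely this is a Maschke-style averaging: given an $\ag$-submodule $N \subseteq M$ with $M$ finite-dimensional, choose an $A$-module projection $\pi : M \to N$ (available by $A$-semisimplicity); then $\tfrac{1}{n!}\sum_{g \in \G} g\pi g^{-1}$ is an $\ag$-module projection onto $N$. Thus the category of finite-dimensional $R_{\mfg}$-modules is semisimple, which is one of the two assertions.

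For the count I would, for each $\la \in P^+_{\mfg}$, exhibit a simple finite-dimensional $R_{\mfg}$-module $\widetilde{L}(\la)$ with underlying space $L(\la)^{\otimes n}$: let $A = U(\mfg)^{\otimes n}$ act factorwise, and let $\sigma \in S_n$ act by $v_1 \otimes \cdots \otimes v_n \mapsto v_{\sigma^{-1}(1)} \otimes \cdots \otimes v_{\sigma^{-1}(n)}$. The defining relation ``$\sigma a \sigma^{-1} = {}^{\sigma}a$'' in $\ag$ is respected, since permuting identical tensor factors intertwines the $A$-action with its $\sigma$-twist, so $\widetilde L(\la)$ is a genuine $R_{\mfg}$-module. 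Its restriction to $A$ is the simple module $L(\la)^{\boxtimes n}$, so any nonzero $R_{\mfg}$-submodule of $\widetilde L(\la)$, being in particular an $A$-submodule, is all of it; hence $\widetilde L(\la)$ is simple. Finally, if $\la \neq \mu$ in $P^+_{\mfg}$ then $\widetilde L(\la) \not\cong \widetilde L(\mu)$ as $R_{\mfg}$-modules, since restriction to $A$ is a functor and their $A$-restrictions $L(\la)^{\boxtimes n}$, $L(\mu)^{\boxtimes n}$ are already non-isomorphic. Thus $\la \mapsto \widetilde L(\la)$ injects $P^+_{\mfg}$ into the set of isomorphism classes of simple objects of the category of finite-dimensional $R_{\mfg}$-modules, giving ``at least $P^+_{\mfg}$-many'' of them.

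The only genuinely delicate step is the transfer of semisimplicity from $A$ to $\ag = R_{\mfg}$, which is exactly where the Clifford theory of the paper enters; in characteristic $0$, however, it reduces to the short averaging argument above, so there is no real obstacle. Everything else is routine: the identification of the wreath product as a skew group ring over a tensor power of $U(\mfg)$, Weyl's theorem and its consequence for $A$-modules, and the restriction argument that makes the diagonal modules $\widetilde L(\la)$ simple and pairwise non-isomorphic. (If one wanted the precise number of simple objects rather than a lower bound, one would instead run full Clifford theory for $\ag$ — allowing arbitrary multiweights $(\la_1,\dots,\la_n)$, their stabilizers in $S_n$, and twists by the irreducible representations of those stabilizers — but the diagonal family already suffices for the stated claim.)
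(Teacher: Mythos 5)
Your proposal is correct and follows essentially the same route as the paper: semisimplicity is transferred from $U(\mfg)^{\otimes n}$ to $R_{\mfg}=U(\mfg)^{\otimes n}\rtimes S_n$ by a Maschke-type argument in characteristic zero (the paper phrases this as exactness of $\hhh_{R\rtimes\G}(M,-)=(\hhh_R(\Res M,\Res-))^{\G}$ in Proposition \ref{Pss}, which is your averaging of an $A$-projection), and the lower bound on simples comes from the same diagonal family $V(\la)^{\otimes n}$ with $S_n$ permuting the factors, distinguished by their restrictions to $A$ (equivalently, their formal characters). No gaps.
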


\noindent Hereafter, $S_n \wr A := A^{\otimes n} \rtimes S_n$ for any
ring $A$, with $S_n$ acting by permuting the components in the tensor
product; the definition is similar when $A$ is a group.\medskip

This article studies the representation theory of special families of
algebras with triangular decomposition over a commutative Hopf algebra
(these algebras have been studied in general by Bazlov and Berenstein).
In general, such algebras are not Hopf algebras; thus one studies them,
for example, by defining and analyzing (analogues of) Verma modules - or,
in other words, some version of the Bernstein-Gelfand-Gelfand Category
$\calo$. We do so below, for a special subclass of such algebras.

In \cite{Kh3}, we defined a general framework of a {\em regular
triangular algebra} $A$ (also denoted by {\em RTA}; we recall the
definition below), wherein the BGG Category $\calo$ can be studied, and
results obtained about a block decomposition into highest weight
categories. Examples of such algebras are (quantized) universal
enveloping algebras of (semisimple, or) symmetrizable Kac-Moody Lie
algebras, Heisenberg and Virasoro algebras, and (quantized) infinitesimal
Hecke algebras (see \cite{Kh,GGK,EGG}, and \cite{KT,Kh3} respectively).

The goal of this article is to extend many of the classical results of
\cite{BGG1} to other setups (e.g., Proposition \ref{Pexample} above), by
applying the following two constructions:
\begin{itemize}
\item the {\em skew group ring} $A \rtimes \G$ (where $\G$ is a
finite group acting on $A$),

\item the {\em tensor product} of RTAs $A_1, \dots, A_n$ for some $n$.
\end{itemize}

\noindent These constructions were motivated by the study of
finite-dimensional representations of the wreath product symplectic
reflection algebra in \cite{EM}; we term the first construction {\em
Clifford theory}. Combining them produces results for the wreath product
of an RTA, for instance.\medskip

We now combine the two constructions as follows: suppose $A_i \rtimes
\G_i$ are skew group rings for $1 \leq i \leq n$. We can then form $A =
\otimes_{i=1}^n A_i, \G = \times_{i=1}^n \G_i$, and $A \rtimes \G$ - and
this gives us four different setups for the category $\calo$:
\begin{equation}\label{E1}
\mbox{all } A_i = \{ A_i : 1 \leq i \leq n \}, \mbox{ all } A_i \rtimes
\G_i = \{ A_i \rtimes \G_i : 1 \leq i \leq n \}, A, A \rtimes \G.
\end{equation}

\noindent In a sense, these constructions ``commute" when $\G = \times_i
\G_i$, namely:
\begin{equation}\label{Ediag1}
\begin{CD}
\{ A_i \} @>\rtimes>> \{ A_i \rtimes \G_i \}\\
@V\otimes VV @V\otimes VV\\
A = \otimes_i A_i @>\rtimes>> A \rtimes \G
\end{CD}
\end{equation}

\noindent (Every diagram in this article is functorial, rather than a
commuting square of morphisms in some categories.)
However, if we want to construct the wreath product $S_n \wr A$, say,
then the diagram above does not help: the steps to take are $\{ A_i = A,
|\G_i| = 1 \} \longrightarrow A^{\otimes n} \longrightarrow A^{\otimes n}
\rtimes S_n$. But these steps are all found in diagram \eqref{Ediag1};
moreover, all algebras here (as well as in \eqref{Ediag1}) are examples
of skew group rings.

Our goal, therefore, is twofold: (a) to relate the categories $\calo$ in
the above four setups, and (b) to show that $\calo_{\ag}$ is a highest
weight category in the sense of Cline, Parshall, and Scott \cite{CPS1},
when $A \rtimes \G$ (or $A$) satisfies what was called {\em Condition
(S)} in \cite{Kh} - but we now call {\em Condition (S3)}, as in
\cite{Kh3}. As we will see, this is related to this condition being
satisfied in the other three setups.

Getting results using the second vertical arrow in diagram \eqref{Ediag1}
may pose problems - for instance, see diagram \eqref{Ediag2} below.
However, the horizontal arrows can be ``reversed", which allows us to
proceed the ``longer" way in this case.
Also note that much of the analysis will be analogous to the theory
developed in \cite{BGG1,Kh,GGK}; however, we will explain the new
features - as well as the interconnections - in detail.\medskip

If $H$ is cocommutative as well, then skew group rings are algebras with
triangular decomposition over the Hopf algebra $H \rtimes \G$ (see
\cite[Appendix]{BaBe}). Thus, one avenue for possible further study, is
to bring the theory of braided doubles and triangular ideals into the
picture.

Finally, as a small application, we study the wreath product of
symplectic oscillator algebras; we conclude by showing that the
Poincare-Birkhoff-Witt property does not hold if we deform certain
relations.

\section{Setups - the general and Hopf cases}

We work throughout over a ground field $k$. Unless otherwise specified,
all tensor products are over $k$. Define $\nn := \N \cup \{ 0 \}$. Given
$S \subset \Z$ and a finite subset $\dd$ of an abelian group $\calp_0$,
the symbols $(\pm) S \dd$ stand for $\{ (\pm) \sum_{\aaa \in \dd} n_\aaa
\aaa : n_\aaa \in S\ \forall \aaa \} \subset \calp_0$. We will often
abuse notation and claim that two modules or functors are equal, when
they are isomorphic (double duals, for instance). Finally, in developing
Clifford theory for RTAs and the Category $\calo$, we use the general
results on Clifford theory over $\C$, that are stated in the Appendix in
\cite{Mac}.

\begin{definition}\hfill
\begin{enumerate}
\item If $A$ is a $k$-algebra, and $\G$ a group acting on $A$ by
$k$-algebra automorphisms, then the {\em ($\G$-)skew group ring over $A$}
is defined to be $A \rtimes \G := A \otimes_k k\G$, with relations $\g a
= \g(a) \g$. (Henceforth, $k\G$ is the group algebra of $\G$.)

\item For $\g \in \G$, define $\Ad \g \in \Aut_k (A \rtimes \G)$ to be
$\Ad \g(a \g') = \g a \g' \g^{-1} = \g(a) \Ad_\G \g(\g')$.

\item Given a {\em weight} $\la \in \hhh_{k-alg}(A,k)$ and an $A$-module
$M$, its {\em $\la$-weight space} is $M_\la := \{ m \in M : am = \la(a)
m\ \forall a \in A \}$.
\end{enumerate}
\end{definition}

\subsection{The main definitions}

We now mention the general setup under which our results are proved; for
``all practical purposes", the assumptions are simpler, and to see them,
the reader should go ahead directly to \S \ref{Sshopf}.

\begin{definition}
An associative $k$-algebra $A$, together with the following data, is
called a {\em regular triangular algebra} (denoted also by {\em RTA}; see
\cite{Kh3}).
\begin{enumerate}
\item[(RTA1)] There exist associative unital $k$-subalgebras $B_\pm$ and
$H$ of $A$, such that the multiplication map $: B_- \otimes_k H \otimes_k
B_+ \to A$ is a vector space isomorphism (the {\em triangular
decomposition}).

\item[(RTA2)] There is an algebra map $\ad \in \hhh_{k-alg}(H,
\End_k(A))$, such that for all $h \in H,\ \ad h$ preserves each of $H,
B_\pm$ (identifying them with their respective images in $A$). Moreover,
$H \otimes B_\pm$ are $k$-subalgebras of $A$.

\item[(RTA3)] There exists a free action $*$ of a group $\calp$ on $G :=
\hhh_{k-alg}(H,k)$, as well as a distinguished element $0_G = 0_\calp *
0_G \in G$ such that $H = H_{0_G}$ as an $\ad H$-module.

\item[(RTA4)] There exists a subalgebra $H_0$ of $H$, and a free abelian
group $\calp_0$ of finite rank, such that
\begin{enumerate}
\item $\calp_0$ acts freely on $G_0 := \hhh_{k-alg}(H_0,k)$ (call this
action $*$ as well), and

\item the ``restriction" map $\pi : G \to G_0$ sends $\calp * 0_G$ onto
$\calp_0 * \pi(0_G)$, and intertwines the actions, i.e., $* \circ (\pi
\times \pi) = \pi \circ *$.
\end{enumerate}\medskip

For the remaining axioms, we need some notation. Fix a finite basis $\dd$
of $\calp_0$. For each $\theta \in \calp$ and $\theta_0 \in \calp_0 = \Z
\dd$, abuse notation and define
$\theta = \theta * 0_G \in G,\ \theta_0 = \theta_0 * \pi(0_G) \in G_0$.
(We will differentiate between $0 \in \calp_0$ or $G_0$, and $0_G \in
G$.) We call $G$ (or $G_0, \calp_0, \dd$) the set of {\em weights} (or
the {\em restricted weights, root lattice, simple roots} respectively).

Given $\la \in S \subset G$ and a module $M$ over $H$ (e.g., $M = (A,
\ad)$), define the {\em weight space} $M_\la$ as above, and $M_S :=
\bigoplus_{\la \in S} M_\la$. Given $\theta_0 \in \Z \dd$, define
$M_{\theta_0} := M_{\pi^{-1}(\theta_0)}$.\medskip

\item[(RTA5)] It is possible to choose $\dd$, such that
$\displaystyle B_\pm = \bigoplus_{\theta \in \calp : \pi(\theta) \in \pm
\nn \dd} (B_\pm)_\theta$
(where $A$ is an $H$-module via $\ad$).

\item[(RTA6)] $(B_\pm)_0 = (B_\pm)_{\pi^{-1}(\pi(0_G))} = k$, and
$\dim_k(B_\pm)_{\theta_0} < \infty\ \forall \theta_0 \in \pm \nn \dd$ (we
call this {\em regularity}).

\item[(RTA7)] The {\em property of weights} holds: for all $A$-modules
$M$,
\begin{eqnarray*}
A_\theta \cdot A_{\theta'} & \subset & A_{\theta * \theta'}\ \forall
\theta, \theta' \in \calp,\\
A_\theta \cdot M_\la & \subset & M_{\theta * \la}\ \forall \theta \in
\calp, \la \in G.
\end{eqnarray*}

\item[(RTA8)] There exists an anti-involution $i$ of $A$ (i.e., $i^2|_A =
\id|_A$) that acts as the identity on all of $H$, and takes $A_\theta$ to
$A_{\theta^{-1}}$ for each $\theta \in \calp$.
\end{enumerate}
\end{definition}

\begin{definition}
An RTA is {\em strict} if $H = H_0, G = G_0 \supset \calp = \calp_0$
(whence $\pi = \id|_G$).
\end{definition}

\noindent {\bf Example.} This definition is quite technical; here is our
motivating example - a complex semisimple Lie algebra $\mf{g}$. Then $A =
\mf{Ug},\ \ad$ is the standard adjoint action, and $H = H_0 = \sym
\mf{h}$, whence the set of weights is $G = G_0 = \mf{h}^* \supset \calp =
\calp_0 = \Z \dd$ (the root lattice). Moreover, $i$ is the composite of
the Chevalley involution and the Hopf algebra antipode on $\mf{Ug}$.

\begin{remark}
We note that $H$ is commutative by (RTA8), and (RTA6) defines
augmentation ideals $N_\pm$ of $B_\pm$. One change from earlier theories
of the Category $\calo$, is in our allowing ``non-strict" RTAs in our
setup; this is needed if we want to include infinitesimal Hecke algebras
(not over $\mf{sl}_2$). See \cite{Kh3,KT} for more details.
\end{remark}

\begin{stand}\label{St1}
Henceforth, $A$ is an RTA, $\G$ is a finite group acting on $A$, and $k$
is a field of characteristic zero if $|\G| > 1$. Moreover, in $A \rtimes
\G$,
\begin{enumerate}
\item There is an algebra map $\ad : H \rtimes \G \to \End_k(A)$, that
restricts on $H, \G$ to $\ad \in \hhh_{k-alg}(H, \End_k(A))$ and $\Ad \in
\hhh_{group}(\G, \Aut_{k-alg}(A))$ respectively. Moreover, $i(\g(a)) =
\g(i(a))$ for $\g \in \G, a \in A$, and each subalgebra $R = k, N_\pm,
H_0, H$ is preserved by $\ad \xi$, for each $\xi \in H \rtimes \G$. Here,
the restricted $\ad|_H$ and the anti-involution $i$ are part of the
RTA-structure of $A$.

\item The map $: \G \times G \to G = \hhh_{k-alg}(H,k)$, given by
$\tangle{\g(\la), h} := \tangle{\la, \g^{-1}(h)}$, is an action that
preserves $(\calp * 0_G, *)$. That is, $\g(\theta) * \g(\la) = \g(\theta
* \la)$ for all $\g, \theta, \la$ respectively in $\G, \calp, G$, and $\g
: \calp * 0_G \to \calp * 0_G\ \forall \g$.
\end{enumerate}
\end{stand}

The above assumptions imply the following ``compatibility":

\begin{lemma}\label{Lcompat}
Suppose $A \rtimes \G$ is a skew group ring that satisfies Assumption
\ref{St1}. Then $\G$ preserves $0_G$, and also acts on $G_0$ (and
$\calp_0$), such that $\pi$ intertwines the actions: $\forall \g \in \G,\
\pi \circ \g = \g \circ \pi$ on $G$. Moreover, $\g(A_\theta) =
A_{\g(\theta)}\ \forall \g \in \G, \theta \in \calp$, and $\ag$ has an
anti-involution that restricts to $i, i_{\G}$ on $A,\G$ respectively.
\end{lemma}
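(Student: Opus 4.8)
The plan is to deduce every clause directly from Assumption~\ref{St1} together with the RTA axioms; no genuinely new idea is needed beyond a careful chase through the definitions, so I will only indicate the mechanism for each assertion.

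First I would record that $\g(H)=H$ and $\g(H_0)=H_0$ for every $\g\in\G$: indeed $\ad$ extends to an algebra map $H\rtimes\G\to\End_k(A)$ whose value at $\g\in\G$ is the automorphism $a\mapsto\g(a)$ of $A$, and by Assumption~\ref{St1}(1) this map preserves the subalgebras $H$ and $H_0$. Consequently $\G$ acts contragrediently on $G=\hhh_{k-alg}(H,k)$ and on $G_0=\hhh_{k-alg}(H_0,k)$, and since $\pi\colon G\to G_0$ is simply restriction of functionals along $H_0\hookrightarrow H$, the identity $\pi\circ\g=\g\circ\pi$ on $G$ is immediate from $\g(H_0)=H_0$. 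To see $\g(0_G)=0_G$, apply $\ad$ to the relation $h\g=\g\,\g^{-1}(h)$ in $H\rtimes\G$ to get $\ad(h)\circ\ad(\g)=\ad(\g)\circ\ad(\g^{-1}(h))$ in $\End_k(A)$; evaluating both sides at an element $h''\in H$ and using $H=H_{0_G}$ (so that $\ad(h)$ acts on $H$ by the scalar $0_G(h)$, applied both to $h''$ and to $\g(h'')\in H$) yields $0_G(h)\,\g(h'')=0_G(\g^{-1}(h))\,\g(h'')$ for all $h,h''\in H$; choosing $h''=1$ forces $0_G\circ\g^{-1}=0_G$, i.e.\ $\g(0_G)=0_G$, whence also $\g(\pi(0_G))=\pi(0_G)$.

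Next I would transfer the $\G$-action to $\calp$ and to $\calp_0$. By Assumption~\ref{St1}(2), $\G$ permutes $\calp*0_G\subset G$ and $\g(\theta*\la)=\g(\theta)*\g(\la)$; together with $\g(0_G)=0_G$ and the freeness of the $\calp$-action, the prescription $\g(\theta)*0_G:=\g(\theta*0_G)$ defines an action of $\G$ on $\calp$ by group automorphisms. Pushing this forward through $\pi$ and invoking (RTA4)(b) — which says $\pi$ carries $\calp*0_G$ onto $\calp_0*\pi(0_G)$ compatibly with the $*$-actions — shows that $\G$ preserves $\calp_0*\pi(0_G)\subset G_0$ and commutes with the $\calp_0$-action there, so the analogous prescription $\g(\theta_0)*\pi(0_G):=\g(\theta_0*\pi(0_G))$ defines a $\G$-action on $\calp_0$ by automorphisms. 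The equality $\g(A_\theta)=A_{\g(\theta)}$ then falls out of the same commutation identity: for $a\in A_\theta$ and $h\in H$ one computes $\ad(h)(\g(a))=\ad(\g)\big(\ad(\g^{-1}(h))(a)\big)=\theta(\g^{-1}(h))\,\g(a)=\g(\theta)(h)\,\g(a)$, so $\g(A_\theta)\subseteq A_{\g(\theta)}$, and applying this with $\g^{-1}$ in place of $\g$ gives the reverse inclusion.

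Finally, for the anti-involution on $\ag$ I would set $j(a):=i(a)$ for $a\in A$, $j(\g):=\g^{-1}$ for $\g\in\G$, and more generally $j(a\g):=\g^{-1}i(a)$, extended $k$-linearly; here $i_\G$ denotes the anti-involution $\g\mapsto\g^{-1}$ of $k\G$. That $j$ reverses products is checked on the algebra generators, the one nontrivial compatibility being with the defining relation $\g a=\g(a)\g$, which holds precisely because $i\circ\g=\g\circ i$ on $A$ by Assumption~\ref{St1}(1); and $j^2=\id$ because $i^2=\id|_A$ by (RTA8) and $\g\mapsto\g^{-1}$ is an involution. I expect the only place demanding real care to be keeping the two layers of abuse of notation apart — $\theta\in\calp$ versus $\theta*0_G\in G$, and $\theta_0\in\calp_0$ versus $\theta_0*\pi(0_G)\in G_0$ — while verifying that the induced $\G$-actions on $\calp$ and $\calp_0$ really are group automorphisms intertwined by the canonical surjection $\calp\to\calp_0$ underlying (RTA4)(b); every other step is a routine unwinding of the axioms.
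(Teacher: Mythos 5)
Your proposal is correct and follows essentially the same route as the paper: the paper derives $\g(A_\theta)=A_{\g(\theta)}$ and the anti-involution by citing Lemma~\ref{L1}, obtains $\g(0_G)=0_G$ from $H=\g(H_{0_G})=H_{\g(0_G)}$ (the weight-space form of your scalar computation), and gets the intertwining and the action on $\calp_0$ exactly as you do. You have merely unpacked the cited lemma and the ``one easily checks'' steps in full detail.
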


To show this, we need some basic results.

\begin{lemma}\label{L1}
Suppose a group $\G$ acts on an associative unital $k$-algebra $R$ by
$k$-algebra automorphisms. Then $\G$ acts on $R^*$:
$\tangle{\g(\la),r} := \tangle{\la, \g^{-1}(r)}$.
\begin{enumerate}
\item Given $\g \in \G$, an $R$-module $M$, and a weight $\la$,
$\g(M_\la) = M_{\g(\la)}$.

\item Suppose $i : R \to R$ is an anti-involution. Define $i_\G(\g) =
\g^{-1}$ for $\g \in \G$. Then $i,i_\G$ extend to an anti-involution of
$R \rtimes \G$, if and only if $i(\g(r)) = \g(i(r))$ for all $r \in R, \g
\in \G$.
\end{enumerate}
\end{lemma}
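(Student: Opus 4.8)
The plan is to verify the two extension/equivariance claims essentially by unwinding the definitions, using only that \G acts by $k$-algebra automorphisms and that $i$ is an anti-involution. First I would check that the formula $\tangle{\g(\la),r} := \tangle{\la,\g^{-1}(r)}$ does define an action of \G on $R^*$: linearity in $r$ is clear since each $\g^{-1}$ is $k$-linear, the identity of \G acts trivially, and $(\g_1\g_2)(\la) = \g_1(\g_2(\la))$ follows from $(\g_1\g_2)^{-1} = \g_2^{-1}\g_1^{-1}$ applied inside the pairing. (When $\la$ is a $k$-algebra map $R\to k$, one also notes $\g(\la)$ is again multiplicative because $\g^{-1}$ is an algebra map, so the action restricts to $G = \hhh_{k\text{-alg}}(R,k)$.)

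For part (1), let $m \in M_\la$, so $rm = \la(r)m$ for all $r \in R$. I want $\g(m) \in M_{\g(\la)}$, i.e.\ $r\cdot\g(m) = \tangle{\g(\la),r}\,\g(m)$ for all $r$. Writing $r\cdot\g(m) = \g\bigl(\g^{-1}(r)\cdot m\bigr) = \g\bigl(\la(\g^{-1}(r))\,m\bigr) = \tangle{\la,\g^{-1}(r)}\,\g(m) = \tangle{\g(\la),r}\,\g(m)$ gives the inclusion $\g(M_\la)\subseteq M_{\g(\la)}$; applying the same with $\g^{-1}$ in place of \g and $\g(\la)$ in place of \la yields the reverse inclusion, hence equality. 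Here the only subtlety is keeping track of which module structure twist is being used when we write $r\cdot\g(m)$, but since \G acts by genuine algebra automorphisms of $R$ this is routine.

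For part (2), suppose $j\colon R\rtimes\G \to R\rtimes\G$ is a putative extension of $i$ and $i_\G$ that is a $k$-linear anti-involution. On a general element $\sum_\g r_\g\g$ (with $r_\g\in R$) anti-multiplicativity forces $j(r_\g\g) = j(\g)\,j(r_\g) = \g^{-1}\,i(r_\g)$, and using the skew-commutation relation $\g^{-1} a = \g^{-1}(a)\,\g^{-1}$ this equals $\g^{-1}(i(r_\g))\,\g^{-1}$; so the extension, if it exists, is unique and given by $j(r\g) = \g^{-1}(i(r))\g^{-1}$. It then remains to check that this $j$ is a well-defined $k$-linear map (clear, since the $\{r\g\}$ span and $i,\g^{-1}$ are linear), is an anti-homomorphism, and squares to the identity. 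The key computation is the anti-homomorphism property on products $(r\g)(s\delta) = r\,\g(s)\,\g\delta$: one has $j\bigl((r\g)(s\delta)\bigr) = (\g\delta)^{-1}\bigl(i(r\g(s))\bigr)(\g\delta)^{-1}$, which by $i(r\g(s)) = i(\g(s))\,i(r)$ and then applying $\delta^{-1}\g^{-1}$ should match $j(s\delta)\,j(r\g) = \delta^{-1}(i(s))\delta^{-1}\g^{-1}(i(r))\g^{-1}$; pushing $\delta^{-1}$ past $\g^{-1}(i(r))$ via the skew relation turns $\delta^{-1}\g^{-1}(i(r))$ into $(\g\delta)^{-1}(i(r))\,\delta^{-1}$, and the two sides agree \emph{precisely} when $\delta^{-1}(i(\g(s))) = i(\delta^{-1}(\g(s)))$, i.e.\ when $i$ commutes with the \G-action. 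This is the one place the hypothesis $i(\g(r)) = \g(i(r))$ is genuinely used, and conversely its failure breaks anti-multiplicativity, giving the ``only if'' direction. Finally $j^2(r\g) = j(\g^{-1}(i(r))\g^{-1}) = \g\bigl(i(\g^{-1}(i(r)))\bigr)\g = \g\bigl(\g^{-1}(i^2(r))\bigr)\g = r\g$ using $i^2=\id$ and again the commuting of $i$ with \G, so $j$ is an involution. The main (indeed only) obstacle is bookkeeping the order in which the automorphisms $\g^{\pm1}$ and the anti-involution $i$ are applied when moving group elements past ring elements; no deeper idea is required.
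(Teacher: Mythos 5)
Your proof is correct and is exactly the routine verification the paper omits (Lemma \ref{L1} is stated without proof and used as a ``basic result''). One small bookkeeping slip at the key step of part (2): after arranging both sides, the first ring factors to be compared are $(\g\delta)^{-1}(i(\g(s)))$ and $\delta^{-1}(i(s))$, so the matching condition is $\g^{-1}(i(\g(s))) = i(s)$, i.e.\ $i(\g(s)) = \g(i(s))$ --- an instance of the hypothesis in $\g$, not the identity in $\delta^{-1}$ that you wrote; since both are instances of the same hypothesis, the argument is unaffected, but the displayed identity should be corrected.
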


\begin{proof}[Proof of Lemma \ref{Lcompat}]
That $\g (A_\theta) = A_{\g(\theta)}$ follows from the above lemma.
Moreover, $H = \g(H) = \g(H_{0_G}) = H_{\g(0_G)}$, whence $\g$ fixes
$0_G$. Finally, given $\la \in G$, one easily checks that $\g(\pi(\la)) =
\pi(\g(\la))$ on $H_0$; in turn, this implies that $\G$ acts on $\calp_0$
(since it acts on $\calp$ by Assumption \ref{St1}).
\end{proof}

\noindent We used the (standard) Hopf algebra structure of $k\G$ in the
above results (i.e., $\dd(\g) = \g \otimes \g, S(\g) = \g^{-1}, \vi(\g) =
1$). This is further used in the following result, which also helps us
rephrase (and reduce) the assumptions when $H \supset H_0$ are Hopf
algebras (i.e., $A$ is a {\em Hopf RTA}; see \cite{Kh3}).

\begin{lemma}
Keep the assumptions of Lemma \ref{L1}, and suppose also that $R$ is a
Hopf algebra. We thus have Hopf algebra operations $\dd, \vi,S$ on both
the (Hopf) subalgebras $R,k\G$ of $R \rtimes \G$.
\begin{enumerate}
\item These operations on $R,k\G$ extend to $R \rtimes \G$ (such that $R
\rtimes \G$ becomes a Hopf algebra), if and only if $\Ad$ is a group
homomorphism from $\G$ to {\em Hopf algebra} automorphisms $\Aut_{Hopf}
(R)$.
\end{enumerate}

\noindent Now suppose that the conditions in the first part hold.
\begin{enumerate}
\setcounter{enumi}{1}
\item Then $\Ad \in \hhh_{group}(\G, \Aut_{group}(G))$, where $G =
\hhh_{k-alg}(R,k)$ is a group under \emph{convolution}: $(\la * \mu)(r)
:= \sum \la(\one{r}) \mu(\two{r})$.

\item If $A \supset R$ is a $k$-algebra containing $R$ (with $1_A =
1_R$), so that $\G$ acts on $A$ by algebra maps (with compatible
restriction to $R$), then $\ad|_R$ and $\Ad|_{\G}$ can be extended to
$\ad \in \hhh_{k-alg}(R \rtimes \G, \End_k (A \rtimes \G))$.
\end{enumerate}
\end{lemma}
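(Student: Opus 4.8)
The plan is to prove the three parts in order, in each case reducing the claim to a statement about the two known Hopf subalgebras $R$ and $k\G$ and the interaction given by $\Ad$.

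\textbf{Part (1).} First I would work out what compatibility of a coproduct $\dd$ on $R\rtimes\G$ with the existing coproducts on $R$ and $k\G$ forces. Since $R\rtimes\G$ is generated as an algebra by $R$ and $\G$, a bialgebra structure extending the given ones is determined (if it exists) by the requirement that $\dd$, $\vi$, $S$ be algebra maps; so the content is an \emph{existence} statement. For the ``if'' direction, assume each $\Ad\g\in\Aut_{Hopf}(R)$. I would define $\dd$, $\vi$, $S$ on $R\rtimes\G$ by extending multiplicatively from $R$ and $k\G$ (e.g. $\dd(r\g):=\dd(r)\,(\g\otimes\g)$, $\vi(r\g):=\vi(r)$, $S(r\g):=(\g^{-1}\otimes\g^{-1})\cdots$ — i.e. $S(r\g)=\g^{-1}S(r)$, using $S(r\g)=S(\g)S(r)=\g^{-1}S(r)$), and then check these are well-defined algebra (resp.\ anti-algebra for $S$) maps: the only relations to respect are $\g r=\g(r)\g$, and $\dd$ respects this precisely because $\dd(\g(r))=(\Ad\g\otimes\Ad\g)\dd(r)$, which is exactly the hypothesis that $\Ad\g$ is a \emph{coalgebra} map; similarly $\vi(\g(r))=\vi(r)$ and $S(\g(r))=\g(S(r))$ encode that $\Ad\g$ preserves counit and antipode. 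The coassociativity, counit, and antipode axioms then hold on generators and hence everywhere. For ``only if'', conversely, if $R\rtimes\G$ is a Hopf algebra with $R$ and $k\G$ Hopf subalgebras, then conjugation by $\g$ is an algebra automorphism of $R$ that (being implemented inside a Hopf algebra, and sending the sub-coalgebra $R$ to itself) must commute with $\dd,\vi,S$, so $\Ad\g\in\Aut_{Hopf}(R)$.

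\textbf{Part (2).} Here $G=\hhh_{k-alg}(R,k)$ is a group under convolution because $R$ is a Hopf algebra; $\Ad\g$ acts on $G$ by $\la\mapsto\la\circ(\Ad\g)^{-1}$ as in Lemma~\ref{L1}. I would check $\Ad\g(\la*\mu)=\Ad\g(\la)*\Ad\g(\mu)$ by unwinding the convolution formula: $(\Ad\g(\la*\mu))(r)=(\la*\mu)((\Ad\g)^{-1}r)=\sum\la(\one{((\Ad\g)^{-1}r)})\mu(\two{((\Ad\g)^{-1}r)})$, and since $(\Ad\g)^{-1}$ is a coalgebra map this equals $\sum\la((\Ad\g)^{-1}\one r)\mu((\Ad\g)^{-1}\two r)=(\Ad\g(\la)*\Ad\g(\mu))(r)$. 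That $\Ad\g$ sends the identity $\vi$ of $G$ to itself follows from $\vi\circ(\Ad\g)^{-1}=\vi$ (again part of being Hopf), and compatibility with inverses is similar using $S$; finally $\Ad$ is a group homomorphism $\G\to\Aut_{group}(G)$ because $\Ad$ is already one into $\Aut_{k-alg}(R)$ and we have just seen each image actually preserves the convolution group structure, while $\Ad(\g\g')=\Ad\g\,\Ad\g'$ is inherited.

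\textbf{Part (3).} Here I want to extend $\ad|_R:R\to\End_k(A)$ and $\Ad|_\G:\G\to\Aut_{k-alg}(A)$ to $\ad\in\hhh_{k-alg}(R\rtimes\G,\End_k(A\rtimes\G))$. The natural formula is $\ad(\xi)(a\g'):=$ ``the $\dd$-twisted adjoint action'', i.e.\ for $\xi\in R$ one uses $\ad\xi(x)=\sum\one\xi\,x\,S(\two\xi)$ inside $A\rtimes\G$ (which makes sense since $R\subset A$ and $R\rtimes\G\subset A\rtimes\G$), and for $\xi=\g$ one uses $\Ad\g$ as defined in the very first Definition, $\Ad\g(a\g')=\g a\g'\g^{-1}$. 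I would verify: (i) each $\ad\xi$ really lands in $\End_k(A\rtimes\G)$ preserving $A\rtimes\G$ (the adjoint action of a Hopf algebra on any algebra containing it always does, and $\Ad\g$ visibly does); (ii) $\ad$ is multiplicative, $\ad(\xi\eta)=\ad\xi\circ\ad\eta$; on the $R$-part this is the standard fact that the adjoint action of a Hopf algebra is an algebra map, on the $\G$-part it is because $\Ad$ is a group homomorphism, and for the mixed relation $\g r=\g(r)\g$ one needs $\Ad\g\circ(\text{ad }r)\circ(\Ad\g)^{-1}=\text{ad}(\g(r))$ as operators on $A\rtimes\G$, which again follows because $\Ad\g$ is a Hopf automorphism of $R$ (so it intertwines $\dd,S$) \emph{and} — using the hypothesis that $\G$ acts on $A$ compatibly with its action on $R$ — it intertwines the ambient multiplication; (iii) $\ad$ restricts correctly to $R$ and $\G$, which is immediate from the formulas, noting that on $R$ the adjoint action recovers $\ad|_R$ when restricted to act on $A$.

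\textbf{Main obstacle.} I expect the routine but fiddly point to be part~(3)(ii), the mixed relation: checking that the two prescriptions ($\text{ad}$ of $R$-elements via the coproduct, and $\Ad$ of group elements) are compatible with $\g r=\g(r)\g$ \emph{as operators on all of $A\rtimes\G$}, not just on $R$. This is where all three hypotheses — $\Ad\g\in\Aut_{Hopf}(R)$ from part~(1), the convolution/coalgebra compatibility, and the assumed compatibility of the $\G$-action on $A$ with that on $R$ — are simultaneously used, and one must be careful that Sweedler-notation manipulations of $S$ and $\dd$ survive being conjugated by $\g$. Everything else is a direct, if tedious, verification on algebra generators.
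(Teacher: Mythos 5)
Your proposal is correct: the paper states this lemma without giving a proof (treating it as a routine verification), and your argument --- extending $\dd,\vi,S$ multiplicatively and checking compatibility with the single relation $\g r = \g(r)\g$, then verifying the mixed relation $\Ad\g\circ\ad r\circ\Ad\g^{-1}=\ad(\g(r))$ for part (3) using that $\Ad\g$ intertwines $\dd$ and $S$ --- is exactly the direct check on generators the paper implicitly relies on. The one point worth making explicit in part (2) is that the convolution inverse of $\la\in\hhh_{k-alg}(R,k)$ is $\la\circ S$, which is again an algebra map because $k$ is commutative; with that noted, everything goes through.
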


We remark that not every algebra automorphism of a Hopf algebra is a Hopf
algebra automorphism; for example, if $H$ is a non-cocommutative Hopf
algebra, then the flip map $\tau : H \otimes H^{cop} \to H \otimes
H^{cop}$, given by $\tau(x \otimes y) = y \otimes x$, is an algebra map
but not a coalgebra map. Here, $H^{cop}$ denotes the Hopf algebra
$(H,m,\eta,\dd^{op},\vi,S^{-1})$.

\subsection{The case of Hopf algebras}\label{Sshopf}

Several extensively studied examples in representation theory occur with
the additional data that $H \supset H_0$ are Hopf algebras (see
\cite{Kh3} for a theorem, as well as a list of examples). Thus, this is
the setup one should have in mind.

The lemmas above, together with the analysis in \cite{Kh3}, show that
some of the defining assumptions can be relaxed. (In particular, $A$ is
now a {\em Hopf RTA}, or {\em HRTA}, when $\G$ is trivial.) Let us
mention the ``reduced" set of axioms for $A$ and $\ag$, obtained by
combining all this.

\begin{propdef}
A {\em skew group ring over a Hopf RTA} is $A \rtimes \G$, where all but
the last part ($\G(\dd) = \dd$) hold if and only if (see \cite{Kh3}) $A$
is an RTA, $H \supset H_0$ are Hopf algebras with compatible structures,
and $\ad$ their usual adjoint actions.
\begin{enumerate}
\item The multiplication map: $B_- \otimes_k H \otimes_k B_+ \to A$ is a
vector space isomorphism. Here $H,B_\pm$ are associative unital
$k$-subalgebras of $A$. Moreover, the ``Cartan part" $H$ is a commutative
Hopf algebra.

\item $H$ contains a sub-Hopf algebra $H_0$ (with groups of weights
$G,G_0$ respectively), and $G_0$ contains a free abelian group of finite
rank $\calp_0 = \Z \dd$. Here, $\dd$ is a basis of $\calp_0$, chosen such
that
\[ B_\pm = \bigoplus_{\theta \in G : \pi(\theta) \in \pm \nn \dd}
(B_\pm)_\theta = \bigoplus_{\theta_0 \in \pm \nn \dd} (B_\pm)_{\theta_0}
\]

\noindent (the summands are weight spaces under the usual adjoint
actions). Each summand in the second sum is finite-dimensional, and
$(B_\pm)_{0_G}$ $= (B_\pm)_0 = k$.

\item There exists an anti-involution $i$ of $A$, such that
$i|_H = \id|_H$.

\item $\G$ is a finite group, that acts on $B_\pm, H, H_0$ (and hence on
$A$), such that the action of each $\g \in \G$
\begin{itemize}
\item on $B_\pm$ is by algebra automorphisms,

\item on $H$ (and hence on $H_0$) is by Hopf algebra automorphisms,

\item on $A$ commutes with the anti-involution $i$, and

\item induced on $G_0$ preserves $\dd$.
\end{itemize}
\end{enumerate}
\end{propdef}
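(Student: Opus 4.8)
The plan is to prove this Proposition-Definition by showing that the ``reduced'' list of axioms (1)--(4) is equivalent to the conjunction: ``$A$ is an RTA, $H \supset H_0$ are Hopf algebras with compatible structures and the usual adjoint actions, $\G$ is a finite group acting as described.'' The strategy is to verify the two implications separately, using the lemmas of \S\ref{Sshopf} to transfer data between (RTA1)--(RTA8) together with Standing Assumption \ref{St1} on one side, and (1)--(4) on the other. Throughout, the point is that when $H$ and $H_0$ are Hopf algebras with their canonical adjoint actions, several of the technical RTA-axioms become \emph{automatic} rather than extra hypotheses, so the reduced list is genuinely shorter.

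First I would go in the ``RTA $\Rightarrow$ reduced'' direction. Given an RTA whose $H,H_0$ are Hopf, (RTA1) gives axiom (1) of the triangular decomposition, with $H$ commutative by the remark following (RTA8); that $H$ is a \emph{Hopf} algebra is now part of the hypothesis, not a conclusion. For axiom (2), take $G = \hhh_{k\text{-}alg}(H,k)$ and $G_0 = \hhh_{k\text{-}alg}(H_0,k)$, which are groups under convolution (this is the standard fact that $\hhh_{k\text{-}alg}(\cdot,k)$ of a commutative Hopf algebra is a group, cf.\ Lemma 3 part (2) of the excerpt); the free action of $\calp$ on $G$ in (RTA3) and the free abelian $\calp_0 = \Z\dd$ of (RTA4) are now subgroups, and the restriction $\pi:G\to G_0$ is a group homomorphism intertwining the actions --- here one uses \cite{Kh3}, where exactly this identification of $\calp,\calp_0$ with subgroups (generated by roots) was carried out. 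The decomposition of $B_\pm$ in axiom (2) is (RTA5)--(RTA6), and the finiteness and $(B_\pm)_0 = k$ clauses are (RTA6) verbatim. Axiom (3) is (RTA8). For axiom (4): Standing Assumption \ref{St1}(1) gives the $\ad$-map on $H\rtimes\G$ restricting appropriately, so $\G$ acts by algebra automorphisms on $B_\pm$ and on $H$; that the action on $H$ is by \emph{Hopf} automorphisms is precisely the extra content of working in the Hopf setting --- I would invoke Lemma 3 part (1), which says the Hopf operations extend to $R\rtimes\G$ iff $\Ad$ lands in $\Aut_{Hopf}(H)$, and note that in the HRTA setup this is built in; commuting with $i$ is Assumption \ref{St1}(1) again; and preservation of $\dd$ on $G_0$ is the ``all but the last part'' caveat, i.e.\ exactly the clause one is allowed to drop. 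Conversely, for ``reduced $\Rightarrow$ RTA,'' I would reconstruct (RTA1)--(RTA8): (RTA1) is axiom (1); (RTA2) follows because the canonical adjoint action of a Hopf algebra preserves sub-Hopf-algebras and, by the triangular decomposition together with the weight-space description in axiom (2), preserves $B_\pm$; (RTA3) with $\calp$ generated by the support of $B_+$ in $G$, $0_G = \vi$; (RTA4) with $\calp_0 = \Z\dd$ and $\pi$ the restriction dual to $H_0\hookrightarrow H$; (RTA5)--(RTA6) repackage axiom (2); (RTA7), the property of weights, follows from the fact that for the convolution product on $G$ and the adjoint action one has $A_\theta\cdot A_{\theta'}\subseteq A_{\theta*\theta'}$ — this is the standard computation with the Hopf axioms and $\ad$; (RTA8) is axiom (3). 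Then Standing Assumption \ref{St1} for $\ag$ is recovered from axiom (4) via Lemmas \ref{L1}, \ref{Lcompat} and Lemma 3.

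The step I expect to be the main obstacle is the careful bookkeeping around (RTA7) (the property of weights) and around the interaction of $\pi$, the convolution structure on $G$ versus $G_0$, and the action of $\G$: namely, checking that the weight-grading on $A$ induced by $\ad|_H$ is \emph{multiplicative} with respect to the convolution product, and that $\pi$ is a homomorphism compatible with everything, including the $\G$-action (Lemma \ref{Lcompat}). This is where the Hopf structure is genuinely used — one must manipulate Sweedler notation $\ad h(ab)=\sum \ad\one{h}(a)\,\ad\two{h}(b)$ and the analogous identity for the $\G$-twist — and it is also the place where the equivalence is most delicate, since a naive argument would only give containment ``up to $\pi$.'' I would isolate this as a short sublemma (or cite the corresponding statement in \cite{Kh3}), and otherwise present the proof as a checklist matching each reduced axiom to its RTA counterpart and back, emphasizing only the clauses (Hopf-ness of $H$, Hopf automorphisms for $\G$, the (RTA7) computation) where the Hopf hypothesis does real work, and remarking that the final clause $\G(\dd)=\dd$ is deliberately excluded from the equivalence because it is an additional normalization on the choice of $\dd$.
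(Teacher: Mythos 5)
The paper offers no written proof of this Proposition-Definition: it simply defers to the preceding lemmas of \S 2 and to the analysis in \cite{Kh3}, and your checklist (matching (1)--(4) against (RTA1)--(RTA8) plus Standing Assumption \ref{St1}, with the Hopf structure making the convolution group law on $G$, the adjoint-action weight grading, and (RTA7) automatic) is exactly the intended argument. Your proposal is correct and takes essentially the same route, just written out in more detail than the paper bothers to.
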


\begin{remark}
First, we do not assume here, that $H$ is cocommutative; nevertheless, $H
\otimes B_\pm \cong B_\pm \rtimes H$, the smash product algebras. Next,
that $\G$ preserves $\dd$ is included, because it will be needed later to
show that every Verma module has a unique simple quotient; see
Proposition \ref{P6} below.
\end{remark}

Recently, Bazlov and Berenstein defined a class of algebras that
encompasses symmetrizable Kac-Moody Lie algebras and their quantum
groups, and rational Cherednik algebras. We now mention their connection
to skew group rings (so one may now try to apply their results in this
setup).

\begin{definition}
A $k$-algebra $A$ has {\em triangular decomposition over a bialgebra}
$H'$, if $A$ has distinguished subalgebras $H', U^\pm$ such that
\begin{itemize}
\item $H'$ acts covariantly from the left on $U^-$, and from the right on
$U^+$;

\item the multiplication map $: U^- \otimes H' \otimes U^+ \to A$ is a
vector space isomorphism, that makes $U^- \otimes H'$ and $H' \otimes
U^+$ isomorphic to the smash products $U^- \rtimes H'$ and $H' \ltimes
U^+$ (by the above actions) respectively;

\item there exist $H'$-equivariant (via the counit $\vi$) characters
$\epsilon^\pm : U^\pm \to k$.
\end{itemize}
\end{definition}

\begin{prop}\label{Pbabe}
Skew group rings over HRTAs are examples of algebras with triangular
decomposition over $H \rtimes \G$.
\end{prop}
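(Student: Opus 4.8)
The plan is to verify the three bullet points in the definition of ``triangular decomposition over a bialgebra'' with $H' := H \rtimes \G$ and $U^\pm := B_\pm$ (we use the reduced axioms from the Proposition-Definition, so $H$ is a commutative Hopf algebra acting adjointly, and $\G$ acts on $H$ by Hopf automorphisms and on $B_\pm$ by algebra automorphisms). First I would observe that $H \rtimes \G$ is indeed a bialgebra: by the earlier lemma on Hopf structures, since $\Ad$ lands in $\Aut_{Hopf}(H)$, the Hopf operations on $H$ and on $k\G$ extend to make $H \rtimes \G$ a Hopf algebra, hence a fortiori a bialgebra. Next I would exhibit the covariant left action of $H \rtimes \G$ on $B_-$ and right action on $B_+$: on the $H$-part these are the restrictions of $\ad$ (which by (RTA2)/the reduced axioms preserves $B_\pm$ and is compatible with the coproduct of the commutative Hopf algebra $H$, so it is genuinely a covariant/comodule-algebra-type action), and on the $\G$-part these are the given actions of $\G$ on $B_\pm$ by algebra automorphisms. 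One must check that these two actions are compatible with the cross relations $\g h = \g(h)\g$ in $H \rtimes \G$ — this is exactly the content of the first part of Assumption \ref{St1}, which says $\ad : H \rtimes \G \to \End_k(A)$ is a single algebra map restricting correctly to $H$ and $\G$, so the compatibility is built in.

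Second, I would handle the smash-product / triangular-decomposition isomorphism. The map $B_- \otimes H \otimes B_+ \to A$ is a vector space isomorphism by (RTA1); tensoring with $k\G$ on the right and using $A \rtimes \G = A \otimes k\G$ immediately gives that $B_- \otimes (H \rtimes \G) \otimes B_+ \to A \rtimes \G$ is a vector space isomorphism. For the smash-product identifications, I need $B_- \otimes (H \rtimes \G) \cong B_- \rtimes (H \rtimes \G)$ and $(H \rtimes \G) \otimes B_+ \cong (H \rtimes \G) \ltimes B_+$ as algebras. The reduced axioms already give $H \otimes B_\pm \cong B_\pm \rtimes H$ (the ``Cartan $\otimes$ nilpotent'' smash products inside $A$), and the $\G$-piece contributes the relation $\g b = \g(b)\g$ for $b \in B_\pm$, which is precisely the additional cross relation of the larger smash product with $H \rtimes \G$. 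So the combined subalgebra $H \otimes B_\pm \otimes k\G$ (or $k\G \otimes H \otimes B_\pm$, with the appropriate ordering) sitting inside $A \rtimes \G$ is exactly the smash product of $B_\pm$ with $H \rtimes \G$. I expect this to be bookkeeping: one writes down the generators and relations of $B_\pm \rtimes (H \rtimes \G)$ and matches them against the relations holding among the images in $A \rtimes \G$, using associativity of the two constituent smash products and the compatibility already proved.

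Finally, for the third bullet, I would produce the $H'$-equivariant characters $\epsilon^\pm : B_\pm \to k$. Take $\epsilon^\pm$ to be the augmentations killing $N_\pm$ (these exist by (RTA6): $(B_\pm)_{0} = k$, so $B_\pm = k \oplus N_\pm$ as algebras). Equivariance via the counit means: for $\xi \in H \rtimes \G$ and $b \in B_\pm$, $\epsilon^\pm(\xi \cdot b) = \vi_{H \rtimes \G}(\xi)\, \epsilon^\pm(b)$. For $\xi = h \in H$: $\ad h$ preserves the weight grading and $\ad h$ acts on $(B_\pm)_{\theta_0}$ by a scalar depending on $\theta_0$, which on the $0$-weight part $k = (B_\pm)_0$ is the counit/augmentation value, so $\ad h$ preserves $N_\pm$ and acts as $\vi(h)$ on $k$ — this is the standard fact that the adjoint action has the augmentation character on the $0$-weight line. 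For $\xi = \g \in \G$: $\g$ is an algebra automorphism of $B_\pm$, hence fixes $1$ and permutes the weight spaces according to $\g$ acting on weights; since $\G$ preserves $\dd$ (fourth part of the reduced axioms, hence preserves $\pm \nn \dd$ and fixes the $0$-weight line), $\g(N_\pm) = N_\pm$, so $\epsilon^\pm \circ \g = \epsilon^\pm = \vi(\g)\epsilon^\pm$. Combining multiplicatively over the generators of $H \rtimes \G$ gives equivariance.

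The main obstacle is the middle step: pinning down that the order-$3$ smash product built from the two given smash structures ($B_\pm \rtimes H$ inside $A$, and $A \rtimes \G$) really is the smash product $B_\pm \rtimes (H \rtimes \G)$, i.e. that no new relations intrude and the module-algebra axioms for $H \rtimes \G$ acting on $B_\pm$ hold on the nose. This reduces to checking the module-algebra compatibility $\xi \cdot (b b') = \sum (\one{\xi} \cdot b)(\two{\xi} \cdot b')$ for $\xi \in H \rtimes \G$, which splits into the $H$-case (already known, since $A$ is a Hopf RTA and $\ad$ makes $B_\pm$ a module algebra over the commutative Hopf algebra $H$) and the grouplike case $\xi = \g$ (where $\dd(\g) = \g \otimes \g$ and $\g$ is an algebra automorphism, so it is automatic), and then mixed products $h\g$ are handled by multiplicativity of the action together with $\g h = \g(h)\g$. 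Everything else is immediate from (RTA1), (RTA6), and Assumption \ref{St1}.
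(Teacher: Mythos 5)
Your overall strategy is the paper's own: take $U^\pm = B_\pm$, $H' = H \rtimes \G$, get the vector space isomorphism from (RTA1) tensored with $k\G$, and verify equivariance of the augmentation characters $\epsilon^\pm$ weight space by weight space (your computation for $\xi = h\g$ is exactly the paper's $\epsilon^\pm(\ad(h\g)(u^\pm_\theta)) = \delta_{\g(\theta),\vi}\,\vi(h)\,u^\pm_{\g(\theta)}$). But there is one genuine gap: your description of the \emph{right} action of $H \rtimes \G$ on $U^+ = B_+$. You say it is given ``on the $H$-part by the restriction of $\ad$ and on the $\G$-part by the given action of $\G$'' --- but those are left actions, and $H \rtimes \G$ is noncommutative (the $H$-part commutes with itself, but $\g h = \g(h)\g$), so the restriction of the left adjoint action is not a right action: one would need $a \triangleleft (\xi\xi') = (a \triangleleft \xi) \triangleleft \xi'$, which fails for $\xi = \g$, $\xi' = h$ unless you reverse the order somewhere. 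The paper's fix is to define the right action through the antipode, $a \triangleleft \xi := \ad S(\xi)(a)$, which is an honest right action because $S$ is an algebra anti-homomorphism; the accompanying observation that $S^2 = \id$ on $H \rtimes \G$ (since $H$ is commutative and $k\G$ cocommutative) is what keeps the subsequent smash-product and module-algebra checks clean. Without this twist, your identification of $(H \rtimes \G) \otimes B_+$ with the smash product $(H \rtimes \G) \ltimes B_+$ does not match the actual cross relation $b\,\g = \g\,\g^{-1}(b)$ inside $A \rtimes \G$: the $\G$-component of the right action must be $b \triangleleft \g = \g^{-1}(b)$, not $\g(b)$.

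Everything else in your proposal --- the splitting of the module-algebra compatibility into the $H$-case (where $A$ being a Hopf RTA does the work) and the grouplike case (automatic since $\dd(\g) = \g\otimes\g$ and $\g$ is an algebra automorphism), and the use of $\G(\dd) = \dd$ to see that $\g$ preserves $N_\pm$ and the zero-weight line --- is consistent with the paper's (much terser) argument and would go through once the right action is corrected as above.
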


\begin{proof}
Set $U^\pm = B_\pm$ and $H' = H \rtimes \G$. Moreover, define the two
actions of $H \rtimes \G$ (on all of $A$, in fact) to be: $h
\triangleright a := \ad h(a),\ a \triangleleft h := \ad S(h)(a)$. Since
$B_\pm$ are direct sums of $\ad H$-weight spaces, and closed under $\ad
\G$, hence one can check that these are valid left and right $H \rtimes
\G$-actions (note that $S^2 = \id|_{H'}$, since $H$ is commutative and
$\G$ is cocommutative). It is now easy to verify the first two
conditions.
Finally, define the characters $\epsilon^\pm$ to have augmentations
$N_{\pm}$. Over here, since $H$ is a Hopf algebra, we have $0 = \vi : H
\to k$ (and $\vi$ can be extended to $H \rtimes \G$). It is now easy to
verify that $\epsilon^\pm$ are $H'$-equivariant (via $\vi$), for we
verify on each $\theta$-weight space, that
$\epsilon^\pm(\ad(h \g)(u_\theta^\pm)) = \delta_{\g(\theta),\vi} \vi(h)
u_{\g(\theta)}^\pm = \vi(h\g) \epsilon^\pm(u_\theta^\pm)$.
\end{proof}

\subsection{Examples}\hfill
\begin{enumerate}
\item The degenerate example is that of an RTA $A$, where we take $\G =
1$.

\item The {\em wreath product} $S_n \wr A$ is defined to be $A^{\otimes
n} \rtimes S_n = S_n \wr A$, where $S_n$ is the group of permutations of
$\{ 1, \dots, n\}$, and $A$ is a (strict) (Hopf) RTA. By \cite{Kh3},
$A^{\otimes n}$ is also a (strict) (Hopf) RTA, with simple roots $\dd =
\coprod_i \dd_i$, and weights $G = G_A^n, \calp = \calp_A^n$, and so on.

Define $f_i : A \hookrightarrow A^{\otimes n} \subset S_n \wr A$, sending
$a$ to the product of $1^{\otimes (i-1)} \otimes a \otimes 1^{\otimes
(n-i)} \otimes 1_{\G}$; then the relations are $s_{ij} f_i(a) = f_j(a)
s_{ij}$, where $a \in A$, and $s_{ij}$ is the transposition that
exchanges $i$ and $j$. Thus, $\sigma(\calp_i) = \calp_{\sigma(i)}$, and
$\sigma(\theta * \la) = \sigma(\theta) * \sigma(\la)\ \forall \theta \in
\calp_A^n, \la \in G_A^n$.

Moreover, $\sigma(\aaa_i) = \aaa_{\sigma(i)}$, where $\sigma \in S_n$,
$\aaa \in \dd$ is a simple root, and $\aaa_i := f_i^*(\aaa) : (H_0)_i \to
k$. (So $\sigma(\dd_i) = \dd_{\sigma(i)}$.)
Finally, define
\[ \ad(h_1 \otimes \dots \otimes h_n \otimes \sigma)(a_1 \otimes \dots
\otimes a_n) := \otimes_j \ad h_j(a_{\sigma^{-1}(j)}). \]

\noindent One checks that each $\Ad \sigma$ acts by a Hopf algebra
automorphism if $H$ (and hence $H^{\otimes n}$) is a Hopf algebra. Thus,
$S_n \wr A$ satisfies all the standing assumptions.

\item If $A \rtimes \G$ is a skew group ring over a (strict) (Hopf) RTA,
then so is $A \rtimes \G'$, for any subgroup $\G'$ of $\G$.

\item For any finite $\G$, $A \otimes k\G$ is a skew group ring if
$A^{\G} = A$.

\item If $A_i \rtimes \G_i$ are skew group rings that satisfy the above
assumptions, then we know by \cite{Kh3}, that $A = \otimes_i A_i$ is an
RTA, the set $G$ of weights is $\times_i G_i$, and $\G = \times_i \G_i$
acts on $A$, via:
\[ (\g_1, \dots, \g_n) \cdot (a_1 \otimes \dots \otimes a_n) = (\g_1(a_1)
\otimes \dots \otimes \g_n(a_n)). \]

\noindent One can check that $A \rtimes \G$ also satisfies all the
assumptions above.

Finally, if $H_i$ is a Hopf algebra for all $i$, then so is $H =
\otimes_i H_i$. If each $\Ad \g_i$ acts as a Hopf algebra automorphism on
$A_i$, then the same property holds for $\Ad \G$ acting on $A$.
\end{enumerate}

\section{The Bernstein-Gelfand-Gelfand Category}

We now introduce the main object of study in this article.

\begin{definition}
The {\em BGG category} $\calo$ is the full subcategory of finitely
generated $H$-semisimple $A \rtimes \G$-modules, with finite-dimensional
weight spaces and a locally finite action of $B_+$, i.e., $\forall m \in
M \in \calo,\ \dim(B_+ m) < \infty$.
\end{definition}\medskip

\noindent Then $\calo$ is closed under quotienting, and every object $M$
in $\calo$ has a locally finite action of $(H \otimes B_+ \otimes k\G)$.
%
%
%
%
Moreover, viewing each $M \in \calo$ as merely an $A$-module, we
can show the following lemma, since $\G$ is finite.

\begin{lemma}\label{LO}
Suppose $\calo_A$ is the Category $\calo$ for $|\G| = 1$. Then $\calo =
\calo_{A \rtimes \G}$ equals $A \rtimes \G - \Mod \bigcap \calo_A := \{ M
\in A \rtimes \G - \Mod : {\rm Res}^{A \rtimes \G}_A M \in \calo_A \}$.
\end{lemma}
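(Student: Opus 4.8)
The plan is to prove the two inclusions separately, using only that $\G$ is finite and that we work in characteristic zero (so that $k\G$ is semisimple and, more concretely, averaging over $\G$ is available). The direction $\calo_{\ag} \subseteq \ag\text{-}\Mod \cap \calo_A$ is essentially a bookkeeping matter: if $M \in \calo_{\ag}$, then restricting to $A$ keeps it an $A$-module; I must check that $\Res^{\ag}_A M$ is finitely generated over $A$, is $H$-semisimple, has finite-dimensional weight spaces, and carries a locally finite $B_+$-action. The last three are immediate since the $H$-, weight-space-, and $B_+$-structures are unchanged by restriction. For finite generation, I would take a finite $\ag$-generating set $\{m_1,\dots,m_r\}$ of $M$ and observe that $\{\g m_i : \g \in \G,\ 1 \le i \le r\}$ is a finite $A$-generating set, since $\ag m_i = \sum_{\g} A \g m_i$.

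For the reverse inclusion, suppose $M$ is an $\ag$-module with $\Res^{\ag}_A M \in \calo_A$; I must show $M \in \calo_{\ag}$. Again $H$-semisimplicity, finite-dimensionality of weight spaces, and local finiteness of the $B_+$-action are inherited verbatim from the $A$-module structure (these conditions do not see the $\G$-action at all), so the only point needing argument is that $M$ is finitely generated as an $\ag$-module. Here I would take a finite $A$-generating set $\{n_1,\dots,n_s\}$ of $\Res^{\ag}_A M$; then since $\ag \supseteq A$, the same set generates $M$ over $\ag$, so $M$ is a fortiori finitely generated over $\ag$. In fact this shows finite generation passes trivially in this direction; the content in the lemma is really the claim that the three ``size'' conditions of $\calo$ are insensitive to the presence of $\G$, which is clear once one notes that $H$, $B_+$, and the weight decomposition all live inside $A \subseteq \ag$ and the defining conditions only constrain the action of these subalgebras.

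The main (and only mild) obstacle is the finite-generation check in the forward direction, i.e.\ confirming that restriction of a finitely generated $\ag$-module to $A$ stays finitely generated; this is where finiteness of $\G$ is used, via the identity $\ag = \bigoplus_{\g \in \G} A\g$ as a left $A$-module, which makes $\ag$ a finitely generated $A$-module and hence lets finite generation descend. Everything else is a direct verification that the conditions defining $\calo$ refer only to subalgebras of $A$ and to the ambient module structure, both of which are untouched by restriction, so I expect the write-up to be short.
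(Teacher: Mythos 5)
Your proposal is correct and is essentially the argument the paper intends: the paper states the lemma with only the remark that it holds ``since $\G$ is finite,'' and the one place finiteness of $\G$ enters is exactly the point you isolate, namely that $\ag = \bigoplus_{\g \in \G} A\g$ is a finitely generated left $A$-module, so finite generation descends under restriction (the other three defining conditions of $\calo$ refer only to $H$, the weight spaces, and $B_+$ inside $A$ and are untouched). The appeal to characteristic zero and semisimplicity of $k\G$ in your opening is unnecessary for this lemma, but you never actually use it, so no harm is done.
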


Let us first show that complete reducibility for finite-dimensional
$A$-modules implies it for $A \rtimes \G$-modules; the special case $A =
(\mf{Ug})^{\otimes n}, \G = S_n$ was stated in Proposition \ref{Pexample}
above (but one can also state that result for $U_q(\mf{g})$, for
instance). We use the following general homological result.

\begin{prop}\label{Pss}
Given a finite group $\G$ acting on an algebra $R$ over a field of
characteristic zero, suppose $\calp \subset R-\Mod$ and $\cald \subset (R
\rtimes \G)-\Mod$ are full abelian subcategories of finite-dimensional
modules, with each $D \in \cald$ satisfying: $\Res D \in \calp$. If
$\calp$ is a semisimple category, then so is $\cald$.
\end{prop}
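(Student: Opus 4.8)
The plan is to show that every short exact sequence in $\cald$ splits, using the standard averaging (Maschke-type) argument adapted to skew group rings, with the semisimplicity of $\calp$ supplying the splitting at the level of $R$-modules. First I would take a short exact sequence $0 \to D' \to D \xrightarrow{p} D'' \to 0$ in $\cald$. Applying $\Res = \Res^{R \rtimes \G}_R$, we get a short exact sequence of finite-dimensional $R$-modules, all of whose terms lie in $\calp$ by hypothesis. Since $\calp$ is semisimple, this sequence splits in $R-\Mod$: there is an $R$-module map $s : \Res D'' \to \Res D$ with $p \circ s = \id_{D''}$.

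Next I would average $s$ over $\G$ to produce an $(R \rtimes \G)$-linear splitting. Concretely, set $\tilde{s} := \frac{1}{|\G|} \sum_{\g \in \G} \g \cdot s \cdot \g^{-1}$, where $\g$ acts on $\hhh_R(D'', D)$ in the usual way (using the $\G$-actions on $D$ and $D''$ coming from their $(R \rtimes \G)$-module structures); this makes sense because $\chrc k = 0$ and $\G$ is finite, so $|\G|$ is invertible. One checks routinely that each $\g \cdot s \cdot \g^{-1}$ is still $R$-linear (here one uses the relation $\g a = \g(a)\g$ in $R \rtimes \G$, i.e. the compatibility of the $R$-action with the $\G$-action), hence $\tilde{s}$ is $R$-linear; that $\tilde{s}$ is $\G$-equivariant by construction, hence $(R \rtimes \G)$-linear; and that $p \circ \tilde{s} = \id_{D''}$ because $p$ is $(R \rtimes \G)$-linear and $p \circ (\g s \g^{-1}) = \g (p s) \g^{-1} = \g \id \g^{-1} = \id$ for every $\g$. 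Thus the original sequence splits in $\cald$, so every object of $\cald$ is a direct sum of indecomposables, and a short induction on length (again using that every sub/quotient sequence splits) shows every indecomposable object is simple. Hence $\cald$ is semisimple.

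I do not expect any serious obstacle here; the one point deserving care is that all the ambient categories are only required to be full abelian subcategories of finite-dimensional modules, so I should check that $\cald$ really is closed under the constructions used — in particular that the kernel $D' = \ker p$ and the summands produced by the splitting lie in $\cald$ and not merely in $(R \rtimes \G)-\Mod$. Closure under subobjects and quotients (hence under the direct summand $D' \cong D/\tilde{s}(D'')$ and under $\tilde{s}(D'') \cong D''$) follows from $\cald$ being an abelian subcategory together with the $\Res$-into-$\calp$ hypothesis, which forces the relevant modules to remain finite-dimensional. The only mild subtlety is purely bookkeeping: making sure the $\G$-action on $\hhh_R(D'', D)$ used in the averaging is exactly the one for which fixed points are the $(R \rtimes \G)$-maps; this is the content of Lemma \ref{L1}(1) applied to $R$ acting on $\hhh_k(D'', D)$, and it is where the hypotheses on $\G$ (finiteness, and $\chrc k = 0$) are used essentially.
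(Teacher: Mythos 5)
Your proof is correct and is essentially the paper's argument in explicit form: the paper shows $\hhh_{\cald}(M,-) = \bigl( \hhh_R(\Res M, \Res -) \bigr)^{\G}$ is exact as a composite of exact functors (restriction, $\hhh_{\calp}$ by semisimplicity of $\calp$, and $\G$-invariants by Maschke in characteristic zero), and your averaging operator $\tilde{s} = \frac{1}{|\G|}\sum_{\g} \g s \g^{-1}$ is precisely the projection onto those $\G$-invariants applied to the $R$-linear splitting that semisimplicity of $\calp$ provides. The ingredients and the way they are combined are the same; only the packaging (explicit splitting of short exact sequences versus exactness of the Hom functor) differs.
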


\noindent The second part of Proposition \ref{Pexample} now follows, by
setting $\calp,\cald$ to be the categories of finite dimensional $A, A
\rtimes \G$- modules respectively. The first part will be shown after
Remark \ref{R1}.

\begin{proof}
It suffices to show that $\hhh_{\cald}(M,-)$ is exact for each object $M$
of $\cald$ (since $\cald$ is abelian, the long exact sequence of
Ext$_{\cald}$'s vanishes). Since $\cald$ is also full, we use
\cite[Equation (A.1), Appendix]{Mac}, and compute:
\[ \hhh_{\cald}(M,-) = \hhh_{R \rtimes \G}(M,-) = \left( \hhh_R(\Res M,
\Res -) \right)^{\G} \]

\noindent By Maschke's Theorem (over characteristic zero), taking
$\G$-invariants is an exact functor (since everything is
finite-dimensional here), as is $\Res - : \cald \to \calp$. By
semisimplicity in the full subcategory $\calp$, $\hhh_{\calp}(\Res M,-)$
is also exact. Thus their composite is exact as well.
\end{proof}

\section{Summary of results}

We now summarize our main results.

\begin{stand}\label{St3}
Suppose $A_i \rtimes \G_i$ are skew group rings over RTAs for $1 \leq i
\leq n$, each of which satisfies Standing Assumption \ref{St1} above.
Suppose also that each $\G_i$ preserves $\dd_i \subset (G_0)_i$, and that
$k$ is algebraically closed if any $\G_i$ is nontrivial.
\end{stand}\medskip

\noindent Then all this also holds for $A \rtimes \G$, where $A :=
\otimes_i A_i$ and $\G := \times_i \G_i$.
Hence, we will state results only for $\ag$ wherever possible,
because it combines all the functorial constructions above (see
\eqref{E1}). One can thus use $A$ as an RTA or as $\otimes_i A_i$. To see
the results for any of the ``subcases", take $n=1$ (to study only
Clifford theory), $\G$ to be trivial (to study only RTAs), etc.\medskip

Let $\calc$ be the category of finite-dimensional $H$-semisimple $H
\rtimes \G$-modules, and $X$ the isomorphism classes of simple objects in
$\calc$ (so $X = G$ if $|\G| = 1$). Then
\begin{itemize}
\item $\calc$ is semisimple.

\item $X$ classifies the simple objects in $\calo = \calo_{A \rtimes
\G}$, and for each $x \in X$, there is a {\em Verma module} $Z(x) \in
\calo$ with unique simple quotient $V(x)$.

\item Given $x$, there is a $\G$-orbit $\la_x \in G / \G$ (its ``set of
weights"), such that
\begin{itemize}
  \item for each $\la \in G$, there exists at least one - and only
  finitely many - $x \in X$ such that $\la \in \la_x$.

  \item (``Weyl Character Formula 1".) $V(x) \cong \bigoplus_{\la \in
  \la_x} V_A(\la)^{\oplus \dim x_\la}$ as $A$-modules in $\calo_A$, where
  $V_A(\la)$ is simple in $\calo_A$.

  \item The center $Z := \mf{Z}(\ag)$ acts on a Verma or simple module by
  a {\em central character} $\chi_x : Z \to k$.
\end{itemize}
\end{itemize}

Next, we define duality functors $F$ on the {\em Harish-Chandra
categories} $\calh$ containing $\calo$ and $\calc$, using the
anti-involution $i$ on $A$ and $H$ respectively.
\begin{itemize}
\item $F$ is exact, contravariant, and involutive on $\calc$ and $\calh$.
\item $F(V(x)) = V(F(x))$.
\item Simple modules $V(x), V(x')$ have non-split extensions in $\calo$
if and only if they or $V(F(x')),V(F(x))$ are the first two Jordan-Holder
factors in a composition series for a Verma module.
\end{itemize}

\noindent For example, if $|\G| = 1$, then $X = G$ and $F(V(\la)) \cong
V(\la)\ \forall \la \in G$.

\begin{definition}\label{D1}
Fix $x \in X$.
\begin{enumerate}
\item Define $CC(x) = S^4(x)$ to be $\{ x' \in X : \chi_{x'} = \chi_x
\}$.

\item Define $S^3(x)$ to be the symmetric and transitive (or equivalence)
closure of $\{ x \}$ in $X$, under the relations (a) $x \to x'$ if $V(x)$
is a subquotient of $Z(x')$, and (b) $x \to_F x'$ if $F(x) = x'$.

\item Define $S'(x)$ to be the equivalence closure of $\{ x \}$ in $X$
under only the relation (a) above.

\item Define the following partial order on $X$: $x \leq x'$ if $x=x'$,
or there exist $\la \in \la_x, \la' \in \la_{x'}$, with $\pi(\la') \in
(\nn) * \pi(\la)$.

\item Given $S \subset X \ni x$, define $S^{\leq x} := \{ s \in S : s
\leq x \}$ (and as a special case, $S^{\leq \la}$ if $S \subset G$ and
$|\G| = 1$). Similarly define $S^{<x}$ and $S^{<\la}$.

\item $S^2(x) := \{ \pi(\la_{x'}) : x' \in S^3(x) \},\ S^1(x) := \{
\pi(\la_{x'}) : x' \in S^3(x)^{\leq x} \}$.

\item $A \rtimes \G$ satisfies {\em Condition (S1),(S2),(S3)}, or {\em
(S4)} if the corresponding sets $S^n(x)$ are finite for all $x \in X$.

\item The {\em block} $\calo(x)$ is the full subcategory of $\calo$,
comprising all objects in $\calo$, each of whose simple subquotients is
in $\{ V(x') : x' \in S^3(x) \}$.

\item Given skew group rings $A_i \rtimes \G_i$ over RTAs for $1 \leq i
\leq n$, each of which satisfies Standing Assumption \ref{St3} above,
define $X_i$ similar to $X$, but over $H_i \rtimes \G_i$ for all $i$.
Given $\la_i \in G_i\ \forall i$, the {\em simple objects over} $\la =
(\la_1, \dots, \la_n)$ in the four setups in \eqref{E1} are,
respectively,
\begin{equation}\label{Esimples}
\la_i, \{ x_i \in X_i : \la_i \in \la_{x_i} \}, \la, \{ \x \in X : \la
\in \la_{\x} \}.
\end{equation}
\end{enumerate}
\end{definition}

\begin{remark}\hfill
\begin{enumerate}
\item As we see below, $X = \times_i X_i$ here, so $\la_{\x} \in \times_i
G_i = G$, as it should.

\item That Verma modules $Z(x)$ are indecomposable and have a unique
simple quotient $V(x)$, is related to the fact that $Z(x)$ is the
projective cover of $V(x)$ in a ``truncated" subcategory of $\calo$. (See
Lemma \ref{Lproj}.)

\item The relation between simple objects in the four setups for Category
$\calc$ (and $\calo$) is expected from diagram \eqref{Ediag1}, and
``indicated" in \eqref{Esimples}. Its analogue in $\calo$ forms the
``front face" of the ``cube of simple objects" below.
We show later, that $V(\x) := \otimes_i V_i(x_i) \in \calo_{A \rtimes
\G}$. We now state all this using diagrams; in them, ``$\rtimes$" really
is some sort of induction, or an ``inverse" to ``restriction" (from $A_i
\rtimes \G_i$ to $A_i$).

\item The Conditions (S) are not uncommon: (S3) holds for complex
semisimple Lie algebras, (nontrivially deformed) infinitesimal Hecke
algebras, and both their quantum analogues (e.g., see the example after
Remark \ref{R1}, \cite{Kh,Kh3,GGK} respectively).
\end{enumerate}
\end{remark}

\begin{theorem}\label{Teqs}
Fix $1 \leq m \leq 4$, and work in the above setup.
\begin{enumerate}
\item $X = \times_i X_i$.
\item Inside any such $\ag$, $\g(S^m_A(\la)) = S^m_A(\g(\la))\ \forall
\la \in G, \g \in \G$.

\item Given RTAs $A_i$ and $\la_i \in G_i$, $S^m_A(\la_1, \dots, \la_n) =
\times_i S^m_{A_i}(\la_i)$.
\end{enumerate}
\end{theorem}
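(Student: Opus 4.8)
The plan is to prove the three parts in the order stated, since part (1) is needed to even make sense of the weights appearing in parts (2) and (3), and part (2) (the $\G$-equivariance of the sets $S^m_A$) is a key ingredient in reconciling the ``restricted'' and ``induced'' pictures that underlie part (3). Throughout, I would proceed by induction on $m$, exploiting the fact that the definition of $S^m(x)$ for $m=1,2$ is built on top of $S^3(x)$ (Definition \ref{D1}(6)), while $S^4(x) = CC(x)$ is characterized by central characters; so the substantive cases are $m=3$ and $m=4$, with $m=1,2$ following formally once $m=3$ is in hand and one checks that the partial order $\leq$ and the map $x' \mapsto \pi(\la_{x'})$ behave well under the group action and the tensor product.

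For part (1), $X = \times_i X_i$: this should follow from the fact (asserted in the summary of results and in Example (5) of \S 2) that $H \rtimes \G = \otimes_i (H_i \rtimes \G_i)$ as algebras, that each $\calc_i$ is semisimple, and that the simple modules of a tensor product of finite-dimensional algebras over an algebraically closed field are exactly the outer tensor products of simples of the factors. So $\x = (x_1,\dots,x_n) \mapsto \boxtimes_i x_i$ gives the bijection, and under it $\la_{\x} = (\la_{x_1},\dots,\la_{x_n}) \in \times_i G_i = G$, as noted in the Remark. For part (2), I would first record that $\g$ permutes Verma modules compatibly, i.e. $\g \cdot Z(x) \cong Z(\g(x))$ (this uses Lemma \ref{Lcompat}: $\g(A_\theta) = A_{\g(\theta)}$, so $\g$ carries the defining highest-weight data of $Z(x)$ to that of $Z(\g(x))$), and similarly $\g$ commutes with the duality functor $F$ up to the natural identification, so $\g(F(x)) = F(\g(x))$. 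Then ``$x \to x'$'' and ``$x \to_F x'$'' are both preserved by applying $\g$, hence so is the equivalence closure defining $S^3(x)$, giving $\g(S^3_A(\la)) = S^3_A(\g(\la))$; the partial order $\leq$ is visibly $\G$-stable because $\g$ acts on $G_0$, $\calp_0$, and $\dd$ (Lemma \ref{Lcompat}, Standing Assumption \ref{St3}), and $\g$ commutes with $\pi$, so the remaining $S^m$ cases follow; for $m=4$ one uses that $\g$ induces an automorphism of the center $Z = \mf{Z}(\ag)$ and $\chi_{\g(x)} = \chi_x \circ (\Ad\g)^{-1}$, so $\chi_{x'} = \chi_x \iff \chi_{\g(x')} = \chi_{\g(x)}$.

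For part (3), the strategy is to reduce the statement about $A = \otimes_i A_i$ to the factors by using $V(\x) \cong \boxtimes_i V_i(x_i)$ and $Z(\x) \cong \boxtimes_i Z_i(x_i)$ (both asserted in the summary / Remark), together with the fact that a Jordan--Hölder factor of an outer tensor product $\boxtimes_i Z_i(x_i)$ is an outer tensor product $\boxtimes_i V_i(x_i')$ with each $V_i(x_i')$ a Jordan--Hölder factor of $Z_i(x_i)$ — this is where semisimplicity-type / Künneth-type bookkeeping over an algebraically closed field enters. Thus the relation ``$\x \to \x'$'' on $X = \times_i X_i$ holds iff $x_i \to x_i'$ for every $i$; since $F(\x) = (F(x_1),\dots,F(x_n))$ (the anti-involution on $\ag$ restricting componentwise, by Lemma \ref{Lcompat} and the propdef), the relation ``$\to_F$'' also factors, and hence the equivalence closure $S^3_A(\la)$ is exactly $\times_i S^3_{A_i}(\la_i)$. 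For $m=1,2$ one additionally needs that the partial order on $X$ is the product order, which follows because $\pi = \times_i \pi_i$ and $\nn * (-)$ is taken coordinatewise on $\calp_0 = \times_i (\calp_0)_i$; for $m=4$ one uses $Z(\ag) = \otimes_i Z(A_i \rtimes \G_i)$ (or at least that a central character of the tensor product is determined by, and determines, its restrictions to the factors) so $\chi_{\x} = \chi_{\x'} \iff \chi_{x_i} = \chi_{x_i'}\ \forall i$.

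The main obstacle I anticipate is the ``factorization of Jordan--Hölder factors of an outer tensor product'' step feeding part (3): a priori $\boxtimes_i Z_i(x_i)$ could have a composition factor not of product form, and ruling this out — equivalently, showing that the relation (a) in Definition \ref{D1}(2) is multiplicative — requires knowing that $\End$ of each $V_i(x_i)$ is just $k$ (Schur's lemma over $\bar k$) and a Künneth/density argument for the finitely many weights involved; the finiteness of weight multiplicities in $\calo$ and the local finiteness of the $B_+$-action are what make this argument go through. Everything else is a matter of transporting structure (Verma modules, duality, central characters) across the algebra isomorphisms $\ag \cong \otimes_i(A_i \rtimes \G_i)$ and $H \rtimes \G \cong \otimes_i(H_i \rtimes \G_i)$, which the earlier sections have already set up.
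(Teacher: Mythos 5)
Your overall route coincides with the paper's: part (1) is the tensor--product classification of simples (Theorem \ref{Tsimples}, via Proposition \ref{Pmitya} plus the $H$-semisimplicity check), part (2) is the $\g$-twisting of $A$-Verma modules inside $Z(y)$ together with $\chi_{\g(\la)} = \g\,\chi_\la(\g^{-1}(\cdot))\,\g^{-1}$ for $m=4$ (Propositions \ref{Pfunct} and \ref{Pcentfunct}), and part (3) reduces to the factorization of Vermas, simples, and centers across $A = \otimes_i A_i$ (Corollary \ref{Cprod} and Lemma \ref{Lcenter}), with $m=2,1$ deduced from $m=3$ by applying $\pi = \times_i \pi_i$ and the product partial order. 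One point of caution: part (3) is stated for RTAs $A_i$ (trivial groups), and that restriction is essential --- for nontrivial $\G_i$ only the inclusions of Theorem \ref{Ptensor}(3) hold in general, and the full product identity requires the extra hypothesis of Theorem \ref{Ttensor}; your language about transporting everything across $\ag \cong \otimes_i(A_i\rtimes\G_i)$ should not suggest otherwise.

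There is, however, a genuine gap in your part (3) at the step ``the relation $\x\to\x'$ holds iff $x_i\to x_i'$ for every $i$; \dots hence the equivalence closure $S^3_A(\la)$ is exactly $\times_i S^3_{A_i}(\la_i)$.'' The equivalence closure of a product relation is in general strictly \emph{smaller} than the product of the equivalence closures: if $R_i$ relates $a_i$ to $b_i$ only, then the product relation relates $(a_1,a_2)$ to $(b_1,b_2)$ but never $(a_1,a_2)$ to $(a_1,b_2)$, whereas the product of the closures identifies all four points. So ``each coordinate relation factors'' does not by itself yield the equality of closures; the obstacle you flag (composition factors of $\otimes_i Z_i(x_i)$ failing to be of product form) is a different and, in the $|\G|=1$ setting, non-issue by Proposition \ref{P3}. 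What rescues the argument is the \emph{reflexivity} of the subquotient relation, $[Z_i(x_i):V_i(x_i)]>0$ for every $i$: this lets you realize any one-coordinate move $x_i'\to x_i''$ (keeping $x_j'$ fixed for $j\neq i$) as an honest edge $[Z(\x'):V(\x'')]>0$ via Lemma \ref{Lelem}, so that the ``categorical product'' relation contains the Cartesian product of graphs, whose connected components are the products of the components. This is precisely Step 2 in the proof of Theorem \ref{Ptensor}, and your write-up needs that step (or an equivalent one-coordinate-at-a-time argument) made explicit before the ``hence.''
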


\begin{prop}\label{Pdiag}\hfill
\begin{enumerate}
\item The following diagram of ``posets of simple objects in $\calo$"
commutes (in the spirit of diagram \eqref{Ediag1}) in between various
$\calc$'s:
\[ \begin{CD}
\{ G_i \} @>\rtimes>> \{ X_i \}\\
@V\times VV @V\times VV\\
G = \times_i G_i @>\rtimes>> X = \times_i X_i
\end{CD} \]

\item Moreover, given $\la_i \in G_i$ and $x_i \in X_i$ (for all $i$)
such that $\la_i \in \la_{x_i}$, the three constructions of $\otimes_i$,
$F$, and ``$\rtimes$" form a commuting ``cube of simple objects" in
various $\calo$'s:
\[
\xymatrix{
  & \{ V_i(\la_i) \}
     \ar[rr]^{\rtimes}
     \ar[dd]^{\otimes}
  && \{ V_i(F(x_i)) \}
     \ar[dd]^{\otimes}\\
  \{ V_i(\la_i) \}
     \ar[rr]^{\rtimes}
     \ar[dd]^{\otimes}
     \ar[ru]^{F}
  && \{ V_i(x_i) \}
     \ar[dd]^{\otimes}
     \ar[ru]^{F}\\
  & V(\la)
     \ar[rr]^{\rtimes}
  && V(F(\x))\\
  V(\la)
     \ar[rr]^{\rtimes}
     \ar[ru]^{F}
  && V(\x)
     \ar[ru]^{F}\\
}
\]

\item The ``corresponding" cube in between various $\calc$'s commutes.

\item For all $\g \in \G, \la_i \in G_i$, and $x_i \in X_i$ with $\la_i
\in \la_{x_i}\ \forall i$, the ``Verma cube" below, commutes (here,
$Z(\x) := \otimes_i Z_i(x_i)$):
\begin{equation}\label{Ediag3}
\xymatrix{
  & \{ Z_i(\g(\la_i)) \}
     \ar[rr]^{\rtimes}
     \ar[dd]^{\otimes}
  && \{ Z_i(x_i) \}
     \ar[dd]^{\otimes}\\
  \{ Z_i(\la_i) \}
     \ar[rr]^{\rtimes}
     \ar[dd]^{\otimes}
     \ar[ru]^{\g(\cdot)}
  && \{ Z_i(x_i) \}
     \ar[dd]^{\otimes}
     \ar[ru]^{\g(\cdot) = \id}\\
  & Z(\g(\la))
     \ar[rr]^{\rtimes}
  && Z(\x)\\
  Z(\la)
     \ar[rr]^{\rtimes}
     \ar[ru]^{\g(\cdot)}
  && Z(\x) 
     \ar[ru]^{\g(\cdot) = \id}\\
}
\end{equation}
\end{enumerate}
\end{prop}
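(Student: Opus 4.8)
The plan is to prove the four parts of Proposition \ref{Pdiag} by reducing everything to compatibility statements about simple objects, Verma modules, and the three functorial constructions ($\otimes_i$, $F$, and ``$\rtimes$''), most of which have already been established in Theorem \ref{Teqs} and in the summary of results. First I would fix notation: write $\x = (x_1,\dots,x_n) \in X = \times_i X_i$ (using Theorem \ref{Teqs}(1)), and recall the three basic identifications we are allowed to use freely: (i) $V(\x) \cong \otimes_i V_i(x_i)$ and $Z(\x) \cong \otimes_i Z_i(x_i)$ lie in $\calo_{A \rtimes \G}$ (stated in the summary and in Remark, item (3)); (ii) the duality functor satisfies $F(V(x)) = V(F(x))$, and $F$ on a tensor product of modules is the tensor product of the dualities, since $i$ on $A = \otimes_i A_i$ is $\otimes_i i_{A_i}$ composed with the flip of tensor factors built into the RTA-structure of $\otimes_i A_i$; (iii) the ``$\rtimes$'' arrow sends $\la_i \in G_i$ to $\{x_i \in X_i : \la_i \in \la_{x_i}\}$ on simple objects of $\calc_i$, and correspondingly on simple/Verma objects of $\calo_i$ via the Clifford-theoretic induction (``inverse to $\Res$''). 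The ``commutativity'' here is functorial, not a commuting square of morphisms (as the paper repeatedly stresses), so in each case it suffices to check that the two composite assignments produce the \emph{same} simple (resp. Verma) object up to the isomorphisms identified in (i)--(iii).

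For part (1), the square $\{G_i\} \to \{X_i\}$, $\{G_i\}\to G \to X$, $\{G_i\}\to \{X_i\}\to X$ commutes on the nose: going right-then-down sends $(\la_1,\dots,\la_n)$ to $\times_i\{x_i : \la_i \in \la_{x_i}\}$, while going down-then-right sends it to $\{\x \in X : \la \in \la_{\x}\}$; these coincide because $\la_{\x} = \times_i \la_{x_i}$ (Remark, item (1)) and $\times_i$ of the orbit-conditions is exactly the orbit-condition for $\G = \times_i \G_i$ acting coordinatewise. For part (2), the cube of simple objects in $\calo$ has six faces. The front face $V(\la) \to V(\x) \to V(F(\x))$ versus $V(\la)\to V(F(\la))\to V(F(\x))$ commutes because $F$ fixes $\la_{\x}$-orbit data and ``$\rtimes$'' intertwines $F$ (this is the content of $F(V(x)) = V(F(x))$ together with the fact that Clifford induction commutes with duality, which one checks from $i_\G(\g) = \g^{-1}$ in Lemma \ref{L1}(2)). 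The two vertical ``$\otimes$'' faces commute by (i) plus $F = \otimes_i F_i$ from (ii). The top and bottom faces are the $\calo_i$-level and $\calo$-level statements that ``$\rtimes$'' commutes with $F$ — same argument as the front face, one factor at a time (top) or with $\G$ in place of each $\G_i$ (bottom). Part (3) is the identical argument with $\calc_i,\calc$ replacing $\calo_i,\calo$, using that $F$ on $\calc$ is defined by the anti-involution $i|_H$ and that the same tensor/flip decomposition of $i|_H$ holds.

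Part (4), the Verma cube, is where the real content lies, and I expect the main obstacle to be the arrows labeled $\g(\cdot)$ and $\g(\cdot) = \id$. The point is that twisting a Verma module $Z_i(\la_i)$ by $\g \in \G_i$ (via $\Ad\g$) yields $Z_i(\g(\la_i))$ — this follows from Lemma \ref{L1}(1), $\g((Z_i)_{\la_i}) = (Z_i)_{\g(\la_i)}$, applied to highest weight vectors and propagated by the ``property of weights'' (RTA7) using $\g(\theta * \la) = \g(\theta) * \g(\la)$ from Standing Assumption \ref{St1}(2). So the left/right vertical ``$\otimes$'' faces commute because $Z(\x) = \otimes_i Z_i(x_i)$ and the coordinatewise $\G = \times_i\G_i$-action twists each factor separately: $\g(\otimes_i Z_i(\la_i)) = \otimes_i \g_i(Z_i(\la_i)) = \otimes_i Z_i(\g_i(\la_i))$. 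The key subtlety — and the reason the label reads ``$\g(\cdot) = \id$'' after applying ``$\rtimes$'' — is that once we have passed to $A_i \rtimes \G_i$, the induced Verma module $Z_i(x_i)$ (for $x_i$ with $\la_i \in \la_{x_i}$) is already $\G_i$-invariant as an object of $\calo_{A_i \rtimes \G_i}$: the $\G_i$-action is now \emph{inner} (implemented by $\Ad\g_i$, i.e., conjugation by $\g_i \in A_i\rtimes\G_i$), so $\Ad\g_i$ carries $Z_i(x_i)$ to an isomorphic module, and the isomorphism class is literally unchanged. Hence the top face commutes trivially ($\g(\cdot)$ becomes the identity on isomorphism classes after ``$\rtimes$''), the bottom face likewise, and the front/back faces commute because Clifford induction from $A_i$ to $A_i\rtimes\G_i$ sends the $\G_i$-orbit $\{\la_i, \g(\la_i),\dots\}$ to a single object $Z_i(x_i)$, which is the whole reason the ``$\rtimes$'' arrow collapses the twist. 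I would write this face-by-face, isolating the lemma ``twisting $Z_i(\la_i)$ by $\g$ gives $Z_i(\g(\la_i))$, and inducing to $A_i \rtimes \G_i$ kills the twist'' as the one nontrivial ingredient, and deriving the six faces of \eqref{Ediag3} from it together with the already-proven $Z(\x) = \otimes_i Z_i(x_i)$.
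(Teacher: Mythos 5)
Your proposal is correct and follows essentially the same route as the paper's (sketched) proof: part (1) via $X = \times_i X_i$ (Theorem \ref{Tsimples}), part (2) via $V(\x) \cong \otimes_i V_i(x_i)$ and $F(V(\x)) = \otimes_i F(V_i(x_i))$ (Proposition \ref{P3} and Corollary \ref{Ctensor}), part (3) analogously in $\calc$ using Proposition \ref{P4}, and part (4) via $Z(\x) \cong \otimes_i Z_i(y_i)$ together with the fact that $\g$-twisting permutes the $A$-Verma summands of $Z(x)$ but fixes its isomorphism class over $A \rtimes \G$ (Remark \ref{R4} and Corollary \ref{Cverma}). Your write-up is merely more explicit, face by face, than the paper's citation-style sketch.
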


\noindent We also get a commuting cube by replacing all $Z$'s by $V$'s in
part (4). Note also that the poset structure on each set in part (1) is
needed to prove that the $\calo$'s are highest weight categories.\medskip

We now state our main theorems; the rest of this article is devoted to
proving them. Note that $\calo$ is taken to mean the BGG Category in any
of the four setups. Finally, for an explicit statement of parts 1(b) and
1(c) below, see Proposition \ref{Pequiv}.

\begin{theorem}\label{Tmain}
Suppose the standing assumptions \ref{St3} above hold.
\begin{enumerate}
%
%
\item Each of the following conditions holds in one setup (see
\eqref{E1}), if and only if it holds in any of the other three:
\begin{enumerate}
  \item Finite-dimensional modules in $\calo$ are completely reducible.

  \item For a fixed $\la = (\la_1, \dots, \la_n) \in \times_i G_i$,
  $V(x)$ is finite-dimensional for any simple object $x$ over $\la$.

  \item For a fixed $\la = (\la_1, \dots, \la_n) \in \times_i G_i$,
  $Z(x)$ has finite length for any simple object $x$ over $\la$.

  \item $\calo$ (equivalently, every Verma module $Z(x)$) is finite
  length.

  \item Any of Conditions (S1)-(S4) holds (for a fixed $1 \leq n \leq
  4$); for (S4), we also need that $\mf{Z}(\ag) = \mf{Z}(A)^{\G} \subset
  A$.
\end{enumerate}

\item We have the following sequence of implications of the Conditions
(S): $(S3) \Rightarrow (S2) \Rightarrow (S1)$; moreover, $(S4)
\Rightarrow (S3)$ if $\mf{Z}(\ag) \subset A$.
\end{enumerate}
\end{theorem}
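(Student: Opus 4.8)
\textbf{Overall strategy.} The plan is to prove part (2) first, as a stand-alone chain of implications inside a single fixed setup, and then to use Theorem \ref{Teqs}(3) (the multiplicativity $S^m_A(\la_1,\dots,\la_n) = \times_i S^m_{A_i}(\la_i)$) together with the identification $\calo_{\ag} \supset \calo_A$ from Lemma \ref{LO} to transport every condition in part (1) across the four setups of \eqref{E1}. The key organizing observation is that a finite product of sets is finite if and only if each factor is finite, and that a finite product of finite-length (resp. semisimple) categories behaves well under $\otimes$; combined with Proposition \ref{Pss} and the ``Weyl Character Formula 1'' $V(x) \cong \bigoplus_{\la \in \la_x} V_A(\la)^{\oplus \dim x_\la}$, this reduces every equivalence to a statement about the sets $S^m$.

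\textbf{Part (2): the chain $(S4)\Rightarrow(S3)\Rightarrow(S2)\Rightarrow(S1)$.} For $(S3)\Rightarrow(S2)\Rightarrow(S1)$ I would argue directly from Definition \ref{D1}(6): $S^1(x) = \{\pi(\la_{x'}) : x' \in S^3(x)^{\leq x}\}$ is the image of a subset of $S^3(x)$ under the map $x' \mapsto \pi(\la_{x'})$, and $S^2(x)$ is the image of all of $S^3(x)$; since each $x'$ determines a single $\G$-orbit $\la_{x'}$ and there are only finitely many $x'$ with a given weight, the fibers of $x' \mapsto \pi(\la_{x'})$ are finite, so $S^3(x)$ finite forces $S^2(x)$ finite, which forces $S^1(x)$ finite (it is contained in $S^2(x)$). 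The implication $(S4)\Rightarrow(S3)$ under the hypothesis $\mf{Z}(\ag)\subset A$ is the substantive one: one must show that $x \to x'$ (i.e. $V(x)$ a subquotient of $Z(x')$) forces $\chi_x = \chi_{x'}$ — this is the standard ``central characters are constant on blocks'' fact, and holds because $\mf{Z}(\ag)$ acts by a scalar on each Verma module, hence on all its subquotients — and likewise that $F(x) = x'$ forces $\chi_x = \chi_{x'}$, which follows since $F$ is induced by the anti-involution $i$ fixing $H$, so $\chi_{F(x)} = \chi_x \circ i|_Z$, and one checks $i|_Z = \id$ (or at least that it fixes central characters) when $\mf{Z}(\ag)\subset A$. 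Thus $S^3(x)$, the symmetric-transitive closure under (a) and (b), is contained in the single fiber $CC(x) = S^4(x)$, so $(S4)$ gives $(S3)$.

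\textbf{Part (1): equivalence across the four setups.} Here the main tool is Theorem \ref{Teqs}: $X = \times_i X_i$ and $S^m_A(\la) = \times_i S^m_{A_i}(\la_i)$. Condition (e) is then immediate: $\times_i S^m_{A_i}(\la_i)$ is finite for all $\la$ iff each $S^m_{A_i}(\la_i)$ is finite for all $\la_i$, and the passage between $A_i$ and $A_i\rtimes\G_i$ is handled by Theorem \ref{Teqs}(2) (the sets $S^m$ over $\ag$ are $\G$-stable unions of the sets over $A$, so one is finite iff the other is); the extra hypothesis $\mf{Z}(\ag) = \mf{Z}(A)^\G$ for (S4) is exactly what lets one identify $CC_{\ag}(x)$ with the $\G$-saturation of $\coprod_\la CC_A(\la)$. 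For (a), I would use Proposition \ref{Pss} to reduce semisimplicity over $\ag$ to semisimplicity over $A$, and the tensor-product structure to reduce semisimplicity over $\otimes_i A_i$ to semisimplicity over each $A_i$ (a module over a tensor product of finite-dimensional semisimple-category situations is built from outer tensor products of simples). For (b), (c), (d): by Weyl Character Formula 1, $V(\x) = \otimes_i V_i(x_i)$ is finite-dimensional iff each $V_i(x_i)$ is, and $\dim V(\x) = \prod_i \dim V_i(x_i)$ visibly relates the $A$- and $\otimes_i A_i$- pictures; the $\G$-equivariance (again Theorem \ref{Teqs}(2), plus $V(x)\cong\bigoplus_{\la\in\la_x}V_A(\la)^{\oplus\dim x_\la}$) relates $A_i$ to $A_i\rtimes\G_i$; and $Z(\x) = \otimes_i Z_i(x_i)$ has finite length iff each factor does, since the Jordan–Hölder factors of a tensor product of finite-length modules are (up to multiplicity) outer tensor products of Jordan–Hölder factors of the pieces. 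Finally (d) $\Leftrightarrow$ (e) is the known equivalence within a single setup: every $Z(x)$ is finite length iff every $S^3(x)$ is finite, since the composition factors of $Z(x)$ have labels in $S'(x)\subseteq S^3(x)$ and conversely $S^3(x)$ is assembled from such composition factors together with the involution $F$.

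\textbf{Expected main obstacle.} The routine-looking but genuinely delicate point is the bookkeeping in (c)/(d) across $\otimes$: one needs that a module of finite length over $A = \otimes_i A_i$ with all composition factors in $\calo$ really does decompose, at the level of Jordan–Hölder data, into outer tensor products — and that conversely finiteness of length is inherited by each tensor factor. Over an algebraically closed field this is standard (the simple $\otimes_i A_i$-modules in $\calo$ are exactly the $\otimes_i(\text{simple})$ by the classification $X = \times_i X_i$), but one must be careful that the relevant simples are those appearing in $\calo$, not arbitrary ones, and that the filtration of $\otimes_i Z_i(x_i)$ can be refined compatibly with the tensor decomposition. I would isolate this as a lemma on finite-length objects in $\calo_{\otimes_i A_i}$ before assembling the equivalence.
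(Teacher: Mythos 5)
Your overall architecture matches the paper's: Theorem \ref{Tmain} is assembled from the transfer results already established (Theorems \ref{Tssskew}, \ref{Tsstensor}, \ref{Tconditions}, \ref{Teqs}, \ref{Ptensor}, Propositions \ref{Pfinlength}, \ref{Pequiv}, \ref{Pcentfunct}), and your treatment of (b), (c), (e) and of part (2) — including the ``Jordan--H\"older bookkeeping'' lemma you flag, which is exactly the paper's Lemma \ref{Lelem} inside Theorem \ref{Ptensor} — is essentially the paper's route. Two points, however, are genuinely problematic. First, for condition (a) you only argue the ``upward'' directions: Proposition \ref{Pss} gives that complete reducibility over $A$ implies it over $\ag$, and your commuting-semisimple-images argument gives the passage from the $\calo_i$ to $\calo_{\otimes_i A_i}$; but the theorem is an if-and-only-if, and the converse directions (from the larger algebra down to a factor) are not formal. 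The paper handles them with Proposition \ref{Pcr}: one builds an exact functor $T = U \otimes_k (-)$ (namely $\Ind^{A\rtimes\G}_A$, resp.\ $T_{\bf V}$) that turns a hypothetical non-split extension of finite-dimensional simples in $\calo_{A'}$ into a standard cyclic finite-dimensional module in the larger category whose splitting there yields a contradiction via weight comparison. Your proposal never addresses why semisimplicity downstairs follows from semisimplicity upstairs, and this is the one place in part (1) where a new idea (not just bookkeeping) is required.

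Second, your closing claim that ``(d) $\Leftrightarrow$ (e) is the known equivalence within a single setup: every $Z(x)$ is finite length iff every $S^3(x)$ is finite'' is false in the reverse direction and, fortunately, not needed. The paper proves only $(S1) \Rightarrow \calo$ finite length (Theorem \ref{Tconditions}, via the Kostant partition function bound $l(Z(x)) \leq \sum_{\la\in S^1(x)} p(\la_0 - \pi(\la_x))$); finite length of every Verma module does \emph{not} imply finiteness of $S^3(x)$, since $S^3(x)$ is a transitive closure over arbitrarily long chains of Verma modules, each of which may individually have finite length while the equivalence class is infinite. The theorem does not assert (d) $\Leftrightarrow$ (e); each of (d) and (e) is separately transferred across the four setups — (d) via Proposition \ref{Pfinlength} and Theorem \ref{Ptensor}(2) (which your treatment of (c) already covers), and (e) via Theorem \ref{Tconditions}(2) together with Theorem \ref{Teqs}(3). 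On that last point, note also that the $A$-versus-$\ag$ comparison of the $S^m$-sets is not Theorem \ref{Teqs}(2) (which only records $\G$-equivariance of $S^m_A$) but rather equations \eqref{Econds3}--\eqref{Econds12} of Proposition \ref{Pfunct} combined with the finite-to-one-ness of $\wt$ from Theorem \ref{T2}.
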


\begin{theorem}\label{Timplies}
Suppose the standing assumptions \ref{St3} above hold.
\begin{enumerate}
\item If $\ag$ satisfies Condition (S1), then $\calo$ is finite length,
and hence splits into a direct sum of abelian, finite length blocks
$\calo(x)$.

\item If $\ag$ satisfies Condition (S2), then each block has enough
projectives, each with a filtration whose subquotients are Verma modules.

\item If $\ag$ satisfies Condition (S3), then each block $\calo(x)$ is
a highest weight category, equivalent to the category $\bfg$ of finitely
generated right modules over a finite-dimensional $k$-algebra $B =
B_x$.
\end{enumerate}
\end{theorem}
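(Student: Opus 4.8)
The plan is to treat the three parts in order, each building on the previous, and for each part to reduce to the corresponding classical statement over a plain RTA (the case $|\G|=1$, treated in \cite{Kh,Kh3}) together with the Clifford-theoretic dictionary established earlier in the excerpt. Throughout we use the identifications $\calo = \calo_{\ag} = \ag\text{-}\Mod \cap \calo_A$ (Lemma \ref{LO}), that $\G$ is finite and $k$ has characteristic zero (in fact algebraically closed when $\G$ is nontrivial), and that every simple $V(x)$ restricts as an $A$-module to $\bigoplus_{\la \in \la_x} V_A(\la)^{\oplus \dim x_\la}$ (``Weyl Character Formula 1'').

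\emph{Part (1).} First I would show that Condition (S1) forces every Verma module $Z(x)$ to have finite length. The point is that the Jordan--H\"older factors of $Z(x)$ are among the $V(x')$ with $x' \leq x$ and, by the definition of $S^1(x)$ and $S^3(x)$, each such $x'$ has $\pi(\la_{x'}) \in S^1(x)$, a finite set; combined with the finite-dimensionality of weight spaces and the local finiteness of the $B_+$-action, this bounds the length of $Z(x)$. Since $\calo$ is generated (under subquotients and extensions) by Verma modules and every object has a finite filtration with Verma subquotients at the top of each weight, finite length of all $Z(x)$ gives finite length of $\calo$. The block decomposition $\calo = \bigoplus_x \calo(x)$ then follows formally: the relation ``$V(x), V(x')$ have a nonsplit extension'' generates, via the symmetric-transitive closure, exactly the equivalence classes $S^3(x)$ (here one invokes the third bullet in \S 4, relating nonsplit extensions to adjacent Jordan--H\"older factors of Verma modules, together with the duality $F$), so no object can have simple subquotients from two different classes, and conversely each $\calo(x)$ is abelian and closed under extensions inside $\calo$. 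This is essentially the argument of \cite[\S]{Kh} adapted to the skew-group setting; the only new ingredient is checking that $S^3(x)$ is $F$-stable and $\G$-compatible, which is immediate from $F(V(x)) = V(F(x))$ and Theorem \ref{Teqs}(2).

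\emph{Part (2).} Assuming (S2), I would construct projective covers block by block. The standard device is the ``truncated'' subcategory $\calo^{\leq x}$ on weights $\leq x$ (Lemma \ref{Lproj}): in it $Z(x)$ is the projective cover of $V(x)$, because $\hhh_{\calo^{\leq x}}(Z(x), M) = M_{\la_x}$ is exact in $M$. Since (S2) makes $S^2(x)$ finite, for any $x'$ in a block one can choose a large enough truncation containing the entire (finite) set $\{\,x'' : \pi(\la_{x''}) \in S^2(x)\,\}$; the projective cover $P(x')$ of $V(x')$ in that truncated category is then already projective in the whole block $\calo(x)$, because no weight outside the truncation can contribute to a $\hhh$ from $P(x')$. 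The Verma filtration of $P(x')$ comes from the BGG-type argument: $P(x')$ is built as an extension of Verma modules indexed by the finitely many $x''$ with $V(x')$ appearing in $Z(x'')$, and one runs the usual projectivity-of-extensions induction on the poset. For $\ag$ with $\G$ nontrivial one additionally needs that these constructions are compatible with the $\G$-action and with restriction to $A$; here Proposition \ref{Pss} (complete reducibility of $\calc$) and exactness of $\Res$ and of $\G$-invariants (Maschke) guarantee that a projective object restricts to a projective $A$-object and that the induction functor $\rtimes$ (right adjoint to $\Res$, exact) sends projectives to projectives, so the $A$-side construction of \cite{Kh} transports over.

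\emph{Part (3).} Finally, under (S3) I would verify the Cline--Parshall--Scott axioms \cite{CPS1} for each block $\calo(x)$: the poset is $\{x' \in S^3(x)\}$ with the order from Definition \ref{D1}(4) (finite by (S3)); the standard objects are the Verma modules $Z(x')$, which have simple head $V(x')$ and all other composition factors $V(x'')$ with $x'' < x'$; and the projectives $P(x')$ from Part (2) have $\Delta$-filtrations with $Z(x')$ on top and other subquotients $Z(x'')$ with $x'' \geq x'$ — this is BGG reciprocity, $[P(x') : Z(x'')] = [Z(x'') : V(x')]$, which one proves by the usual bilinear-form / duality argument, taking care (as the abstract warns) that $F$ need not fix each simple, so reciprocity is stated with $F$ inserted: $[P(x') : Z(x'')] = [Z(x'') : V(F(x'))]$ or the like. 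Once $\calo(x)$ is a finite-length highest weight category with finitely many simples and enough projectives, the equivalence with $\bfg$ for $B_x := \End_{\calo(x)}(P)^{\mathrm{op}}$, $P$ a projective generator, is the standard Morita-type statement (each object has a projective presentation, $\hhh_{\calo(x)}(P,-)$ is exact and faithful, lands in finitely generated right $B_x$-modules, and is an equivalence onto its image which is all of $\bfg$).

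\emph{Main obstacle.} The genuinely new difficulty, compared with \cite{Kh,Kh3}, is Part (3)'s reciprocity and highest-weight axioms in the presence of a duality $F$ that permutes the simples within a block rather than fixing them — so one must track the $F$-twist carefully through the bilinear form on $[\calo(x)]$ and check that the resulting $B_x$ is still quasi-hereditary. A secondary but recurring subtlety is ensuring at every stage that passing between the four setups of \eqref{E1} (restriction/induction along $\Res$ and $\rtimes$, and tensoring along $\otimes$) preserves projectivity, Verma filtrations, and block structure; this is where Theorem \ref{Teqs}, Proposition \ref{Pdiag}, and Proposition \ref{Pss} do the heavy lifting, but the bookkeeping of how $S^3$, the partial order, and the Verma cube \eqref{Ediag3} interact is where care is needed.
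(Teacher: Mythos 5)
Your proposal is correct and follows essentially the same route as the paper: part (1) via the bound on composition factors of $Z(x)$ coming from $S^1(x)$ and the Kostant partition function (the paper's Theorem \ref{Tconditions} and Proposition \ref{Pskewblock}), part (2) via the truncated categories and the projectives $P(y,l)$ with their simple Verma flags (Lemma \ref{Lproj}, Proposition \ref{Pproj}, Corollary \ref{Cproj}), and part (3) via interval-finiteness from (S3), the ordered Verma filtration of each $P(x)$, and the standard progenerator/Morita argument (Theorem \ref{Tskewhighest}). The only cosmetic difference is that you fold the $F$-twisted BGG reciprocity into the verification of the highest-weight axioms, whereas the paper needs only Corollary \ref{Cproj} there and derives reciprocity afterwards as a consequence.
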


\noindent In the last part, moreover, many different notions of block
decomposition all coincide, and (a modified form of) BGG Reciprocity (see
\cite{BGG1}) holds in $\calo$ with a symmetric (modified) Cartan matrix.

\begin{remark}\label{R1}
One can add other (equivalent) setups, for example $A \rtimes \G'$, where
$\G'$ is any finite group that acts ``nicely" on $A,B_\pm,G,\dd$.
\end{remark}\medskip

\noindent {\bf Example:}
Suppose $\G = S_n$ and $A = (\mf{Ug})^{\otimes n}$ for a complex
semisimple Lie algebra $\mf{g}$. Then the ``dominant integral" $x \in X$
are precisely the simple objects over the dominant integral weights
$(P_{\mf{g}}^+)^n$ of $\mfg^{\oplus n}$. Every finite-dimensional module
is in $\calo_{A \rtimes \G}$, and is completely reducible.

Theorem A.$5$ in \cite{Mac} explicitly describes each $x \in X$ (the
construction for $V(x)$ is similar):
choose a partition $n_1 + \dots + n_l = n$, and for each $j$, pick $\la_j
\in \mf{h}^*$ pairwise distinct, and simple $S_{n_j}$-modules $N_j$. Then
$x = \Ind \otimes_{j=1}^l [(\C^{\la_j})^{\otimes n_j} \otimes_{\C} N_j]$,
where one induces from $\otimes_{j=1}^l (S_{n_j} \wr \mf{Uh})$ to $S_n
\wr \mf{Uh}$.

For example, fix any $\la \in \mf{h}^*$. The module corresponding to the
partition $n = n_1$ and the one-dimensional simple $S_n$-module, is $V(x)
= V(\la)^{\otimes n}$ (and $S_n$ acts by permuting the factors). (This
proves the first part of Proposition \ref{Pexample}, since the modules
$V(\la)^{\otimes n}$ have unequal formal characters for pairwise distinct
$\la$.)
More generally, $V(\la)^{\otimes n} \otimes_{\C} E$ is simple in $\calo$
for any simple $S_n$-module $E$.

Finally, all the Conditions (S) hold for $\mf{Ug}$ (and hence for $A
\rtimes S_n$, by above results). This is because by the theory of central
characters and Harish-Chandra's theorem, $S^4(\la) = W_{\mfg} \bullet
\la$ (so the strict HRTA $\mf{Ug}$ satisfies Condition (S4)), where
$W_{\mfg}$ is the (finite) Weyl group of $\mfg$, and $\bullet$ its
twisted action.
This also gives the usual and twisted actions of $S_n \wr W_{\mfg}$ on
the set of weights $(\mf{h}^*_{\mfg})^{\oplus n}$. The twisted action is
``good", since
\[ (\sigma \w) \bullet \la = (\sigma(\w) \sigma) \bullet \la = \sigma( \w
\bullet \la) = \sigma(\w) \bullet \sigma(\la) \]

\noindent for all $\la \in (\mf{h}^*_{\mf{g}})^{\oplus n}, \w \in
W_{\mfg}^n$, and $\sigma \in S_n$. We now adapt the above results here;
note that $R_{\mfg} = S_n \wr \mf{Ug}$.

\begin{prop}\label{Pwreath}\hfill
\begin{enumerate}
\item $R_\mfg$ is of finite representation type.

\item If $\calo'$ is the category of $M \in R_{\mfg}-\Mod$ such that
${\rm Res}^{R_{\mfg}}_A M \in \calo_A$, then $\calo'$ is a direct sum of
subcategories $\calo'(\la)$. For a given summand, the modules in it are
of finite length, and all their simple subquotients have highest weights
only in $S_n ( W_{\mfg}^n \bullet \la )$ for various $\la \in \Theta :=
(\mf{h}^*_{\mfg})^{\oplus n} / (S_n \wr W_{\mfg}, \bullet)$.

\item $\Theta$ is precisely the set of {\em central characters} of
$R_{\mf{g}}$, where $R_{\mf{g}}$ has center $(\mf{Z}(\mf{Ug})^{\otimes
n})^{S_n}$.
\end{enumerate}
\end{prop}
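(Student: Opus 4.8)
The plan is to reduce everything to the classical theory of $\calo$ for $\mf{Ug}$ via the functorial machinery built up so far. First I would record the data from the Example preceding Remark~\ref{R1}: $A := (\mf{Ug})^{\otimes n}$ is a strict HRTA, $\G := S_n$ acts on it, $R_{\mfg} = A \rtimes \G$ satisfies all the standing assumptions, $X = (\mf{h}^*_{\mfg})^n = G$ (each tensor factor carries the trivial group), and $\calo' = \calo_{R_{\mfg}}$ by Lemma~\ref{LO}. By Harish-Chandra's theorem $S^4_{\mf{Ug}}(\la) = W_{\mfg}\bullet\la$, which is finite, so $\mf{Ug}$ satisfies Condition (S4) (trivially $\mf{Z}(\mf{Ug})\subset\mf{Ug}$), hence (S3), (S2), (S1) by Theorem~\ref{Tmain}(2); for $\mf{Ug}$ moreover $S^3=S^4$, the linkage class. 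By Theorem~\ref{Teqs}(3), $S^m_A(\la_1,\dots,\la_n) = \times_i S^m_{\mf{Ug}}(\la_i)$ is finite for $m\le 4$, and $\mf{Z}(A) = \mf{Z}(\mf{Ug})^{\otimes n}$ (center of a tensor product of $\C$-algebras), so $A$ satisfies (S1)--(S4). Finally Theorem~\ref{Tmain}(1)(e), applied between the setups ``$A$'' and ``$A\rtimes\G$'' (using $\mf{Z}(R_{\mfg}) = \mf{Z}(A)^{\G}\subset A$, shown in the proof of part (3)), transfers (S1)--(S4) to $R_{\mfg}$. Part (1) is then immediate: Condition (S1) makes $\calo'$ finite length and splits it into finite-length blocks by Theorem~\ref{Timplies}(1), and Condition (S3) makes each block equivalent to $(\Mod-B_x)^{fg}$ for a finite-dimensional $\C$-algebra $B_x$ by Theorem~\ref{Timplies}(3), which is the asserted finiteness of the representation type.

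For part (2), I would identify the block of $\calo'$ through a simple object $\x$ with $S^3_{R_{\mfg}}(\x)$. Using Theorem~\ref{Teqs}(2)--(3) and the relation between $S^3$ over $A$ and over $A\rtimes\G$, this is the $\G$-saturation of $\times_i S^3_{\mf{Ug}}(\la_i) = \times_i (W_{\mfg}\bullet\la_i) = W_{\mfg}^n\bullet\la$, where $\la=(\la_1,\dots,\la_n)$ is a weight of $\x$; by the compatibility $(\sigma\w)\bullet\la = \sigma(\w\bullet\la)$ noted before Proposition~\ref{Pwreath}, this saturation is $S_n(W_{\mfg}^n\bullet\la) = (S_n\wr W_{\mfg})\bullet\la$. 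Hence $\calo'$ splits as $\bigoplus \calo'(\la)$ over $\la\in\Theta := (\mf{h}^*_{\mfg})^n/(S_n\wr W_{\mfg},\bullet)$; the objects of each summand are finite length by (S1), and every composition factor $V(\x')$ of such an object has weights in $S_n(W_{\mfg}^n\bullet\la)$.

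For part (3), I would first compute the center. Over a field $\mf{Z}(R\otimes_\C S) = \mf{Z}(R)\otimes_\C\mf{Z}(S)$, so $\mf{Z}(A) = \mf{Z}(\mf{Ug})^{\otimes n}$. To see $\mf{Z}(R_{\mfg})\subset A$, write a central element as $\sum_{\sigma\in S_n} a_\sigma\sigma$; commuting with $A$ forces $a_1\in\mf{Z}(A)$ and, for $\sigma\ne 1$, $a_\sigma\,\sigma(b) = b\,a_\sigma$ for all $b\in A$, which in the domain $A$ (with $\sigma$ genuinely moving one tensor slot to another) forces $a_\sigma = 0$; thus $\mf{Z}(R_{\mfg}) = \mf{Z}(A)^{S_n} = (\mf{Z}(\mf{Ug})^{\otimes n})^{S_n}$. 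Via Harish-Chandra's isomorphism $\mf{Z}(\mf{Ug})\cong\C[\mf{h}^*_{\mfg}]^{W_{\mfg}}$ this gives $\mf{Z}(R_{\mfg})\cong\C[(\mf{h}^*_{\mfg})^n]^{S_n\wr W_{\mfg}}$, so $\hhh_{\C-alg}(\mf{Z}(R_{\mfg}),\C) = (\mf{h}^*_{\mfg})^n/(S_n\wr W_{\mfg}) = \Theta$. Every such character is realized: a point of $\Theta$ lifts to some $(\la_1,\dots,\la_n)$, and (the $S_n$-twist of) $\otimes_i V_{\mf{Ug}}(\la_i)$ is a simple module on which $\mf{Z}(R_{\mfg})$ acts by it. Since $R_{\mfg}$ satisfies (S4) with $S^3 = S^4$, the central-character classes $CC(\x) = S^4(\x)$ of Definition~\ref{D1} coincide with the blocks of part (2), so $\Theta$ is exactly the set of central characters of $R_{\mfg}$.

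The main obstacle I anticipate is the bookkeeping in parts (2)--(3): correctly passing from $S^3$ over the tensor product $A$ to its $S_n$-saturation over $A\rtimes S_n$ and recovering the full wreath-product orbits $(S_n\wr W_{\mfg})\bullet\la$, and the containment $\mf{Z}(R_{\mfg})\subset A$, which is not formal --- it genuinely fails for trivial $\G$-actions and here relies on $\mf{Ug}$ being an infinite-dimensional domain whose tensor slots are permuted freely by $S_n$ away from the identity.
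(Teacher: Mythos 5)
Your treatments of parts (2) and (3) essentially follow the paper's route (block decomposition via Condition (S3)/$S^3$-sets, the product formula $S^3_A(\la)=\times_i S^3_{\mf{Ug}}(\la_i)$, the $S_n$-saturation from Proposition \ref{Pfunct}, and the center computation of Proposition \ref{Pcentfunct}(3) plus Chevalley). Two small caveats there: your assertion ``$X=(\mf{h}^*_{\mfg})^n=G$'' is false for $R_{\mfg}=A\rtimes S_n$ with $S_n$ nontrivial --- $X$ indexes simple $H\rtimes S_n$-modules and is described by Macdonald's Theorem A.5, not by $G$ (this does not derail your later steps, which correctly work with $\la_{\x}$ and $\G$-orbits); and the identification of \emph{all} algebra maps $\mf{Z}(R_{\mfg})\to\C$ with orbits in $(\mf{h}^*)^n/(S_n\wr W_{\mfg},\bullet)$ --- i.e.\ surjectivity of the orbit map onto the maximal spectrum of the invariant ring --- is exactly where the paper invokes the Nagata--Mumford theorem; you assert it without justification.

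The genuine gap is in part (1). You deduce ``finite representation type'' from the fact that Conditions (S1) and (S3) make $\calo'$ finite length with each block equivalent to $\bfg$ for a finite-dimensional algebra $B_x$. This is a non sequitur: a finite-dimensional algebra need not have finite representation type, there are infinitely many blocks, and the blocks contain infinite-dimensional objects (Verma modules), so no statement about finitely many indecomposables can follow from this structure alone. The paper's argument for part (1) is entirely different and is a dimension count on \emph{simple} modules: every finite-dimensional simple $R_{\mfg}$-module lies in $\calo$ (since its restriction to $\mf{Ug}^{\otimes n}$ does), hence equals some $V(x)$ with $\dim V(x)=\dim M_x\cdot\dim V_A(\la_x)$ by Corollary \ref{C1}; so $\dim V_A(\la_x)$ divides $d=\dim V(x)$, only finitely many $\la_x$ satisfy this for fixed $d$ (Weyl dimension formula), and by Theorem \ref{T2} only finitely many $x\in X$ lie over each such $\la_x$. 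You would need to supply this (or an equivalent) counting argument; the highest-weight-category machinery you invoke does not yield it.
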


\noindent Proposition \ref{Pdiag} and all the theorems in this section
are proved in \S \ref{Sfinal} below, and Proposition \ref{Pwreath} is
shown in Section \ref{Scentral}.

\section{The first setup - skew group rings}

We start by relating $\calo_A$ and $\calo_{A \rtimes \G}$. The first
thing to do is to identify a set which will characterize the simple
objects in $\calo_{\ag}$.

\begin{definition}\hfill
\begin{enumerate}
\item Denote by $\G_1$ the set of singly generated \G-modules.

\item Let $\calc$ denote the abelian category of finite-dimensional
$H$-semisimple $(H \rtimes \G)$-modules.

\item Let $Y$ denote the set of isomorphism classes of objects in $\calc$
generated by a weight vector $v_\la$ (for some $\la \in G$).

\item Let $X \subset Y$ denote the isomorphism classes of simple objects
in \calc.
\end{enumerate}
\end{definition}

If $m \in M$ is a weight vector in $M \in X$ simple, then
\[ M = (H \rtimes \G)m = k\G (k \cdot m) \]

\noindent so that every $M \in X$ is of the form $M = k\G/I$ for some
ideal $I$ (as $\G$-modules).
Now suppose $M \in Y$; thus, $M$ is of the form $k\G / I$. Note that to
fix an $(H \rtimes \G)$-structure on the module involves fixing the
weight vector $v_\la \in k\G/I$. This is equivalent to choosing which
ideal $J$ of $k\G$ to quotient out by, so that $k\G/J \cong k\G/I \in
\G_1$ (as $\G$-modules). Hence,
\begin{equation}\label{EXYZ}
Y = \{ (\la, M, [J]) : \la \in G, M \in \G_1, k\G/J \cong M \},
\end{equation}

\noindent where $J$ is assumed to be an ideal of $k\G$, that annihilates
$v_\la$. (We thus map $1 \mapsto v_\la$, and kill $h - \la(h) \cdot 1$
for all $h \in H$.) Over here, we only take isomorphism classes of ideals
$[J]$. For example, if $v_\la \in M_\la$ generates $M$, then so does $\g
v_\la \in M_{\g(\la)}$.

\begin{definition}
$\la_y$ (for $y \in Y$) is defined to be this $\la$; or more precisely,
$\la_y$ is the $\G$-orbit $[\la] \in G / \G$. (However, we will abuse
notation and say $\la = \la_y$ - or $\la = \la_x$, if $y = x \in X$ -
instead of $\la \in \la_y$.)
\end{definition}

\begin{remark}\label{R2}
Note that given $M \in \G_1$, not all $\la \in G$ occur in the set of
triples in $Y$ (as the first coordinate). For example, if $M$ is the
trivial representation of $\G$, then $\g-1$ annihilates $M$ for all $\g
\in \G$, so the only permissible weights here are $\la \in G^\G$.

However, for all $\la \in G$, there exists $y \in Y$ such that $\la =
\la_y$. For we take the one-dimensional $H$-module $k^\la = k \cdot
v_\la$, where $h v_\la = \la(h) v_\la$ for all $h \in H$. We now induce
this, to get $M = \Ind^{H \rtimes \G}_H k^\la$. Thus $M \cong k\G$ as
$k$-vector spaces, and $(\la,M,0) \in Y$.
\end{remark}

Now consider the special case $\G = 1$; all objects in $X$ are
one-dimensional, and $X = Y = G = \hhh_{k-alg}(H,k)$. These are the
maximal vectors, from which one induces Verma modules. Moreover, all
objects in \calc are $H$-semisimple. This last part is true in general:

\begin{prop}\label{Phgsplits}
Every object of \calc is completely reducible.
\end{prop}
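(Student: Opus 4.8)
The plan is to run a Maschke-type averaging argument, using that $H$ is commutative and that $|\G|$ is invertible (recall $\chrc k = 0$ when $|\G| > 1$, by Standing Assumption \ref{St1}). First I would record the basic reductions. Let $M \in \calc$, so $M$ is finite-dimensional and $M = \bigoplus_{\la \in G} M_\la$ as an $H$-module. By Lemma \ref{L1}(1) we have $\g(M_\la) = M_{\g(\la)}$, so collecting weight spaces along $\G$-orbits gives a decomposition $M = \bigoplus_{[\la] \in G/\G} M_{[\la]}$ into $(H \rtimes \G)$-submodules; this lets one assume (if convenient) that $M$ is supported on a single $\G$-orbit, though the argument below does not require it. It then suffices to prove that every $(H \rtimes \G)$-submodule $N \subseteq M$ is a direct summand, since a finite-length module with this property is semisimple.

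Next I would produce an $H$-linear projection and then symmetrize it. Given an $(H \rtimes \G)$-submodule $N \subseteq M$, it is again $H$-semisimple: $N = \bigoplus_\la N_\la$ with $N_\la = N \cap M_\la$. Choosing, for each $\la$ occurring in $M$, a $k$-linear complement $M_\la = N_\la \oplus C_\la$ and taking the direct sum of the resulting linear projections yields an $H$-linear projection $p \colon M \twoheadrightarrow N$ with $p|_N = \id_N$; it need not be $\G$-equivariant. Now set $\bar p := \frac{1}{|\G|} \sum_{\g \in \G} \g \circ p \circ \g^{-1}$. Each summand $\g \circ p \circ \g^{-1}$ is still $H$-linear, since for $h \in H$, $m \in M$ one has $(\g p \g^{-1})(hm) = \g\bigl(\g^{-1}(h)\, p(\g^{-1}m)\bigr) = h\,(\g p \g^{-1})(m)$ using that $\g$ is a $k$-algebra automorphism of $H$ and $p$ is $H$-linear; it maps $M$ into $N$ because $N$ is $\G$-stable; and it restricts to the identity on $N$ because $p$ does and $\g^{-1}(N) = N$. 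Hence $\bar p \in \hhh_{H \rtimes \G}(M, N)$ is a projection (it is $\G$-equivariant by the usual averaging identity and $H$-linear by the above), and therefore $M = N \oplus \ker \bar p$ as $(H \rtimes \G)$-modules.

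Since every submodule splits off and $M$ has finite length, $M$ is completely reducible, proving the proposition. The one point to watch is the simultaneous $H$- and $\G$-linearity of $\bar p$ — this is exactly where commutativity of $H$ (so that $\g$ acts by algebra maps compatible with the $H$-action) and invertibility of $|\G|$ in $k$ enter — but there is no genuine obstacle here; when $\G$ is trivial the statement is immediate, since then each $M_\la$ is already a sum of one-dimensional simple modules.
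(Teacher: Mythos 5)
Your proof is correct. The paper disposes of this proposition in one line by invoking Proposition \ref{Pss} with $R = H$, $\calp$ the category of finite-dimensional $H$-semisimple $H$-modules, and $\cald = \calc$; that proposition is in turn proved via the identity $\hhh_{R \rtimes \G}(M,-) = \left( \hhh_R(M,-) \right)^{\G}$ together with Maschke's theorem (exactness of taking $\G$-invariants in characteristic zero) and semisimplicity of $\calp$. Your argument is the explicit, module-level version of the same mechanism: instead of deducing exactness of $\hhh_{\calc}(M,-)$ abstractly, you produce an $H$-linear projection onto a submodule and average it over $\G$. Both routes hinge on exactly the same two inputs (invertibility of $|\G|$ in $k$, and semisimplicity of the $H$-weight-module category), so nothing is gained or lost logically; your version is self-contained, while the paper's packages the averaging once in Proposition \ref{Pss} so it can be reused (e.g., for Proposition \ref{Pexample} and Theorem \ref{Tssskew}). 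Two small points worth making explicit if you keep your write-up: the claim $N = \bigoplus_\la (N \cap M_\la)$ for an $(H \rtimes \G)$-submodule $N$ of a weight module deserves its one-line justification (the standard separation argument for distinct characters $\la : H \to k$, applied inductively to the weight components of an element of $N$); and commutativity of $H$ is not actually used anywhere in your computation --- only that each $\g$ acts by a $k$-algebra automorphism of $H$ compatibly with the module structure, which is part of the skew group ring data.
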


\begin{proof}
Use Proposition \ref{Pss} with $R = H$, $\calp$ the category of
finite-dimensional $H$-semisimple $H$-modules, and $\cald = \calc$ as
above.
\end{proof}

The following is the other main result of this section.

\begin{theorem}\label{T2}
For each $\la \in G$, there exists one - and if $k$ is algebraically
closed, only finitely many - $x \in X$ with $\la = \la_x$.
\end{theorem}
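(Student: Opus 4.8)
The plan is to show the two assertions separately. For \emph{existence}, I would use the construction already indicated in Remark \ref{R2}: given $\la \in G$, form the one-dimensional $H$-module $k^\la$ and induce it up to $H \rtimes \G$, obtaining $M = \Ind^{H \rtimes \G}_H k^\la$, which is finite-dimensional (of dimension $|\G|$) and lies in $\calc$. Since every object of $\calc$ is completely reducible by Proposition \ref{Phgsplits}, $M$ decomposes as a direct sum of simple objects in $\calc$. By Frobenius reciprocity, $\hhh_{H \rtimes \G}(M, N) = \hhh_H(k^\la, \Res N)$, so any simple summand $N$ of $M$ has $N_\la \neq 0$; picking a weight vector $v_\la \in N_\la$ that generates $N$ (it does, since $N$ is simple) shows $\la = \la_x$ for the class $x = [N] \in X$. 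This gives at least one such $x$.

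For \emph{finiteness} (when $k$ is algebraically closed), the key point is that each simple object $N \in X$ with $\la = \la_N$ is, as observed right before \eqref{EXYZ}, a quotient $k\G / I$ as a $\G$-module, where the $(H \rtimes \G)$-structure is determined by the choice of a weight-$\la$ generator. Concretely, every such $N$ is a quotient of $M = \Ind^{H \rtimes \G}_H k^\la$: the generator $v_\la \in N_\la$ induces a surjection $M \twoheadrightarrow N$ of $(H \rtimes \G)$-modules. Thus the simple objects $x$ with $\la = \la_x$ are, up to isomorphism, exactly the composition factors (equivalently, by semisimplicity, the direct summands) of the single finite-dimensional module $M$. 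Since $M$ is finite-dimensional and $k$ is algebraically closed (so $\End(N) = k$ for each simple $N$, hence a decomposition into simples has boundedly many terms), there are only finitely many isomorphism classes of simple summands; hence only finitely many $x \in X$ with $\la = \la_x$.

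The main obstacle — really the only subtle point — is making sure that ``$\la = \la_x$'' in the sense of Definition of $\la_y$ (a $\G$-orbit in $G/\G$, so that $\la$ being \emph{some} weight of a generating vector suffices) is correctly matched with the surjection $M \twoheadrightarrow N$: one must check that if $N$ has a generating weight vector of weight $\la'$ in the $\G$-orbit of $\la$, then $N$ is still a quotient of $\Ind^{H\rtimes\G}_H k^\la$, which follows because $\g v_{\la'} \in N_{\g(\la')}$ is again a generator and one can choose $\g$ with $\g(\la') = \la$. Beyond that, the argument is a routine combination of induction–restriction adjunction, complete reducibility of $\calc$ (Proposition \ref{Phgsplits}), and finite-dimensionality of the induced module $M$.
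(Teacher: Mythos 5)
Your proof is correct, and the existence half is essentially the paper's argument (both start from $M=\Ind^{H\rtimes\G}_H k^\la$ as in Remark \ref{R2}; the paper extracts a composition factor with nonzero $\la$-weight space via character theory, while you use complete reducibility from Proposition \ref{Phgsplits} plus the induction adjunction -- a negligible difference). The finiteness half, however, genuinely diverges from the paper. The paper invokes the Clifford-theoretic classification \cite[Theorem A.5]{Mac} with $R=H$: every simple $H\rtimes\G$-module is $\Ind^{H\rtimes\G}_{H\rtimes\G'}(k^\la\otimes(\G')^\mu)$ for a subgroup $\G'$ stabilizing $\la$, and one counts the finitely many pairs $(\G',\mu)$. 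You instead use the universal property of induction to realize every simple $N$ with $N_\la\neq 0$ as a quotient (hence, by semisimplicity, a summand) of the single finite-dimensional module $M$, and then count composition factors. Your route is more elementary and self-contained; indeed it does not really need $k$ algebraically closed at all, since a finite-dimensional module has only finitely many Jordan--H\"older factors regardless of the ground field (your parenthetical appeal to $\End(N)=k$ is superfluous for this). What the paper's heavier machinery buys in exchange is an explicit parametrization of the simples over a given $\la$ by pairs (stabilizer subgroup, simple module of that subgroup), which is reused later, e.g.\ in the example following Remark \ref{R1} to describe the simple objects for $S_n\wr\mf{Ug}$ concretely. Your handling of the orbit ambiguity in the definition of $\la_x$ (translating a generator of weight $\la'$ by some $\g$ with $\g(\la')=\la$) is exactly the right check and is consistent with the remark following \eqref{EXYZ}.
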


\begin{proof}
We first show that the set is nonempty. Choose $y \in Y$ such that $\la =
\la_y$ (by Remark \ref{R2}), and look at a composition series $0 \subset
E_1 \subset \dots$ (as above) for $E=M_y$, as $(H \rtimes \G)$-modules.
Then all subquotients are $H$-semisimple, so by character theory,
$(E_{i+1} / E_i)_\la \neq 0$ for some $i$. Then $\la = \la_x$, where $x =
E_{i+1} / E_i \in X$.

If $\G$ is trivial, then so is the second part of the result. Now suppose
that $|\G| > 1$ and $k$ is algebraically closed of characteristic zero.
To show that this set is finite, use \cite[Theorem A.5]{Mac}, setting $R
= H$. Now, the set of subgroups of $\G$ is finite; hence so is the set
$\{ (\G', M)\}$, where $\G'$ is a subgroup of $\G$, and $M$ is a simple
$\G'$-module (up to isomorphism).

The theorem now says that every simple $H \rtimes \G$-module is of the
form $\Ind^{H \rtimes \G}_{H \rtimes \G'} (k^{\la} \otimes (\G')^\mu)$
for some simple $H$-module $k^{\la}$ and simple $\G'$-module $(\G')^\mu$,
where $\G'$ fixes $\la$. (Note that since we are only concerned here with
finite-dimensional representations, a simple $H$-module is
one-dimensional by Lie's theorem; we denote it by $k^{\la}$ as above.)
The structure is given here by
\[ (h \otimes \g) (v_\la \otimes w_\mu) = h \g \cdot v_\la \otimes  \g
\cdot w_\mu = \la(h) \cdot \g v_\la \otimes \g w_\mu \ \forall h \in H,\
\forall \g \in \G' \]

\noindent (where we fix a group action $: \G' \to \Aut_k(k^{\la})$).
Fixing $\la$ and $\g$, the set of $(\G',\mu)$'s is finite, hence we are
done.
\end{proof}

\section{Verma and standard modules}

Unless otherwise specified, the functor $\Ind$ denotes $\Ind^{A \rtimes
\G}_{\hbg}$ henceforth. Given a finite-dimensional $\hbg$-module $E$, we
can define the induced module $\Ind E \in \calo$. Given a
finite-dimensional $(H \rtimes \G)$-module $E$, we also define the {\em
(universal) standard module} $\dd(E)$ as follows: we first give $E$ an
$\hbg$-module structure, by
\[ (h \otimes n_+ \otimes \g)e = 0,\ (h \otimes 1 \otimes \g) e = h \g e
\]

\noindent for each $h \in H,\ \g \in \G,\ e \in E$, and $n_+ \in N_+$.
Now define the induced module $\dd(E)$, to be $\dd(E) = \Ind E$. The
following properties are standard.

\begin{prop}\label{P2}
Suppose $E$ is a finite-dimensional \hbg-module.
\begin{enumerate}
\item $\Ind E  \cong B_- \otimes_k E$, as free $B_-$-modules.

\item If $E$ has a basis of weight vectors, then $\ch_{\Ind E} =
\ch_{B_-} \ch_E$.

\item If $E' \subset E$ is an \hbg-submodule and $E$ is $H$-semisimple
(as above), then $\Ind (E/E') \cong \Ind E / \Ind E'$.
\end{enumerate}
\end{prop}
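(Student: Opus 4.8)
The statement to prove is Proposition \ref{P2}, describing the structure of an induced module $\Ind E = \Ind_{\hbg}^{\ag} E$ for a finite-dimensional $\hbg$-module $E$. The proof is a routine application of the triangular decomposition, and the strategy is to reduce everything to the vector-space isomorphism $B_- \otimes_k H \otimes_k B_+ \xrightarrow{\sim} A$ from (RTA1), together with the identification $\ag \cong B_- \otimes_k (\hbg)$ as a right $\hbg$-module (which follows since $H \otimes B_+$ is a subalgebra, $k\G$ normalizes it inside $\ag$, and $\G$ permutes the weight spaces of $B_-$ by Lemma \ref{Lcompat}). Granting this, $\Ind E = \ag \otimes_{\hbg} E \cong (B_- \otimes_k \hbg) \otimes_{\hbg} E \cong B_- \otimes_k E$ as left $B_-$-modules, and freeness is immediate because $B_-$ is free over itself. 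This is part (1).

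For part (2), I would observe that the isomorphism in (1) is one of $H$-modules under $\ad$ when $E$ has a basis of weight vectors: the action of $h \in H$ on $b_- \otimes e$ with $b_- \in (B_-)_{\theta}$ and $e$ of weight $\mu$ produces a vector of weight $\theta * \mu$ by the property of weights (RTA7). Hence the formal character of $\Ind E$ is the product of $\ch_{B_-}$ and $\ch_E$ — more precisely, one checks that the $\nu$-weight space of $B_- \otimes_k E$ has dimension $\sum_{\theta * \mu = \nu} \dim (B_-)_\theta \dim E_\mu$, which is exactly the coefficient of $e^{\nu}$ in $\ch_{B_-} \ch_E$. (Finite-dimensionality of each weight space, needed for the characters to make sense, comes from (RTA6).)

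For part (3), I would use right-exactness of the induction functor $\Ind = \ag \otimes_{\hbg} (-)$, which is always right-exact; applied to the short exact sequence $0 \to E' \to E \to E/E' \to 0$ of $\hbg$-modules it gives a surjection $\Ind E \twoheadrightarrow \Ind(E/E')$ with kernel the image of $\Ind E'$. The point where $H$-semisimplicity of $E$ enters is in showing that this image is all of $\Ind E'$, i.e. that $\Ind E' \to \Ind E$ is injective: since $E = E' \oplus E''$ as $H$-modules (complete reducibility) and $\Ind$ is additive, $\Ind E \cong \Ind E' \oplus \Ind E''$, so the inclusion of $\Ind E'$ is split. Comparing with the free $B_-$-module description of part (1) then identifies $\Ind(E/E')$ with $\Ind E / \Ind E'$.

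**Main obstacle.**
None of the three parts presents a serious difficulty; the only point requiring care is the identification $\ag \cong B_- \otimes_k \hbg$ as a right $\hbg$-module underlying part (1) — one must check that left multiplication is genuinely free on the $B_-$-factor, using that $k\G$ together with $H \otimes B_+$ spans a subalgebra that complements $B_-$, and that $\G$ acts on $A$ compatibly with the triangular decomposition (Lemma \ref{Lcompat}). Everything after that is bookkeeping with weights via (RTA7) and (RTA6).
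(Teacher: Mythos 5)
The paper states Proposition \ref{P2} without proof (``the following properties are standard''), so there is nothing to compare line by line; your reconstruction is the standard argument, resting on the identification $\ag \cong B_- \otimes_k \hbg$ as a $(B_-,\hbg)$-bimodule, which follows from the triangular decomposition together with the fact that $\G$ preserves $B_-$ and $H\otimes B_+$. Parts (1) and (2) are correct as you give them.

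In part (3), however, the step ``$E = E'\oplus E''$ as $H$-modules, and $\Ind$ is additive, so $\Ind E \cong \Ind E' \oplus \Ind E''$'' does not work: $\Ind = \ag\otimes_{\hbg}(-)$ is additive only over direct sums of \emph{$\hbg$-modules}, and $H$-semisimplicity produces a complement to $E'$ only as an $H$-module --- there is no reason for it to be stable under $B_+$ or $\G$. The injectivity of $\Ind E' \to \Ind E$ that you need is instead immediate from part (1): under the isomorphisms $\Ind E' \cong B_-\otimes_k E'$ and $\Ind E \cong B_-\otimes_k E$, the natural map becomes $\id_{B_-}\otimes\,\iota$, which is injective because all tensor products are over the field $k$. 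Equivalently, part (1) shows that $\ag$ is free, hence flat, as a right $\hbg$-module, so $\Ind$ is exact and $\Ind(E/E') \cong \Ind E/\Ind E'$ follows at once; note that the $H$-semisimplicity hypothesis is not actually needed for this part. With that one repair your proof is complete.
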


\noindent We now recall a few concepts from \cite{Kh3}:
\begin{enumerate}
\item $G_0$ has a partial ordering (via the base of simple roots $\dd$):
$\mu \leq \la$ if and only if there exists $\theta_0 \in \nn \dd$ such
that $\theta_0 * \mu = \la$. This induces a partial order on $G$: $\la
\geq \mu$ if $\la = \mu$ in $G$, or $\pi(\la) > \pi(\mu)$ in $G_0$.

\item The {\em formal character} of $M \in \calo$ is $\ch_M := \sum_{\la
\in G} (\dim_k M_\la) e(\la)$.
\end{enumerate}

\noindent (Note that Proposition \ref{P2} used the latter notion.) We now
relate the $\G$-action to the partial ordering.

\begin{stand}\label{St2}
Henceforth, $\G$ acts by order-preserving transformations on $G_0$. In
other words, $\mu \leq \la \Rightarrow \g(\mu) \leq \g(\la)\ \forall \g
\in \G$.
\end{stand}

\begin{remark}
For instance, if $\G = 1$, or $\G = S_n$ and $A \rtimes \G$ is the wreath
product $S_n \wr A$, then this assumption holds. It also clearly holds
when $A \rtimes \G$ is built up from subalgebras $A_i \rtimes \G_i$ (as
discussed above), and each $\G_i$ preserves $\dd_i \subset (G_i)_0$.
Moreover, this assumption is reasonable, as the subsequent lemma shows.
\end{remark}

\begin{lemma}\label{L2.1}
Suppose a set $G_0$ contains a free abelian group (denoted by $\Z \dd_0 =
\bigoplus_{\aaa \in \dd_0} \Z \aaa$) with a free action $*$ on $G_0$,
that restricts to addition on $\Z \dd_0$. Define a partial order on $G_0$
by: $\la \geq \mu$ if and only if $\la \in (\nn \dd_0) * \mu$. Suppose
also that a group $\G_0$ acts on $G_0$, preserving $\Z \dd_0$ and the
action $*$.

\begin{enumerate}
\item The following are equivalent, for a given $\g \in \G_0$:

\begin{enumerate}
\item $\gpm(\aaa) \in \dd_0$ for all $\aaa \in \dd_0$.

\item For each $\aaa \in \dd_0$, there exists $n_{\aaa} \in \N$, so that
$\gpm(n_{\aaa} \aaa) > 0$.

\item If $\aaa \in \dd_0$ then $\g^{\pm 1}(\aaa) > 0$.

\item If $\la \geq 0$ then $\gpm(\la) \geq 0$.

\item $\g,\g^{-1}$ act on $G_0$ by order-preserving automorphisms.
\end{enumerate}

\item If the conditions in the first part are satisfied, and $\g$ has
finite order in $\G_0$, then $\g(\la) \nless \la$ for all $\la \in G_0$.

\item Under the assumptions of Standing Assumption \ref{St1}, if for all
$\aaa \in \dd$, there exists $n_\aaa \in \N$ with $(B_+)_{n_\aaa \aaa}
\neq 0$, then the first part holds for $G_0 = G, \G_0 = \G, \dd_0 = \dd$.
\end{enumerate}
\end{lemma}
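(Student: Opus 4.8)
The plan is to prove the three parts in order; the first part — a purely combinatorial statement about the lattice $\Z\dd_0$ — does essentially all the work, and parts (2) and (3) are short deductions from it.

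For part (1), the first observation is that since $\G_0$ preserves $\Z\dd_0$ and the action $*$, and $*$ restricts to addition on $\Z\dd_0$, each $\g\in\G_0$ restricts to a \emph{group} automorphism of $\Z\dd_0$; in particular $\g(0)=0$ (the identity of $\Z\dd_0$, which is the base point used in the order) and $\g(n\aaa)=n\,\g(\aaa)$ for $n\in\Z$. I would then run the cycle $(a)\Rightarrow(e)\Rightarrow(d)\Rightarrow(c)\Rightarrow(a)$, together with $(b)\Leftrightarrow(c)$. For $(a)\Rightarrow(e)$: if $\gpm(\dd_0)\subseteq\dd_0$ then $\gpm$ carry $\nn\dd_0$ into $\nn\dd_0$, and since $\la\geq\mu$ means $\la=\theta*\mu$ with $\theta\in\nn\dd_0$, applying $\g$ (which commutes with $*$) gives $\g(\la)=\g(\theta)*\g(\mu)\geq\g(\mu)$, so $\g,\g^{-1}$ are order-preserving bijections. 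For $(e)\Rightarrow(d)$: apply order-preservation to $\la\geq 0$ and use $\g(0)=0$. For $(d)\Rightarrow(c)$: each $\aaa\in\dd_0$ has $\aaa\geq 0$ and $\aaa\neq 0$, so $\gpm(\aaa)\geq 0$ and $\gpm(\aaa)\neq 0$ by injectivity, i.e.\ $\gpm(\aaa)>0$. The equivalence $(b)\Leftrightarrow(c)$ is immediate: $(c)\Rightarrow(b)$ with $n_\aaa=1$, and conversely divide $\gpm(n_\aaa\aaa)=n_\aaa\gpm(\aaa)>0$ by $n_\aaa$, noting $\gpm(\aaa)\in\Z\dd_0$ forces its coordinates to be nonnegative integers. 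Finally $(c)\Rightarrow(a)$ is the crux: from $(c)$, both $\g$ and $\g^{-1}$ carry $\nn\dd_0$ into $\nn\dd_0$, hence $\g$ is a monoid automorphism of $(\nn\dd_0,+)$; the atoms of this monoid (nonzero elements not a sum of two nonzero elements) are exactly the $\aaa\in\dd_0$, and a monoid automorphism permutes atoms, so $\g(\dd_0)=\dd_0$.

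For part (2), I would first record that $<$ is a genuine strict partial order on $G_0$: transitivity holds because $\theta*(\theta'*x)=(\theta+\theta')*x$ and $\nn\dd_0+\nn\dd_0\subseteq\nn\dd_0$, and irreflexivity holds because $*$ is free (if $\la=\theta*\la$ with $\theta\in\nn\dd_0$ then $\theta=0$). Now assume the conditions of part (1) and that $\g$ has order $m$; suppose for contradiction $\g(\la)<\la$, say $\la=\theta*\g(\la)$ with $\theta\in\nn\dd_0\setminus\{0\}$. Applying $\g$, and using part (1)(a) so that $\g(\theta)\in\nn\dd_0\setminus\{0\}$, gives $\g(\la)=\g(\theta)*\g^2(\la)$, i.e.\ $\g^2(\la)<\g(\la)$; iterating yields $\g^m(\la)<\g^{m-1}(\la)<\cdots<\g(\la)<\la$, and since $\g^m=\id$ this says $\la<\la$, a contradiction.

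For part (3), one applies part (1) with its ``$G_0,\dd_0,\G_0$'' taken to be $G,\dd,\G$: the required free abelian group acting on $G$ and the order-preserving $\G$-action on the weight lattice are furnished by (RTA3)--(RTA4), Standing Assumption \ref{St1}(2), and Lemma \ref{Lcompat}. It then suffices to verify hypothesis (b). Fix $\g\in\G$ and $\aaa\in\dd$, and let $n_\aaa$ be as given, so $(B_+)_{n_\aaa\aaa}\neq 0$. Since $\G$ preserves $B_+$ (Standing Assumption \ref{St1}(1)) and $\g((B_+)_\theta)=(B_+)_{\g(\theta)}$ (Lemma \ref{Lcompat}), we get $(B_+)_{\gpm(n_\aaa\aaa)}\neq 0$; but (RTA5) says $B_+$ is supported only on weights $\theta$ with $\pi(\theta)\in\nn\dd$, while (RTA6) says $(B_+)_0=k$ and $\gpm(n_\aaa\aaa)\neq 0$ by injectivity of $\g$. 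Hence $\gpm(n_\aaa\aaa)\in\nn\dd\setminus\{0\}$, i.e.\ $\gpm(n_\aaa\aaa)>0$, which is (b). The main obstacle is precisely the implication $(c)\Rightarrow(a)$: all the remaining implications and parts (2)--(3) are bookkeeping once one notices that each $\g$ acts additively on $\Z\dd_0$, whereas $(c)\Rightarrow(a)$ requires the genuinely structural observation that an automorphism of the monoid $\nn\dd_0$ must permute its atoms $\dd_0$.
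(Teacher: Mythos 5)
Your proof is correct and follows the same route as the paper: parts (2) and (3) are argued identically (iterating the order-preserving action of a finite-order $\g$ to produce the contradiction $\la < \la$, and using $\g((B_+)_\theta) = (B_+)_{\g(\theta)}$ together with the grading of $B_+$ to verify hypothesis (b) and then invoke part (1)). The paper dismisses part (1) as ``easy to prove''; your cycle of implications---in particular the key step $(c)\Rightarrow(a)$, that an additive automorphism of $\Z\dd_0$ restricting to a monoid automorphism of $\nn\dd_0$ must permute its atoms $\dd_0$---is a correct way to supply the details it omits.
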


\noindent Note that we do not need $\dd_0$ or $\G_0$ to be finite in this
result.

\begin{proof}\hfill
\begin{enumerate}
\item The cyclic chain of implications is easy to prove.

\item If $\g(\la) < \la$ for some $\la,\g$, then $\g^{i+1}(\la) <
\g^i(\la)\ \forall i$, by the previous part. Hence if $\g^n = 1$ in $\G$,
then we get a contradiction:
\[ \la = \g^n(\la) < \g^{n-1}(\la) < \dots < \g(\la) < \la. \]

\item For $\g \in \G, \la \in G,\ \g : A_\la \to A_{\g(\la)}$ by Lemma
\ref{Lcompat} (since $A$ is an $\ad H$-module). In particular, $0 \neq
\g((B_+)_{n_{\aaa} \aaa}) \subset A_{\g(n_{\aaa} \aaa)} \cap B_+$ (by
assumption) $= (B_+)_{\g(n_{\aaa} \aaa)}$. (This makes use of the fact
that $\g \circ \pi = \pi \circ \g$ on $G$.) Hence $\g(n_{\aaa} \aaa) > 0\
\forall \aaa, \g$; now use the first part.
\end{enumerate}
\end{proof}

We now introduce the following notation: for $y \in Y$, we write $y =
(\la_y,M_y,J_y)$ (see equation \eqref{EXYZ}). This representation of $y$
may not be unique, e.g., under the action of \G. We also define {\em
Verma modules} to be $Z(y) = \dd(M_y)$, for $y \in Y$.
The next result is standard; the first part uses Standing Assumption
\ref{St2}, via (the second part of) Lemma \ref{L2.1}.

\begin{prop}\label{P6}\hfill
\begin{enumerate}
\item If $y \in Y$ and $(M_y)_\la \neq 0$, then $(Z(y))_{\la} =
(M_y)_{\la}$, where $\la = \la_y$.

\item If $x \in X$, then $Z(x)$ has a unique simple quotient $V(x)$. Its
``highest weight" vectors also span $M_x$ (as is true in $Z(x)$).

\item $Z(x)$ is indecomposable for $x \in X$.
\end{enumerate}
\end{prop}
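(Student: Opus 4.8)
The plan is to establish the three parts of Proposition~\ref{P6} in order, leveraging the structure of $Z(y) = \dd(M_y) = \Ind M_y \cong B_- \otimes_k M_y$ from Proposition~\ref{P2}.

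\medskip

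\noindent\textbf{Part (1).} First I would use the $B_-$-freeness: $Z(y) = B_- \otimes_k M_y$ as $H$-modules, where $B_-$ is $H$-graded by weights in $-\nn\dd$ (more precisely by $\theta$ with $\pi(\theta) \in -\nn\dd$) via (RTA5), with $(B_-)_{0_G} = k$ by (RTA6). Hence $(Z(y))_\mu = \bigoplus_{\theta + \la = \mu} (B_-)_\theta \otimes (M_y)_\la$, and the weights $\la$ appearing in $M_y$ are precisely $\G$-translates of $\la_y$, which all lie over the same point $\pi(\la_y) \in G_0$ since $\G$ preserves $\dd$ (hence $\pi$). So every weight of $Z(y)$ is $\leq$ some weight of $M_y$ in the partial order, and the extremal layer $(Z(y))_\la = (B_-)_{0_G} \otimes (M_y)_\la = (M_y)_\la$ whenever $(M_y)_\la \neq 0$. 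The one subtlety is confirming that no strictly higher contribution sneaks in: this is exactly where Standing Assumption~\ref{St2} and Lemma~\ref{L2.1}(2) enter --- one needs that $\g(\la_y) \not< \la_y$ for any $\g \in \G$, so that the $\G$-orbit of $\la_y$ forms an antichain and no $(B_-)_\theta \otimes (M_y)_{\g(\la)}$ with $\theta \neq 0_G$ can land in a weight space $(M_y)_\mu$.

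\medskip

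\noindent\textbf{Part (2).} For existence and uniqueness of a simple quotient, I would argue as in the classical BGG setting. Consider the sum $N$ of all submodules $N' \subsetneq Z(x)$ that do not meet the top layer, i.e.\ with $N' \cap (M_x) = 0$ (using the identification of Part (1), $M_x \subset Z(x)$ sits in the weight spaces over $\pi(\la_x)$). Any proper submodule is such an $N'$: if a submodule contained a nonzero vector of weight $\mu$ with $\pi(\mu) = \pi(\la_x)$, then applying $B_+$ and using that $M_x$ is generated as an $(H\rtimes\G)$-module by a weight vector together with the locally finite $B_+$-action --- and that $B_+$ raises weights strictly except on the top layer --- one shows it contains all of $M_x$, hence equals $Z(x)$ by Part (1) applied to generation by the top layer (here simplicity of $x$ in $\calc$ is used: $M_x$ has no proper $(H\rtimes\G)$-submodule, so any submodule of $Z(x)$ hitting the top layer must contain all of $M_x$). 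Therefore $N$ is the unique maximal proper submodule, $V(x) := Z(x)/N$ is its unique simple quotient, and the image of $M_x$ spans the top layer of $V(x)$. I expect the main obstacle to be the careful bookkeeping showing that a submodule meeting the top weight-layer must contain all of $M_x$: this requires combining simplicity of $M_x$ in $\calc$ with the fact (from Part (1) and (RTA5)--(RTA6)) that $B_+$ acts on $(Z(x))_{\pi^{-1}(\pi(\la_x))}$ exactly as $H\rtimes\G$ acts on $M_x$, because positive-weight pieces of $B_+$ move weights strictly up and out of that layer.

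\medskip

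\noindent\textbf{Part (3).} Indecomposability follows formally once Part (2) is in hand: the endomorphism ring argument. Any idempotent $e \in \End_{\ag}(Z(x))$ preserves weight spaces, in particular acts on the finite-dimensional top layer $(Z(x))_{\pi^{-1}(\pi(\la_x))} \cong M_x$; since this layer generates $Z(x)$ (Part (1)), $e$ is determined by its restriction there. That restriction is an $(H\rtimes\G)$-endomorphism of $M_x$, hence a scalar (or, over a non-algebraically-closed $k$, an element of a division ring) by simplicity of $M_x$ in $\calc$ --- and being idempotent, it is $0$ or $1$, forcing $e = 0$ or $e = \id$. Alternatively, and perhaps more cleanly, a direct sum decomposition $Z(x) = Z_1 \oplus Z_2$ would split the top layer $M_x = (M_x \cap Z_1) \oplus (M_x \cap Z_2)$ as an $(H\rtimes\G)$-module, contradicting simplicity of $M_x$ unless one summand is zero, and a summand with zero top layer is zero by Part (1). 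I would present this last version since it avoids any hypothesis on $k$.
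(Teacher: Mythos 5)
Your proof is correct and is exactly the ``standard'' argument the paper has in mind: the paper gives no written proof of Proposition \ref{P6}, saying only that it is standard and that part (1) uses Standing Assumption \ref{St2} via Lemma \ref{L2.1}(2) --- which is precisely the ingredient you isolate (the $\G$-orbit of $\la_y$ is an antichain, so no $(B_-)_\theta$ with $\theta \neq 0$ can carry one weight of $M_y$ onto another). Two small slips, neither load-bearing: first, your claim that all weights of $M_y$ ``lie over the same point $\pi(\la_y)$'' is false in general ($\G$ preserving $\dd$ gives $\pi \circ \g = \g \circ \pi$, not $\pi \circ \g = \pi$; e.g.\ $S_n$ permutes the coordinates of $\pi(\la)$ in a wreath product), but you do not use it --- the antichain property is the correct substitute. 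Second, in part (3) the phrase ``a summand with zero top layer is zero by Part (1)'' is not literally what Part (1) gives (proper submodules also have zero top layer); the right conclusion is that the \emph{complementary} summand then contains all of $M_x$, which generates $Z(x)$ as a $B_-$-module, so it is everything. Alternatively, part (3) follows formally from part (2), since a module with a unique maximal proper submodule is indecomposable.
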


We next turn to standard cyclic modules, namely, modules generated by a
single {\em maximal} (i.e., in $\ker N_+$) weight vector.

\begin{definition}\label{D2}\hfill
\begin{enumerate}
\item A {\em standard cyclic module} is a quotient of $\dd(M_y)$ for $y
\in Y$.

\item A module $M$ has an {\em SC-filtration} or a {\em $p$-filtration}
(denoted by $M \in \pfilt$; see \cite{BGG1}) if it has a finite
filtration whose successive quotients are standard cyclic or Verma
modules, respectively.

\item A module $M$ has a {\em simple Verma flag} if it has a
$p$-filtration by Verma modules $\{ Z(x) : x \in X \}$.

\item We define a relation on $Y$: we say $y \leq y'$ if and only if
$\la_y < \g(\la_y')$ in $G$ for some $\g \in \G$, or else $y = y'$.
\end{enumerate}
\end{definition}

\begin{prop}\label{P5}\hfill
\begin{enumerate}
\item $\leq$ is a partial order on $Y$.

\item If $E$ is any finite-dimensional $H$-semisimple \hbg-module, then
$\Ind E$ has a simple Verma flag.
\end{enumerate}
\end{prop}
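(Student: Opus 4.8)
The plan is to prove the two parts in order. For part (1), I would check reflexivity, antisymmetry, and transitivity of $\leq$ on $Y$ directly from Definition \ref{D2}(4). Reflexivity is built into the definition ($y = y'$). For antisymmetry and transitivity, the point is to unwind the relation into the partial order on $G$ (recalled before Standing Assumption \ref{St2}) and use that $\G$ acts by order-preserving transformations on $G_0$ (Standing Assumption \ref{St2}), so that $\la_y < \g(\la_{y'})$ and $\la_{y'} < \g'(\la_{y''})$ yield $\g'(\la_{y'}) < \g'\g'^{-1}\cdots$; more precisely, applying $\g'$ to the first inequality (order-preserving) gives $\g'(\la_y) < \g'\g(\la_{y'})$, and one combines this with $\la_{y'} < \g'(\la_{y''})$ after reindexing the group element — the key is that the composite of order-preserving maps is order-preserving, and $\pi \circ \g = \g \circ \pi$ (Lemma \ref{Lcompat}). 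For antisymmetry, if $y \leq y'$ and $y' \leq y$ with $y \neq y'$, one gets $\la_y < \g(\la_{y'})$ and $\la_{y'} < \g'(\la_y)$ for some $\g, \g'$; applying $\g'$ to the first and chaining gives $\la_{y'} < \g'(\la_y) < \g'\g(\la_{y'})$, so $\la_{y'} < (\g'\g)(\la_{y'})$, and since $\G$ is finite (hence $\g'\g$ has finite order), Lemma \ref{L2.1}(2) forces a contradiction. (One must be slightly careful that $\la_y, \la_{y'}$ are really $\G$-orbits, but the inequalities above only use representatives, and the finite-order argument is orbit-independent.)

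For part (2), the strategy is to filter $E$ as an $\hbg$-module and induce. Since $E$ is a finite-dimensional $H$-semisimple $\hbg$-module, and $\calc$ (the category of finite-dimensional $H$-semisimple $(H \rtimes \G)$-modules) is semisimple by Proposition \ref{Phgsplits}, $E$ decomposes as an $(H \rtimes \G)$-module into simples; but as an $\hbg$-module, $N_+$ need not act trivially, so instead I would take a composition series $0 = E_0 \subset E_1 \subset \cdots \subset E_r = E$ of $E$ by $\hbg$-submodules whose successive quotients $E_j/E_{j-1}$ are simple $\hbg$-modules. The essential observation is that $N_+$ (being the augmentation ideal of $B_+$, and an $\ad H$-stable, $\ad\G$-stable subalgebra consisting of strictly positive weight spaces) acts nilpotently on any finite-dimensional $\hbg$-module, hence acts by $0$ on any simple $\hbg$-quotient; therefore each $E_j/E_{j-1}$ is a simple object of $\calc$, i.e., is some $M_{x_j}$ with $x_j \in X$. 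Now apply $\Ind = \Ind^{A \rtimes \G}_{\hbg}$: since induction from $\hbg$ is exact (indeed $\Ind E \cong B_- \otimes_k E$ as free $B_-$-modules by Proposition \ref{P2}(1), so it is just base change along a free module and takes short exact sequences of $H$-semisimple modules to short exact sequences — Proposition \ref{P2}(3)), we get a filtration $0 = \Ind E_0 \subset \Ind E_1 \subset \cdots \subset \Ind E_r = \Ind E$ whose successive quotients are $\Ind(E_j/E_{j-1}) \cong \dd(M_{x_j}) = Z(x_j)$. This is a simple Verma flag in the sense of Definition \ref{D2}(3).

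The main obstacle I anticipate is the verification that $N_+$ acts by zero on every simple $\hbg$-module — equivalently, that every simple $\hbg$-module factors through $\hbg / (N_+) \cong H \rtimes \G$. This requires knowing that $N_+$ generates a nilpotent (or at least locally nilpotent, on finite-dimensional modules) ideal of $\hbg$; this follows from the grading by $\nn\dd$ on $B_+$ with $(B_+)_0 = k$ and each graded piece finite-dimensional (axioms (RTA5)–(RTA6), or part (2) of the Proposition-Definition), together with the fact that $\G$ permutes the graded pieces and $H$ preserves them, so the ideal generated by $N_+$ in $\hbg$ lies in strictly positive degrees and any finite-dimensional module is annihilated by a high enough power. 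Once that is in hand, identifying the simple subquotients with objects $M_{x_j}$ of $X$ and matching $\Ind M_{x_j}$ with $Z(x_j) = \dd(M_{x_j})$ is immediate from the definitions, and the exactness of $\Ind$ is Proposition \ref{P2}. Everything else is bookkeeping with the partial order.
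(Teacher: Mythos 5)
Your part (1) is essentially the paper's argument: antisymmetry is exactly the appeal to Lemma \ref{L2.1}(2) via the finite order of $\g'\g$ (the chain $\la_{y'} < \g'(\la_y) < \g'\g(\la_{y'})$ is the right one), and transitivity is the routine composition of order-preserving maps; your displayed step for transitivity applies $\g'$ to the inequality that does not chain, but as you note one only has to reindex (apply $\g$ to $\la_{y'} < \g'(\la_{y''})$ and concatenate with $\la_y < \g(\la_{y'})$). No issues there.

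For part (2) you take a genuinely different, and correct, route. The paper filters $E$ by a composition series in $\calc$, i.e.\ as an $(H \rtimes \G)$-module (using Proposition \ref{Phgsplits}), rearranges the factors $M_{x_i}$ so that $x_i \geq x_j \Rightarrow i \leq j$, and then uses formal characters to see that the rearranged chain is automatically a chain of \hbg-submodules with $N_+ E_i \subset E_{i-1}$: a weight occurring in $N_+ E_i$ is strictly larger than a weight of $E_i$, so by the definition of $\leq$ on $Y$ and Lemma \ref{L2.1}(2) it can only occur in $M_{x_j}$ for $j < i$. You instead take a Jordan--H\"older series of $E$ as an \hbg-module and show that every simple subquotient already lies in $\calc$, because the two-sided ideal of \hbg generated by $N_+$ sits in strictly positive $\nn\dd$-degrees (cf.\ Proposition \ref{P8}(1) and (RTA7)) and therefore acts nilpotently on the finite-dimensional $H$-semisimple module $E$, hence annihilates each simple subquotient; inducing with Proposition \ref{P2} then gives the flag. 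Both arguments are complete, and yours is arguably the more economical for the bare statement. What the paper's version buys in exchange is that the resulting flag has its Verma subquotients arranged compatibly with the partial order (higher $x$ first), which is the ``suitable arrangement'' invoked later for the modules $P(y,l)$ and in Corollary \ref{Cproj}; with your construction one would still need to perform that rearrangement afterwards (e.g.\ via the paper's character argument, or by iterating Proposition \ref{Pfilt}(1)).
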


\begin{proof}\hfill
\begin{enumerate}
\item If $y \leq y'$ and $y' \leq y$, then $y=y'$ by Lemma \ref{L2.1}.

\item Look at a composition series for $E$ in \calc, say $0 \subset E_1
\subset \dots \subset E_n = E$, with $E_{i+1} / E_i \cong M_{x_i}$ for
some $x_i \in X$. Using formal characters, we can rearrange the $E_i$'s
(see \cite{BGG1}), such that $x_i \geq x_j \Rightarrow i \leq j$.

Character theory now shows that this ``rearranged" filtration is a chain
of $\hbg$-modules. But then $0 \subset \Ind E_1 \subset \dots \subset
\Ind E_n = \Ind E$ is a simple Verma flag for $\Ind E$, by the exactness
of $\Ind$ (from Proposition \ref{P2} above).
\end{enumerate}
\end{proof}

\section{Simple modules}

We now classify all simple modules in $\calo$, as well as those of them
which are finite-dimensional. We assume that we have classified all
finite-dimensional simple $A$-modules $V_A(\la) \in \calo_A$. (Note, as
in \cite{Kh}, that if $k$ is algebraically closed, then all
finite-dimensional simple $A$-modules are in $\calo_A$, and hence are of
the form $V_A(\la)$ for some $\la$.) The following is trivial.

\begin{lemma}\label{Lmax}
Given $M \in \calo$, a weight vector $v \in M$ is maximal, if and only if
so is $\g v$ for any $\g \in \G$.
\end{lemma}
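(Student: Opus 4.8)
The plan is to use three ingredients, all available from the earlier material: (i) each $\g \in \G$ acts on $A$ by a $k$-algebra automorphism that preserves $B_+$ and the $\ad H$-weight grading, hence preserves the augmentation ideal $N_+ = \bigoplus_{0 \neq \theta_0 \in \nn \dd} (B_+)_{\theta_0}$; (ii) by Lemma \ref{L1}(1) (equivalently Lemma \ref{Lcompat}), if $v \in M_\la$ then $\g v \in M_{\g(\la)}$, so $\g v$ is again a weight vector; (iii) the defining relation $\g a = \g(a) \g$ in the skew group ring.

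First I would record (i): since $\g$ carries each $\ad H$-weight space $(B_+)_\theta$ onto $(B_+)_{\g(\theta)}$ and fixes $0_G$ (Lemma \ref{Lcompat}), it permutes the nonzero-weight summands of $B_+$ among themselves, so $\g(N_+) = N_+$, and likewise $\g^{-1}(N_+) = N_+$. Next, suppose $v \in M_\la$ is maximal, i.e.\ $N_+ v = 0$. For any $n \in N_+$, applying (iii) gives $n \cdot (\g v) = \g(\g^{-1}(n)) \cdot (\g v) = \g \cdot (\g^{-1}(n)\, v)$ inside $M$. Since $\g^{-1}(n) \in \g^{-1}(N_+) = N_+$, we have $\g^{-1}(n)\, v = 0$, whence $n \cdot (\g v) = 0$; as $n \in N_+$ was arbitrary, $\g v \in M_{\g(\la)}$ is a maximal weight vector. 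The converse direction follows by applying this same argument with $\g^{-1}$ in place of $\g$, using that $\g^{-1}(\g v) = v$.

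There is essentially no obstacle here: once one observes that $\G$ stabilizes $N_+$ — which is immediate from the weight-space description of $N_+$ together with $\g(A_\theta) = A_{\g(\theta)}$ — the rest is a one-line computation in the skew group ring. The only point worth a word of care is that \emph{maximal} means annihilation by the whole augmentation ideal $N_+$ (equivalently, by $B_+$ modulo scalars), so what must be checked is precisely that $N_+$ itself is $\G$-stable, rather than some generating set of $N_+$ that $\G$ might fail to preserve setwise; this is exactly point (i).
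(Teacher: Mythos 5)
Your argument is correct and is exactly the verification the paper leaves implicit by declaring the lemma ``trivial'': the key point that $\g(N_+) = N_+$ is in fact already built into Standing Assumption \ref{St1} (which requires $\ad\xi$ to preserve $N_\pm$ for all $\xi \in H \rtimes \G$), and your skew-ring computation $n\cdot(\g v) = \g\cdot(\g^{-1}(n)v)$ together with the symmetry under $\g \mapsto \g^{-1}$ completes it. Nothing is missing.
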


\begin{prop}[``Weyl Character Formula 1"]\label{P7}
Fix $x \in X$, and consider $M_x \subset V(x)$. Then $B_- v_\mu =
V_A(\mu)$ for all weight vectors $v_\mu \in M_x$, and left-multiplication
by $\g$ is a vector space isomorphism : $V_A(\mu) \to V_A(\g(\mu))$.
Thus, $\ch_{V_A(\g(\mu))} = \g(\ch_{V_A(\mu)})$ for all $\g, \mu$.

Moreover, if $\{ \g_i v_{\la_x} : 1 \leq i \leq n \}$ is a weight basis
of $M_x$, then $V(x) = \bigoplus_i V_A(\g_i(\la_x))$ as $A$-modules. In
particular, $\ch_{V(x)} = \sum_{i=1}^n \ch_{V_A(\g_i(\la_x))}$.
\end{prop}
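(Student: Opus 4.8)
The plan is to prove the statement in three stages, each of which follows from the structural results already in place: first that $B_-v_\mu = V_A(\mu)$ for every weight vector $v_\mu \in M_x$; second that left-multiplication by $\g \in \G$ gives a vector space isomorphism $V_A(\mu) \to V_A(\g(\mu))$ (and hence the character identity); and finally the direct sum decomposition of $V(x)$ as an $A$-module.

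For the first stage, recall from Proposition \ref{P6} that $V(x)$ is the unique simple quotient of $Z(x) = \dd(M_x)$, and that the maximal weight vectors in $V(x)$ span (the image of) $M_x$. So fix a weight vector $v_\mu \in M_x$ with $\mu = \g(\la_x)$ for some $\g$; by Lemma \ref{Lmax} it is a maximal vector, i.e.\ $N_+ v_\mu = 0$. Then $B_- v_\mu$ is an $A$-submodule of $V(x)$ which, as an $A$-module, is a quotient of the $A$-Verma module $Z_A(\mu)$ (since $v_\mu$ is killed by $N_+$ and has weight $\mu$). It is therefore a highest-weight $A$-module with highest weight $\mu$, so it has a unique simple quotient, which must be $V_A(\mu)$ (here I use that $V_A(\mu)$ is simple in $\calo_A$; this is part of the standing framework, and for finite-dimensional $V(x)$ it matches the assumption made at the start of this section that the $V_A(\la)$ have been classified). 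The key point making $B_-v_\mu$ actually \emph{equal} to $V_A(\mu)$, rather than merely surjecting onto it, is that $B_- v_\mu$ is \emph{simple} as an $A$-module: any proper nonzero $A$-submodule would, by the triangular decomposition and the weight-space argument, have to avoid the top weight $\mu$, but then its $A\rtimes\G$-span would be a proper nonzero $A\rtimes\G$-submodule of the simple module $V(x)$ — contradiction. (One must be a little careful: the span of the $\G$-translates of such a submodule is $\G$-stable and $A$-stable, hence $A\rtimes\G$-stable, and is proper since it still misses some weight; I expect this is the step requiring the most care.)

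For the second stage, $\g \in \G$ acts on $V(x)$ as an invertible $k$-linear map, and by Lemma \ref{Lcompat} (or Lemma \ref{L1}) it sends the weight space $V(x)_\nu$ to $V(x)_{\g(\nu)}$, and more generally intertwines the $A$-action with the $\g$-twist: $\g(a\cdot w) = \g(a)\cdot \g(w)$. Hence $\g \cdot B_- v_\mu = \g(B_-) \cdot \g v_\mu = B_- \cdot (\g v_\mu)$ since $\g$ preserves $B_-$; and $\g v_\mu$ is again a maximal weight vector, of weight $\g(\mu)$, lying in $M_x$ (by Lemma \ref{Lmax} and the fact that $M_x$ is $\G$-stable). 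By stage one, $\g(B_- v_\mu) = B_-(\g v_\mu) = V_A(\g(\mu))$, and $\g : V_A(\mu) \to V_A(\g(\mu))$ is then a $k$-linear isomorphism. Applying $\dim_k$ to each weight space gives $\dim_k V_A(\mu)_\nu = \dim_k V_A(\g(\mu))_{\g(\nu)}$, which is exactly the character identity $\ch_{V_A(\g(\mu))} = \g(\ch_{V_A(\mu)})$.

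For the last stage, let $\{\g_i v_{\la_x} : 1 \le i \le n\}$ be a weight basis of $M_x$. Since the maximal weight vectors span $M_x$ in $V(x)$ and $V(x)$ is generated over $A\rtimes\G$ by $v_{\la_x}$, we get $V(x) = (A\rtimes\G) v_{\la_x} = \sum_\g B_- (\g v_{\la_x}) = \sum_i B_- (\g_i v_{\la_x}) = \sum_i V_A(\g_i(\la_x))$, using $A = B_- H B_+$, that $H B_+$ fixes the line through each $\g v_{\la_x}$ up to scalars, and stage one. It remains to see the sum is direct. Here I would argue by weights: project to a fixed weight $\nu$ and use that the $V_A(\g_i(\la_x))$ are distinct simple $A$-modules when the weights $\g_i(\la_x)$ are distinct — or, if some coincide, group them and use semisimplicity of the relevant $A$-isotypic piece together with the fact that $\dim_k V(x)_\nu = \dim_k (Z(x))_\nu$ in top weights forces the multiplicities to add up with no overlap. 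The cleanest route is a character count: we already know $\ch_{M_x} = \sum_i e(\g_i(\la_x))$, and by Proposition \ref{P2} the Verma $Z(x)$ has character $\ch_{B_-}\cdot\ch_{M_x} = \sum_i \ch_{Z_A(\g_i(\la_x))}$; passing to the simple quotient, $\ch_{V(x)} \le \sum_i \ch_{V_A(\g_i(\la_x))}$ coefficientwise, while stage one gives the reverse inequality, hence equality, which together with $V(x) = \sum_i V_A(\g_i(\la_x))$ forces the sum to be direct and yields $\ch_{V(x)} = \sum_{i=1}^n \ch_{V_A(\g_i(\la_x))}$.
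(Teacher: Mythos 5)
Your proposal is correct and follows essentially the same route as the paper's proof: the heart of both arguments is that a putative proper nonzero $A$-submodule of $B_- v_\mu$ generates, via its $\G$-translates, a proper nonzero $A \rtimes \G$-submodule of the simple module $V(x)$. Two cautions on the points you leave loose. First, the step you rightly flag as the delicate one --- that the $\G$-span of the submodule ``still misses some weight,'' i.e.\ does not contain $v_\mu$ --- is exactly where Lemma \ref{L2.1}(2) must be invoked: every weight of the span is $< \g(\mu)$ for some $\g \in \G$, and $\mu < \g(\mu)$ is impossible because $\g$ has finite order and acts by order-preserving maps (Standing Assumption \ref{St2}); the paper's proof cites precisely this. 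Second, your preferred ``character count'' for the directness of $\sum_i V_A(\g_i(\la_x))$ is circular: stage one gives $\ch_{V(x)} \geq \ch_{V_A(\g_i(\la_x))}$ for each $i$ separately, not the sum over $i$ --- and the inequality $\ch_{V(x)} \geq \sum_i \ch_{V_A(\g_i(\la_x))}$ is equivalent to the directness you are trying to establish. Your first suggestion is the right one and is what the paper does: distinct $\G$-translates of $\la_x$ are incomparable (again Lemma \ref{L2.1}(2)), so the weight space of $\sum_{j \neq i} V_A(\g_j(\la_x))$ at weight $\g_i(\la_x)$ is spanned by those $\g_j v_{\la_x}$ with $j \neq i$ and $\g_j(\la_x) = \g_i(\la_x)$; linear independence of the basis $\{\g_j v_{\la_x}\}$ of $M_x$ then shows $\g_i v_{\la_x}$ is not in that sum, hence (by simplicity of each summand) the sum is direct.
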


\begin{proof}
We first note that if $B_- v_\mu = V_A(\mu)$ for all $v_\mu \in M_x$,
then $\g B_- v_\mu = \g B_- \g^{-1} \cdot \g v_\mu = B_- \g v_\mu$, and
this must equal $V_A(\g(\mu))$ (it is simple because any maximal vector
in $\g V_A(\mu)$ must come from one in $V_A(\mu)$). Thus the second part
follows from the first (and holds for every $\mu$, since each $\mu$ is of
the form $\la_x$ for some $x \in X$, from above results).

Observe that $v_\mu$ is maximal, being in $M_x$, hence $B_- v_\mu$ is a
standard cyclic module in $\calo_A$. We claim that it is simple. Suppose
not. Then there exists a maximal vector $b_- v_\mu \in B_- v_\mu$, of
weight $\nu <\mu$, say.
We now claim that $V := \sum_\g B_- \g \cdot (b_- v_\mu) = \sum_\g \g B_-
(b_- v_\mu)$ is a (nonzero) proper $(A \rtimes \G)$-submodule of $V(x)$
(which is a contradiction). For since the $\g b_- v_\mu$'s have weights
$\g(\nu) < \g(\mu)$ respectively, are maximal vectors by Lemma
\ref{Lmax}, and generate $V$, hence $v_\mu \notin V$ by Lemma \ref{L2.1}.
Thus $B_- v_\mu = V_A(\mu)$, as claimed.

Finally, if $\{ \g_i v_{\la_x} \}$ is a basis of $M_x$, then
\[ V(x) = B_- M_x = B_- (k\G \cdot v_{\la_x}) = \sum_i B_- \g_i v_{\la_x}
= \sum_i V_A(\g_i(\la_x)) \]

\noindent Now, $\g_i(v_{\la_x}) \notin V' := \sum_{j \neq i}
V_A(\g_i(\la_x))$, so $V_A(\g_i(\la_x)) \cap V' = 0$. Hence the above sum
of simple $A$-modules is direct; now use character theory.
\end{proof}

\begin{remark}\label{R4}
The same results hold if we replace simple modules by Verma modules,
i.e., $x \in X$ by $y \in Y,\ V$'s by $Z$'s, and $V_A$'s by $Z_A$'s
respectively (using the proof of Proposition \ref{P2}). Moreover, we
claim that the length $l_A(Z_A(\g(\la)))$ (if finite) is independent of
$\g$, for any $\la \in G$. For, choose any $x \in X$ such that $\la_x =
\la$; then $B_- \g v_\la \cong Z_A(\g(\la))$ as $A$-modules, for all
$\g$. Moreover, $\g : Z_A(\la) \to Z_A(\g(\la))$ takes maximal vectors to
maximal vectors, so it preserves the length of any filtration.
\end{remark}

\begin{cor}\label{C1}
Suppose $V_A(\la)$ is finite-dimensional. Then so is $V_A(\g(\la))$ for
all $\g$, and the dimension is independent of $\g$. Moreover, $\dim V(x)
= \dim M_x \cdot \dim V_A(\la)$, if $\la = \la_x$.
\end{cor}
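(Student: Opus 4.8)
The plan is to deduce Corollary \ref{C1} directly from Proposition \ref{P7} (``Weyl Character Formula 1'') together with Remark \ref{R4}. Fix $\la = \la_x$ and assume $\dim_k V_A(\la) < \infty$. First I would recall from Proposition \ref{P7} that for any weight vector $v_\mu \in M_x$ one has $B_- v_\mu = V_A(\mu)$, and that left-multiplication by $\g$ gives a $k$-linear isomorphism $V_A(\mu) \xrightarrow{\ \sim\ } V_A(\g(\mu))$. Applying this with $\mu = \la$ and noting that every $\g(\la)$ arises as $\la_{x'}$ for a suitable $x' \in X$ (so that the statement ``$B_- v_\mu = V_A(\mu)$'' is available for $\mu = \g(\la)$ too), I get an explicit vector space isomorphism $V_A(\la) \cong V_A(\g(\la))$; hence $V_A(\g(\la))$ is finite-dimensional and $\dim_k V_A(\g(\la)) = \dim_k V_A(\la)$, independently of $\g$. (Alternatively, one invokes the length statement in Remark \ref{R4} — but here the cleaner route is the explicit linear isomorphism, which immediately controls dimension rather than just length.)

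For the dimension formula, I would use the last assertion of Proposition \ref{P7}: if $\{\g_i v_{\la_x} : 1 \leq i \leq n\}$ is a weight basis of $M_x$, then $V(x) = \bigoplus_{i=1}^n V_A(\g_i(\la_x))$ as $A$-modules, with $n = \dim_k M_x$. Taking dimensions of both sides of this direct-sum decomposition gives
\[
\dim_k V(x) = \sum_{i=1}^n \dim_k V_A(\g_i(\la_x)) = n \cdot \dim_k V_A(\la) = \dim_k M_x \cdot \dim_k V_A(\la),
\]
where the middle equality is exactly the $\g$-independence of $\dim_k V_A(\g(\la))$ established in the first step. In particular $V(x)$ is finite-dimensional.

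There is essentially no obstacle here: the corollary is a straightforward bookkeeping consequence of Proposition \ref{P7}. The only point requiring a word of care is making sure that the identity $B_- v_\mu = V_A(\mu)$ may legitimately be applied at the twisted weights $\g(\la)$ — this is fine because, as noted in the proof of Proposition \ref{P7} and in the surrounding text (Theorem \ref{T2}, Remark \ref{R2}), every element of $G$, in particular each $\g(\la)$, occurs as $\la_{x'}$ for some $x' \in X$, so Proposition \ref{P7} is available at that weight. Thus the ``hard part'' is really just citing the right earlier statements in the right order.
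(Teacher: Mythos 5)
Your proposal is correct and follows the same route as the paper: the paper's proof also combines Theorem \ref{T2} (to realize each $V_A(\mu)$ inside some $V(x)$ so that Proposition \ref{P7} applies) with the explicit linear isomorphism $\g : V_A(\mu) \to V_A(\g(\mu))$ for the first claim, and reads the dimension formula off the direct-sum decomposition $V(x) = \bigoplus_i V_A(\g_i(\la_x))$ for the second. Your write-up just spells out the bookkeeping the paper leaves implicit.
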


\begin{proof}
For all $\mu \in G,\ \g : V_A(\mu) \to V_A(\g(\mu))$ is a vector space
isomorphism (as $A$-submodules of some $V(x)$), by Theorem \ref{T2} and
Proposition \ref{P7}. The next part is also clear by Proposition
\ref{P7}.
\end{proof}

All these facts come together in proving

\begin{theorem}\label{Tfacts}\hfill
\begin{enumerate}
\item Every simple module in \calo is of the form $V(x)$ for some $x \in
X$.

\item Given $x \in X$, the simple module $V(x)$ is finite-dimensional if
and only if $V_A(\la_x)$ is finite-dimensional.

\item Given $x \in X$, the Verma module $Z(x)$ is of finite length, if
and only if $Z_A(\la_x) \in \calo_A$ is.
\end{enumerate}
\end{theorem}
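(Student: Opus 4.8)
The plan is to deduce all three parts from the structural results already assembled in Sections 5--8, chiefly Theorem \ref{T2}, Proposition \ref{P6}, Proposition \ref{P7}, Corollary \ref{C1}, and Remark \ref{R4}. For part (1), I would take an arbitrary simple $M \in \calo$ and pick a maximal weight vector $v_\mu$ in it; such a vector exists because the $B_+$-action is locally finite and weights are bounded above along the root lattice order (the standard argument, exactly as in \cite{BGG1}). Then $k\G \cdot v_\mu$ spans a finite-dimensional $(H \rtimes \G)$-submodule $E$ of $M$ on which $N_+$ acts by zero; passing to a simple $(H \rtimes \G)$-quotient (possible since $\calc$ is semisimple by Proposition \ref{Phgsplits}) gives some $x \in X$ with a nonzero $\ag$-map $\dd(M_x) = Z(x) \to M$. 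Since $M$ is simple, this map is onto, and since $V(x)$ is the unique simple quotient of $Z(x)$ by Proposition \ref{P6}(2), we conclude $M \cong V(x)$.

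For part (2), the forward direction is immediate: if $V(x)$ is finite-dimensional, then so is its $A$-submodule $V_A(\la_x) = B_- v_{\la_x} \subset V(x)$ (Proposition \ref{P7}). Conversely, if $V_A(\la_x)$ is finite-dimensional, then by Corollary \ref{C1} each $V_A(\g(\la_x))$ is finite-dimensional of the same dimension; since $M_x$ is finite-dimensional (it lies in $\calc$) and $V(x) = \bigoplus_{i} V_A(\g_i(\la_x))$ as $A$-modules for a weight basis $\{\g_i v_{\la_x}\}$ of $M_x$ (again Proposition \ref{P7}), we get $\dim V(x) = \dim M_x \cdot \dim V_A(\la_x) < \infty$.

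For part (3), I would argue analogously using Remark \ref{R4}, which records that Proposition \ref{P7} and Corollary \ref{C1} hold with simple modules replaced by Verma modules: namely $Z(x) = \bigoplus_i Z_A(\g_i(\la_x))$ as $A$-modules, and $l_A(Z_A(\g(\la_x)))$ is finite and independent of $\g$ whenever $l_A(Z_A(\la_x))$ is finite. Hence if $Z_A(\la_x)$ has finite length as an $A$-module, then $\Res^{\ag}_A Z(x)$ is a finite direct sum of finite-length $A$-modules, so $Z(x)$ has finite length over $A$, and a fortiori over the larger ring $\ag$ (any strictly increasing chain of $\ag$-submodules is a strictly increasing chain of $A$-submodules). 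Conversely, if $Z(x)$ has finite length over $\ag$, then restricting to $A$ and using that $Z_A(\la_x)$ appears as a direct summand of $\Res^{\ag}_A Z(x)$ forces $Z_A(\la_x)$ to have finite length. The only genuinely delicate point is the existence of a maximal vector in part (1) together with the reduction to $\calc$ being simple, but both are standard once the local finiteness of $B_+$ and the semisimplicity of $\calc$ are in hand; the rest is bookkeeping with the $A$-module decompositions already proved.
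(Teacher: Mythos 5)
Your proposal follows essentially the same route as the paper: part (1) is the standard maximal-vector argument (the paper just cites the previous section, in effect Proposition \ref{P8}(4)), part (2) is word-for-word the paper's proof via Proposition \ref{P7} and Corollary \ref{C1}, and part (3) rests on Remark \ref{R4} exactly as the paper does. The one step that is too quick is the converse of (3): from ``$Z(x)$ has finite length over $\ag$'' you pass directly to ``its $A$-module direct summand $Z_A(\la_x)$ has finite length,'' but finite length over the larger ring $\ag$ does not by itself give finite length over $A$ (a strictly increasing chain of $A$-submodules need not consist of $\ag$-submodules, so the inclusion of lattices goes the wrong way here). The missing observation, which the paper supplies explicitly, is that each $\ag$-composition factor $V(x')$ of $Z(x)$ is itself a finite direct sum of simple $A$-modules by Proposition \ref{P7}, hence has finite $A$-length; only then does $l_{\ag}(Z(x)) < \infty$ yield $l_A(Z(x)) < \infty$, after which Remark \ref{R4} finishes the argument as you say. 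Since you already have Proposition \ref{P7} in hand, this is a one-line repair rather than a structural flaw.
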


\begin{proof}\hfill
\begin{enumerate}
\item This is standard from the previous section, say.

\item If $V(x)$ is finite-dimensional, then so is its $A_i$-submodule
$V_A(\la_x)$ (by Proposition \ref{P7}). The converse follows from
Corollary \ref{C1}.

\item If each $Z_A(\la_x) \in \calo_A$ has finite length, then by Remark
\ref{R4}, $Z(x)$ has finite length as an $A$-module - hence also as an $A
\rtimes \G$-module. Conversely, if $l_{A \rtimes \G}(Z(x)) < \infty$,
then $l_A(Z(x)) < \infty$, since by Proposition \ref{P7}, $l_A(V(x')) <
\infty\ \forall x'$. Now use Remark \ref{R4} again.
\end{enumerate}
\end{proof}

\begin{cor}
If $k$ is algebraically closed, then every finite-dimensional simple $A
\rtimes \G$-module $V$ is of the form $V(x) \in \calo$ for some $x \in
X$.
\end{cor}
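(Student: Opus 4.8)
The plan is to reduce the corollary to Theorem \ref{Tfacts}(1) together with the fact that finite-dimensional simple $A \rtimes \G$-modules lie in $\calo$. First I would observe that if $k$ is algebraically closed, then any finite-dimensional $A \rtimes \G$-module $V$ is automatically $H$-semisimple and has finite-dimensional weight spaces: indeed, since $H$ is commutative (by (RTA8)) and $k$ is algebraically closed, $V$ restricted to $H$ decomposes into generalized weight spaces, and (being part of a simple $A \rtimes \G$-module on which $B_+$ acts) these are honest weight spaces because the maximal vectors furnish a nonzero weight space and simplicity propagates $H$-semisimplicity — alternatively one invokes Lie's theorem as in the proof of Theorem \ref{T2}. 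The action of $B_+$ is trivially locally finite because $V$ is finite-dimensional. Hence $V \in \calo$.

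Next I would note that $V$, being simple in $\calo$, is of the form $V(x)$ for some $x \in X$ by Theorem \ref{Tfacts}(1). This is essentially the whole content: the corollary is simply the specialization of Theorem \ref{Tfacts}(1) to the finite-dimensional case, once one has checked that finite-dimensionality forces membership in $\calo$. I would also point out, as is done in the parenthetical remark preceding Lemma \ref{Lmax} for the group-trivial case, that this recovers the classification of finite-dimensional simple modules: combining with Theorem \ref{Tfacts}(2), the finite-dimensional simple $A \rtimes \G$-modules are exactly the $V(x)$ with $V_A(\la_x)$ finite-dimensional.

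The only mild subtlety — and the step I expect to require the most care — is verifying that a finite-dimensional simple $A \rtimes \G$-module really is $H$-semisimple, i.e. that $H$ acts semisimply rather than just with a filtration by weight spaces. Over an algebraically closed field this follows because a finite-dimensional module over the commutative algebra $H$ has a weight vector, hence (being part of a finite-dimensional $A \rtimes \G$-module) a maximal weight vector $v_\la$ in $\ker N_+$; the submodule generated by $v_\la$ is a standard cyclic module, and since $V$ is simple this submodule is all of $V$, whence $V = B_- k\G v_\la$ and every weight appearing is of the form $\g(\mu)$ with $\pi(\mu) \in \pi(\la) - \nn\dd$, so $V$ is a sum of weight spaces by the analysis in Section 7 (Proposition \ref{P7} and its proof). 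Once $H$-semisimplicity is in hand, everything else is formal. I would therefore write the proof in two short sentences: first that $V \in \calo$, citing the characteristic-zero / algebraically-closed arguments already used in the proofs of Theorem \ref{T2} and Proposition \ref{P7}; second that $V \cong V(x)$ by Theorem \ref{Tfacts}(1).
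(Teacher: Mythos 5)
Your proof is correct, and it is the argument the paper intends: the corollary is stated without proof, but the parenthetical remark at the start of the section on simple modules (citing [Kh1]) invokes exactly this reasoning for $|\G|=1$, and your extension via a maximal weight vector, Lemma \ref{Lmax}, and the decomposition $V = B_- \cdot k\G v_\la$ is the right way to carry it over to $A \rtimes \G$ before applying Theorem \ref{Tfacts}(1). (An equivalent shortcut: restrict to $A$, note by Clifford theory that $\Res V$ is a finite direct sum of finite-dimensional simple $A$-modules, each in $\calo_A$ by the cited fact, and conclude $V \in \calo$ by Lemma \ref{LO}.)
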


\section{Duality}\label{Sduality}

We now introduce the duality functor, that helps obtain information about
the Ext-quiver in $\calo$ (and its relation to the partial order on the
simple objects). We first make a general definition. Suppose we have a
$k$-algebra $A'$, satisfying the following:\medskip

\noindent There exists an anti-involution $i : A' \to A'$, that fixes a
subalgebra $H' \subset A'$.

\begin{definition}\hfill
\begin{enumerate}
\item The {\em Harish-Chandra category} $\calh' = \calh_{A',H'}$ over
$(A',H')$ consists of all $A'$-modules $M$ with a simultaneous weight
space decomposition for $H'$, and finite-dimensional weight spaces.

\item The {\em duality functor} $F : \calh' \to \calh'$ is defined as
follows: $F(M)$ is the span of all $H'$-weight vectors in $M^* =
\hhh_k(M,k)$. It is a module under: $\tangle{a' m^*, m} = \tangle{m^*,
i(a') m}$ for $a' \in A', m \in M, m^* \in F(M)$.
\end{enumerate}
\end{definition}

\begin{remark}
One can then show \cite[Propositions 1,2]{Kh} (except for part 2 of
Proposition (2.2)):

\begin{prop}
$F$ is exact, contravariant, and preserves lengths and formal characters.
Moreover, $F(F(M)) = M$ for all $M \in \calh'$.
\end{prop}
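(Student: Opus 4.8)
The plan is to verify the four assertions of the Proposition directly from the definition of $F$, treating each as a computation on weight spaces. First I would recall the basic structure: for $M \in \calh'$, $F(M)$ is the sum of $H'$-weight spaces inside $M^* = \hhh_k(M,k)$, with the $A'$-action twisted by the anti-involution $i$. Since $i$ fixes $H'$, the pairing $\langle h m^*, m \rangle = \langle m^*, hm \rangle$ shows that $(M^*)_\mu$ (the functionals vanishing off $M_\nu$ for $\nu \neq \mu$, and acting by $\mu$ on $M_\mu$) is exactly the $\mu$-weight space of $F(M)$; because each $M_\mu$ is finite-dimensional, $(F(M))_\mu \cong (M_\mu)^*$ canonically, and in particular $\dim (F(M))_\mu = \dim M_\mu$. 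This immediately gives that $F$ \emph{preserves formal characters}, and hence also preserves lengths once exactness and contravariance are in hand (a composition series for $M$ maps to a filtration of $F(M)$ whose subquotients have the same characters as the simple subquotients of $M$, hence are themselves simple of the right characters).

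Next I would check \emph{contravariance and exactness}. Given $f : M \to N$ in $\calh'$, the transpose $f^* : N^* \to M^*$ restricts to weight vectors (since $f$ is a weight-space map, $f^*$ sends $(N^*)_\mu$ into $(M^*)_\mu$), giving $F(f) : F(N) \to F(M)$; functoriality and $F(\id) = \id$ are formal, so $F$ is a contravariant functor. For exactness: if $0 \to M' \to M \to M'' \to 0$ is exact in $\calh'$, then on each weight space we get a short exact sequence of finite-dimensional vector spaces $0 \to M'_\mu \to M_\mu \to M''_\mu \to 0$; dualizing is exact on finite-dimensional spaces, and since $F$ is computed weight-space-by-weight-space (each $(F(M))_\mu = (M_\mu)^*$), the sequence $0 \to F(M'') \to F(M) \to F(M') \to 0$ is exact. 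This is where the finite-dimensionality of weight spaces — built into the definition of $\calh'$ — is essential; without it $F$ would only be left exact.

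Finally, for $F(F(M)) \cong M$: the natural evaluation map $M \to M^{**}$, $m \mapsto (m^* \mapsto m^*(m))$, sends $M_\mu$ into $(F(F(M)))_\mu$ because for $h \in H'$ the double-twist by $i$ cancels ($i^2 = \id$), so the $A'$-action is recovered untwisted; on each weight space this is the canonical isomorphism $M_\mu \xrightarrow{\sim} (M_\mu)^{**}$, and assembling over all $\mu$ — using that $M = \bigoplus_\mu M_\mu$ since $M \in \calh'$ has a simultaneous weight decomposition — gives an isomorphism $M \xrightarrow{\sim} F(F(M))$ of $A'$-modules. The only genuinely delicate point, and the one I would write out most carefully, is checking that the $A'$-module structure on $F(F(M))$ pulled back along evaluation really agrees with the original on all of $A'$ (not just on $H'$): this is the identity $\langle a' m, m^* \rangle_{F(F(M))} = \langle m^*, i(a') \cdot_{F(M)} \text{(something)} \rangle$ unwinding to $\langle i(i(a')) m, \cdot \rangle = \langle a' m, \cdot\rangle$, which is exactly where $i^2 = \id$ is used. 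The remaining ``main obstacle'' is merely bookkeeping — making sure one does not confuse the two levels of $i$-twist — rather than any real mathematical difficulty; all four claims reduce to linear algebra on finite-dimensional weight spaces once the module structures are pinned down.
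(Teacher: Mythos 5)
Your proposal is essentially correct, and it is worth noting that the paper itself offers no proof of this Proposition: it simply cites [Kh1, Propositions 1, 2], so your direct weight-space computation supplies exactly the argument being outsourced there (and it is the standard one: $(F(M))_\mu \cong (M_\mu)^*$, dualization is exact on finite-dimensional spaces, and the evaluation map gives $F(F(M)) \cong M$ using $i^2 = \id$). The one step you should tighten is the deduction that $F$ preserves lengths. You argue that the images of the composition factors ``have the same characters as the simple subquotients of $M$, hence are themselves simple,'' but equality of formal characters does not in general imply simplicity. The correct route is to prove $F(F(M)) \cong M$ \emph{first} (your argument for it is self-contained and does not use length preservation), and then observe that an exact contravariant functor satisfying $F \circ F \cong \id$ is an anti-equivalence of $\calh'$ with itself: if $V$ is simple and $0 \neq W \subsetneq F(V)$ were a proper submodule, exactness would produce a proper nonzero quotient $F(W)$ of $F(F(V)) \cong V$ (nonzero and proper because $F$ preserves formal characters), a contradiction. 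Hence $F$ sends simples to simples and composition series to composition series, which gives length preservation. This is a reordering and patching of your write-up rather than a missing idea; everything else (the identification of weight spaces, contravariant functoriality of the transpose, exactness weight space by weight space, and the $i^2 = \id$ bookkeeping in the double dual) is correct as stated.
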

\end{remark}

\subsection{Functoriality}
Now suppose we have algebras $A' \supset A'' \supset H'$, with $i_{A'}
|_{A''} = i_{A''}$. We then have the duality functors $F', F''$ on the
Harish-Chandra categories $\calh' = \calh_{A', H'}, \calh'' = \calh_{A'',
H'}$ respectively, and the forgetful functor $\id' : \calh' \to \calh''$.
The following is easy to prove.

\begin{lemma}\label{Lduality}
$F'' \circ \id' = \id' \circ F'$ on $\calh'$.
\end{lemma}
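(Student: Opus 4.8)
The statement to prove is Lemma~\ref{Lduality}: for algebras $A' \supset A'' \supset H'$ with $i_{A'}|_{A''} = i_{A''}$, the identity $F'' \circ \id' = \id' \circ F'$ holds on $\calh'$. The plan is to unwind both sides applied to an arbitrary module $M \in \calh'$ and check that they agree as subspaces of $M^*$ carrying the same $A''$-action. First I would observe that $\id'(M)$ is simply $M$ regarded as an $A''$-module, and since the weight-space decomposition for $H'$ does not depend on whether we view $M$ over $A'$ or over $A''$, the module $\id'(M)$ lies in $\calh''$ with exactly the same weight spaces; in particular $\dim_k M_\mu < \infty$ is inherited. Therefore $F''(\id'(M))$ is, by definition, the span of all $H'$-weight vectors in $(\id'(M))^* = M^*$, which is literally the same subspace of $M^*$ as $F'(M)$, namely the span of all $H'$-weight vectors in $M^*$. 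So the underlying vector spaces of $F''(\id'(M))$ and $\id'(F'(M))$ coincide on the nose.

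Next I would check the module structures match. On $\id'(F'(M))$, an element $a'' \in A''$ acts by restricting the $A'$-action on $F'(M)$, i.e.\ $\tangle{a'' m^*, m} = \tangle{m^*, i_{A'}(a'') m}$ for $m \in M$. On $F''(\id'(M))$, the same $a''$ acts by the duality recipe over $(A'', H')$, i.e.\ $\tangle{a'' m^*, m} = \tangle{m^*, i_{A''}(a'') m}$. Since $i_{A'}|_{A''} = i_{A''}$ by hypothesis, these two formulas are identical, so the $A''$-actions agree. This shows $F'' \circ \id' = \id' \circ F'$ as functors (the statement on morphisms is immediate, since both functors act on a morphism $f$ by the transpose $f^*$ restricted to the weight-vector span, and this restriction is the same subspace in both cases).

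There is essentially no obstacle here: the only thing to be a little careful about is the (implicit) claim that $\id'(M)$ genuinely lies in $\calh''$ so that $F''$ is defined on it — but this is immediate because the hypotheses defining $\calh'$ (simultaneous $H'$-weight decomposition, finite-dimensional weight spaces) refer only to the $H'$-module structure, which is unchanged by forgetting down from $A'$ to $A''$. One should also note in passing that the well-definedness of the $A''$-action on $F''(\id'(M))$ (that $a'' m^*$ is again an $H'$-weight vector) is already guaranteed by the general construction of the duality functor applied to $(A'', H')$, so nothing new needs checking. Thus the lemma reduces to matching two definitions term-by-term, and the proof is a short verification.
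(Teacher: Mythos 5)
Your proof is correct and is exactly the definitional verification the paper has in mind (the paper simply asserts the lemma is ``easy to prove'' and omits the details): both sides give the span of $H'$-weight vectors in $M^*$, and the $A''$-actions agree because $i_{A'}|_{A''} = i_{A''}$. No issues.
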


\subsection{Application to the BGG Category}
Note that $A \rtimes \G$ has an anti-involution $i = i_A \otimes i_\G$ by
the standing assumptions. This enables us to define the duality functor
$F : \calo \to \calo^{op} \subset \calh$, as in \cite{Kh}. Now, $F$
permutes the set of simple objects, so $F(V(x))$ is also a simple object
in $\calh = \calh_{A \rtimes \G, H}$. Moreover, $\G$ acts on formal
characters (i.e., on $\Z[G]$): if $e(\la) \in \Z[G]$ corresponds to $\la
\in G$, then $\g(e(\la)) = e(\g(\la))$.

We now put $A' = H \rtimes \G$ and $H' = H$. Then the analogous results
hold, and we get a duality functor on $\calh_{H \rtimes \G}$, that
restricts to one on \calc as well. In particular, $F$ permutes the set of
simple objects, i.e., $F : X \to X$. For example, if $\G = 1$, then
$F(\la) = \la \in G$, since each $x \in X = G$ is then one-dimensional.

The following result relates the dualities in $\calc$ and $\calo$, and
generalizes part 2 of \cite[Proposition 2.2]{Kh}.

\begin{prop}\label{P4}
For all $x \in X$, we have $F(V(x)) = V(F(x))$.
\end{prop}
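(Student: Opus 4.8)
The plan is to compute both sides as submodules of the same ambient dual space and check they coincide. Recall $V(x)$ is the unique simple quotient of the Verma module $Z(x) = \dd(M_x)$, so there is a surjection $Z(x) \twoheadrightarrow V(x)$; dualizing via the exact contravariant functor $F$ on $\calh$ gives an injection $F(V(x)) \hookrightarrow F(Z(x))$. The first task is therefore to understand $F(Z(x))$ well enough to locate $F(V(x))$ inside it. Since $F$ preserves formal characters (by the Proposition recalled in \S\ref{Sduality}), $F(V(x))$ has the same character as $V(x)$, and by Proposition \ref{P7} its ``top'' weight space (maximal in the partial order among weights occurring) is $M_x$ regarded now as a module for $H \rtimes \G$ via the action twisted by $i$; but $i$ fixes $H$ and sends $\g \mapsto \g^{-1}$, so this twisted $H \rtimes \G$-module structure on the top weight space of $F(V(x))$ is exactly $F(M_x) = M_{F(x)}$ in $\calc$, using the duality functor on $\calc$ from the preceding paragraph of the excerpt.

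From here I would argue as follows. First, $F(V(x))$ is simple in $\calh$ (as $F$ is an exact contravariant involution and $V(x)$ is simple), and it lies in $\calo$: one checks local finiteness of the $B_+$-action, which follows because $F$ of an object of $\calo$ with a simple Verma flag / finite weight multiplicities stays in $\calo$ — more precisely, $F$ sends maximal vectors for $N_+$ to vectors annihilated by $i(N_+) = N_-$ behaviour is not quite what we want, so instead I would note that the weights of $F(V(x))$ are the same multiset as those of $V(x)$, which are bounded above (in $G_0$, up to the $\G$-action) by $\la_x$, and a module with weights bounded above in this sense and finite-dimensional weight spaces automatically has locally finite $B_+$-action. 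Hence $F(V(x)) \in \calo$ is simple, so by Theorem \ref{Tfacts}(1) it equals $V(x')$ for a unique $x' \in X$. It remains to identify $x' = F(x)$.

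To pin down $x'$: a simple object $V(x')$ in $\calo$ is determined by $x' \in X$, and $x'$ is recovered from $V(x')$ as (the isomorphism class of) the $H \rtimes \G$-module $M_{x'}$ spanned by its maximal weight vectors (Proposition \ref{P6}(2)). So I must show that the span of the maximal weight vectors of $F(V(x))$, as an object of $\calc$, is $F(x)$. A maximal vector of $F(V(x))$ is a weight functional in $F(V(x)) \subset V(x)^*$ that is killed by $N_+$ acting via $i$, i.e.\ killed by $i(N_+) = N_-$ in the original action — equivalently it is a functional vanishing on $N_- V(x)$, which is the unique maximal proper submodule complement to $M_x$ in $V(x)$, i.e.\ $V(x) = M_x \oplus N_- V(x)$ as $H \rtimes \G$-modules by $H$-semisimplicity and the weight grading. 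Thus the space of maximal vectors of $F(V(x))$ is naturally $(V(x)/N_- V(x))^* \cong M_x^*$, and unwinding the $H \rtimes \G$-action (twisted by $i$) this is precisely $F(M_x) = M_{F(x)}$ by definition of $F$ on $\calc$. Therefore $M_{x'} \cong M_{F(x)}$ in $\calc$, so $x' = F(x)$ by the classification, giving $F(V(x)) = V(F(x))$.

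The main obstacle I anticipate is the bookkeeping in the middle step — verifying cleanly that $F(V(x))$ actually lies in $\calo$ (not merely in $\calh$) and that the decomposition $V(x) = M_x \oplus N_- V(x)$ of $H\rtimes\G$-modules behaves correctly under dualization, i.e.\ that the $i$-twist interchanges the roles of $N_+$ and $N_-$ and simultaneously turns the $\calc$-structure on $M_x$ into the $\calc$-structure $F(M_x)$, consistently with the fact (noted just before Proposition \ref{P4}) that $\G$ acts on characters by $\g(e(\la)) = e(\g(\la))$ while $i_\G(\g) = \g^{-1}$. Everything else is a routine combination of: simplicity of $F$ applied to a simple module, character preservation under $F$, the classification of simples in $\calo$ by $X$, and the recovery of $x$ from $V(x)$ via its maximal-vector space $M_x$. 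One subtlety worth a sentence in the writeup: when $|\G|=1$ this reduces to the known statement $F(V(\la)) = V(\la)$, and the content here is genuinely that $F$ permutes $X$ nontrivially in general, consistent with the modified BGG Reciprocity remarked after Theorem \ref{Timplies}.
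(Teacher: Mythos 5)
Your argument is correct and takes essentially the same route as the paper: both proofs first observe that $F(V(x))$ is simple with the same formal character as $V(x)$, hence lies in $\calo$ and equals some $V(x')$, and then identify $x'$ by showing that the top weight spaces of $F(V(x))$ form the object $M_{F(x)}$ of $\calc$. The only difference is cosmetic --- where you compute the maximal vectors of $F(V(x))$ by hand as the functionals vanishing on $N_- V(x)$ (using $V(x) = M_x \oplus N_- V(x)$ as $H \rtimes \G$-modules), the paper packages that same computation as an appeal to Lemma \ref{Lduality} with $A' = A \rtimes \G$ and $A'' = H \rtimes \G$.
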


\begin{proof}
We know that $F(V(x))$ is a simple module in $\calh$ with the same formal
character as $V(x)$. It thus has a weight vector of maximal weight, which
generates the entire module, since it is simple. Thus $F(V(x)) \in
\calo$, whence it is of the form $V(x')$ from above. We claim that $x' =
F(x)$.

To see this, apply Lemma \ref{Lduality}, setting $A' = A \rtimes \G, A''
= H \rtimes \G, H' = H$. The ``highest" set of weight spaces in $V(x)$ is
the $H \rtimes \G$-module $M_x$, and $F''$ sends it to $M_{F(x)}$; now by
Lemma \ref{Lduality}, $F(V(x)) = V(F(x))$.
\end{proof}

We conclude with a standard variant of Schur's Lemma, in addition to
dualizing the fact that $V(x)$ is the unique simple quotient of $Z(x)$.

\begin{prop}\label{P10}\hfill
\begin{enumerate}
\item For all $x \in X$, $F(Z(x))$ has socle $V(F(x))$.

\item Given $x,x' \in X$,
\begin{eqnarray*}
&&    \Hom(Z(x), F(Z(F(x'))))     = \Hom(Z(x), V(x'))\\
& = & \hhh_{\hbg}(Z(x), M_{x'})   = \hhh_{\hbg}(V(x), M_{x'})\\
& = & \hhh_{\hbg}(M_x, M_{x'})    = \hhh_{\hbg}(M_x, V(x'))\\
& = & \Hom(V(x), V(x'))           = \delta_{x,x'} \End_{H \rtimes
\G}(M_x),
\end{eqnarray*}

\noindent where each $M_x$ is killed by $N_+$, and $M \twoheadrightarrow
M_x$ for $M = Z(x), V(x)$, with kernel(s) $\oplus_{\mu < \g(\la_x)}
M_\mu$.

\item $\dim_k \End(M_x) = \dim_k \End(M_{F(x)})\ \forall x \in X$.

\item If $k$ is algebraically closed when $|\G|>1$, then $M_x, V(x)$, and
$Z(x)$ are {\em Schurian}, i.e., the only module endomorphisms are
scalars.
\end{enumerate}
\end{prop}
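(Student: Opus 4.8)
The plan is to prove the four parts of Proposition~\ref{P10} essentially in order, exploiting the adjunction $\hhh_{A \rtimes \G}(\Ind E, M) = \hhh_{\hbg}(E, \Res M)$ for the induction functor together with the fact (Proposition~\ref{P6}) that every maximal weight vector in a Verma or simple module lies in the finite-dimensional $H \rtimes \G$-submodule $M_x$, which is annihilated by $N_+$. Throughout, I use that $\calc$ is semisimple (Proposition~\ref{Phgsplits}), that $F$ is exact and contravariant with $F \circ F = \id$ and $F(V(x)) = V(F(x))$ (Propositions in \S\ref{Sduality} and Proposition~\ref{P4}), and that $V(x)$ is the unique simple quotient of $Z(x)$ (Proposition~\ref{P6}).

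For part (1), since $F$ is exact and contravariant, applying $F$ to the surjection $Z(x) \twoheadrightarrow V(x)$ gives an injection $V(F(x)) = F(V(x)) \hookrightarrow F(Z(x))$; I must then argue this copy of $V(F(x))$ is the whole socle, i.e.\ that any simple submodule $V(x'') \hookrightarrow F(Z(x))$ must equal $V(F(x))$. Dualizing again, $V(x'') \hookrightarrow F(Z(x))$ corresponds to a surjection $Z(x) = F(F(Z(x))) \twoheadrightarrow F(V(x'')) = V(F(x''))$, so by uniqueness of the simple quotient of $Z(x)$ we get $V(F(x'')) = V(x)$, hence $x'' = F(x)$. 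For part (2), the chain of equalities is a sequence of adjunction and duality manipulations: $\Hom(Z(x), F(Z(F(x'))))$: by Lemma~\ref{Lduality}/contravariance this equals $\Hom(F(F(Z(F(x')))), F(Z(x))) = \Hom(Z(F(x')), F(Z(x)))$, whose image must land in the socle $V(F(x))$ of $F(Z(x))$ (part (1)), giving $\Hom(Z(x), V(x'))$. That, via the adjunction and $M_{x'} = $ the maximal weight space killed by $N_+$, equals $\hhh_{\hbg}(Z(x), M_{x'})$; since any $\hbg$-map out of $Z(x)$ kills $N_- Z(x)$ (as $M_{x'}$ is $N_+$-trivial and the image is spanned by weights $\geq$ something, one checks it factors through the top), this is $\hhh_{\hbg}(M_x, M_{x'})$, and that is the same as $\hhh_{H \rtimes \G}(M_x, M_{x'})$ since $N_+$ acts as zero on both. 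By semisimplicity of $\calc$ and Schur, $\hhh_{H \rtimes \G}(M_x, M_{x'}) = \delta_{x,x'} \End_{H \rtimes \G}(M_x)$. The remaining equalities ($= \hhh_{\hbg}(V(x), M_{x'}) = \Hom(V(x), V(x'))$ etc.) follow because $V(x)$ and $Z(x)$ share the same maximal weight space $M_x$ and any homomorphism is determined by its effect there.

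Part (3) is immediate from $F(V(x)) = V(F(x))$ applied to the top: $F$ restricts to a length- and character-preserving contravariant equivalence on $\calc$ sending the simple object $M_x \in \calc$ to $M_{F(x)}$, so $\End_{H \rtimes \G}(M_x) \cong \End_{H \rtimes \G}(M_{F(x)})^{op}$, which has the same $k$-dimension. Part (4) then follows: when $k$ is algebraically closed, a finite-dimensional simple module $M_x$ over $H \rtimes \G$ has $\End = k$ by the classical Schur's Lemma, so $\dim_k \End(M_x) = 1$; plugging into part (2) with $x' = x$ gives $\End(V(x)) = \End(Z(x)) = k$ (and $\End(M_x) = k$), i.e.\ all three modules are Schurian.

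The main obstacle I anticipate is the bookkeeping in part (2): correctly justifying that a homomorphism $Z(x) \to V(x')$ (or $Z(x) \to M_{x'}$ of $\hbg$-modules) is determined by, and factors through, the top quotient $M_x$ — i.e.\ that $N_- Z(x)$ (equivalently the span of weights strictly below the $\G$-orbit of $\la_x$) must map to zero. This uses the weight-space description in Proposition~\ref{P6}(1) and Lemma~\ref{L2.1}: $M_{x'}$ lives in weights in the $\G$-orbit of $\la_{x'}$, it is annihilated by $N_+$, and a weight vector of $Z(x)$ strictly below $\g(\la_x)$ that mapped nontrivially into $M_{x'}$ would, after multiplying back up by $B_+$, contradict either maximality or the weight bound. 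Once that one factorization lemma is in hand, the rest is a routine chase through adjunctions, $F \circ F = \id$, and the semisimplicity of $\calc$.
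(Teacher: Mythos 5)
Your proposal is correct and follows essentially the same route as the paper, which itself only remarks that the first equality is standard and that the rest come from a cyclic chain of maps $\varphi_{ZV}\mapsto\varphi_{ZM}\mapsto\cdots$ composing to the identity; your write-up simply makes those identifications (Frobenius reciprocity for $\Ind$, factorization through the top $M_x$ via Proposition \ref{P6}(1) and Lemma \ref{L2.1}, and semisimplicity of $\calc$) explicit. One small imprecision: in part (4), $\End(Z(x))$ is not literally one of the terms in the chain of part (2), so it does not follow by ``plugging in $x'=x$''; but the same technique disposes of it in one line, since by adjunction $\End(Z(x)) = \hhh_{\hbg}(M_x, Z(x))$ and the image of $M_x$ must land in the $\G(\la_x)$-weight spaces of $Z(x)$, which equal $M_x$ by Proposition \ref{P6}(1), giving $\End(Z(x)) \cong \End_{H \rtimes \G}(M_x) = k$.
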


\noindent We merely remark that in the second part, the first equality is
standard. Moreover, for all terms but the first and the last, all
$\hhh$-groups are clearly zero unless $x = x'$, in which case, we produce
a cyclic chain of maps $\varphi_{AB} : A(x) \to B(x')$, that compose to
give the identity:
\[ \varphi_{ZV} \mapsto \varphi_{ZM} \mapsto \varphi_{VM} \mapsto
   \varphi_{MM} \mapsto \varphi_{MV} \mapsto \varphi_{VV} \mapsto
   \varphi_{ZV}. \]

\section{Homological properties}

In this section, we show some results that are needed in later sections.

\subsection{Every object has a good filtration}

We now show that every module in $\calo$ has a filtration with standard
cyclic subquotients. As in \cite{Kh}, we need more notation.

\begin{definition}\hfill
\begin{enumerate}
\item Define $\hgt : \Z \dd \to \Z$, via: $\hgt \left( \sum_{\aaa \in
\dd} n_\aaa \aaa \right) := \sum_\aaa n_\aaa$.

\item Given $l \in \nn$, define $B_{+l} := \sum_{\hgt \pi(\theta) \geq l}
(B_+)_\theta$ in $B_+$ for all $l \in \N$.

\item Given $\la \in G$ and $l \in \N$, define the subcategory
$\calo(\la, l)$ to be the full subcategory of all $M \in \calo$ such that
$B_{+l} \cdot k\G \cdot M_\la = 0$.

\item Given $y \in Y$, define the module
\[ P(y,l) := (A \rtimes \G) / (B_{+l}, J_y), \]

\noindent where $y = (\la_y,M_y,[J_y])$, and we identify 1 with the
generator $v_{\la_y}$ (by choice of $[J_y]$), as in equation
\eqref{EXYZ}. (Thus $h - \la_y(h) \cdot 1 \in J_y$ for all $h \in H$.)

\item Define $I_{y,l}$ to be the left ideal of (the $k$-algebra) \hbg
generated by $B_{+l}$ and $J_y$, and set $E_{y,l} := (\hbg) / I_{y,l}$.
\end{enumerate}
\end{definition}\medskip

\noindent Thus, $\Ind E_{y,l} = B_- \otimes E_{y,l} = (A \rtimes \G) /
((A \rtimes \G) I_{y,l}) = P(y,l)$; in particular, $P(y,l) \in \calo$. In
fact, by considering the formal character of $E_{y,l},\ P(y,l) \in
\calo(\la_y, m)\ \forall m \geq l, y \in Y$; moreover, $P(y,l+1)
\twoheadrightarrow P(y,l)\ \forall y,l$.\medskip

Note that $k\G \cdot \bar{1} = M_y \subset E_{y,l}$. Since $E_{y,l} = B_+
M_y$ is a finite-dimensional $H$-semisimple $\hbg$-module, hence this is
similar to a construction in \cite{BGG1}, and by the above results,
$P(y,l)$ has a simple Verma flag, with a ``suitable arrangement" of
composition factors, by the proof of Proposition \ref{P5} above. In fact,
for all $l$, one of the terms in the Verma flag for $P(x,l)$ is $Z(x)$,
and by inspecting the formal character of $E_{x,l}$ (or its direct sum
decomposition as a $H \rtimes \G$-module), we find that all other terms
are of the form $Z(x')$ for $x'>x$.
For example, if $l=1$, then $B_{+1} = N_+$, so $P(y,1) = Z(y)$.\medskip

We now analyze singly-generated modules in $\calo$, and then all modules.
If $N = (A \rtimes \G) v_\la$ for some (not necessarily maximal) weight
vector $v_\la$, then $N = B_- B_+ (k\G v_\la)$, where we note that $k\G
v_\la = M_y$, say, for some $y \in Y$. Since $N \in \calo$, hence
$B_+(k\G v_\la)$ is finite-dimensional, so that $P(y,l)
\twoheadrightarrow N = (A \rtimes \G) v_{\la_y}$ for all $l \gg 0$. In
fact, we have:

\begin{prop}\label{P8}\hfill
\begin{enumerate}
\item For all $l \geq 0$, $B_{+l}$ is a two-sided ideal of $B_+$ with
finite codimension, that is stable under the $\G$-action.

\item If $N \in \calo(\la,l)$ and $\la = \la_y$, then\\
$\hhom(P(y,l),N) = \hhh_{H \rtimes \G}(M_y, k\G \cdot N_{\la_y})$.

\item $P(y,l)$ is projective in $\calo(\la_y,l)$ for all $y \in Y,\ l \in
\N$.

\item If $N \in \calh$, then the following are equivalent:
\begin{enumerate}
\item $N \in \calo$.

\item $N$ is a quotient of a (finite) direct sum of $P(y_i,l_i)$'s.

\item $N$ has an SC-filtration, with subquotients of the form $Z(x) \to V
\to 0$, with $x \in X$.
\end{enumerate}
\end{enumerate}
\end{prop}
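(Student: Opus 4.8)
The plan is to prove the four parts in order, since each builds on the previous ones. For part (1), I would note that $B_{+l}$ is spanned by weight spaces $(B_+)_\theta$ with $\hgt \pi(\theta) \geq l$; since multiplication respects the grading (by the property of weights, RTA7), the product of $(B_+)_\theta$ with any $(B_+)_{\theta'}$ lands in a weight space of height $\geq l$, giving the two-sided ideal property. Finite codimension follows because $B_+ / B_{+l}$ is spanned by the finitely many $(B_+)_{\theta_0}$ with $0 \leq \hgt \theta_0 < l$, each finite-dimensional by regularity (RTA6). Stability under $\G$ is immediate from $\g((B_+)_\theta) = (B_+)_{\g(\theta)}$ together with the fact that $\G$ preserves $\dd$, hence preserves heights. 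For part (2), the key point is the adjunction $\hhom(\Ind E_{y,l}, N) = \hhh_{\hbg}(E_{y,l}, \Res N)$ combined with the description of $E_{y,l}$: since $B_{+l}$ kills $N_{\la_y}$ (and its $\G$-translates) by the hypothesis $N \in \calo(\la,l)$, any $\hbg$-map out of $E_{y,l} = (\hbg)/I_{y,l}$ is determined by where it sends $M_y = k\G \cdot \bar 1$, and the only constraint is $H \rtimes \G$-equivariance into $k\G \cdot N_{\la_y}$; this gives the claimed identification.

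Part (3) is essentially formal from parts (1)--(2): to show $P(y,l)$ is projective in $\calo(\la_y,l)$, take a surjection $N \twoheadrightarrow N'$ in that subcategory; applying part (2) reduces the lifting problem to lifting $H \rtimes \G$-maps $M_y \to k\G \cdot N'_{\la_y}$, and this is possible because $\calc$ is semisimple (Proposition \ref{Phgsplits}), so $k\G \cdot N_{\la_y} \twoheadrightarrow k\G \cdot N'_{\la_y}$ splits as $H \rtimes \G$-modules. For part (4), I would prove the cyclic chain of implications (a) $\Rightarrow$ (b) $\Rightarrow$ (c) $\Rightarrow$ (a). The implication (a) $\Rightarrow$ (b): if $N \in \calo$ is finitely generated, pick generators, replace each by a finite sum of weight vectors, and observe (as in the paragraph preceding the proposition) that each generator $v_{\la}$ yields a surjection $P(y,l) \twoheadrightarrow (A \rtimes \G) v_\la$ for $l \gg 0$ since $B_+(k\G v_\la)$ is finite-dimensional; summing these gives the desired surjection from a finite direct sum. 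The implication (b) $\Rightarrow$ (c): each $P(y_i,l_i)$ has a simple Verma flag (shown in the discussion above the proposition, via the proof of Proposition \ref{P5}), in particular an SC-filtration with subquotients $Z(x) \twoheadrightarrow V \to 0$; a finite direct sum of such modules again has such a filtration, and so does any quotient, by the standard argument that SC-filtrations pass to quotients (refine using Proposition \ref{P2}(3) and the fact that a quotient of a standard cyclic module is standard cyclic). The implication (c) $\Rightarrow$ (a): a module with a finite SC-filtration is finitely generated (one generator per subquotient) and lies in $\calo$ since $\calo$ is closed under extensions and quotients and each subquotient, being a quotient of some $Z(x)$, is in $\calo$.

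The main obstacle I anticipate is part (2) — specifically, pinning down $E_{y,l}$ precisely enough as an $\hbg$-module to justify that $\hhh_{\hbg}(E_{y,l}, \Res N)$ collapses to $\hhh_{H \rtimes \G}(M_y, k\G \cdot N_{\la_y})$. One must check that $E_{y,l}$ is generated over $\hbg$ by $M_y$ with relations exactly $B_{+l}$ and $J_y$, so that the universal property forces a map out of it to be an $H\rtimes\G$-map on $M_y$ landing in the $B_{+l}$-annihilated part of $N$, which is where $N \in \calo(\la,l)$ enters. The rearrangement-of-Verma-flag bookkeeping in (b) $\Rightarrow$ (c) is routine given Proposition \ref{P5}, and the remaining implications are soft.
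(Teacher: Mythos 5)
Your proposal is correct and follows essentially the same route as the paper, which proves part (1) exactly as you do (RTA7 for two-sidedness, RTA6 for finite codimension, and Lemma \ref{L2.1} for $\G$-stability via preservation of heights) and disposes of parts (2)--(4) by citing \cite{BGG1} and \cite{Kh}, whose arguments are precisely the Frobenius-reciprocity identification, the reduction of projectivity to semisimplicity of $\calc$ (Proposition \ref{Phgsplits}), and the cyclic chain of implications you spell out. Your write-up simply makes explicit the details the paper delegates to those references.
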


\begin{proof}\hfill
\begin{enumerate}
\item Two-sidedness follows from (RTA7), and finite codimention from
(RTA6). Next, if $b \in B_{+l}$ is a weight vector of weight $\la$, then
$\hgt \la \geq l$. But then by Lemma \ref{L2.1}, $\hgt \g(\la) \geq l$,
whence $\g(b) \in B_{+l}$.

\item This is as in \cite{BGG1} (or also \cite[Proposition 5]{Kh}).

\item This follows from the previous part and Proposition \ref{Phgsplits}
above.

\item This is proved as in \cite[Proposition 7]{Kh}.
\end{enumerate}
\end{proof}

A standard consequence is

\begin{prop}\label{Pfinlength}
The following are equivalent:
\begin{enumerate}
\item $Z_A(\la)$ has finite length for all $\la$.
\item $Z(x)$ has finite length (as an $\ag$-module) for all $x \in X$.
\item $Z(y)$ has finite length for all $y \in Y$.
\item $\calo_A$ is finite length.
\item $\calo_{\ag}$ is finite length.
\end{enumerate}
\end{prop}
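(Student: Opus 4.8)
The plan is to prove the chain of equivalences by a cycle of implications, using the results already established relating $\calo_A$ and $\calo_{\ag}$ together with the structural facts about the modules $P(y,l)$. Most of the work has in fact already been done: Theorem \ref{Tfacts}(3) gives ``$Z(x)$ has finite length in $\calo_{\ag}$ $\iff$ $Z_A(\la_x)\in\calo_A$ has finite length'', and Remark \ref{R4} shows that the length of $Z_A(\g(\la))$ is independent of $\g$. So the equivalence of (1), (2), (3) is essentially immediate from these: every $\la\in G$ arises as $\la_x$ for some $x\in X$ (Theorem \ref{T2}) and as $\la_y$ for some $y\in Y$, and each $Z(y)$ decomposes as an $A$-module into a sum of $Z_A(\g_i(\la_y))$'s by the $Y$-analogue of Proposition \ref{P7} (Remark \ref{R4}), whose lengths all agree and equal $l_A(Z_A(\la_y))$. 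Thus (1) $\iff$ (2) $\iff$ (3), with (1) the ``$A$-side'' restatement of (2)/(3).

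The core of the argument is then the passage from ``all Verma modules have finite length'' to ``every object of $\calo$ has finite length'', i.e. (3) $\Rightarrow$ (5) and (1) $\Rightarrow$ (4). For (3) $\Rightarrow$ (5): by Proposition \ref{P8}(4), every $N\in\calo$ is a quotient of a finite direct sum of modules $P(y_i,l_i)$; since finite length is preserved by finite direct sums and quotients, it suffices to show each $P(y,l)$ has finite length. But $P(y,l) = \Ind E_{y,l}$ has a simple Verma flag (by the discussion after the definition of $P(y,l)$, via Proposition \ref{P5}(2)), so it has a finite filtration with subquotients of the form $Z(x)$; if each such $Z(x)$ has finite length, then so does $P(y,l)$. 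The implication (1) $\Rightarrow$ (4) is the same argument run in $\calo_A$ (this is the classical statement, proved as in \cite{Kh}). Conversely (5) $\Rightarrow$ (2) and (4) $\Rightarrow$ (1) are trivial, since every $Z(x)$ lies in $\calo_{\ag}$ and every $Z_A(\la)$ with $Z_A(\la)\in\calo_A$ lies in $\calo_A$. Finally I would close the loop by connecting the $A$-side and $\ag$-side: (4) $\Rightarrow$ (3) because any $Z_A(\la_x)$ having finite length (by (4), applied to the object $Z_A(\la_x)\in\calo_A$) forces $Z(x)$ to have finite length by Theorem \ref{Tfacts}(3); and (5) $\Rightarrow$ (4) because if $\calo_{\ag}$ is finite length then in particular each $Z(x)$ is, hence each $Z_A(\la_x)\in\calo_A$ is (Theorem \ref{Tfacts}(3) again), and then (1)/(4) follow as above. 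Assembling: $(4)\Rightarrow(3)\Rightarrow(5)\Rightarrow(4)$ and $(1)\Leftrightarrow(2)\Leftrightarrow(3)$, $(5)\Rightarrow(2)$, $(1)\Rightarrow(4)$, giving all five equivalent.

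The step I expect to require the most care is verifying that $P(y,l)$ genuinely admits a \emph{finite} filtration by Verma modules $Z(x)$ with $x\in X$ (not merely by standard cyclic modules or by $Z(y)$ with $y\in Y$): this uses that $E_{y,l}$ is a finite-dimensional $H$-semisimple $\hbg$-module, so its composition series in $\calc$ has subquotients $M_{x_i}$ with $x_i\in X$, and that $\Ind$ is exact and sends this (suitably rearranged) series to a simple Verma flag — all of which is assembled in the paragraph preceding Proposition \ref{P8} from Propositions \ref{P2} and \ref{P5}. Once that is in hand, ``finite filtration by finite-length modules has finite length'' is the only remaining ingredient, and it is formal. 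One should also note that the equivalences implicitly require the Standing Assumptions \ref{St1}, \ref{St2} (so that $\G$ acts order-preservingly and the $\G$-translate bookkeeping of Remark \ref{R4} is valid), but these are in force throughout this section.
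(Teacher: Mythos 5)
Your proof is correct and is exactly the intended argument: the paper presents this proposition as "a standard consequence" of Proposition \ref{P8} without writing out details, and your reconstruction — (1)$\Leftrightarrow$(2)$\Leftrightarrow$(3) via Theorem \ref{Tfacts}(3), Remark \ref{R4}, and Theorem \ref{T2}, then (3)$\Rightarrow$(5) by covering any $N\in\calo$ with a finite direct sum of $P(y_i,l_i)$'s each carrying a finite simple Verma flag — is precisely the route the paper has set up. No gaps.
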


\subsection{Extensions between simple objects}

We now obtain information about the Ext-quiver in $\calo$. Recall the
partial ordering on $Y$, namely: $y \leq y'$ if and only if $\la_y <
\g(\la_{y'})$ for some $\g \in \G$, or $y=y'$. We also define $Y(x)$ (for
any $x$) to be the unique maximal submodule of $Z(x)$.
We now imitate a result in \cite{Kh3}; the proof is similar to that of
\cite[Proposition 4, part 2]{Kh}.

\begin{prop}\label{Pext}
$E_{x,x'} := \Ext^1(V(x), V(x'))$ is nonzero if and only if ($x > x'$
and) $Y(x) \twoheadrightarrow V(x')$, or ($x' > x$ and) $Y(F(x'))
\twoheadrightarrow V(F(x))$.

\noindent Moreover, $E_{x,x'} \cong E_{F(x'),F(x)}$ is
finite-dimensional.
\end{prop}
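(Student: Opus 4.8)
The plan is to follow the classical BGG argument (as in \cite{BGG1} and \cite[Proposition 4]{Kh}), adapted to the skew group ring setting via the tools already assembled in the excerpt. First I would reduce the ``only if'' direction to a statement about the unique maximal submodule $Y(x) \subset Z(x)$. Given a nonsplit extension $0 \to V(x') \to M \to V(x) \to 0$, pick a maximal weight vector in the copy of $V(x)$ sitting on top; lift it to a weight vector $\bar v \in M$, necessarily of the same highest weight. The submodule $(A \rtimes \G)\bar v$ is standard cyclic, hence a quotient of $Z(x)$; if the extension is nonsplit this quotient cannot be $V(x)$ itself, so it surjects onto a proper submodule of $M$, forcing $Y(x) \twoheadrightarrow V(x')$. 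Using the partial order recalled just before the statement (Proposition \ref{Pfinlength} and the surrounding discussion), $Y(x) \twoheadrightarrow V(x')$ can only happen when $x' < x$, since every composition factor $V(x'')$ of $Y(x)$ has $x'' < x$ by inspecting formal characters of $Z(x)$. The apparently missing case ``$x' > x$'' is then recovered by duality: apply the exact contravariant functor $F$ of \S\ref{Sduality} to the extension, use $F(V(x)) = V(F(x))$ from Proposition \ref{P4}, and note $F$ swaps the two simple objects, so an extension with $x' > x$ becomes one with $F(x') < F(x)$, handled by the first case with $Y(F(x')) \twoheadrightarrow V(F(x))$.

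For the ``if'' direction, suppose $x > x'$ and $Y(x) \twoheadrightarrow V(x')$. Then the pushout of $0 \to Y(x) \to Z(x) \to V(x) \to 0$ along $Y(x) \twoheadrightarrow V(x')$ produces a short exact sequence $0 \to V(x') \to M \to V(x) \to 0$, and I would check it is nonsplit: a splitting would give a copy of $V(x)$ inside $M$ whose highest weight vector, pulled back appropriately, would contradict that $Y(x)$ is the \emph{unique} maximal submodule of $Z(x)$ (equivalently, that $Z(x)$ has simple top $V(x)$, Proposition \ref{P6}). Some care is needed because $\mathrm{Ext}^1$ is computed in $\calo$, not in the full module category, but $\calo$ is closed under extensions of its objects (it is closed under quotients, submodules of objects in $\calo$ with the right finiteness, and the pushout $M$ lies in $\calh$ with the right weight-space behaviour and locally finite $B_+$-action), so the constructed $M$ genuinely represents a nonzero class in $\Ext^1_{\calo}(V(x),V(x'))$.

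It remains to establish $E_{x,x'} \cong E_{F(x'),F(x)}$ and finite-dimensionality. The isomorphism is immediate from $F$ being exact and contravariant with $F^2 = \id$ and $F(V(x)) = V(F(x))$: it induces $\Ext^1_{\calo}(V(x),V(x')) \cong \Ext^1_{\calo}(F(V(x')),F(V(x))) = \Ext^1_{\calo}(V(F(x')),V(F(x)))$. For finite-dimensionality, I would use the projective-type objects $P(y,l)$ of Proposition \ref{P8}: choosing $l$ large enough that $V(x), V(x') \in \calo(\la_x, l) \cap \calo(\la_{x'},l)$ and using that $P(y,l)$ is projective in $\calo(\la_y,l)$ with a finite Verma flag, one computes $\Ext^1$ via a short projective presentation built from finitely many $P(y_i,l_i)$'s, all of which have finite-dimensional weight spaces; then $\Ext^1(V(x),V(x'))$ is a subquotient of $\hhom$'s between finitely generated objects with finite-dimensional graded pieces, hence finite-dimensional. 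The main obstacle I anticipate is the bookkeeping for the ``if'' direction — verifying that the pushout extension is genuinely nonsplit \emph{and} lives in $\calo$ — together with pinning down exactly which truncated subcategory $\calo(\la,l)$ to work in so that the $P(y,l)$'s give a legitimate computation of $\Ext^1_{\calo}$ rather than $\Ext^1$ in some larger category; the $\G$-equivariance is handled automatically since $Y(x)$, being the unique maximal submodule, is $\G$-stable and all the relevant functors ($F$, $\Ind$, truncation) are already known to be compatible with the $\G$-action by the results cited above.
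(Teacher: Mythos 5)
Your overall strategy is the one the paper intends: the paper offers no proof of Proposition \ref{Pext} beyond the remark that it is similar to \cite[Proposition 4, part 2]{Kh}, and what you describe is precisely that classical BGG argument transported to the skew setting (pushout along $Y(x)\twoheadrightarrow V(x')$ for the ``if'' direction, indecomposability of quotients of $Z(x)$ for nonsplitness, the duality isomorphism $E_{x,x'}\cong E_{F(x'),F(x)}$ from exactness of $F$ and Proposition \ref{P4}).

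One step in your ``only if'' direction would, as written, fail. You lift a highest weight vector of $V(x)$ to $\bar v\in M$ and assert that $(A\rtimes\G)\bar v$ is standard cyclic, hence a quotient of $Z(x)$. This needs $N_+\bar v=0$; a priori you only know $N_+\bar v$ lands in $V(x')$, in weight spaces strictly above $\g(\la_x)$. Those weight spaces of $V(x')$ vanish exactly when $x'\not>x$ (via Lemma \ref{L2.1}); when $x'>x$ they need not, $\bar v$ need not be maximal, and the submodule it generates need not be a quotient of $Z(x)$. So the dichotomy must precede the standard-cyclicity claim: if $x'\not>x$, run the direct argument and deduce $x'<x$ from the composition factors of $Y(x)$; if $x'>x$, apply $F$ first. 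You clearly know this, since you invoke duality for the ``missing case,'' but as written the first paragraph asserts an intermediate statement that is false in that case. (Also, after dualizing one gets an extension of $V(F(x'))$ by $V(F(x))$ with $F(x')>F(x)$, not ``$F(x')<F(x)$''; the relevant point is that the \emph{top} of the dualized sequence now has the larger weight, which is what the direct argument requires.) Two smaller remarks: in the skew setting one should lift the whole simple $H\rtimes\G$-module $M_x$ equivariantly, which the semisimplicity of $\calc$ (Proposition \ref{Phgsplits}) permits; and you can avoid the $P(y,l)$ bookkeeping for finite-dimensionality altogether, since the ``only if'' analysis exhibits $E_{x,x'}$ (for $x>x'$) as a subquotient of $\Hom(Y(x),V(x'))$, which is finite-dimensional because any such map is determined by its restriction to the finite-dimensional weight spaces $Y(x)_{\g(\la_{x'})}$.
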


We now show our first block decomposition - though not for $\calo$. The
proof uses Proposition \ref{Pext} and an argument similar to
\cite[Theorem 4]{Kh}; each ``finite length block" is clearly self-dual.

\begin{prop}\label{Pskewblock}
Define $\caloo$ to be the full subcategory of all finite length objects
in $\calo$. The sum $\sum_{x \in X} \left( \calo(x) \cap \caloo \right)$
of distinct subcategories (of all finite length objects in $\calo(x)$) is
direct, and equals all of $\caloo$. Each ``finite length block" is
abelian, (finite length,) self-dual, and closed under morphisms and
extensions (in $\caloo$). All morphisms and extensions between distinct
blocks are trivial.
\end{prop}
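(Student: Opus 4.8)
The plan is to build this block decomposition out of the Ext-quiver information in Proposition \ref{Pext}, exactly paralleling the standard argument for Category \calo\ (as in \cite[Theorem 4]{Kh}). First I would fix notation: for $M \in \caloo$, each composition factor is of the form $V(x)$ for $x \in X$ by Theorem \ref{Tfacts}(1), and since $M$ has finite length this is a finite set of $x$'s, so it makes sense to ask which ``linkage classes'' $S^3(x)$ they lie in. The key lemma is the following consequence of Proposition \ref{Pext}: if $\Ext^1(V(x), V(x')) \neq 0$, then $x$ and $x'$ lie in the same class, i.e.\ $x' \in S^3(x)$. Indeed, Proposition \ref{Pext} says a nonzero $\Ext^1$ forces either $Y(x) \twoheadrightarrow V(x')$ (so $V(x')$ is a subquotient of $Z(x)$, giving $x \to x'$ in the sense of Definition \ref{D1}(2)), or $Y(F(x')) \twoheadrightarrow V(F(x))$ (so $F(x) \to F(x')$ together with the relations $x \to_F F(x)$, $x' \to_F F(x')$ chain $x$ to $x'$). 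In either case $x$ and $x'$ are identified in the symmetric-transitive closure $S^3$.

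Next I would run the standard ``connected component'' argument. Given $M \in \caloo$, decompose the (finite) set of composition factors of $M$ into its $S^3$-classes, say indexed by $x^{(1)}, \dots, x^{(r)}$ with distinct $S^3(x^{(j)})$. I claim $M = \bigoplus_j M_j$, where $M_j$ is the largest submodule whose composition factors all lie in $\{V(x') : x' \in S^3(x^{(j)})\}$; equivalently $M_j \in \calo(x^{(j)}) \cap \caloo$. The point is that, by the Ext-lemma above, no extension can ``mix'' two different classes: proceeding by induction on the length of $M$ and using the long exact $\Ext$ sequence, any submodule/quotient splitting of $M$ along its socle respects the class partition, because a nonsplit extension between $V(x)$ and $V(x')$ requires $x, x'$ in the same class. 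This shows $\caloo = \sum_x (\calo(x) \cap \caloo)$, and that the sum is direct (a module in two distinct blocks has composition factors in the intersection of two distinct $S^3$-classes, forcing it to be zero). Each block $\calo(x) \cap \caloo$ is abelian: it is closed under sub- and quotient-objects within $\caloo$ by definition of $\calo(x)$ (composition factors only shrink), and closed under extensions in $\caloo$ by the Ext-lemma. It has finite length by construction. Self-duality follows from $F(V(x')) = V(F(x'))$ (Proposition \ref{P4}) together with $F(x') \in S^3(x')$ (built into Definition \ref{D1}(2) via the relation $\to_F$), so $F$ preserves each $\calo(x)$; and $F$ preserves finite length (it is exact, contravariant, length-preserving). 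Finally, $\Hom$ and $\Ext^1$ between modules in distinct blocks vanish: $\Hom$ because a nonzero map would give a common composition factor, and $\Ext^1$ because a nonsplit extension of a module in one block by a module in another would, by d\'evissage down to simples and the Ext-lemma, force two factors from distinct classes to be linked.

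The main obstacle I anticipate is the bookkeeping in the d\'evissage: one must be careful that ``largest submodule with composition factors in a given class'' is well-defined (closed under sums of submodules, which follows since the class of a composition factor is intrinsic) and that the induction on length genuinely localizes the obstruction to $\Ext^1$ between \emph{simple} objects — this is where the full strength of Proposition \ref{Pext} (both the existence criterion and the finite-dimensionality, so the Ext groups are honest finite-dimensional vector spaces) is used. Everything else — abelianness, closure under morphisms and extensions, self-duality — is formal once the Ext-quiver is shown to be supported on single $S^3$-classes. Since the argument is the verbatim analogue of \cite[Theorem 4]{Kh} with $G$ replaced by $X$ and the linkage relation replaced by $S^3$, I would present it compactly, citing that source for the routine parts.
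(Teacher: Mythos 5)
Your proposal is correct and follows exactly the route the paper takes: the paper's proof is precisely ``use Proposition \ref{Pext} plus the argument of \cite[Theorem 4]{Kh},'' which is the Ext-quiver-on-single-$S^3$-classes d\'evissage you spell out, with self-duality noted separately. (The only nit: from $Y(F(x')) \twoheadrightarrow V(F(x))$ the relation runs $F(x') \to F(x)$, not the reverse, but this is immaterial since $S^3$ is the symmetric closure.)
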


\section{The Conditions (S)}

\begin{prop}\label{P9}
Fix $y \in Y, \mu \in G$. The following are equivalent:
\begin{enumerate}
\item $[Z_A(\g(\la_y)) : V(\mu)] > 0$ for some $\g \in \G$.

\item There exists $x \in X$ so that $\la_x = \mu$ and $[Z(y) : V(x)] >
0$.
\end{enumerate}
\end{prop}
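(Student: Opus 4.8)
The plan is to pass between the two skew group ring setups by using the $A$-module decomposition of simple and Verma modules in $\calo_{\ag}$ provided by Proposition \ref{P7} and Remark \ref{R4}, together with Proposition \ref{P4} relating the dualities. First I would recall that for $x \in X$ with $\la_x = \mu$, Proposition \ref{P7} gives $V(x) \cong \bigoplus_i V_A(\g_i(\mu))$ as $A$-modules (where $\{\g_i v_{\la_x}\}$ is a weight basis of $M_x$), and similarly Remark \ref{R4} gives $Z(y) \cong \bigoplus_j Z_A(\g_j(\la_y))$ as $A$-modules. Hence as an $A$-module, the multiplicity $[Z(y):V(x)]$ computed in $\calo_{\ag}$ restricts to a sum of $A$-module Jordan--Hölder multiplicities $[Z_A(\g_j(\la_y)) : V_A(\g_i(\mu))]$; so the restriction functor $\Res$ translates (2) into a statement about composition factors in $\calo_A$.

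Next I would establish the two directions. For $(2)\Rightarrow(1)$: if $[Z(y):V(x)] > 0$ in $\calo_{\ag}$, then since $\Res$ is exact, $V(x)$'s restriction $\bigoplus_i V_A(\g_i(\mu))$ appears among the composition factors of $\Res Z(y) = \bigoplus_j Z_A(\g_j(\la_y))$; in particular $V_A(\mu)$ (the $i$ with $\g_i = 1$, or any $\g_i$ after applying Remark \ref{R4} to shift by $\g_i^{-1}$, using that $[Z_A(\g(\la)):V_A(\g(\mu))] = [Z_A(\la):V_A(\mu)]$ via the isomorphism $\g : Z_A(\la) \to Z_A(\g(\la))$ carrying maximal vectors to maximal vectors) is a composition factor of some $Z_A(\g_j(\la_y))$, giving (1). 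For $(1)\Rightarrow(2)$: suppose $[Z_A(\g(\la_y)) : V_A(\mu)] > 0$ for some $\g$; by the length-shifting in Remark \ref{R4} I may replace $\g(\la_y)$ by $\la_y$ itself, i.e.\ assume $[Z_A(\la_y):V_A(\mu)] > 0$. Choose (via Theorem \ref{T2}) some $x \in X$ with $\la_x = \mu$. Now $Z(y)$ has a simple Verma flag and finite-dimensional weight spaces below $\la_y$ are governed by $\ch_{Z(y)} = \sum_j \ch_{Z_A(\g_j(\la_y))}$; I would argue that since $V_A(\mu)$ occurs in $\Res Z(y)$, the corresponding $(H \rtimes \G)$-module $M_\mu$ of maximal weight vectors must occur among the subquotients, so some $V(x')$ with $\la_{x'} = \mu$ is a composition factor of $Z(y)$ in $\calo_{\ag}$. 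To upgrade ``some $x'$'' to the asserted existence of $x$ with $\la_x = \mu$ and $[Z(y):V(x)]>0$, it suffices to note the statement only claims existence of such an $x$, so $x := x'$ works.

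I expect the main obstacle to be the bookkeeping in $(1)\Rightarrow(2)$: one must guarantee that a composition factor $V_A(\mu)$ of the restricted module $\Res Z(y)$ genuinely ``lifts'' to a composition factor $V(x')$ of $Z(y)$ over $\ag$, rather than being spread across several $\ag$-composition factors. The clean way to handle this is to induct down the simple Verma flag of $Z(y)$: each subquotient is a standard cyclic quotient $W$ of some $Z(x'')$, and by Proposition \ref{P7} applied to composition factors, $\Res W$ is a direct sum of $A$-modules of the form (quotients of) $Z_A(\g(\la_{x''}))$; one checks that $V_A(\mu)$ appears in $\Res W$ precisely when some $V(x')$ with $\la_{x'}=\mu$ appears in $W$, using that the ``highest weight space'' functor $M \mapsto M_{\text{max}}$ detects the $(H\rtimes\G)$-module $M_{x'}$, whose restriction contains the line $k^\mu$. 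The remaining steps---exactness of $\Res$, the character identities from Proposition \ref{P7} and Remark \ref{R4}, and the reduction using the isomorphisms $\g:Z_A(\la)\to Z_A(\g(\la))$---are routine once this correspondence is in place.
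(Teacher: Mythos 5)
Your $(2)\Rightarrow(1)$ direction is exactly the paper's: decompose $Z(y)\cong\bigoplus_{v\in\mathcal{B}}B_-v$ as $A$-modules via Remark \ref{R4}, note that $V_A(\la_x)$ is an $A$-subquotient of $V(x)$ by Proposition \ref{P7}, and use Lemma \ref{Lsummand} to locate it inside a single summand $Z_A(\g(\la_y))$. That half is fine.

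The $(1)\Rightarrow(2)$ direction has a genuine gap, and it is precisely the step you flag as ``the main obstacle.'' Your strategy is to restrict, observe that $V_A(\mu)$ is an $A$-composition factor of $\Res Z(y)$, and then lift this back to an $\ag$-composition factor $V(x')$ with $\mu\in\la_{x'}$. But your proposed lifting -- inducting over an SC-filtration and asserting that ``$V_A(\mu)$ appears in $\Res W$ precisely when some $V(x')$ with $\la_{x'}=\mu$ appears in $W$'' -- is exactly the statement to be proved, not a tool for proving it; nothing in the proposal actually produces the $(H\rtimes\G)$-module $M_{x'}$ of maximal vectors whose existence you invoke. The paper avoids this by never leaving $Z(y)$: from $[Z_A(\g(\la_y)):V_A(\mu)]>0$ it extracts a maximal vector $w_\mu=b_-w_{\g(\la_y)}$ in the $A$-Verma module, transports it to $v_\mu:=b_-\g v_{\la_y}\in Z(y)$ (maximal because $B_-\g v_{\la_y}\cong Z_A(\g(\la_y))$ as $A$-modules and by Lemma \ref{Lmax}), then decomposes $k\G\cdot v_\mu$ into simple $\hbg$-summands using Proposition \ref{Phgsplits}; each summand $M_x$ generates a standard cyclic submodule of $Z(y)$, hence a quotient of $Z(x)$, whose unique simple quotient $V(x)$ is a subquotient of $Z(y)$ with $\la_x=\mu$. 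That explicit maximal vector and the complete reducibility of $\calc$ are the missing ingredients in your argument. Two smaller remarks: your reduction ``replace $\g(\la_y)$ by $\la_y$'' via Remark \ref{R4} replaces $V_A(\mu)$ by $V_A(\g^{-1}(\mu))$, which is only harmless because $\la_x$ is really a $\G$-orbit; and the appeal to Theorem \ref{T2} to ``choose some $x$ with $\la_x=\mu$'' is idle, since that $x$ carries no information about being a subquotient of $Z(y)$.
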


For this result (and also later), we need the following lemma.

\begin{lemma}\label{Lsummand}
For any ring $R$, every simple (sub)quotient of a direct sum of
$R$-modules, is automatically a simple (sub)quotient of some summand.
\end{lemma}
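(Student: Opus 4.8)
The plan is to reduce the statement about direct sums of modules to the cyclic case and then argue with the structure of submodules of a direct sum. First I would observe that it suffices to handle simple \emph{quotients}, since the simple \emph{sub}quotient case follows: a simple subquotient of $\bigoplus_\alpha N_\alpha$ is, by definition, a simple quotient of some submodule $N' \subseteq \bigoplus_\alpha N_\alpha$; but every submodule of a direct sum is itself a submodule (hence contained in) the direct sum, and if we can show the simple quotient of $N'$ is a quotient of a summand of $\bigoplus_\alpha N_\alpha$ — no, this needs care, so instead I would directly prove: if $S$ is a simple quotient of $M := \bigoplus_\alpha N_\alpha$, then $S$ is a quotient of some $N_\alpha$; and separately handle submodules.

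Here is the core argument for quotients. Let $q : M \twoheadrightarrow S$ with $S$ simple, and let $\iota_\alpha : N_\alpha \hookrightarrow M$ be the canonical inclusions. Since $S \ne 0$, there is some $\alpha$ with $q \circ \iota_\alpha \ne 0$. As $S$ is simple, the submodule $\operatorname{im}(q \circ \iota_\alpha) \subseteq S$ is nonzero, hence equals $S$; thus $q \circ \iota_\alpha : N_\alpha \twoheadrightarrow S$ is the desired surjection. For simple subquotients, suppose $S$ is a simple subquotient of $M$, so $S \cong N'/N''$ for submodules $N'' \subsetneq N' \subseteq M$. Then $S$ is a simple quotient of $N'$, and $N'$ itself is a submodule of the direct sum $M$. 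Now intersect: $N'_\alpha := N' \cap \iota_\alpha(N_\alpha)$. In general $N'$ need not be the direct sum of the $N'_\alpha$, so one cannot immediately conclude; instead, I would apply the quotient case to $N'$ with respect to \emph{some} system of generators. The cleaner route: pick any nonzero $v \in N'$ whose image in $S = N'/N''$ is nonzero (possible since $N' \ne N''$); write $v = \sum_{\alpha \in F} v_\alpha$ with $F$ finite and $v_\alpha \in \iota_\alpha(N_\alpha)$; then the finitely generated submodule $\sum_{\alpha \in F} R v_\alpha$ already surjects onto $S$ (as its image in $S$ contains the image of $v$, which generates the simple $S$), so replacing $M$ by the \emph{finite} direct sum $\bigoplus_{\alpha \in F} \iota_\alpha(N_\alpha)$ reduces to the finite case, and then I apply the quotient argument above to one of the finitely many $\iota_\alpha(N_\alpha)$ — choosing $\alpha$ so that the composite $\iota_\alpha(N_\alpha) \hookrightarrow \sum_{\beta \in F} R v_\beta \to S$, or rather the projection followed by the map to $S$, is nonzero.

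The main obstacle is the bookkeeping in the subquotient case: a submodule of a direct sum is not a direct sum of its intersections with the summands, so one must not pretend otherwise. The fix is exactly the finiteness reduction above — any element of $N'$ lies in a finite sub-sum, and since $S$ is simple a single well-chosen element's image generates it, so we only ever need finitely many summands, after which projecting onto the summands and using simplicity of $S$ finishes it. I expect the write-up to be short: state the quotient case as a one-line argument via the canonical inclusions, then deduce the subquotient case by the finiteness reduction, noting explicitly that this is where simplicity of $S$ and finiteness of the chosen generating set are used.
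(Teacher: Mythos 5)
The paper states Lemma \ref{Lsummand} without proof, so the only question is whether your argument is correct. Your quotient case is fine: if $q:\bigoplus_\alpha N_\alpha\twoheadrightarrow S$ is nonzero then some $q\circ\iota_\alpha$ is nonzero and hence surjective by simplicity of $S$. The subquotient case, however, has a genuine gap, and it is exactly at the point you flagged as delicate. Writing $S=N'/N''$, the quotient map onto $S$ is defined \emph{only on $N'$}. Once you pick $v\in N'\setminus N''$ and decompose $v=\sum_{\alpha\in F}v_\alpha$, the components $v_\alpha$ need not lie in $N'$, so the phrase ``the image of $\sum_{\alpha\in F}Rv_\alpha$ in $S$'' is not defined, and neither is the composite ``projection followed by the map to $S$'' in your last step: there is no map from $N_\alpha$ (or from $Rv_\alpha$) to $S$ to compose with. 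What your finiteness reduction legitimately gives is that $Rv\subseteq\bigoplus_{\alpha\in F}Rv_\alpha$ and $S$ is a quotient of $Rv$ (since $Rv+N''=N'$ by simplicity), i.e.\ that $S$ is a \emph{subquotient} of a finite direct sum. But that is the same problem you started with, now for finitely many summands; applying the quotient case to it is circular.

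The missing ingredient is the two-summand dichotomy, after which induction on $|F|$ finishes the proof. Suppose $N''\subsetneq N'\subseteq M_1\oplus M_2$ with $N'/N''\cong S$, and set $K:=N'\cap M_1$. If $K\not\subseteq N''$, then the image of $K$ in $N'/N''$ is a nonzero submodule of the simple module $S$, hence all of it, so $S\cong K/(K\cap N'')$ is a subquotient of $M_1$. If $K\subseteq N''$, then $S$ is a quotient of $N'/K\cong (N'+M_1)/M_1\cong p_2(N')\subseteq M_2$, so $S$ is a subquotient of $M_2$. Note that this step genuinely requires both alternatives; neither projection alone works, which is why no single choice of $\alpha$ ``so that the composite is nonzero'' can be made sense of. With this lemma in hand, your reduction $S\cong Rv/(Rv\cap N'')$ with $Rv\subseteq\bigoplus_{\alpha\in F}N_\alpha$, $F$ finite, correctly completes the argument for arbitrary index sets.
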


\begin{proof}[Proof of the proposition]
First assume that (1) holds; then there is a $b_- \in B_-$ with $b_-
w_{\g(\la_y)} = w_\mu$, where $w_{\g(\la_y)}, w_\mu$ are maximal weight
vectors of appropriate weights. Now assume $v_{\la_y}$ is maximal in
$Z(y)$. Then we claim that $v_\mu := b_- \g v_{\la_y}$ is maximal in
$Z(y)$; this is because $B_- \cdot \g v_{\la_y} \cong Z_A(\g(\la_y))$ as
$A$-modules.

Let us now write $k\G \cdot v_\mu$ as a direct sum of simple
$\hbg$-modules, by results above. If $M_x$ is any simple summand, then we
see that $V(x)$ is a subquotient of $Z(y)$, with $\la_x = \mu$; thus, (2)
follows.

Conversely, assume $(2)$. Given a weight basis $\mathcal{B}$ of $M_y
\subset Z(y)$, we have $Z(y) \cong \bigoplus_{v \in \mathcal{B}} B_- v$
as $A$-modules (from Remark \ref{R4}), with each summand a Verma module
for $A$. So if $[Z(y) : V(x)] > 0$, then as $A$-modules, $V_A(\la_x)$ is
a simple subquotient of $Z(y)$. By Lemma \ref{Lsummand}, $V_A(\la_x)$ is
a simple subquotient of some $B_- v$, hence of $Z_A(\g(\la_y))$ for some
$\g$.
\end{proof}

Next, recall the definition of $S^n(\la) = S^n_A(\la), S^n(x)$, for $\la
\in G$ and $x \in x$ (see Definition \ref{D1}). We now relate the
Conditions (S) for $A$ and $\ag$.

\begin{prop}\label{Pfunct}
Define $\sgx := \{ x' \in X : \la_{x'} = \g(\mu)$ for some $\g \in \G,
\mu \in S^3_A(\la_x) \}$. Now given all $\la \in G, \g \in \G, x \in X$,
we have:
\begin{enumerate}
\item $\g(S^n_A(\la)) = S^n_A(\g(\la))$ for $n=1,2,3$.

\item The map $\wt: X \to G/\G$, sending $x \mapsto \la_x$, satisfies:
\begin{equation}\label{Econds3}
\bigcup_{\wt(x) = \la} S^3(x) = \sgx = \wt^{-1}(\G(S^3_A(\la)))\ \forall
x \in \wt^{-1}(\la).
\end{equation}

\item For $n=1,2$, and any $\la \in G$,
\begin{equation}\label{Econds12}
\bigcup_{\wt(x) = \la} S^n(x) = \G(S^n_A({\la})).
\end{equation}
\end{enumerate}
\end{prop}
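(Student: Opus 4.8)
\textbf{Proof plan for Proposition~\ref{Pfunct}.}

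The plan is to prove the three parts in order, using Proposition~\ref{P9} as the main bridge between the $A$-side and the $\ag$-side, and the $\G$-equivariance of all the structural data (weights, Verma modules, the duality $F$) established in Lemma~\ref{Lcompat}, Proposition~\ref{P7}, Remark~\ref{R4}, and Proposition~\ref{P4}.

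For part~(1), I would proceed by induction on $n$. For $n=1$, recall $S^1_A(\la) = \{\pi(\mu) : \mu \in S^3_A(\la)^{\leq \la}\}$ while $S^2_A(\la) = \{\pi(\mu) : \mu \in S^3_A(\la)\}$; since $\g$ acts order-preservingly on $G_0$ (Standing Assumption~\ref{St2}) and $\pi \circ \g = \g \circ \pi$ (Lemma~\ref{Lcompat}), it suffices to handle $n=3$ and then push through $\pi$, noting $\g$ sends $S^3_A(\la)^{\leq\la}$ bijectively to $S^3_A(\g(\la))^{\leq\g(\la)}$. For $n=3$: the defining relations of $S^3_A$ are (a) $\mu \to \mu'$ if $V_A(\mu)$ is a subquotient of $Z_A(\mu')$, and (b) $\mu \to_F \mu'$ if $F(\mu) = \mu'$. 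By Remark~\ref{R4}, $\g : Z_A(\mu') \to Z_A(\g(\mu'))$ is an isomorphism carrying maximal vectors to maximal vectors, so it preserves composition factors; hence $\mu \to \mu'$ iff $\g(\mu) \to \g(\mu')$. For relation (b) over $A$ (the $|\G|=1$ case, where $F$ fixes each weight), $F(\mu) = \mu$, so $\g(F(\mu)) = \g(\mu) = F(\g(\mu))$ trivially. Therefore $\g$ is an automorphism of the graph generating $S^3_A$, hence $\g(S^3_A(\la)) = S^3_A(\g(\la))$.

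For part~(2), the key identity to establish is $\bigcup_{\wt(x) = \la} S^3(x) = \sgx = \wt^{-1}(\G(S^3_A(\la)))$. The second equality is essentially the definition of $\sgx$ unwound (an element $x'$ lies in $\wt^{-1}(\G(S^3_A(\la)))$ precisely when $\la_{x'} = \g(\mu)$ for some $\g\in\G, \mu\in S^3_A(\la)$, using part~(1) to absorb the $\G$-ambiguity in $\la_x$ inside $S^3_A$). For the first equality I would argue both inclusions. The generating relations for $S^3(x)$ in $X$ are (a) $x\to x'$ if $V(x)$ is a subquotient of $Z(x')$, and (b) $x\to_F x'$ if $F(x) = x'$. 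Relation~(a) translates, via Proposition~\ref{P9}, to: $[Z(x'):V(x)]>0$ iff $[Z_A(\g(\la_{x'})):V_A(\la_x)]>0$ for some $\g$, i.e.\ iff $\la_x \in \G(S^3_A(\la_{x'}))$-type relation at the level of weights. Relation~(b) translates, via Proposition~\ref{P4}, to $\la_{F(x)} = \la_x$ up to the duality on weights, which over $A$ is trivial; so $F$ on $X$ does not move the weight-orbit outside $\G(S^3_A(\cdot))$. Chaining these, any $x'\in S^3(x)$ has $\la_{x'}\in\G(S^3_A(\la_x)) = \G(S^3_A(\la))$, giving ``$\subseteq$''. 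Conversely, given $x'$ with $\la_{x'}\in\G(S^3_A(\la))$, one walks a path in $S^3_A$ from $\la$ to $\g^{-1}(\la_{x'})$ and lifts each step to $X$ using Proposition~\ref{P9} (each weight-step produces \emph{some} $x''\in X$ over the appropriate weight with the required subquotient relation), then adjusts by the $\G$-action on $X$ (which preserves $S^3$, by part~(1) applied inside $\ag$, i.e.\ Theorem~\ref{Teqs}(2) for $m=3$) to land on $x'$ itself; this uses the fact (Theorem~\ref{T2}) that the lifts are nonempty. That gives ``$\supseteq$'' and also shows $\bigcup_{\wt(x)=\la}S^3(x)$ is independent of the choice of $x\in\wt^{-1}(\la)$.

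Part~(3) is the analogue for $n=1,2$. Here $S^2(x) = \{\pi(\la_{x'}) : x'\in S^3(x)\}$ and $S^1(x) = \{\pi(\la_{x'}) : x'\in S^3(x)^{\leq x}\}$, so taking $\pi$ of the identity in part~(2) immediately gives $\bigcup_{\wt(x)=\la} S^2(x) = \pi(\G(S^3_A(\la))) = \G(\pi(S^3_A(\la))) = \G(S^2_A(\la))$, using $\pi\circ\g = \g\circ\pi$. For $S^1$ one must check the truncation ``$\leq x$'' matches ``$\leq\la$'' under $\wt$: by the definition of the partial order on $X$ (Definition~\ref{D1}(4)) this amounts to $x'\leq x \Leftrightarrow \pi(\la_{x'})\leq\pi(\g(\la))$ for some $\g$, which follows since the order on $X$ is defined through exactly this condition on representative weights; so $\wt(S^3(x)^{\leq x}) = \G(S^3_A(\la)^{\leq\la})$ and applying $\pi$ gives $\bigcup_{\wt(x)=\la} S^1(x) = \G(S^1_A(\la))$.

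The main obstacle I anticipate is the careful bookkeeping in part~(2): Proposition~\ref{P9} relates subquotient multiplicities for a \emph{fixed} $y\in Y$ (or $x\in X$) to those of $Z_A(\g(\la_y))$ for \emph{some} $\g$, so when iterating the relation along a chain in $S^3$ one must consistently track the $\G$-ambiguity and confirm that the equivalence-closure operation (symmetric + transitive, including the $F$-edges) on $X$ descends exactly to the corresponding closure on $G$ pushed forward by $\G$. In particular one must verify that adding the $F$-edges on the $X$-side does not enlarge the weight-orbit set beyond $\G(S^3_A(\la))$ — this is where Proposition~\ref{P4} ($F(V(x)) = V(F(x))$) together with the fact that $F$ on $G$ (the $|\G|=1$ case) is the identity on weights is used decisively — and symmetrically that the $\G$-action on $X$ (Theorem~\ref{Teqs}(2)) lets us promote the ``some $x''$ over the right weight'' produced by lifting to the \emph{specific} $x'$ demanded. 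Everything else is routine once these equivariance and lifting statements are in place.
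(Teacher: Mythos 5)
Your plan follows essentially the same route as the paper's proof: part (1) is handled for $n=3$ via the $\g$-isomorphism of $A$-Verma modules from Remark \ref{R4} (with the $F$-edges trivial since $F$ fixes weights when $|\G|=1$) and then pushed through $\pi$ for $n=1,2$; part (2) uses Proposition \ref{P9} together with Lemma \ref{Lsummand} for the forward inclusion and a step-by-step lift of a path in the $S^3_A$-graph (the paper phrases this as an induction on graph distance) for the reverse; and part (3) is obtained by applying $\pi$ and matching the truncations, exactly as in the paper. The obstacle you flag — tracking the $\G$-ambiguity when iterating Proposition \ref{P9} along a chain — is precisely what the paper's graph-distance induction resolves, so your proposal is correct and matches the paper's argument.
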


\noindent Note that in both equations, only the left-hand side
(supposedly) depends on the specific $\la$ inside an ``$S^n$-set".

\begin{proof}\hfill
\begin{enumerate}
\item We first show that $\g$ takes ``edges" in $S^3_A(\la)$ to ``edges"
in $S^3_A(\g(\la))$. Take any $y \in Y$ such that $\la = \la_y$ (such a
$y$ exists, by Remark \ref{R2}). Now consider the Verma module $Z(y)$.
For all $v_{\g(\la_y)} = \g v_{\la_y} \in M_y$, we have the $A$-Verma
module $Z_A(\g(\la_y)) = B_- v_{\g(\la_y)}$. Now use Remark \ref{R4} to
observe that $V_A(\mu)$ is a subquotient of $Z_A(\la) \subset Z(y)$ if
and only if $V_A(\g(\mu))$ is a subquotient of $Z_A(\g(\la))$. (We note
that $v_\mu \in Z_A(\la)$ is maximal if and only if so is $\g v_\mu$, by
Lemma \ref{Lmax}.)

The proof is similar if $[Z_A(\mu) : V_A(\la)] > 0$: start with $y \in Y$
so that $\mu = \la_y$.
Applying transitivity, we conclude that  $\g(S^3_A(\la)) \subset
S^3_A(\g(\la))$. Replacing $\g$ by $\g^{-1}$ and $\la = \la_y$ by
$\g(\la)$, we get:
\[ S^3_A(\la) = \g^{-1}(\g(S^3_A(\la))) \subset \g^{-1}(S^3_A(\g(\la)))
\subset S^3_A(\g^{-1}(\g(\la))) = S^3_A(\la). \]

This proves the result for (S3). The other two subparts now follow, by
using this subpart and Lemma \ref{Lcompat}:
\begin{eqnarray*}
\g(S^2_A(\la)) & = & \g(\pi(S^3_A(\la))) = \pi(\g(S^3_A(\la))) =
\pi(S^3_A(\g(\la))) = S^2_A(\g(\la)),\\
\g(S^1_A(\la)) & = & \g(S^2_A(\la)^{\leq \la}) = S^2_A(\g(\la))^{\leq
\g(\la)} = S^1_A(\g(\la)).
\end{eqnarray*}

\item We show this in two parts.
\begin{enumerate}
\item It is clear that $\wt^{-1}(S^3_A(\la)) = \sgx$ if $\la_x = \la$.
Next, if $\la_x \in \G(\la)$, then we claim that $S^3(x) \subset
\wt^{-1}(\G(S^3_A(\la)))$:

First, $\la_{F(x)} = \la_x \in S^3_A(\la_x)\ \forall x$. Next, if $[Z(x)
: V(x')] > 0$, then using the structure of $V(x'), Z(x)$, we see that
$V_A(\la_{x'})$ is a subquotient of a direct sum of some
$Z_A(\g(\la_x))$'s. By the previous part, Lemma \ref{Lsummand}, and
Proposition \ref{P9}, $\la_{x'} \in S^3_A(\g(\la_x)) = \g(S^3_A(\la_x))$.
Hence $\la_{x'} \in \G(S^3_A(\la_x))$, and by symmetry and transitivity,
the claim is proved.

\item We now show that $\wt^{-1}(\G(S^3_A(\la))) \subset \bigcup_{\wt(x)
= \la} S^3(x)$. Recall the graph structure on $G$, under which
$S^3_A(\la)$ is a connected component. We now prove this inclusion by
induction on $d(-,\la)$, where $d(-,-)$ is the {\em graph distance
function} (and $\la$ is the distinguished weight in the statement).

In what follows, $\la_k$ will denote some element of $S^3_A(\la)$ such
that $d(\la_k,\la) = k \geq 0$. (This is well-defined, since $\g :
S^3_A(\la) \to S^3_A(\g(\la))$ ``takes edges to edges" by the previous
part.)

The result is clear for $k=0$, since $\la_0 = \la$. Now suppose that it
holds for all $\la_k$ (for some fixed $k \geq 0$). Given $\la_{k+1} \in
S^3_A(\la)$, we know there exists a $\la_k$ connected to it by an edge.
Suppose $[Z(\la_{k+1}) : V(\la_k)] > 0$ (the proof of the other case is
similar). Choose any $x_{k+1} \in \wt^{-1}(\la_{k+1})$; then by
Proposition \ref{P9}, there exists $x_k \in \wt^{-1}(\la_k) \subset
\bigcup_{\wt(x) = \la} S^3(x)$ (by the induction hypothesis), such that
$[Z(x_{k+1}) : V(x_k)] > 0$. But then $x_{k+1} \in \bigcup_{\wt(x) = \la}
S^3(x)$ as well, and we are done by induction.
\end{enumerate}

\item By equation \eqref{Econds3}, $\bigcup_{\wt(x) = \la} \wt(S^3(x)) =
\G(S^3_A(\la))$. To prove equation \eqref{Econds12} for $n=2$, apply
$\pi$ to both sides and use Lemma \ref{Lcompat}. To prove the equation
for $n=1$, first intersect both sides with $G^{\leq \la}$, and then apply
$\pi$. We are then done, if we note that
\[ \bigcup_{\wt(x) = \la} S^3(x)^{\leq \la} = \bigcup_{\wt(x) = \la}
S^3(x)^{\leq x}. \]
\end{enumerate}
\end{proof}

We now collect our results relating the Conditions (S) for $A$ and $\ag$.

\begin{theorem}\label{Tconditions}
Suppose $\ag$ is a skew group ring over an RTA.
\begin{enumerate}
\item The various Conditions (S) satisfy: $(S3) \Rightarrow (S2)
\Rightarrow (S1)$.

\item Suppose $k$ is algebraically closed whenever $|\G|>1$. Then $\ag$
satisfies each of the Conditions (S1),(S2),(S3), if and only if $A$ does.

\item If Condition (S1) holds, then $\calo$ is finite length.

\item If $\calo$ is finite length, then it splits into a direct sum of
blocks.
\end{enumerate}
\end{theorem}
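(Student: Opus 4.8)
The plan is to deduce all four parts from the structural machinery already in place, handling the easy bookkeeping first. For part (1), I argue straight from Definition~\ref{D1}: since $S^2(x)=\{\pi(\la_{x'}):x'\in S^3(x)\}$ is the image of $S^3(x)$ under $x'\mapsto\pi(\la_{x'})$, finiteness of $S^3(x)$ immediately forces finiteness of $S^2(x)$, giving $(S3)\Rightarrow(S2)$; and $S^1(x)=\{\pi(\la_{x'}):x'\in S^3(x)^{\leq x}\}$ is a subset of $\{\pi(\la_{x'}):x'\in S^3(x)\}=S^2(x)$, so $S^2(x)$ finite forces $S^1(x)$ finite, giving $(S2)\Rightarrow(S1)$.

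For part (2) the decisive input is Proposition~\ref{Pfunct}, which for $n=1,2$ gives $\bigcup_{\wt(x)=\la}S^n(x)=\G(S^n_A(\la))$ and for $n=3$ gives $\bigcup_{\wt(x)=\la}S^3(x)=\wt^{-1}(\G(S^3_A(\la)))$, where $\wt\colon X\to G/\G$ sends $x$ to $\la_x$. The point is that, by Theorem~\ref{T2}, over an algebraically closed field $\wt^{-1}(\la)$ is nonempty and finite for every $\la\in G$, so each of these unions is in fact a finite union. Thus if $A$ satisfies $(Sn)$ then $\G(S^n_A(\la))$ is finite ($\G$ being finite), and for $n=3$ so is $\wt^{-1}$ of it; since every $x\in X$ lies in $\wt^{-1}(\la_x)$, each $S^n(x)$ is then finite and $\ag$ satisfies $(Sn)$. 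Conversely, if $\ag$ satisfies $(Sn)$, then for each $\la$ the union $\bigcup_{\wt(x)=\la}S^n(x)$ is a finite union of finite sets; applying $\wt$ (when $n=3$) recovers that $S^n_A(\la)\subseteq\G(S^n_A(\la))$ is finite, so $A$ satisfies $(Sn)$.

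For part (3) I would avoid routing through part (2) --- which would cost the hypothesis that $k$ be algebraically closed --- and instead show directly that $(S1)$ makes every Verma module $Z(x)$ finite length, then quote Proposition~\ref{Pfinlength}. If $V(x')$ is a composition factor of $Z(x)$, then $[Z(x):V(x')]>0$ puts $x'$ in $S'(x)\subseteq S^3(x)$, and since $\la_{x'}$ is a weight of $Z(x)\cong B_-\otimes_k M_x$ it satisfies $\la_{x'}\leq\g(\la_x)$ for some $\g\in\G$, whence $x'\leq x$; so $\pi(\la_{x'})\in S^1(x)$. For each restricted weight $\theta_0\in S^1(x)$ the space $Z(x)_{\theta_0}$ is finite-dimensional --- $M_x$ is finite-dimensional and $\dim_k(B_-)_{\theta_0}<\infty$ by regularity --- so at most $\dim_k Z(x)_{\theta_0}$ composition factors (with multiplicity) can have $\pi(\la_{x'})=\theta_0$; summing over the finite set $S^1(x)$ bounds $l(Z(x))$. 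Hence all $Z(x)$ have finite length, and Proposition~\ref{Pfinlength} gives that $\calo$ is finite length.

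Part (4) is then immediate from Proposition~\ref{Pskewblock}: if $\calo$ is finite length it equals its subcategory $\caloo$ of finite length objects, and that proposition exhibits $\caloo$ as the direct sum $\bigoplus_{x\in X}\calo(x)$ of abelian, self-dual, morphism- and extension-closed blocks. I expect the only genuinely fiddly step to be part (2): the bookkeeping of which of the three sets $S^n$ live in $X$ versus in the restricted weight lattice, and correctly invoking the finiteness and surjectivity of $\wt$ from Theorem~\ref{T2} in each direction; parts (1), (3), and (4) are essentially assembly.
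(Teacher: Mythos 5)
Your proposal is correct and follows essentially the same route as the paper: part (1) directly from the definitions, part (2) from the two displayed equations of Proposition \ref{Pfunct} combined with the finite-to-one (and surjective) nature of $\wt$ from Theorem \ref{T2}, part (3) by bounding $l(Z(x))$ through the finitely many finite-dimensional restricted weight spaces indexed by $S^1(x)$ (the paper phrases the same bound via the Kostant partition function $p(\theta)=\dim_k(B_-)_\theta$, using $Z(x)\cong B_-\otimes M_x$) and then invoking Proposition \ref{Pfinlength}, and part (4) from Proposition \ref{Pskewblock}.
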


\noindent Note that semisimple Lie algebras and quantum groups satisfy
Condition (S4) (as seen earlier), hence all the Conditions (S) as well -
and hence if $k$ is algebraically closed of characteristic zero, then
their wreath products also satisfy all the Conditions (S). (We relate
Condition (S4) across the various setups, in Section \ref{Scentral}.)

\begin{proof}
The first part is by definition, and the last part follows from
Proposition \ref{Pskewblock}.
The second part follows from equations \eqref{Econds3} and \eqref{Econds12},
and Theorem \ref{T2}, because (if $k$ is algebraically closed,) $\wt$ is
a finite-to-one map.

We now abuse notation (using the freeness of the action $*$) to write
$\la - \la' = \theta$ when $\theta * \la' = \la$, and use that $\pi$
intertwines the $*$-actions on $G$ and $G_0$. Now given $x \in X$, a
submodule (or a maximal vector) in $Z(x)$ of restricted highest weight
$\la_0 (\neq \pi(\g(\la_x))) \in G_0$ occurs only if $\la_0 =
\pi(\la_{x'})$ for some $x' < x$ in $S^3(x)$. This yields:
\[ l_{\ag}(Z(x)) \leq \sum_{\la \in S^1(x)} p(\la_0 - \pi(\la_x)), \]

\noindent where $p$ is the {\em Kostant partition function} $: G \to
\nn$, defined by
$p(\theta) := \dim_k(B_-)_\theta$.
But this summation is a finite sum by assumption, and each summand is
finite by regularity of $A$; now use Proposition \ref{Pfinlength}.
\end{proof}

\section{Central characters}\label{Scentral}

Next, we discuss the notion of {\em central characters}, as in
\cite{BGG1} - but over skew group rings now. All but the last subsection
are general; the last focusses on $R_{\mf{g}} = S_n \wr \mf{Ug}$.

\subsection{Central characters for skew group rings}

We start with a few definitions. Given a skew group ring $A \rtimes \G$
over an RTA $A$ (satisfying Standing Assumptions \ref{St1}, \ref{St2}),
recall the triangular decomposition $A \cong B_- \otimes H \otimes B_+$,
the augmentation ideals $N_\pm$ of $B_\pm$, and the (possibly infinite)
sets $S^3_A(\la)$ for each $\la \in G$, defined above.

\begin{definition}
To start with, denote the center of $A \rtimes \G$ by $Z$.
\begin{enumerate}
\item The {\em Harish-Chandra projection} is $\xi := \epsilon^- \otimes
\id \otimes \epsilon^+ : A = B_- \otimes H \otimes B_+ \twoheadrightarrow
H$ (see Proposition \ref{Pbabe}).

\item Given $\la \in G$, the subgroups $\G^\la,\gsl$ of $\G$ are defined
to be:
$\G^\la := \{ \g \in \G : \g(\la) = \la \}, \qquad \gsl := \bigcap_{\mu
\in S^3_A(\la)} \G^\mu$.

\item Given $\la \in G$, the {\em central character} $\chi_\la$ is the
map $: A \rtimes \G \to k\G$, defined as follows: given $r = \sum_{\g \in
\G} a_\g \g$ with $a_\g \in A$, define $\chi_\la(r) := \sum_{\g \in
\G^\la} \la(\xi(a_\g)) \g$.
\end{enumerate}
\end{definition}

This definition is motivated by the fact that in \cite{BGG1} (where $|\G|
= 1$), the center acts via such characters on (maximal vectors in)
objects in $\calo$.

\begin{lemma}\label{Lcent}
If $v_\la$ is a maximal vector (i.e., $N_+ v_\la = 0$) of weight $\la$ in
any $A \rtimes \G$-module $M$, then $z v_\la = \chi_\la(z) v_\la$ in $M$,
for all $z = \sum_\g z_\g \g$ in $Z$.
\end{lemma}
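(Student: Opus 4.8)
\textbf{Proof plan for Lemma \ref{Lcent}.}

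The plan is to reduce the statement, via the triangular decomposition of $A$, to a direct computation of how each component of a central element $z$ acts on $v_\la$. First I would write $z = \sum_{\g \in \G} z_\g \g$ with $z_\g \in A$, and then decompose each $z_\g$ using (RTA1): $z_\g = \sum (z_\g)_\theta$, a sum of $\ad H$-weight vectors, and further $z_\g = \sum b_-^{(j)} h^{(j)} b_+^{(j)}$ with $b_\pm^{(j)} \in B_\pm$, $h^{(j)} \in H$. Acting on the maximal vector $v_\la$, the factor $b_+^{(j)}$ either lies in $k = (B_+)_0$ or in $N_+$, and in the latter case it kills $v_\la$; so $b_+^{(j)} v_\la = \epsilon^+(b_+^{(j)}) v_\la$. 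Then $h^{(j)} b_+^{(j)} v_\la = \epsilon^+(b_+^{(j)}) \la(h^{(j)}) v_\la$. Finally the surviving terms have $b_-^{(j)} \in B_-$, and acting by $b_-^{(j)}$ shifts the weight down by $\pi$ of a nonnegative combination of simple roots (by (RTA5), (RTA7)); the only term that stays in the $\la$-weight space is the one with $b_-^{(j)} \in (B_-)_0 = k$. Hence $z_\g v_\la = \xi(z_\g)$-evaluation $= \la(\xi(z_\g)) v_\la$, i.e. $z_\g$ acts on $v_\la$ as the scalar $\la(\xi(z_\g))$ times a shift into weight $\la$, EXCEPT that I have not yet used centrality — this naive computation only handles the $B_\pm$ parts, not the $\g$ in front.

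Next I would bring in $\g v_\la$. By Lemma \ref{L1}(1) (or Lemma \ref{Lmax}/Lemma \ref{Lcompat}), $\g v_\la$ is a weight vector of weight $\g(\la)$, and it is again maximal since $N_+ (\g v_\la) = \g (\g^{-1}(N_+) v_\la) = \g(N_+ v_\la) = 0$ using that $\g$ preserves $N_+$ (Standing Assumption \ref{St1}). So $z_\g \g v_\la$ lives in weight spaces $\leq \g(\la)$, and its component in weight $\la$ is nonzero only if $\g(\la) \geq \la$ and $z_\g$ has an appropriate weight component. Here I would invoke Lemma \ref{L2.1}(2): since $\G$ is finite, each $\g$ has finite order, and $\g$ acts by order-preserving automorphisms (Standing Assumption \ref{St2}), so $\g(\la) \not< \la$; combined with the requirement that the contribution land in weight $\la$, we must have $\g(\la) = \la$, i.e. $\g \in \G^\la$. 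For such $\g$, the weight-$\la$ component of $z_\g \g v_\la$ is exactly $\la(\xi(z_\g)) \g v_\la$ by the computation of the previous paragraph applied to $\g v_\la$ in place of $v_\la$ (note $\xi$ and $\la$-evaluation commute with the $\g$-twist appropriately, or one simply redoes the three-step reduction). Summing over $\g$, the weight-$\la$ component of $z v_\la$ is $\sum_{\g \in \G^\la} \la(\xi(z_\g)) \g v_\la = \chi_\la(z) v_\la$, where $\chi_\la(z) \in k\G^\la \subset k\G$ acts on $v_\la$.

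It remains to show $z v_\la$ has \emph{no} component outside weight $\la$ — this is where centrality of $z$ is essential and is the main obstacle. The point: $z v_\la$ is a priori a sum over various weights $\mu \leq \g(\la)$ for $\g \in \G$, but I claim $z v_\la$ must itself be a weight vector of weight $\la$. Indeed, for any $h \in H$ one has $h z v_\la = z h v_\la = \la(h) z v_\la$ since $z$ is central and commutes with $h \in H \subset A$; as $H$ is commutative and $v_\la$ is an $H$-eigenvector, $z v_\la$ is again an $H$-eigenvector with the same eigenvalue, hence lies purely in the $\la$-weight space. (One must be slightly careful that $H$-weight spaces are defined via $\ad$ on $A$ but via the module action on $M$; the identity $hz = zh$ in $A$ is what is used, so this is fine.) Combining this with the computation above gives $z v_\la = \chi_\la(z) v_\la$ exactly. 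I expect the only genuinely delicate point to be bookkeeping the $\g$-twists in the second paragraph — making sure that applying $\epsilon^\pm$ and then evaluating at $\la$ is compatible with first twisting $v_\la$ by $\g$ — but this follows from $\g(N_\pm) = N_\pm$ and $\tangle{\g(\la), h} = \tangle{\la, \g^{-1}(h)}$, i.e. the equivariance in Standing Assumption \ref{St1}.
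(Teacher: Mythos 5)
Your proposal is correct and follows essentially the same route as the paper's proof: decompose each $z_\g$ via the triangular decomposition so that the $AN_+$ part kills the maximal vector $\g v_\la$ and the $N_-H$ part misses the $\la$-weight space (via Lemma \ref{L2.1}(2), which forces the surviving $\g$'s to lie in $\G^\la$), and then use centrality through $h\, z v_\la = z\, h v_\la = \la(h)\, z v_\la$ to conclude $z v_\la \in M_\la$. The only cosmetic difference is that the paper packages the $B_-\otimes H\otimes B_+$ bookkeeping directly as $z_\g = n_\g \oplus b_\g \oplus \xi(z_\g)$ with $\xi$ the Harish-Chandra projection, which you reconstruct by hand.
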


\begin{proof}
Since each $\g v_\la$ is maximal, hence writing $z_\g = n_\g \oplus b_\g
\oplus \xi(z_\g)$ (with $n_\g \in A N_+, b_\g \in N_- H$), we get that
$n_\g$ kills $\g v_\la$, and $b_\g \g v_\la$ has no $\la$-weight
component, by Lemma \ref{L2.1}. Hence:
\[ z v_\la = \sum_{\g \in \G} \xi(z_\g) \g v_\la = \sum_{\g \in \G}
\g(\la)(\xi(z_\g)) \g v_\la. \]

\noindent This is because if $v_\la \in M_\la$ for any $A \rtimes
\G$-module $M$, then we claim that $z v_\la \in M_\la$ as well: for any
$h \in H$, we have $h \cdot z v_\la = z \cdot h v_\la = \la(h) z v_\la$.
Thus, the above summation only runs over $\g \in \G^\la$, so we have
\begin{equation}\label{E4}
z v_\la = \sum_{\g \in \G^\la} \la(\xi(z_\g)) \g v_\la = \chi_\la(z)
v_\la.
\end{equation}
\end{proof}

We now explore properties of central characters. The first result is that
they are compatible with the $\G$-action, in the following sense:

\begin{prop}\label{Pcent}
Given $r \in A \rtimes \G, \la \in G, \beta \in \G$, we have
\[ \chi_{\beta(\la)}(r) = \beta \chi_\la(\beta^{-1}(r)) \beta^{-1}. \]
\end{prop}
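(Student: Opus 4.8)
The identity to prove is $\chi_{\beta(\la)}(r) = \beta\, \chi_\la(\beta^{-1}(r))\, \beta^{-1}$ for $r \in \ag$, $\la \in G$, $\beta \in \G$. My plan is to verify it directly from the definition of $\chi_\la$ by unwinding both sides on a general element $r = \sum_{\g \in \G} a_\g \g$, and tracking how the group $\G$ acts on the data appearing in the definition: the Harish-Chandra projection $\xi$, the stabilizer $\G^\la$, and the pairing $\la \mapsto \la(\xi(a_\g))$.

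First I would record the key compatibilities. Since each $\g' \in \G$ acts on $A$ preserving $B_\pm$ and $H$ (Standing Assumption \ref{St1}, part (1): each of $k, N_\pm, H_0, H$ is $\ad\g'$-stable), the Harish-Chandra projection $\xi = \epsilon^- \otimes \id \otimes \epsilon^+$ is $\G$-equivariant: $\xi(\g'(a)) = \g'(\xi(a))$ for all $a \in A$, $\g' \in \G$ (the $\G$-action respects the triangular decomposition, and $\epsilon^\pm$, having augmentations $N_\pm$, are preserved since $N_\pm$ are). Second, the stabilizers transform by conjugation: $\G^{\beta(\la)} = \beta\, \G^\la\, \beta^{-1}$, immediate from $\g(\beta(\la)) = \beta(\la) \iff (\beta^{-1}\g\beta)(\la) = \la$. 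Third, the duality of the $\G$-action on $G = \hhh_{k\text{-alg}}(H,k)$ (Lemma \ref{L1}): $\tangle{\beta(\la), h} = \tangle{\la, \beta^{-1}(h)}$, so $\beta(\la)(\xi(a)) = \la(\beta^{-1}(\xi(a))) = \la(\xi(\beta^{-1}(a)))$ using equivariance of $\xi$.

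Next I would compute the right-hand side. Write $r = \sum_{\g \in \G} a_\g \g$; then $\beta^{-1}(r) = \sum_{\g} \beta^{-1}(a_\g)\, (\beta^{-1}\g\beta)$ — here one must be careful that the group component gets conjugated, since $\beta^{-1}(a_\g \g) = \Ad\beta^{-1}(a_\g \g) = \beta^{-1}(a_\g)\, \beta^{-1}\g\beta$ in the notation of the Definition of $\Ad$ (the action of $\G$ on $\ag$ is by $\Ad$, which conjugates the $k\G$-part). Re-indexing by $\delta := \beta^{-1}\g\beta$, we have $\beta^{-1}(r) = \sum_{\delta \in \G} \beta^{-1}(a_{\beta\delta\beta^{-1}})\, \delta$, so by definition $\chi_\la(\beta^{-1}(r)) = \sum_{\delta \in \G^\la} \la\big(\xi(\beta^{-1}(a_{\beta\delta\beta^{-1}}))\big)\, \delta$. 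Conjugating by $\beta$ and re-indexing $\g = \beta\delta\beta^{-1}$ (so $\delta \in \G^\la \iff \g \in \G^{\beta(\la)}$) gives $\beta\, \chi_\la(\beta^{-1}(r))\, \beta^{-1} = \sum_{\g \in \G^{\beta(\la)}} \la\big(\xi(\beta^{-1}(a_\g))\big)\, \g$. Finally, $\la(\xi(\beta^{-1}(a_\g))) = \la(\beta^{-1}(\xi(a_\g))) = \beta(\la)(\xi(a_\g))$ by equivariance of $\xi$ and the duality pairing, so this equals $\sum_{\g \in \G^{\beta(\la)}} \beta(\la)(\xi(a_\g))\, \g = \chi_{\beta(\la)}(r)$, as desired.

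The main obstacle — really the only subtle point — is bookkeeping the conjugation of the $k\G$-component under the $\Ad$-action and making sure the re-indexing of the sum is done consistently (it is easy to drop a conjugation and get an off-by-conjugation error). Everything else is formal once the three compatibilities ($\G$-equivariance of $\xi$, conjugation behavior of stabilizers, and the duality pairing) are in hand; none of these requires $r$ to be central, so the statement holds for arbitrary $r \in \ag$ as asserted.
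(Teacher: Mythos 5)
Your proof is correct and follows essentially the same route as the paper's: both expand $r = \sum_{\g} a_\g \g$, use $\Ad\beta^{-1}(a_\g\g) = \beta^{-1}(a_\g)\,\beta^{-1}\g\beta$, the $\G$-equivariance of $\xi$ (from preservation of $N_\pm$), and the re-indexing of the sum over $\G^{\beta(\la)} = \beta\,\G^\la\,\beta^{-1}$ together with the duality pairing $\tangle{\beta(\la),h} = \tangle{\la,\beta^{-1}(h)}$. The only difference is presentational — you work from the right-hand side toward the left, while the paper computes both sides and matches them — so there is nothing to add.
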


\begin{proof}
Suppose $r = \sum_{\g \in \G} a_\g \g$, with $a_\g \in A\ \forall \g$.
Then
\[ \chi_{\beta(\la)}(r) = \sum_{\g \in \G^{\beta(\la)}}
\tangle{\beta(\la), \xi(a_\g)} \g = \sum_{\g \in \G^{\beta(\la)}}
\tangle{\la, \xi(\beta^{-1}(a_\g))} \g, \]

\noindent where the second equality holds because $\g(\xi(a)) =
\xi(\g(a))$ for all $a \in A, \g \in \G$ (since $\Ad \g$ preserves
$N_\pm$). Therefore
\begin{align*}
\chi_\la(\beta^{-1}(r)) = & \chi_\la \left( \sum_{\g \in \G}
\beta^{-1}(a_\g) \cdot \beta^{-1} \g \beta \right) = \sum_{\beta^{-1} \g
\beta \in \G^\la} \la(\xi(\beta^{-1}(a_\g))) \beta^{-1} \g \beta\\
= & \beta^{-1} \cdot \sum_{\g \in \G^{\beta(\la)}}
\la(\xi(\beta^{-1}(a_\g))) \g \cdot \beta = \beta^{-1}
\chi_{\beta(\la)}(r) \beta
\end{align*}

\noindent using the above equation.
\end{proof}

Next, we show that at least on the center, central characters have very
few nonzero components:

\begin{prop}\label{Pcenter}
Fix $\la \in G$. If $\g \notin \gsl$, and $z = \sum_\g z_\g \g$ is any
central element, then $\la(\xi(z_\g)) = 0$.
\end{prop}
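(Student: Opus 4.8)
\textbf{Proof plan for Proposition \ref{Pcenter}.} The plan is to exploit the $\G$-equivariance of central characters (Proposition \ref{Pcent}) together with the fact that a central element is, in particular, $\Ad$-invariant. First I would observe that for a central $z = \sum_{\g} z_\g \g$ and any $\beta \in \G$, conjugation by $\beta$ fixes $z$: since $\beta z \beta^{-1} = z$ and $\beta z \beta^{-1} = \sum_\g \Ad\beta(z_\g \g) = \sum_\g \beta(z_\g)\, \beta\g\beta^{-1}$, comparing the coefficient of a fixed group element $\g$ on both sides gives $\beta(z_{\beta^{-1}\g\beta}) = z_\g$, i.e.\ the family $\{z_\g\}$ transforms equivariantly under the conjugation action of $\G$ on itself. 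Applying $\xi$ (which commutes with the $\G$-action, as $\Ad\g$ preserves $N_\pm$) yields $\beta(\xi(z_{\beta^{-1}\g\beta})) = \xi(z_\g)$.

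Next I would apply Proposition \ref{Pcent} to the central element $z$, which tells us directly that $\chi_{\beta(\la)}(z) = \beta\, \chi_\la(z)\, \beta^{-1}$ for every $\beta \in \G$. In particular, suppose $\beta \in \G^\la$, so $\beta(\la) = \la$; then $\chi_\la(z) = \beta\, \chi_\la(z)\, \beta^{-1}$. Writing $\chi_\la(z) = \sum_{\g \in \G^\la} \la(\xi(z_\g))\, \g$ and conjugating, we get that the coefficient function $\g \mapsto \la(\xi(z_\g))$ on $\G^\la$ is invariant under conjugation by $\G^\la$; more precisely $\la(\xi(z_{\beta^{-1}\g\beta})) = \la(\xi(z_\g))$ for all $\beta,\g \in \G^\la$. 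Combined with the equivariance from the first step, one gets for $\beta \in \G^\la$ that $\la(\xi(z_\g)) = \tangle{\la, \beta(\xi(z_{\beta^{-1}\g\beta}))} = \tangle{\beta^{-1}(\la), \xi(z_{\beta^{-1}\g\beta})} = \tangle{\la, \xi(z_{\beta^{-1}\g\beta})}$, consistent with the above.

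The crucial input, and what I expect to be the main obstacle, is to bring in the sets $S^3_A(\la)$ — the definition of $\gsl = \bigcap_{\mu \in S^3_A(\la)} \G^\mu$ forces me to show that $\la(\xi(z_\g))$ depends only on the behavior of $\g$ at \emph{all} weights linked to $\la$, not just at $\la$. The idea is: by Lemma \ref{Lcent}, $z$ acts on a maximal vector $v_\mu$ of weight $\mu$ in any module by the scalar-valued operator $\chi_\mu(z)$; choosing $\mu \in S^3_A(\la)$ and using that $V_A(\mu)$ appears (up to the $\G$-action) as a subquotient linked to $\la$ inside a suitable Verma module $Z(y)$ with $\la_y = \la$ — so that the central element $z$ acts by a single operator throughout $Z(y)$ — forces a compatibility between $\chi_\la(z)$ and $\chi_\mu(z)$. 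Concretely, I would take $y \in Y$ with $\la_y = \la$ (Remark \ref{R2}), note $z$ acts on all of $Z(y)$, hence the "central character operators" $\chi_{\g(\la)}(z)$ and $\chi_{\g(\mu)}(z)$ recorded on the various maximal vectors of subquotients must agree after accounting for the $\G$-action; tracking the coefficient of $\g$ then shows $\la(\xi(z_\g)) = \mu(\xi(z_\g))$ whenever $\mu \in S^3_A(\la)$, for $\g \in \G^\la \cap \G^\mu$ — and more importantly that if $\g$ moves some $\mu \in S^3_A(\la)$, the corresponding coefficient is forced to vanish because $\chi_{\g(\mu)}(z)$ records $\g$ in a shifted position. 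Pinning down this last vanishing argument — using that $\g \notin \gsl$ means $\g(\mu) \neq \mu$ for some $\mu \in S^3_A(\la)$, and that the operator $z$ cannot simultaneously act on the $\mu$-maximal vector via a group element $\g$ fixing $\mu$ and via one moving it — is the delicate step, and I would handle it by the same decomposition-of-$k\G v_\mu$-into-simples technique used in the proof of Proposition \ref{P9}, applied to a maximal vector realizing the link $\mu \in S^3_A(\la)$.
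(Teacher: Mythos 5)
Your plan is essentially the paper's proof: the paper likewise takes $y = \Ind^{H \rtimes \G}_H k^\la$, writes a maximal vector $v_\mu = b v_\la$ realizing an edge $[Z_A(\la) : V_A(\mu)] > 0$, computes $z v_\mu$ both as $b (z v_\la)$ via $\chi_\la$ and directly via $\chi_\mu$ (Lemma \ref{Lcent}), and compares coefficients of the $\g v_\la$ in the free $B_-$-module $Z(y)$ to force $\la(\xi(z_\g)) = 0$ unless $\g \in \G^\la \cap \G^\mu$ and $\g(b) \in k^\times b$, finishing by symmetry and transitivity over $S^3_A(\la)$. One minor imprecision: the coefficient comparison yields $\la(\xi(z_\g)) = \theta_{\la,\mu}(\g)\, \mu(\xi(z_\g))$ for a character $\theta_{\la,\mu}$ of the relevant stabilizer (cf.\ Theorem \ref{Tcenter}), not the unadorned equality you state, but this does not affect the vanishing conclusion.
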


\begin{proof}
Consider equation \eqref{E4}, with $k\G \cdot v_\la = M_y$, where $y =
\Ind^{H \rtimes \G}_H k^\la$; thus $\dim_k y = |\G|$.
If $[Z_A(\la) : V_A(\mu)] > 0$ for some $\mu \in G$, then there is a
maximal vector $v_\mu = b v_\la$ in $Z(y) = (A \rtimes \G) v_\la$, for
some weight vector $b \in N_-$ (e.g., by Proposition \ref{P9}). We now
compute using equation \eqref{E4}, for each $z \in Z$:
\begin{equation}\label{E5}
\sum_{\g \in \G^\la} \la(\xi(z_\g)) b \cdot \g v_\la = b z v_\la = z
v_\mu = \sum_{\g \in \G^\mu} \mu(\xi(z_\g)) \g(b) \cdot \g v_\la.
\end{equation}

\noindent By the triangular decomposition, the coefficient of $\g v_\la$
vanishes if $\g \notin \G^\la \cap \G^\mu$ (or $\g(b) \notin k^\times
b$). Now apply symmetry and transitivity to get the result.
\end{proof}

\noindent The above proof does not use the information about $\g(b)
\notin k^\times b$; however, we will use it presently.\medskip

The main result in this subsection relates central characters to simple
subquotients of Verma modules.

\begin{theorem}\label{Tcenter}
$Z = \mf{Z}(A \rtimes \G)$ as above.
\begin{enumerate}
\item For all $\la \in G,\ \chi_\la$ is an algebra map $: Z \to k\G$.

\item To each $\la \in G$ is associated a subgroup $\G_\la$ of $\gsl$,
such that
\begin{itemize}
\item $\chi_\la(z) = \sum_{\g \in \G_\la} \la(\xi(z_\g)) \g\ \forall z =
\sum_{\g} z_\g \g \in Z$;

\item $\G_\la = \G_\mu$ if $\mu \in S^3_A(\la)$; and


\item given $\mu \in S^3_A(\la)$, $\chi_\la$ and $\chi_\mu$ are related
(on $Z$) via a character $\theta_{\la,\mu}$ of $\G_\la$: $\la(\xi(z_\g))
= \theta_{\la,\mu}(\g) \mu(\xi(z_{\g}))\ \forall \g \in \G_\la, z \in Z$.
\end{itemize}
\end{enumerate}
\end{theorem}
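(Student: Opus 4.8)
The plan is to prove the two parts essentially in tandem, extracting the subgroup $\G_\la$ from a closer analysis of the computation already appearing in the proof of Proposition \ref{Pcenter}. For part (1), I would first verify that $\chi_\la : Z \to k\G$ is a $k$-linear map with image in $k\G$ — this is immediate from the definition, since $\chi_\la(r) = \sum_{\g \in \G^\la} \la(\xi(a_\g))\g$ only ever picks out group elements with scalar coefficients. To see it is multiplicative on $Z$, the cleanest route is representation-theoretic: by Lemma \ref{Lcent}, for any $A \rtimes \G$-module $M$ with a maximal weight vector $v_\la$, the center $Z$ acts on the $|\G|$-dimensional space $k\G \cdot v_\la \cong M_y$ (with $y = \Ind^{H \rtimes \G}_H k^\la$) through the assignment $z \mapsto \chi_\la(z)$, viewed as an element of $\End_{H \rtimes \G}(M_y)$ acting by left multiplication inside $k\G$. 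Since $Z$ is commutative and acts by a genuine algebra action on $M_y$, and since the map $z \mapsto (\text{left mult.\ by }\chi_\la(z))$ is injective enough on the relevant quotient, multiplicativity of $\chi_\la$ follows from multiplicativity of the $Z$-action. I would need to check that $\chi_\la(z) \in k\G$ actually commutes with the residual $H \rtimes \G$-action on $M_y$ so that "left multiplication by $\chi_\la(z)$" is well-defined as an endomorphism; this uses Proposition \ref{Pcent} (compatibility with the $\G$-action) together with the fact that $\chi_\la(z)$ is supported on $\G^\la$.

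For part (2), the subgroup $\G_\la$ should be defined as the set of $\g \in \gsl$ that actually occur with a nonzero coefficient $\la(\xi(z_\g))$ for some central $z$ — more precisely, I would set $\G_\la := \{\g \in \G : \la(\xi(z_\g)) \neq 0 \text{ for some } z \in Z\}$, and the first order of business is to show this is a subgroup contained in $\gsl$. Containment in $\gsl$ is exactly Proposition \ref{Pcenter}. That $\G_\la$ is closed under multiplication follows from part (1): if $z, z'$ are central with nonzero $\g$- and $\g'$-components, then the product $zz'$ (central, since $Z$ is commutative) has, by the algebra-map property of $\chi_\la$, a contribution to its $\g\g'$-component; one must rule out cancellation, which I would do by choosing $z, z'$ judiciously or by passing to the semisimple quotient of the image algebra. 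Closure under inverses and containing the identity ($\chi_\la(1) = 1$) are easy. The first bullet in (2) is then immediate from the definition of $\G_\la$ and the original formula for $\chi_\la$. The second bullet, $\G_\la = \G_\mu$ for $\mu \in S^3_A(\la)$, is the crux: I would prove it first for a single "edge" $[Z_A(\la) : V_A(\mu)] > 0$ by revisiting equation \eqref{E5}. There, comparing coefficients of $\g v_\la$ on both sides — now using the extra information $\g(b) \notin k^\times b$ for $\g \notin \G_\mu$, which the remark after Proposition \ref{Pcenter} flags as available — forces $\la(\xi(z_\g)) = \theta(\g)\,\mu(\xi(z_\g))$ for a suitable nonzero scalar $\theta(\g)$ whenever $\g$ lies in the common support, and forces both sides to vanish outside a common subgroup. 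Symmetry and transitivity of the relation defining $S^3_A(\la)$ then propagate the equality $\G_\la = \G_\mu$ along the whole connected component, and the scalars multiply along edges to give a well-defined character $\theta_{\la,\mu} : \G_\la \to k^\times$, which is the third bullet. That $\theta_{\la,\mu}$ is a group homomorphism follows once more from part (1), since $\la(\xi(\cdot))$ and $\mu(\xi(\cdot))$ are both algebra maps on $Z$ and $\theta_{\la,\mu}$ is their "ratio" on group-like supports.

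The main obstacle I anticipate is the cancellation issue: showing that $\G_\la$ is genuinely a subgroup and that the scalars $\theta_{\la,\mu}(\g)$ are nonzero and multiply correctly requires knowing that the various components $\la(\xi(z_\g))$, as $z$ ranges over $Z$, do not conspire to vanish. The right framework is to note that $\chi_\la(Z)$ is a commutative subalgebra of $k\G$ supported on $\G_\la$, hence (since $k$ has characteristic zero and $\G_\la$ is finite) is contained in $k\G_\la$; identifying the precise structure of this algebra — in effect, that it is the function algebra on a quotient through which $\G_\la$ acts — is what makes the scalars $\theta_{\la,\mu}$ genuine characters rather than mere multiplicative gadgets. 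I would handle this by evaluating $\chi_\la$ on a spanning set of $Z$ obtained from $\G$-averaging central elements of $A$ (using that $\mf{Z}(\ag) \supseteq \mf{Z}(A)^\G$ when applicable, or more generally the Clifford-theoretic description of the center), reducing everything to the already-understood case $|\G| = 1$ of \cite{BGG1} twisted by the group action via Proposition \ref{Pcent}.
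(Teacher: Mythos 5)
Your part (1) is essentially the paper's argument: evaluate on the Verma module $Z(y)$ with $y = \Ind^{H\rtimes\G}_H k^\la$, use Lemma \ref{Lcent} together with the centrality of $z'$ to get $\chi_\la(zz')v_\la = \chi_\la(z)\chi_\la(z')v_\la$, and conclude. The one point you leave vague (``injective enough on the relevant quotient'') is precisely that $M_y$ is the regular representation of $\G$, so $w \mapsto w v_\la$ is injective on $k\G$; no commutation check beyond the centrality of $z'$ is needed.

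Part (2) contains a genuine gap. You define $\G_\la$ as the support $\{\g : \la(\xi(z_\g)) \neq 0 \text{ for some } z \in Z\}$, and you yourself flag that closure under multiplication requires ``ruling out cancellation,'' to be handled ``by choosing $z,z'$ judiciously or by passing to the semisimple quotient.'' That is not an argument: the support of a unital commutative subalgebra of $k\G$ need not be a subgroup, and nothing in the structure of $Z$ obviously prevents cancellation in the $\g\g'$-component of $\chi_\la(zz')$. Likewise, your justification that $\theta_{\la,\mu}$ is a character --- as a ``ratio'' of the ``algebra maps'' $\la(\xi(\cdot))$ and $\mu(\xi(\cdot))$ --- fails, because for fixed $\g$ the component map $z \mapsto \la(\xi(z_\g))$ is not an algebra map on $Z$; only the full $k\G$-valued map $\chi_\la$ is. The paper sidesteps both problems by defining $\G_\la$ differently: for each edge $\nu' > \nu''$ of $S^3_A(\la)$ with $[Z_A(\nu'):V_A(\nu'')]>0$ it takes the set $\G_{\nu',\nu''}$ of those $\g \in \gsl$ stabilizing the line $k^\times b$ for every weight vector $b \in N_-$ with $b v_{\nu'}$ maximal of weight $\nu''$, and sets $\G_\la$ to be the intersection of these over all edges of the component. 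This is a subgroup by construction (an intersection of line-stabilizers); it manifestly depends only on the component $S^3_A(\la)$, which gives $\G_\la = \G_\mu$ for free; the refined reading of \eqref{E5} shows the support of $\chi_\la|_Z$ lies inside it, which is all the first bullet requires (the theorem does not ask for a minimal subgroup); and $\theta_{\la,\mu}$ is literally the character by which $\G_\la$ acts on the one-dimensional space $k\cdot b$, so its multiplicativity is automatic. To salvage your route you would either have to prove that your support set is a subgroup --- which I do not see how to do directly --- or switch to a stabilizer-type definition as above.
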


\begin{proof}
To show the first part, choose any $z,z' \in Z$ and $y = \Ind^{H \rtimes
\G}_H k^\la$. Then in the Verma module $Z(y) = (A \rtimes \G) v_\la$, by
Lemma \ref{Lcent},
\[ \chi_\la(zz') v_\la = \chi_\la(z'z) v_\la = z'z v_\la = z' \chi_\la(z)
v_\la = \chi_\la(z) \cdot z' v_\la = \chi_\la(z) \chi_\la(z') v_\la, \]

\noindent because $z'$ is central. The statement now follows, since $y$
is the regular representation of $\G$ (as a $\G$-module).\medskip

For the second part, we continue beyond the proof of Proposition
\ref{Pcenter}. Thus, $\chi_\la(z) = \sum_{\g \in \gsl} \la(\xi(z_\g))
\g$, and the ``improved" equation \eqref{E5} suggests that
$\la(\xi(z_\g)) = 0$ if $\g(b) \notin k^\times b$. Moreover,
$\la(\xi(z_\g)) = 0$ if and only if $\mu(\xi(z_\g)) = 0$.
Now denote $\G_{\la,\mu} := \{ \g \in \gsl : \g(b) \in k^\times b$ for
all $b$ such that $b v_\la \in Z(y)_\mu$ is maximal$\}$, and $\G_\la :=
\bigcap \G_{\nu',\nu''}$ for each $\nu' > \nu''$ in $S^3_A(\la)$ with
$[Z_A(\nu') : V_A(\nu'')] > 0$ (though a more suitable name would be
$\G_{S^3_A(\la)}$). The first two sub-parts are now obvious, and by
transitivity in  $S^3_A(\la)$, it suffices to show the last sub-part when
$[Z_A(\la) : V_A(\mu)] > 0$. But since $\G_\la$ acts on $k^\times \cdot
b$, this yields a character $\theta_{\la,\mu}$ of $\G_\la$; now look at
equation \eqref{E5} again, and we are done.
\end{proof}

\begin{cor}
If $\la \in G^\G$, then $\chi_\la$ is an algebra map $: Z \to
\mf{Z}(k\G)$.
\end{cor}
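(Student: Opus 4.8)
The plan is to deduce the corollary directly from Theorem~\ref{Tcenter}(1) together with the $\G$-equivariance of central characters recorded in Proposition~\ref{Pcent}, using only the hypothesis that $\la$ is fixed by all of $\G$.

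First I would note that when $\la \in G^\G$ we have $\G^\la = \G$, so $\chi_\la(r) = \sum_{\g \in \G} \la(\xi(a_\g))\,\g$ for $r = \sum_\g a_\g \g$; in particular, by Theorem~\ref{Tcenter}(1), $\chi_\la$ is already an algebra map $Z \to k\G$, and the only remaining point is to show $\chi_\la(Z) \subseteq \mf{Z}(k\G)$. To see this, fix $z \in Z$ and $\beta \in \G$. The operation ``$\beta^{-1}(r)$'' appearing in Proposition~\ref{Pcent} is, by its proof (and by the Definition of $\Ad$), exactly $\Ad \beta^{-1}(r) = \beta^{-1} r \beta$, i.e.\ conjugation inside $A \rtimes \G$; since $z$ is central this gives $\Ad\beta^{-1}(z) = z$. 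Because $\la$ is $\G$-fixed, $\beta(\la) = \la$, so Proposition~\ref{Pcent} yields
\[ \chi_\la(z) = \chi_{\beta(\la)}(z) = \beta\,\chi_\la\!\left(\Ad\beta^{-1}(z)\right)\beta^{-1} = \beta\,\chi_\la(z)\,\beta^{-1}. \]
Since this holds for every $\beta \in \G$, the element $\chi_\la(z)$ commutes with a $k$-basis of $k\G$, hence with all of $k\G$; that is, $\chi_\la(z) \in \mf{Z}(k\G)$. Combined with the algebra-map property from Theorem~\ref{Tcenter}(1), this shows that $\chi_\la$ is an algebra map $Z \to \mf{Z}(k\G)$, as claimed.

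I do not expect a real obstacle here. The only mildly delicate step is the identification of the ``$\beta^{-1}(\cdot)$'' in Proposition~\ref{Pcent} with conjugation by $\beta$ in the smash product, which is immediate from how that operation was set up and used; once that is in hand, the argument is a single substitution using centrality of $z$. If one prefers to avoid quoting Proposition~\ref{Pcent}, the same conclusion can be reached by the computation of Lemma~\ref{Lcent} applied to the Verma module $Z(y)$ with $y = \Ind^{H \rtimes \G}_H k^\la$ and to the conjugated vectors $\beta v_\la$, but the route through Proposition~\ref{Pcent} is the shortest.
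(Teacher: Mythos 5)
Your proposal is correct and follows exactly the paper's intended argument: the paper's proof is literally ``Use Proposition \ref{Pcent} and the first part of Theorem \ref{Tcenter},'' and you have simply spelled out the details, including the correct identification of $\beta^{-1}(\cdot)$ in Proposition \ref{Pcent} with conjugation $\Ad\beta^{-1}$ inside $A \rtimes \G$, so that centrality of $z$ gives $\beta^{-1}(z) = z$ and hence $\chi_\la(z) = \beta\,\chi_\la(z)\,\beta^{-1}$ for all $\beta$. Nothing is missing.
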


\begin{proof}
Use Proposition \ref{Pcent} and the first part of Theorem \ref{Tcenter}.
\end{proof}

\subsection{Another block decomposition}

We now show a block decomposition of $\calo$ using central characters,
under some extra assumptions.

\begin{theorem}\label{Tnoeth}\hfill
\begin{enumerate}
\item Suppose $k$ is algebraically closed if $|\G|>1$. Then $Z$ acts on
Verma modules $Z(x)$ (for $x \in X$) via a central {\em character}, i.e.,
an algebra map $: Z \to k$, say $\chi_x$. Then $\chi_x = \chi_{\la_x} =
\chi_{\g(\la_x)}$ on $Z \cap A\ \forall \g \in \G$.

\item Now suppose that $Z$ acts on every $Z(x)$ via an algebra map
$\theta_x : Z \to k$.
Then $\calo = \bigoplus_{\nu \in \Theta} \calo(\nu)$, where $\nu$ runs
over all distinct elements from among $\{ \theta_x : x \in X \}
\subset \hhh_{k-alg}(Z,k)$ (call this set $\Theta$), and $\calo(\nu)$ is
the full subcategory $\{ M \in \calo :\ \forall m \in M, z \in Z,\
\exists n \in \N \mbox{ such that } (z - \nu(z))^n \cdot m = 0 \}$.
\end{enumerate}
\end{theorem}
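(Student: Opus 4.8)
The plan is to establish the two parts of Theorem~\ref{Tnoeth} in sequence, using the central character machinery developed in Theorem~\ref{Tcenter} and the block decomposition of Proposition~\ref{Pskewblock}.

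For part~(1): since $k$ is algebraically closed, Proposition~\ref{P10}(4) tells us that $Z(x)$ is Schurian for $x \in X$. The center $Z = \mf{Z}(\ag)$ acts on $Z(x)$ by $\ag$-module endomorphisms; since $Z(x)$ has a (possibly infinite, but we only need nonzero) simple quotient $V(x)$ that is also Schurian, any $z \in Z$ acts on the maximal line $M_x$ — more precisely, on a fixed maximal weight vector $v_{\la_x}$ — by scalar multiplication, because $\End_{H \rtimes \G}(M_x) = k$ by Schurianness (Proposition~\ref{P10}(4) again, since $M_x$ is Schurian). Call that scalar $\chi_x(z)$. That $\chi_x$ is an algebra map follows exactly as in the first paragraph of the proof of Theorem~\ref{Tcenter}: working inside $Z(x)$ and using that $v_{\la_x}$ generates the module, $\chi_x(zz') v_{\la_x} = z z' v_{\la_x} = z \chi_x(z') v_{\la_x} = \chi_x(z)\chi_x(z') v_{\la_x}$. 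Finally, to compare with $\chi_{\la_x}$: by Lemma~\ref{Lcent}, $z v_{\la_x} = \chi_{\la_x}(z) v_{\la_x}$ where $\chi_{\la_x}(z) = \sum_{\g \in \G^{\la_x}} \la_x(\xi(z_\g))\,\g \in k\G$; for this to be a scalar (i.e. for $\chi_x(z) \in k \cdot 1$) the $k\G$-element must be supported on the identity, so $\chi_x(z) = \la_x(\xi(z_1))$, and in particular restricting to $z \in Z \cap A$ gives $\chi_x|_{Z \cap A} = \chi_{\la_x}|_{Z \cap A}$. The equality $\chi_{\la_x} = \chi_{\g(\la_x)}$ on $Z \cap A$ is immediate from Proposition~\ref{Pcent}: for $z \in Z \cap A$ (so $z = z_1$, $\g$-component trivial), $\chi_{\g(\la_x)}(z) = \g \chi_{\la_x}(\g^{-1}(z)) \g^{-1} = \g \chi_{\la_x}(z) \g^{-1} = \chi_{\la_x}(z)$ since $\chi_{\la_x}(z) \in k$ commutes with $\g$; alternatively just note $\g$ sends the maximal vector of $Z_A(\la_x)$ to that of $Z_A(\g(\la_x))$ and $z$ acts compatibly.

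For part~(2): this is the standard argument that every object of $\calo$ decomposes along generalized central characters, adapted to the skew group ring. First, for each $\nu \in \Theta$ and each $x \in X$ with $\theta_x = \nu$, the simple module $V(x)$ (a quotient of $Z(x)$) has $Z$ acting by the honest character $\nu$. Since $\calo$ — or at least its finite-length objects — decomposes as $\bigoplus_{x} (\calo(x) \cap \caloo)$ by Proposition~\ref{Pskewblock}, and within a block $\calo(x)$ every simple subquotient is some $V(x')$ with $x' \in S^3(x)$, it suffices to check that $\theta_{x'} = \theta_{x}$ whenever $x' \in S^3(x)$. This in turn reduces to the two generating relations defining $S^3(x)$: if $F(x) = x'$ then $\theta_{x'} = \theta_x$ because $Z$ is fixed by the anti-involution $i$ (it is the center, and $i$ is an anti-automorphism so $i(Z) = Z$ pointwise up to the fact that $i$ restricts to an algebra \emph{anti}-automorphism of the commutative ring $Z$, hence an automorphism), and the duality functor $F$ sends a module on which $z$ acts by $\theta_x(z)$ to one on which $z$ acts by $\theta_x(i(z))$; combined with $i|_H = \id$ and the explicit formula for $\chi_{\la_x}$ this gives $\theta_{F(x)} = \theta_x$; and if $V(x)$ is a subquotient of $Z(x')$ then $\theta_x = \theta_{x'}$ because $Z$ acts on all subquotients of $Z(x')$ by the same character $\theta_{x'}$, as $Z(x')$ is generated by $v_{\la_{x'}}$ on which $Z$ acts by $\theta_{x'}$. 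Hence $\theta$ is constant on each $S^3$-class, so each block $\calo(x)$ lies entirely in one $\calo(\nu)$. Conversely, a Harish-Chandra–style weight argument (every $M \in \calo$ is generated by its maximal vectors after passing to SC-filtrations, via Proposition~\ref{P8}(4)) shows $\calo = \sum_\nu \calo(\nu)$, and the sum is direct because the generalized eigenspaces for the commuting family $\{z - \nu(z)\}$ are disjoint for distinct $\nu$ (a module on which $Z$ acts through $\nu$ and also has a generalized eigenvector for $\nu' \neq \nu$ is zero, since $\nu(z) \neq \nu'(z)$ for some $z$ makes $(z-\nu(z))$ invertible on the $\nu$-piece).

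The main obstacle I expect is part~(1): making precise that $z \in Z$ acts by a \emph{scalar} — not merely by an element of $\End_{H\rtimes\G}(M_x)$, which a priori could be a nontrivial group-algebra quotient when $|\G| > 1$ and $M_x$ is not one-dimensional — genuinely requires the Schurian property of Proposition~\ref{P10}(4), which is exactly where algebraic closure of $k$ is used. One must be careful that the ``character'' $\chi_x: Z \to k$ of part~(1) is the restriction-to-identity of the $k\G$-valued $\chi_{\la_x}$, and that the passage between them is what forces the extra hypothesis; the rest is bookkeeping with the formulas already derived in Theorem~\ref{Tcenter} and Proposition~\ref{Pcent}. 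Part~(2) is then essentially formal given Proposition~\ref{Pskewblock} and the reduction to the two $S^3$-relations, with the duality step being the only place one must invoke $i(Z) = Z$.
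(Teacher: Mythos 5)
Your part (1) is essentially the paper's argument: Schurian-ness of $M_x$ from Proposition \ref{P10} gives that each $z \in Z$ acts on $M_x$, hence on all of $Z(x)$, by a scalar, and the comparison with $\chi_{\la_x}$ and $\chi_{\g(\la_x)}$ comes from Lemma \ref{Lcent} and Proposition \ref{Pcent}. One caveat: from $\chi_{\la_x}(z)v_{\la_x} = \chi_x(z)v_{\la_x}$ you cannot conclude that the $k\G$-element $\chi_{\la_x}(z)$ is ``supported on the identity'' --- the vectors $\g v_{\la_x}$ need not be linearly independent in $M_x = k\G/J$ (e.g.\ $M_x$ trivial as a $\G$-module). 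Fortunately the theorem only asserts equality on $Z \cap A$, where $z_\g = 0$ for $\g \neq 1$ and the comparison is immediate, so this does not damage the conclusion. Your argument that the sum in part (2) is direct also matches the paper's (Lemma \ref{Lfit}).

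The genuine gap is in the existence half of part (2): you never actually prove that an arbitrary $M \in \calo$ is the sum of its generalized eigenspaces $M(\nu)$. Your reduction via Proposition \ref{Pskewblock} only covers finite-length objects, whereas no Condition (S) is assumed in this theorem, so $\calo$ need not be finite length; moreover the blocks $\calo(\nu)$ are defined by generalized central characters, not by $S^3$-linkage, so the discussion of $\theta_x$ being constant on $S^3$-classes (including the duality step $\theta_{F(x)} = \theta_x$) is beside the point for the statement being proved. The step you defer to ``a Harish-Chandra-style weight argument'' is exactly where the work lies. From Proposition \ref{P8}(4) and part (1) one does get that for each $z \in Z$ some finite product $\prod_{i=1}^s (z-\nu_i(z))^{n_i}$ annihilates $M$; but to split $M$ as $\bigoplus_i M(\nu_i)$ one must still produce elements $r_i \in \left(\prod_{j \neq i} \ker \nu_j\right)^n$ with $1 = \sum_i r_i$ in $Z$ --- the paper does this with a Chinese-Remainder-type lemma for powers of distinct maximal ideals of $Z$ --- so that $m = \sum_i r_i m$ with $r_i m \in M(\nu_i)$. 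Without this (or an equivalent simultaneous-generalized-eigenspace decomposition of each finite-dimensional weight space under the commuting family $Z$), the decomposition is asserted rather than proven.
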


\begin{proof}
The first part follows from Propositions \ref{P10} and \ref{Pcent}, since
$M_x (\subset Z(x))$ is Schurian and any $z \in Z$ is an endomorphism of
$M_x$. For the second part, we use the following result:

\begin{lemma}\label{Lfit}
Given $\nu \neq \nu'$ in $\Theta$, $M \in \calo(\nu')$, and $z \notin
\ker(\nu - \nu')$, the map $\varphi := z - \nu(z)$ is an $A \rtimes
\G$-module isomorphism on $M$.
\end{lemma}

\noindent To see why, note that $\varphi = c + s$, where $s = z -
\nu'(z)$ is locally nilpotent on $M$, and $c = \nu'(z) - \nu(z) \in
k^\times$. Hence $\varphi$ is invertible.\medskip

Given $M \in \calo$, Proposition \ref{P8} implies (together with the
first part) that there exist $\nu_i \in \Theta$ and $n_i \in \N$ such
that $\prod_{i=1}^s (z - \nu_i(z))^{n_i}$ kills all of $M$, for all $z
\in Z$.
Now define $M(\nu) := \{ m \in M :\ \forall z \in Z, \exists n \in \N$
such that $(z - \nu(z))^n m = 0 \}$. Then this is an $A \rtimes
\G$-submodule. Using Lemma \ref{Lfit}, it is easy to see that each
$\calo(\nu)$ is closed in $A \rtimes \G - \Mod$ under taking submodules,
quotients, and extensions, and all maps between distinct blocks are
trivial. So if we show that $M = \bigoplus_\nu M(\nu)$, then each
$M(\nu)$ is a quotient of $M \in \calo$, hence also in $\calo$, hence in
$\calo(\nu)$, and we will be done. To do this, we need the following
standard lemma from commutative algebra.

\begin{lemma}
If $\{ \mf{m}_i : 1 \leq i \leq s \}$ are distinct maximal ideals in $Z$
for some $s \geq 1$, then $\sum_i \left( \prod_{j \neq i} \mf{m}_j
\right)^n = Z$ for all $n \geq 1$.
\end{lemma}

\noindent Use the lemma to write $1 \in Z$ as $1 = \sum_{i=1}^s r_i$,
with $r_i \in \left( \prod_{j \neq i} \ker \nu_j \right)^n$ and $n =
\max_i n_i$ ($s, \nu_i, n_i$ as above). Then any $m \in M$ equals $\sum_i
r_i m$, with $r_i m \in M(\nu_i)$. This shows that $M$ is the sum of its
components.

Finally, this sum is direct, for suppose $\sum_{i=1}^l m_i = 0$ for some
$l > 1$, with $m_i \in M(\nu_i)$ for each $i$. If $i<l$, pick $z_i \notin
\ker(\nu_i - \nu_l)$, and $n_i \in \N$ such that $(z_i -
\nu_i(z_i))^{n_i}$ kills $m_i$. Then $\prod_{i<l} (z_i -
\nu_i(z_i))^{n_i}$ kills $m_1, \dots, m_{l-1}$, whereas by Lemma
\ref{Lfit}, it is an isomorphism on $(A \rtimes \G) m_l \subset
M(\nu_l)$. Hence $m_l = 0$, and by induction on $l$, the other $m_i$'s
vanish as well.
\end{proof}

\subsection{The Conditions (S), linking, and central characters}

We now describe and relate different types of block decompositions. We
need some definitions. We note that is possible to obtain a block
decomposition of $\calo$ using any of these sets.

\begin{definition}\label{D3}\hfill
\begin{enumerate}
\item Define $CC(x) = S^4(x)$ to be the set of simple objects $\{ x' \in
X : \chi_{x'} = \chi_x \}$.

\item {\em Condition (S4)} holds for $\ag$, if all sets $S^4(x)$ are
finite.

\item We say that two indecomposable $A$-modules $M,N$ are {\em linked}
if\hfill\break
$\Hom(M,N) \neq 0$. Let the equivalence closure of $M$ under such a
relation be denoted by $[M]$.

\item Define $T(x)$ to be the equivalence closure of $x$ in $X$, under
the relations: $x \to F(x)$ and $x \to x'$ if $V(x') \in [V(x)]$.
\end{enumerate}
\end{definition}

\noindent (Note that we need the ``intermediate modules" between linked
modules to be indecomposable, otherwise any two modules are linked via:
$M \to M \oplus N \to N$.) Also recall $\sgx,\wt$ from Proposition
\ref{Pfunct}. We now compare these sets, and also mention a sufficient
condition when the center is ``nice" (this does indeed hold for wreath
products).

\begin{prop}\label{Pcentfunct}\hfill
\begin{enumerate}
\item Given a skew group ring $A \rtimes \G$, and $\la \in G, \g \in \G$,
\[ \g(CC_A(\la)) = \g(S^4_A(\la)) = S^4_A(\g(\la)) = CC_A(\g(\la)). \]

\item $\mf{Z}(\ag) \cap A = \mf{Z}(A)^{\G}$, and given $\la \in G$,
\[ \{ \mu \in G : \chi_\la \equiv \chi_\mu \mbox{ on } \mf{Z}(A)^{\G} \}
= \G(S^4_A(\la)). \]

\item Suppose $A$ is an integral domain, and each $\g \neq 1 \in \G$
nontrivially permutes the {\em restricted} (i.e., $G_0$-)weight spaces of
$A$. Then $\mf{Z}(A \rtimes \G) = \mf{Z}(A)^{\G} = e_{\G} \mf{Z}(A)
e_{\G}$.\medskip

\noindent Now suppose that $k$ is algebraically closed if $\G$ is
nontrivial.\medskip

\item Then we have:
\begin{eqnarray*}
S^3(x) & \subset & T(x) \cap \sgx\\
T(x) & \subset & S^4(x) \mbox{ if } Z = \mf{Z}(A)^{\G},\\
\wt^{-1}(\G(S^4_A(\la))) & \supset & \bigcup_{\wt(x) = \la} S^4(x).
\end{eqnarray*}

\item Condition (S4) holds for $\ag$ if it holds for $A$. If $\mf{Z}(\ag)
= Z = \mf{Z}(A)^{\G}$, then the converse is also true, since
\begin{equation}\label{Ecenter}
S^4(x) = \wt^{-1}(\G(S^4_A(\la_x)))\ \forall x \in X.
\end{equation}

\noindent Moreover, $(S4) \Rightarrow (S3)$ in this case.
\end{enumerate}
\end{prop}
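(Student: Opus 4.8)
Write $\chi^A_\la:=\la\circ\xi$ for the ordinary central character of $A$, so that $S^4_A(\la)=CC_A(\la)=\{\mu\in G:\chi^A_\mu=\chi^A_\la\}$. Since $\Ad\g$ preserves $N_\pm$ and $H$, it commutes with the Harish--Chandra projection $\xi$, so $\chi^A_\la\circ\g^{-1}=\chi^A_{\g(\la)}$ on $\mf{Z}(A)$; this gives (1) at once (and $CC_A=S^4_A$ by definition). For (2), an element of $A$ is central in $\ag$ precisely when it is central in $A$ and $\G$-fixed, whence $\mf{Z}(\ag)\cap A=\mf{Z}(A)^\G$; and for $z\in\mf{Z}(A)^\G\subseteq\mf{Z}(\ag)$ only the $\g=1$ term of $\chi_\la(z)$ survives, so on $\mf{Z}(A)^\G$ the map $\chi_\la$ is just $\chi^A_\la$. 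The plan for the second assertion of (2) is to note that $\mf{Z}(A)$ is integral over $\mf{Z}(A)^\G$ (each $z$ kills $\prod_\g(X-\g(z))$) and to invoke the standard fact that for a finite group acting on a commutative ring two primes contract to the same prime of the invariant subring iff they lie in one orbit; applied to the maximal ideals $\ker\chi^A_\la,\ker\chi^A_\mu$ this says $\chi^A_\mu\equiv\chi^A_\la$ on $\mf{Z}(A)^\G$ iff $\chi^A_\mu=\chi^A_{\g(\la)}$ for some $\g$, i.e.\ iff $\mu\in\bigcup_\g S^4_A(\g(\la))=\G(S^4_A(\la))$, using (1).

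\textbf{Part (3).} Given $z=\sum_\g z_\g\g\in\mf{Z}(\ag)$, comparing the $A\g$-components of $za=az$ gives $z_\g\,\g(a)=a\,z_\g$ for all $a\in A$; in particular $z_1\in\mf{Z}(A)$. Fixing $\g\neq1$ and replacing $a$ by $\g^{-1}(a)$ yields $z_\g\,a=\g^{-1}(a)\,z_\g$ for all $a$. The crucial move is to use the $\Z\dd$-grading $A=\bigoplus_{\theta_0}A_{\theta_0}$, which is a \emph{direct} sum, so every element has finite support: picking (by the hypothesis on $\g$, equivalently on $\g^{-1}$) a nonzero $a\in A_{\theta_0}$ with $\delta:=\theta_0-\g^{-1}(\theta_0)\neq0$ and comparing weight components forces $(z_\g)_\psi\,a=\g^{-1}(a)\,(z_\g)_{\psi+\delta}$ for every $\psi$; since $A$ is a domain, any nonzero component of $z_\g$ then propagates along the infinite progression $\psi+\N\delta$, which is impossible. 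Hence $z=z_1$, central and $\G$-fixed, so $\mf{Z}(\ag)=\mf{Z}(A)^\G$; the identification $\mf{Z}(A)^\G\cong e_\G\mf{Z}(A)e_\G$ via $u\mapsto ue_\G=e_\G u e_\G$ is then a routine averaging computation.

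\textbf{Parts (4)--(5).} The inclusion $S^3(x)\subseteq\sgx$ is immediate from \eqref{Econds3} since $x\in\wt^{-1}(\la_x)$. For $S^3(x)\subseteq T(x)$ I would check the defining moves: the $F$-move is common to both, and if $V(x)$ is a composition factor of the indecomposable module $Z(x')$ (with head $V(x')$) then $V(x)$ lies in the linkage class of $V(x')$, by the usual ``a composition factor of an indecomposable module is linked to its head'' argument carried out in the finite-length, extension-closed block $\calo(x')\cap\caloo$ of Proposition~\ref{Pskewblock}. Assuming now $k$ algebraically closed when $|\G|>1$: Theorem~\ref{Tnoeth}(1),(2) give $\calo=\bigoplus_\nu\calo(\nu)$, so every nonzero map between indecomposables of $\calo$ preserves the central character; combined with the fact that $F$ preserves formal characters (hence $\la_{F(x)}=\la_x$, hence $\chi_{F(x)}=\chi_x$ by Theorem~\ref{Tnoeth}(1) once $Z=\mf{Z}(A)^\G=Z\cap A$), this gives $T(x)\subseteq S^4(x)$. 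For the third inclusion, $x'\in S^4(x)$ with $\wt(x)=\la$ forces $\chi^A_{\la_{x'}}\equiv\chi^A_{\la_x}$ on $\mf{Z}(A)^\G$, so $\la_{x'}\in\G(S^4_A(\la))$ by (2). For (5): if $A$ has (S4) then $S^4(x)\subseteq\wt^{-1}(\G(S^4_A(\la_x)))$ is finite since $\wt$ has finite fibres (Theorem~\ref{T2}), so $\ag$ has (S4); when $Z=\mf{Z}(A)^\G$ the third inclusion is in fact the equality \eqref{Ecenter} (the reverse direction being another one-line computation using that $\mf{Z}(A)^\G$ is $\G$-fixed), whence (S4) for $\ag$ descends to (S4) for $A$ by taking $\wt$-images; and since $F=\id_G$ for $A$ one has $S^3_A(\la)=S'(\la)\subseteq S^4_A(\la)$ (a simple subquotient $V_A(\mu)$ of $Z_A(\nu)$ has $\chi^A_\mu=\chi^A_\nu$), so (S4) for $A$ $\Rightarrow$ (S3) for $A$ $\Rightarrow$ (S3) for $\ag$ by Theorem~\ref{Tconditions}(2).

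\textbf{Expected obstacle.} The commutative-algebra descent in (2) and the grading argument in (3) are short once set up; I expect the delicate part to be (4). Specifically, $S^3(x)\subseteq T(x)$ rests on the ``composition factor is linked to the head'' statement for modules that need not have finite length, which must be handled by restricting to a suitable truncated or block subcategory; and making the $F$-move respect central characters requires the hypothesis $Z=\mf{Z}(A)^\G$ to be used exactly so that $\chi_x|_Z$ is governed by $\la_x$ alone (otherwise the antipode-like twist $i_\G(\g)=\g^{-1}$ in $i$ can move the $\g\neq1$ components of $\chi_x$).
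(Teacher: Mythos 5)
Your proposal is correct, and parts (1), (4) and (5) follow essentially the paper's own route (Proposition \ref{Pcent} for the $\G$-equivariance of central characters, the chain $V(x')\twoheadleftarrow M\hookrightarrow Z(x)\twoheadrightarrow V(x)$ for $S^3\subset T$, Theorem \ref{Tnoeth} for $T\subset S^4$, and Theorem \ref{T2} for finiteness of the fibres of $\wt$). Two of your sub-arguments are genuinely different. For the second assertion of (2), the paper does not invoke the orbit-transitivity of a finite group on the primes of an integral extension; instead it explicitly manufactures a separating invariant element $z=\sum_\g \g(z')$ with $z'$ a product of elements vanishing on all the unwanted maximal ideals, and uses $\chrc(k)=0$ to see that $\chi_\mu(z)\neq 0$. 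Your Bourbaki-style descent is cleaner and does not use the characteristic-zero hypothesis at all, at the cost of quoting a (standard) nontrivial theorem; the identification of characters with their kernels is legitimate since both are $k$-algebra maps onto $k$. For (3), the paper compares the finite multisets of restricted weights appearing on the two sides of $a_\la z_\g = z_\g\,\g(a_\la)$ and equates their sums in $\Z\dd$; your propagation of a nonzero component along the infinite progression $\psi+\nn\delta$ is an equally valid (and arguably tidier) way to exploit finite support in the free abelian group $\Z\dd$. One small caveat on (4): your framing of ``a composition factor is linked to the head'' inside the block $\calo(x')\cap\caloo$ of Proposition \ref{Pskewblock} does not literally apply, since in part (4) no Condition (S) is assumed and $Z(x')$ need not have finite length, hence need not lie in $\caloo$; but no truncation is needed \textemdash\ the linkage relation of Definition \ref{D3} is defined on arbitrary indecomposables, and the direct chain through $Z(x')$ itself (which is indecomposable by Proposition \ref{P6}) already gives $V(x)\in[V(x')]$. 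Finally, your detour for $(S4)\Rightarrow(S3)$ through $A$ and Theorem \ref{Tconditions}(2) is sound but longer than the paper's one-line $S^3(x)\subset T(x)\subset S^4(x)$.
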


\noindent Note that one of the parts implies that $\chi_\la = \la \circ
\xi\ \forall \la$, and if $k$ is algebraically closed, then $\chi_x =
\chi_{\la_x} = \theta_{\la_x}\ \forall x \in X$ (see Theorem
\ref{Tnoeth}).

\begin{proof}\hfill
\begin{enumerate}
\item If we prove: $CC_A(\g(\la)) \supset \g(CC_A(\la))\ \forall \g,\la$,
then
\[ \g(CC_A(\la)) = \g(CC_A(\g^{-1}(\g(\la)))) \supset
\g(\g^{-1}(CC_A(\g(\la)))) = CC_A(\g(\la)), \]

\noindent thereby proving the reverse inclusion.
To show the original inclusion, use Proposition \ref{Pcent}. If $\mu \in
CC_A(\la)$, then
\[ \chi_{\g(\mu)}(z) = \g \chi_\mu(z) \g^{-1} = \g \chi_\la(z) \g^{-1} =
\chi_{\g(\la)}(z) \]

\noindent for any central $z \in Z$, whence $\g(\mu) \in
CC_A(\g(\la))$.\medskip

\item First, $z \in A$ commutes with $A$ and with $\G$ if and only if $z
\in \mf{Z}(A)^{\G}$. Next, we prove both inclusions: if $\mu \in
S^4_A(\la)$ and $\g \in \G$, then $\chi_{\g(\mu)} \equiv \chi_\mu \equiv
\chi_\la$ on $\mf{Z}(A)^{\G}$.

For the reverse inclusion, take any $\mu \notin \G(CC_A(\la))$; we will
show that $\chi_\mu \neq \chi_\la$ on $\mf{Z}(A)^\G$. Since $\G$ is
finite, enumerate the {\it distinct central characters for $A$}
associated to $\G(CC_A(\mu)) \coprod \G(CC_A(\la))$ (equivalently from
above, $\G(\la) \coprod \G(\mu)$), as $\{ \chi_\mu, \chi_1, \dots, \chi_l
\}$. (In particular, $\chi_\la = \chi_i$ for some $i$, and so is
$\chi_{\g(\mu)}$ for all $\g(\mu) \notin CC_A(\mu)$.)

Now, $\ker \chi_\mu$ is a maximal ideal in $\mf{Z}(A)$, different from
each $\ker \chi_i$; this allows us to choose $z_i \in \mf{Z}(A)$ such
that $\chi_i(z_i) = 0 \neq \chi_\mu(z_i)$. Hence $z' = \prod_i z_i$
satisfies: $\chi_i(z') = 0\ \forall i,\ \chi_\mu(z') \neq 0$.

\noindent Finally, define $z := \sum_{\g \in \G} \g(z') \in
\mf{Z}(A)^\G$. Then if $\nu \in G$,
\[ \chi_\nu(z) = \sum_\g \chi_\nu(\g(z')) = \sum_\g \nu(\xi(\g(z'))) =
\sum_\g \g^{-1}(\nu(\xi(z'))) = \sum_\g \chi_{\g(\nu)}(z'), \]

\noindent so that from above, $\chi_\la(z) = \sum_\g \chi_{\g(\la)}(z') =
0$. On the other hand, $\chi_\mu(z)$ is a sum of zeroes and some
(positive) number of $\chi_\mu(z')$'s, which is nonzero (since $\chrc(k)
= 0$ if $|\G| > 1$). Hence $\chi_\mu \neq \chi_\la$ on $\mf{Z}(A)^\G$ if
$\mu \notin \G(CC_A(\la))$, as claimed.\medskip

\item The second equality comes from Lemma \ref{Lcenter}. We now prove
both inclusions for the first equality. Clearly, $\mf{Z}(A)^{\G} \subset
Z$,
and conversely, let us claim\medskip

\noindent {\bf Claim.} $Z \subset A$.\medskip

\noindent (If this holds, then $Z = \mf{Z}(A)^{\G}$ by the previous
part.)
It remains to show the claim. Suppose $z = \sum_{\g \in \G} z_{\g} \g \in
Z$, with $z_{\g} \in A\ \forall \g$. Given $\g \neq 1$, we have to show
that $z_{\g} = 0$; now choose any $\la \in \Z \dd$ with $\g(\la) \neq
\la$ and $A_\la \neq 0$, and fix $0 \neq a_\la \in A_\la$. Then
\[ \sum_{\g} a_\la z_{\g} \g = a_\la z = z a_\la = \sum_{\g} z_{\g}
\g(a_\la) \g. \]

\noindent We now assume $z_{\g} \neq 0$, and obtain a contradiction.
Assume that $z_{\g} = a_{\theta_1} \oplus \dots \oplus a_{\theta_l}$ for
some weight vectors $0 \neq a_{\theta_j} \in A_{\theta_j}$ (with pairwise
distinct weights $\theta_j \in G$). Then $a_\la a_{\theta_j}$ is nonzero
(since $A$ is an integral domain) and in $A_{\la + \pi(\theta_j)}$, where
all (restricted) weights are in the {\it abelian} group $\Z \dd$, by the
RTA axioms. Similarly, $0 \neq a_{\theta_j} \g(a_\la) \in A_{\g(\la) +
\pi(\theta_j)}$.

So if $a_\la z = z a_\la$, then comparing the coefficient of $\g$ on both
sides, the sets of weights on both sides must be the same, whence their
sum is the same. Hence $\sum_j \pi(\theta_j) + l \la = \sum_j
\pi(\theta_j) + l \g(\la)$, or $l \la = l \g(\la)$, whence (in $\Z \dd$)
$\la = \g(\la)$, a contradiction.\medskip

\item We first show that if $[Z(x) : V(x')] > 0$, then $x$ and $x'$ are
linked. But this is clear, since $Z(x)$ is indecomposable: choose some $N
\subset M \subset Z(x)$ such that $M$ is indecomposable, and $M/N \cong
V(x')$. (This is possible by Lemma \ref{Lsummand}.) We now have $V(x')
\twoheadleftarrow M \hookrightarrow Z(x) \twoheadrightarrow V(x)$, so $x$
and $x'$ are linked. This proves that $S^3(x) \subset T(x)$.

Next, that $S^3(x) \subset \sgx$, follows from Proposition \ref{Pfunct}.

It thus remains to show that $T(x) \subset S^4(x)$. To see this, use
Theorem \ref{Tnoeth} again: on $Z \subset A$, $\chi_x \equiv \chi_{\la_x}
\equiv \chi_{\la_{F(x)}} \equiv \chi_{F(x)}$, so $F(x) \in S^4(x)\
\forall x$.
Otherwise if $x,x'$ are linked through a chain of indecomposable objects
in $\calo$, then there exists a unique ``central character block"
$\calo(\nu)$, that contains all these objects. In particular, since $z -
\nu(z)$ kills all simple objects in such a block, we are done.

Finally, we prove the third inclusion. Given $x' \in S^4(x)$ and $\la =
\la_x$, we note that $\chi_x \equiv \chi_{x'}$ on $\mf{Z}(\ag) = Z$.
Hence on $Z \cap A$, using Theorem \ref{Tnoeth}, we get: $\chi_{\la_x}
\equiv \chi_x \equiv \chi_{x'} \equiv \chi_{\la_{x'}}$, whence by a
previous part, $\la_{x'} \in \G(S^4_A(\la_x))$, as desired.\medskip

\item If $Z \subset A$, then by the previous part, $S^3(x) \subset T(x)
\subset S^4(x)$. Next, if $A$ satisfies (S4), then so does $\ag$ by the
previous part again, using Theorem \ref{T2}. We now only need to prove
that if $Z \subset A$, then equation \eqref{Ecenter} holds. But if $\mu
\in S^4_A(\la_x)$ and $x' \in \wt^{-1}(\mu)$, then $\chi_{\g(\mu)} \equiv
\chi_\mu \equiv \chi_\la$ on $\mf{Z}(A)^{\G}$. Now use Theorem
\ref{Tnoeth}; thus, $\chi_{x'} \equiv \chi_x$ on $Z$ - so $x' \in
S^4(x)$.
\end{enumerate}
\end{proof}

\subsection{Central characters for wreath products}

Finally, we come to the case of $R_{\mf{g}} = S_n \wr \mf{Ug}$ for
$\mf{g}$ a complex semisimple Lie algebra. The main result that we prove
here helps prove Proposition \ref{Pwreath}.

\begin{theorem}\label{Twreath}
Let $A \rtimes \G = (\mf{Ug})^{\otimes n} \rtimes S_n = R_{\mf{g}}$ (over
$k = \C$).
\begin{enumerate}
\item The center of $R_{\mf{g}}$ is $(\mf{Z}(\mf{Ug})^{\otimes n})^{S_n}
\cong \C[X_1, \dots, X_{ns}]$, with $s = \dim_{\C} \mf{h}$.

\item The center acts by the same central character on two simple objects
$V(x), V(x') \in \calo$, if and only if $\la_x \in S_n (W^n_{\mf{g}}
\bullet \la_{x'})$.

\item The sets of central characters of $A$ and $\ag$ are in bijection
with $(\mf{h}^*)^n / (W^n, \bullet)$ and $(\mf{h}^*)^{\oplus n} / (S_n
\wr W, \bullet)$ respectively. Every central character comes from a Verma
module.
\end{enumerate}
\end{theorem}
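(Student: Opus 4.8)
The plan is to leverage the well-understood center $\mf{Z}(\mf{Ug})$ and Harish-Chandra's theorem, then bootstrap to the wreath product using the general machinery already developed (Proposition \ref{Pcentfunct} and Theorem \ref{Tnoeth}). For part (1), I would start from $\mf{Z}((\mf{Ug})^{\otimes n}) = \mf{Z}(\mf{Ug})^{\otimes n}$ (a standard fact for tensor products of algebras, since each factor has a $\C$-central-character-generating center), and then apply part (3) of Proposition \ref{Pcentfunct}: since $\mf{Ug}$ is an integral domain and each nonidentity $\sigma \in S_n$ nontrivially permutes the $G_0$-weight spaces of $(\mf{Ug})^{\otimes n}$ (this uses that the simple-root components $\dd_i$ are genuinely moved), we get $\mf{Z}(R_{\mf{g}}) = (\mf{Z}(\mf{Ug})^{\otimes n})^{S_n}$. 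The polynomial-ring identification then follows from the Harish-Chandra isomorphism $\mf{Z}(\mf{Ug}) \cong \C[\mf{h}^*]^{W}$ (a polynomial ring on $s = \dim\mf{h}$ generators by Chevalley's theorem), so $\mf{Z}(\mf{Ug})^{\otimes n} \cong \C[(\mf{h}^*)^n]^{W^n} = \C[X_1,\dots,X_{ns}]$, and taking $S_n$-invariants of a polynomial ring on $ns$ generators (permuted in $n$ blocks of $s$) is again a polynomial ring on $ns$ generators by the theory of symmetric functions in vector variables — this is where I should be a little careful, since $\C[y_1,\dots,y_m]^{S_m}$ is polynomial but the block structure needs the statement that $(\C[V]^{W})^{\otimes n}$ made $S_n$-invariant is polynomial; it is, because it equals $\C[(\mf{h}^*)^n/(S_n \wr W)]$ and this quotient is smooth (a product of affine spaces symmetrized), but I expect this to be the main obstacle and I would cite Chevalley–Shephard–Todd for the group $S_n \wr W$ acting on $(\mf{h}^*)^n$, whose invariant ring is polynomial precisely because $S_n \wr W$ is generated by reflections.

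For part (2), I would combine Harish-Chandra's theorem for $\mf{g}$ — which says $\chi^{\mf{g}}_{\la} = \chi^{\mf{g}}_{\mu}$ iff $\la \in W_{\mf{g}} \bullet \mu$, i.e. $S^4_{\mf{Ug}}(\la) = W_{\mf{g}} \bullet \la$ — with the tensor-product formula from Theorem \ref{Teqs}(3): $S^4_A(\la_1,\dots,\la_n) = \times_i S^4_{\mf{Ug}}(\la_i) = \times_i (W_{\mf{g}} \bullet \la_i) = W^n_{\mf{g}} \bullet \la$. Then Proposition \ref{Pcentfunct}(5), equation \eqref{Ecenter}, gives $S^4(x) = \wt^{-1}(\G(S^4_A(\la_x))) = \wt^{-1}(S_n(W^n_{\mf{g}} \bullet \la_x))$ once I check the hypothesis $\mf{Z}(\ag) = \mf{Z}(A)^{\G}$, which is exactly part (1). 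Unwinding $\wt^{-1}$ via Theorem \ref{T2} (the map $\wt$ has fibers over $\G$-orbits), two simples $V(x), V(x')$ share a central character iff $\chi_x = \chi_{x'}$ on $Z$ iff (by \eqref{Ecenter}) $\la_{x'} \in \G(S^4_A(\la_x)) = S_n(W^n_{\mf{g}} \bullet \la_x)$, which is the claimed condition.

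For part (3), the central characters of $A = (\mf{Ug})^{\otimes n}$ are parametrized by $\hhh_{k-alg}(\mf{Z}(A),k) = \hhh(\mf{Z}(\mf{Ug})^{\otimes n},k) = (\mf{h}^*/W)^n = (\mf{h}^*)^n/W^n$ (using Harish-Chandra for each factor and that $k = \C$ is algebraically closed, so algebra maps from a finitely generated commutative $\C$-algebra are points of its spectrum), and those of $R_{\mf{g}}$ by $\hhh((\mf{Z}(\mf{Ug})^{\otimes n})^{S_n},k) = ((\mf{h}^*)^n/W^n)/S_n = (\mf{h}^*)^{\oplus n}/(S_n \wr W)$ under the twisted action — where the compatibility of the $S_n$- and $W^n$-actions was verified in the Example before this Proposition ($(\sigma\w)\bullet\la = \sigma(\w)\bullet\sigma(\la)$). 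That every central character arises from a Verma module follows from Theorem \ref{Tnoeth}(1): $\chi_x = \chi_{\la_x}$ on $Z$ for every $x \in X$, and by Theorem \ref{T2} every $\la \in G$ equals $\la_x$ for some $x \in X$, while every $W^n_{\mf{g}}\bullet S_n$-orbit meets $G = (\mf{h}^*)^n$; so surjectivity onto $\Theta$ is immediate. I would present (1) first, then (3) (which is mostly a restatement of (1) on spectra), then (2) last since it needs \eqref{Ecenter} whose hypothesis is (1).
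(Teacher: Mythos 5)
Your route is the same as the paper's at every step: part (1) via Lemma \ref{Lcenter} and Proposition \ref{Pcentfunct}(3) to identify $\mf{Z}(R_{\mf{g}})$ with $(\mf{Z}(\mf{Ug})^{\otimes n})^{S_n}$, then Dixmier's $\mf{Z}(\mf{Ug})\cong\C[X_1,\dots,X_s]$ and Chevalley's theorem for the polynomial‑ring claim; part (2) via Harish--Chandra's theorem, the tensor‑product formula of Lemma \ref{Lcenter}(3)/Theorem \ref{Teqs}(3), and equation \eqref{Ecenter}; part (3) by reading off $\hhh_{k-alg}(-,\C)$ of the invariant ring (the paper's citation of Nagata--Mumford is exactly the identification $\hhh_{k-alg}(R^{S_n},\C)=\hhh_{k-alg}(R,\C)/S_n$ that you use implicitly) together with Theorems \ref{Tnoeth} and \ref{T2} for surjectivity from Verma modules. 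Your ordering (1), (3), (2) and the logical dependence of (2) on (1) through the hypothesis $\mf{Z}(\ag)=\mf{Z}(A)^{\G}$ also match the paper.

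The one step you flagged as ``the main obstacle'' is, however, a genuine gap, and your proposed fix does not close it. For $s=\dim_\C\mf{h}\geq 2$ and $n\geq 2$ the group $S_n\wr W$ is \emph{not} generated by reflections on $(\mf{h}^*)^n$: any element $\sigma\w$ with $\sigma\neq 1$ has fixed space of codimension at least $s$, so the reflections all lie in $W^n$ and generate only $W^n$. Equivalently, $\sym^n(\C^s)$ is singular for $s,n\geq 2$ (already $\sym^2\C^2\cong\C^2\times(\C^2/\{\pm 1\})$ contains a quadric cone factor), and the ring of multisymmetric functions $\left(\C[X_1,\dots,X_s]^{\otimes n}\right)^{S_n}$ is not a polynomial ring. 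So the identification of the center with $(\mf{Z}(\mf{Ug})^{\otimes n})^{S_n}$, and with it parts (2) and (3), are unaffected, but the clause ``$\cong\C[X_1,\dots,X_{ns}]$'' only holds when $\mf{g}$ has rank one. To be fair, the paper's own proof makes the identical leap --- it applies Chevalley's theorem to ``$G=S_n$'' acting on $\C[X_1,\dots,X_{ns}]$ by permuting blocks of $s$ variables, which is a reflection action only when $s=1$ --- so your write‑up reproduces the published argument faithfully, including its weakest point; but since you explicitly paused there, you should either restrict the polynomiality statement to $\mf{sl}_2$ or replace it with the correct assertion that the invariant ring is the (generally non‑polynomial) ring of functions on $(\mf{h}^*)^n/(S_n\wr W)$.
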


\noindent In particular, we obtain a central character block
decomposition, by Theorem \ref{Tnoeth} above. The rest of this section is
devoted to proving the theorem.

\begin{definition}
Given $\la \in G$, define $CC_A(\la) := S^4_A(\la) = \{ \mu \in G : \mu
\circ \xi = \la \circ \xi$ on $\mf{Z}(A) \}$.
\end{definition}

\begin{lemma}\label{Lcenter}
We work over any fixed ground field $k$.
\begin{enumerate}
\item Suppose a finite group $\G$ acts by algebra automorphisms on a
$k$-algebra $A$ (here, $|\G| \in k^\times$). Then the fixed point algebra
$A^{\G}$ is isomorphic to the spherical subalgebra $e_{\G} A e_{\G}$ as
subalgebras of $A \rtimes \G$, where $e_{\G} := \frac{1}{|\G|} \sum_{\g
\in \G} \g$.

\item If $A_1, \dots, A_n$ are $k$-algebras, then $\mf{Z}(\otimes_i A_i)
= \otimes_i \mf{Z}(A_i)$.

\item If $A_1, \dots, A_n$ are RTAs, and $\la_i \in G_i$ are weights,
then $\chi_{\la} = \otimes_i \chi_{\la_i}$ on $\mf{Z}(A)$, and $CC_A(\la)
= \times_i CC_{A_i}(\la_i)$, where $\la = (\la_1, \dots, \la_n)$.
\end{enumerate}
\end{lemma}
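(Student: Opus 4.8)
The plan is to prove the three parts separately (only part~(3) uses part~(2)). Part~(1) is the standard identification of a fixed-point algebra with a spherical subalgebra, proved directly with the idempotent $e_{\G}$; part~(2) is the classical computation of the centre of a tensor product; and part~(3) is obtained by unwinding the tensor-product structure of the Harish--Chandra projection and then invoking part~(2).

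For part~(1): since $a \in A^{\G}$ means $\g a = a\g$ inside $\ag$ for all $\g \in \G$, we get $e_{\G}a = ae_{\G}$, so $\phi \colon A^{\G} \to e_{\G}Ae_{\G}$, $a \mapsto e_{\G}ae_{\G} = ae_{\G}$, is a unital $k$-algebra homomorphism (using $e_{\G}^2 = e_{\G}$; here $\phi(1) = e_{\G}$ is the unit of the spherical subalgebra, noting $e_{\G}Ae_{\G} = e_{\G}(\ag)e_{\G}$ because $\g e_{\G} = e_{\G}$ for all $\g$). Injectivity is immediate from the grading $\ag = \bigoplus_{\g}A\g$: the $1$-component of $ae_{\G}$ is $a/|\G|$. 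For surjectivity --- the only place the hypothesis $|\G| \in k^{\times}$ is used --- given $x \in A$ set $\bar{x} := |\G|^{-1}\sum_{\g\in\G}\g(x) \in A^{\G}$; a one-line reindexing of the resulting double sum shows $\phi(\bar{x}) = e_{\G}xe_{\G}$, and every element of $e_{\G}(\ag)e_{\G}$ has this form.

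For part~(2): the inclusion $\otimes_i \mf{Z}(A_i) \subseteq \mf{Z}(\otimes_i A_i)$ is clear since pure tensors of central elements commute with the (spanning) pure tensors. For the reverse inclusion, induction on $n$ reduces us to $n = 2$. Fixing a $k$-basis $\{b_j\}$ of $A_2$, write $z \in \mf{Z}(A_1 \otimes A_2)$ uniquely as $z = \sum_j c_j \otimes b_j$; commutation of $z$ with $a \otimes 1$ forces $ac_j = c_j a$ for all $a \in A_1$, so $c_j \in \mf{Z}(A_1)$. Re-expanding $z = \sum_l d_l \otimes e_l$ with $\{d_l\}$ a $k$-basis of $\mf{Z}(A_1)$ and commuting with $1 \otimes a$ then forces $e_l \in \mf{Z}(A_2)$, whence $z \in \mf{Z}(A_1) \otimes \mf{Z}(A_2)$.

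For part~(3): by \cite{Kh3}, the RTA $A = \otimes_i A_i$ has $B_{\pm} = \otimes_i (B_{\pm})_i$ and $H = \otimes_i H_i$, so the augmentations satisfy $\epsilon^{\pm} = \otimes_i \epsilon^{\pm}_i$, hence $\xi = \otimes_i \xi_i$; and under $G = \times_i G_i$ the weight $\la = (\la_1,\dots,\la_n)$ corresponds to the algebra map $\otimes_i \la_i \colon \otimes_i H_i \to k$. Evaluating on a pure tensor gives $\chi_{\la}(a_1 \otimes \cdots \otimes a_n) = \la(\xi_1(a_1) \otimes \cdots \otimes \xi_n(a_n)) = \prod_i \chi_{\la_i}(a_i)$, so $\chi_{\la}$ and $\otimes_i \chi_{\la_i}$ agree on pure tensors of central elements; since $\mf{Z}(A) = \otimes_i \mf{Z}(A_i)$ by part~(2) is spanned by these and both maps are $k$-linear, $\chi_{\la} = \otimes_i \chi_{\la_i}$ on $\mf{Z}(A)$. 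The inclusion $\times_i CC_{A_i}(\la_i) \subseteq CC_A(\la)$ follows at once from this product formula. For the reverse, given $\mu = (\mu_1,\dots,\mu_n) \in CC_A(\la)$, fix $i_0$ and evaluate the identity $\chi_{\mu} = \chi_{\la}$ at $1 \otimes \cdots \otimes z \otimes \cdots \otimes 1$ (with $z \in \mf{Z}(A_{i_0})$ in slot $i_0$, which lies in $\mf{Z}(A)$); using $\xi_i(1) = 1$ and $\mu_i(1) = \la_i(1) = 1$ this collapses to $\mu_{i_0}(\xi_{i_0}(z)) = \la_{i_0}(\xi_{i_0}(z))$, so $\mu_{i_0} \in CC_{A_{i_0}}(\la_{i_0})$. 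None of this presents a genuine obstacle; the only points needing a moment's care are the verification that $e_{\G}Ae_{\G}$ really exhausts $e_{\G}(\ag)e_{\G}$ in part~(1), and that the tensor factorisations of $H$, of $\xi$, and of $\la$ are mutually compatible in part~(3), both of which fall out of the definitions in \cite{Kh3}.
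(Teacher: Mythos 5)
Your proposal is correct and follows essentially the same route as the paper's (much terser) proof: the averaging/idempotent map $a \mapsto e_{\G}ae_{\G}$ for part (1), the basis-expansion argument reduced to $n=2$ by induction for part (2), and the factorisation $\xi = \otimes_i \xi_i$ together with the substitution $z_j = 1$ for $j \neq i_0$ for part (3). The extra detail you supply (injectivity via the $\G$-grading, surjectivity via the average $\bar{x}$) fills in steps the paper leaves implicit, with no gaps.
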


\begin{proof}\hfill
\begin{enumerate}
\item The map $: A^{\G} \to e_{\G} A e_{\G}$ ($a \mapsto e_{\G} a
e_{\G}$) is a $k$-algebra isomorphism.

\item One inclusion is clear; the proof of the other is by induction on
$n$, and the only (possibly) nontrivial step is to show it for $n=2$.
Given $k$-algebras $A,B$, suppose $z = \sum_i a_i \otimes b_i$ is
central, with the $b_i$'s linearly independent in $B$. Choose any $a \in
A$; then $az = za$ implies that all $a_i$'s are central, whence $z \in
\mf{Z}(A) \otimes B$. Now write $z = \sum_j a'_j \otimes b'_j$, where the
$a'_j$'s are now linearly independent in $\mf{Z}(A)$. Then $bz = zb$ for
all $b \in B$ implies that the $b'_j$'s are all central in $B$.

\item The statements make sense because of the previous part. It is easy
to show that the Harish-Chandra maps $\xi_i$ on $A_i$ and $\xi$ on $A$
satisfy: $\xi = \otimes_i \xi_i$ (extended by linearity). The first part
now follows.

One inclusion for the second part follows from the first part here; for
the other, by the previous part of this lemma, it suffices to start with
$\prod_{i=1}^n \chi_{\la_i}(z_i) = \prod_{i=1}^n \chi_{\mu_i}(z_i)$ for
all $i, z_i \in \mf{Z}(A_i)$. Now fix $i$ and set $z_j = 1$ for all $j
\neq i$; thus $\mu_i \in CC_{A_i}(\la_i)\ \forall i$.
\end{enumerate}
\end{proof}

It is not hard now, to compute the center of $R_{\mf{g}} = S_n \wr
\mf{Ug}$, or the corresponding blocks. We can now prove the main result
in this subsection.

\begin{proof}[Proof of Theorem \ref{Twreath}]\hfill
\begin{enumerate}
\item The first claim follows from Lemma \ref{Lcenter}, Proposition
\ref{Pcentfunct}, and the following two classical results from Lie
theory ($s = \dim_{\C} \mf{h}$ here):
\begin{enumerate}
\item $\mf{Z}(\mf{Ug}) \cong \C[X_1, \dots, X_s]$ (see \cite[Theorem
7.3.8]{Dix}).

\item If $G$ is a finite group acting on a finitely generated polynomial
algebra $\C[X_1, \dots, X_l]$ via reflections, then $\C[X_1, \dots,
X_l]^G \cong \C[Y_1, \dots, Y_l]$ (Chevalley's Theorem; see \cite{Che}).
\end{enumerate}

\noindent Applying these results (with $G = S_n$), the first part follows.

\item By Harish-Chandra's Theorem, ($CC_{\mf{Ug}}(\la) = W_{\mf{g}}
\bullet \la$. Now by Lemma \ref{Lcenter},)
$CC_{(\mf{Ug})^{\otimes n}}(\la_1, \dots, \la_n) = W^n_{\mf{g}} \bullet
(\la_1, \dots, \la_n)$. We are now done by Proposition \ref{Pcentfunct}
(and the previous part).

\item The only part not done above, involves computing {\it all} central
characters of $\ag$ (and not merely those in the Category $\calo$) - note
that the result itself implies that every central character corresponds
to some object in $\calo$. This remaining part follows from a special
case of the Nagata-Mumford Theorem (see e.g., \cite[Theorem 5.3]{Muk}).
\end{enumerate}
\end{proof}

Finally, we present the proof of an earlier, unproved result:

\begin{proof}[Proof of Proposition \ref{Pwreath}]
All but the first part (and that $\calo$ is finite length) were shown in
this section. However, by Harish-Chandra's theorem, $\mf{Ug}$ satisfies
all the Conditions (S). Thus, we are done by Theorem \ref{Tmain}.
Moreover, that $\mf{Ug}^{\otimes n} = \mf{U}(\mf{g}^{\oplus n})$ is of
finite representation type, is well-known.

Now consider a finite-dimensional simple $R_{\mf{g}}$-module; it is also
a finite-dimensional $\mf{Ug}^{\otimes n}$-module, hence is in
$\calo_{R_{\mf{g}}}$. So let us denote it by $V(x)$; say $\dim V(x) = d$.
Now by Corollary \ref{C1}, $\dim V_A(\la_x) | d$, and there are only
finitely many such $\la_x$'s. We are now done by Theorem \ref{T2}.
\end{proof}

\section{Each block is a highest weight category}

We now show that each block $\calo(x)$ has enough projectives, and is a
highest weight category, under some Condition (S). The following result
(see \cite[(A1)]{Don}) will be useful shortly; the proof is similar to
\cite[Theorem 3]{Kh}.

\begin{prop}\label{moreoncalo}\hfill
\begin{enumerate}
\item If $\Ext^1(Z(x),M)$ or $\Ext^1(M,F(Z(x)))$ is nonzero for $M \in
\calo$ and $x \in X$, then $M$ has a composition factor $V(x')$ with $x'
> x$.

\item If $X$ and $F(Y)$ have simple Verma flags, then $\Ext^1(X,Y) = 0$, and
\[ \dim_k(\hhh_A(X,Y)) = \sum_{x \in X} [X : Z(x)][Y : F(Z(F(x)))] \dim_k
\End_{\ag} V(x). \]
\end{enumerate}
\end{prop}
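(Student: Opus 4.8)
The plan is to prove Proposition \ref{moreoncalo} in two parts, mirroring the structure of \cite[Theorem 3]{Kh} and using the projective objects $P(y,l)$ together with the duality functor $F$. For part (1), I would start by recalling that $P(x,l)$ is projective in the truncated subcategory $\calo(\la_x,l)$ and that, for $l$ large enough, $Z(x)$ is a submodule of $P(x,l)$ with quotient $P(x,l)/Z(x)$ filtered by Verma modules $Z(x')$ with $x' > x$ (this was established after Proposition \ref{P8}). Applying $\hhh_\calo(-,M)$ to the short exact sequence $0 \to Z(x) \to P(x,l) \to P(x,l)/Z(x) \to 0$ and using projectivity of $P(x,l)$ in a suitable truncation containing $M$, any nonzero class in $\Ext^1(Z(x),M)$ forces a nonzero $\hhh_\calo(P(x,l)/Z(x), M)$ modulo the image from $P(x,l)$; since $P(x,l)/Z(x)$ is built from $Z(x')$'s with $x'>x$ and $Z(x') \twoheadrightarrow V(x')$, this yields a composition factor $V(x')$ of $M$ with $x'>x$. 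The statement for $\Ext^1(M, F(Z(x)))$ then follows by applying the exact contravariant duality $F$ (Proposition on $F$, and Proposition \ref{P4}), noting $\Ext^1(M,F(Z(x))) \cong \Ext^1(Z(x), F(M))$ and that $F$ preserves the property of having a given composition factor.

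For part (2), the vanishing $\Ext^1(X,Y) = 0$ when $X$ has a simple Verma flag and $F(Y)$ has a simple Verma flag: I would induct on the length of the Verma flag of $X$, reducing via the long exact sequence to the case $X = Z(x)$; then $\Ext^1(Z(x), Y) = 0$ follows from part (1), because $Y$ (having a simple Verma flag) has all composition factors $V(x')$ appearing in some $Z(x'')$ in its flag, and one checks using the partial order that no such $x'$ can be strictly above $x$ in a way that produces a nonzero extension — more precisely, dualizing, $\Ext^1(Z(x),Y) \cong \Ext^1(F(Y), F(Z(x)))$, and since $F(Y)$ has a simple Verma flag, applying part (1) again gives that a nonzero Ext forces a composition factor of $F(Y)$ strictly above $x$; a careful bookkeeping with the flag of $F(Y)$ and the fact that a Verma flag is ordered compatibly with $\leq$ rules this out. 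The cleanest route is probably: reduce both $X$ and $F(Y)$ to single Verma modules by dévissage, so it suffices to show $\Ext^1(Z(x), F(Z(x'))) = 0$ for all $x,x'$, which is immediate from part (1) since $F(Z(x'))$ has socle $V(F(x'))$ and its composition factors $V(x'')$ satisfy $x'' \geq F(x')$, hence can only be $> x$ if... — and here one uses that $\Ext^1(Z(x), F(Z(x')))$ nonzero would, by part (1), need a composition factor of $F(Z(x'))$ strictly above $x$, while symmetrically $\Ext^1(F(Z(x')), Z(x)) \cong \Ext^1(Z(x), F(Z(x')))$ would need a composition factor of $Z(x)$ strictly above $x$, which is impossible since $Z(x)$'s factors are all $\leq x$.

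For the dimension formula, I would compute $\dim_k \hhh_\calo(X,Y)$ by dévissage on the simple Verma flag of $X$ and the simple Verma flag of $F(Y)$ (equivalently a flag of $Y$ by modules $F(Z(F(x)))$). Since $\Ext^1$ between Verma-flagged and dual-Verma-flagged objects vanishes by the first part, $\hhh_\calo$ is additive across both flags, so $\dim_k \hhh_\calo(X,Y) = \sum_{x} [X:Z(x)] \cdot \dim_k \hhh_\calo(Z(x), Y)$, and then a further reduction along the flag of $Y$ gives $\dim_k \hhh_\calo(Z(x), Y) = \sum_{x'} [Y : F(Z(F(x')))] \dim_k \hhh_\calo(Z(x), F(Z(F(x'))))$. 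The remaining local computation $\dim_k \hhh_\calo(Z(x), F(Z(F(x')))) = \delta_{x,x'} \dim_k \End_{\ag} V(x)$ is exactly the chain of identifications recorded in Proposition \ref{P10}(2). Assembling these sums gives the stated formula. The main obstacle I anticipate is the bookkeeping in the vanishing argument for $\Ext^1$: one must carefully arrange the Verma flags so that they are ordered compatibly with the partial order on $X$ (as in the proof of Proposition \ref{P5}), and then argue that the "strictly above" obstruction from part (1) cannot be met — this is where the interplay between the flag ordering, the duality $F$, and the partial order has to be handled with care rather than waved at.
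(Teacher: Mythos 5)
Your overall strategy is the one the paper intends (the paper itself only cites \cite[(A1)]{Don} and the proof of \cite[Theorem 3]{Kh} rather than writing the argument out), and your dévissage for the dimension formula, ending in the local computation $\hhh(Z(x),F(Z(F(x'))))=\delta_{x,x'}\End(M_x)$ from Proposition \ref{P10}(2), is exactly right. However, two steps are garbled as written. First, in part (1) you place $Z(x)$ as a \emph{submodule} of $P(x,l)$; in fact $Z(x)=P(x,1)$ is the \emph{quotient} at the bottom of the Verma flag of $P(x,l)$ (the surjections $P(x,l+1)\twoheadrightarrow P(x,l)$ terminate in $Z(x)$, and all other flag terms $Z(x')$ have $x'>x$ and sit below it as submodules). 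This matters: with your orientation $0\to Z(x)\to P(x,l)\to Q\to 0$, the long exact sequence traps $\Ext^1(Z(x),M)$ between $\Ext^1(P(x,l),M)$ and $\Ext^2(Q,M)$, which yields nothing. You need $0\to K\to P(x,l)\to Z(x)\to 0$ with $K$ filtered by the $Z(x')$, $x'>x$; then, after choosing $l$ large enough that any extension of $Z(x)$ by $M$ lies in $\calo(\la_x,l)$ where $P(x,l)$ is projective, $\hhh_\calo(K,M)$ surjects onto $\Ext^1(Z(x),M)$, and a nonzero map $K\to M$ produces a composition factor $V(x')$ of $M$ with $x'>x$.

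Second, the concluding contradiction in your $\Ext^1$-vanishing argument is misstated. Applying the second clause of part (1) to $\Ext^1(Z(x),F(Z(x')))$ yields a composition factor of $Z(x)$ strictly above $x'$ (not above $x$), and that alone is not impossible; likewise the duality identification you invoke is not the correct one ($F$ gives $\Ext^1(Z(x),F(Z(x')))\cong\Ext^1(Z(x'),F(Z(x)))$). The correct deduction is: the first clause gives a composition factor of $F(Z(x'))$ strictly above $x$, and since all composition factors of $F(Z(x'))$ are $\leq x'$ (they are the $V(F(x''))$ for factors $V(x'')$ of $Z(x')$, and $F$ preserves the weight orbit), this forces $x<x'$; the second clause gives a composition factor of $Z(x)$ strictly above $x'$, and since all factors of $Z(x)$ are $\leq x$, this forces $x'<x$. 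Together these contradict the antisymmetry of $\leq$ from Proposition \ref{P5}. With these two repairs your proof is the standard one and goes through.
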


Given $x \in X$, we now define $\calo^{\leq x}$ (or $\calo^{<x}$) to be
the subcategory of objects $N \in \calo$, so that all simple subquotients
of $N$ are of the form $V(x')$ for $x' \leq x$ (or $x' < x$
respectively). Given $x,x' \in X$, define $\calo(x')^{\leq x} :=
\calo(x') \cap \calo^{\leq x}$, and similarly, $\calo(x')^{<x}$ - so
$Z(x) \in \calo(x)^{\leq x}$ and $Y(x) \in \calo(x)^{<x}$.\medskip

\noindent We first show that enough projectives exist in $\calo$.

\begin{lemma}\label{Lproj}
If $\ag$ satisfies Condition (S1), then $Z(x)$ is the projective cover of
$V(x)$ in $\calo(x)^{\leq x}$.
\end{lemma}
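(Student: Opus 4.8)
The plan is to show two things: (i) $Z(x)$ lies in $\calo(x)^{\leq x}$ and is indecomposable with top $V(x)$, and (ii) $Z(x)$ is projective in $\calo(x)^{\leq x}$. The first point is already essentially available: by Proposition \ref{P6}, $Z(x)$ is indecomposable with unique simple quotient $V(x)$, and by inspection of formal characters every composition factor $V(x')$ of $Z(x)$ has $\la_{x'} \leq \g(\la_x)$ for some $\g$, so $x' \leq x$ in the partial order of Definition \ref{D1}(4); hence $Z(x) \in \calo^{\leq x}$, and since all its simple subquotients lie in $\{V(x') : x' \in S^3(x)\}$ (this is exactly the content of $S^3$ being closed under the relation (a)), we get $Z(x) \in \calo(x)^{\leq x}$. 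Because $\calo(x)^{\leq x}$ is a full subcategory closed under quotients, and $V(x)$ is the unique simple quotient of $Z(x)$, the projectivity of $Z(x)$ in this subcategory will immediately give that $Z(x)$ is the projective cover of $V(x)$.

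So the real work is projectivity: I must show $\Ext^1_{\calo(x)^{\leq x}}(Z(x), N) = 0$ for every $N \in \calo(x)^{\leq x}$. Since Condition (S1) holds, Theorem \ref{Tconditions}(3) tells us $\calo$ is finite length, so it suffices to check this for $N = V(x')$ a simple object with $x' \leq x$, $x' \in S^3(x)$ (a standard dévissage on the length of $N$). By Proposition \ref{moreoncalo}(1), if $\Ext^1_{\calo}(Z(x), V(x'))$ is nonzero then $V(x')$ has a composition factor $V(x'')$ with $x'' > x$ — but $V(x')$ is simple, forcing $x' > x$, which contradicts $x' \leq x$. Hence $\Ext^1_{\calo}(Z(x), V(x')) = 0$ for all relevant $x'$, and since $\calo(x)^{\leq x}$ is a full subcategory closed under extensions inside $\calo$ (being a truncation of a block), the $\Ext^1$ computed there agrees with the one in $\calo$ and therefore also vanishes. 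Thus $Z(x)$ is projective in $\calo(x)^{\leq x}$.

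The one point requiring a little care — and the main (mild) obstacle — is the passage from vanishing of $\Ext^1$ on simple modules to projectivity in the truncated category: I need that every object of $\calo(x)^{\leq x}$ has finite length (guaranteed by (S1) via Theorem \ref{Tconditions}) so that the long exact sequence argument terminates, and that $\calo(x)^{\leq x}$ is closed under the relevant subquotients and extensions so that $\Ext^1$-groups are not altered by passing to the subcategory. Both are by now routine given the machinery already set up (Proposition \ref{Pskewblock} for the block structure, and the definitions of $\calo^{\leq x}$ and $\calo(x')$), so I would simply assemble these ingredients: indecomposability and unique top from Proposition \ref{P6}, membership in $\calo(x)^{\leq x}$ by the character/partial-order inspection above, and projectivity from Proposition \ref{moreoncalo}(1) together with finiteness of length.
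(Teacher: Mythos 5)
Your argument is correct, but it is not the route the paper takes. The paper's proof is shorter and more "absolute": it observes that $\calo(x)^{\leq x} \subset \calo(\la_x,1)$ and that $Z(x) = P(x,1)$, which is projective in $\calo(\la_x,1)$ by Proposition \ref{P8}(3) --- a fact that rests on the adjunction $\hhom(P(y,l),N) = \hhh_{H \rtimes \G}(M_y, k\G \cdot N_{\la_y})$ together with the semisimplicity of $\calc$ (Proposition \ref{Phgsplits}), and which therefore holds without any Condition (S). Condition (S1) enters the paper's proof only through finite length, needed for the Fitting-lemma step identifying the projective indecomposable $Z(x)$ as the projective \emph{cover} of its top $V(x)$. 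Your proof instead derives projectivity from the $\Ext^1$-vanishing criterion: $\Ext^1(Z(x),V(x')) = 0$ for $x' \leq x$ by Proposition \ref{moreoncalo}(1) and antisymmetry of $\leq$, followed by d\'evissage over a composition series, which is where you use (S1) a second time. Both arguments are sound; what the paper's buys is independence of the projectivity statement from the Conditions (S) and from Proposition \ref{moreoncalo} (which the paper only sketches by reference to \cite{Don} and \cite{Kh}), while yours buys a more standard homological presentation at the cost of leaning on that proposition and on the identification of Yoneda $\Ext^1$ in the truncated subcategory with $\Ext^1$ in $\calo$ (harmless, since $\calo(x)^{\leq x}$ is extension-closed in $\calo$, but worth saying explicitly). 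Your preliminary points --- that $Z(x) \in \calo(x)^{\leq x}$ and that projectivity plus indecomposability with top $V(x)$ yields the projective cover --- match the paper's.
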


\begin{proof}
We know that $\calo(x)^{\leq x} \subset \calo(\la_x,1)$, so by
Proposition \ref{P8}, $Z(x) = P(x,1)$ is projective here. Moreover,
$Z(x)$ is indecomposable, with radical $Y(x)$. The usual Fitting Lemma
arguments now complete the proof.
\end{proof}

\begin{prop}\label{Pproj}
If $\ag$ satisfies Condition (S2), then each $\calo(x)$ has enough
projectives. If $\ag$ satisfies Condition (S3), then each block
$\calo(x)$ is equivalent to the category $\bfg$ of finitely generated
right modules over a finite-dimensional $k$-algebra $B = B_x$.
\end{prop}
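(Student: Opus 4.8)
The plan is to build, for each $x\in X$ under Condition (S2), a projective cover $P(x)\in\calo(x)$ of $V(x)$, and then under Condition (S3) assemble these into a progenerator whose endomorphism ring is the finite-dimensional algebra $B_x$. First I would fix $x\in X$ and work in the block $\calo(x)$, which by Theorem \ref{Timplies}(1) (valid since $(S2)\Rightarrow(S1)$) is abelian and finite length, with only finitely many simple objects $\{V(x'):x'\in S^3(x)\}$. Condition (S2) guarantees $S^2(x)=\{\pi(\la_{x'}):x'\in S^3(x)\}$ is finite, so one can choose $l\in\N$ large enough that $P(x',l)\in\calo(\la_{x'},l)$ has $Z(x)$ among its Verma-flag subquotients, and more generally so that the finitely many $P(x',l)$ ($x'\in S^3(x)$) jointly ``see'' every Verma $Z(x')$ appearing in the block. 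By Proposition \ref{P8}(3) each $P(x',l)$ is projective in $\calo(\la_{x'},l)$; I would then check that its projection onto the block $\calo(x)$ — i.e. the largest quotient (or summand) lying in $\calo(x)$ — is projective in $\calo(x)$, using that $\calo(x)$ is closed under quotients and that maps out of a projective split off block-components. Decomposing this block-truncation of $P(x',l)$ into indecomposables and using the Fitting-lemma argument (as in Lemma \ref{Lproj}) yields an indecomposable projective $P(x')$ covering $V(x')$; doing this for all $x'\in S^3(x)$ produces enough projectives in $\calo(x)$.

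For the second statement, assume Condition (S3), so $S^3(x)$ itself is finite. I would set $P:=\bigoplus_{x'\in S^3(x)}P(x')$, a projective generator of $\calo(x)$ (it surjects onto every simple, hence onto every object of the finite-length category $\calo(x)$), and put $B=B_x:=\End_{\ag}(P)^{op}$. One checks $B$ is finite-dimensional over $k$: each $P(x')$ has a finite Verma flag (it is a quotient of some $P(x'',l)$, which has a simple Verma flag by the discussion before Proposition \ref{P8}), each Verma $Z(x'')$ has finite length by (S1), and $\Hom_{\ag}(P(x'),P(x''))$ is controlled by the finite-dimensional $\Hom$-spaces of Proposition \ref{P10} and \ref{moreoncalo}(2) together with the finiteness of the flags. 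The functor $\hhh_{\ag}(P,-):\calo(x)\to\bfg$ is then the desired equivalence: this is the standard Morita-type argument — $P$ projective makes the functor exact, $P$ a generator makes it faithful, and fullness plus essential surjectivity follow because every object of $\calo(x)$ has a finite presentation by sums of summands of $P$ (Proposition \ref{P8}(4) adapted to the block). I would cite \cite{CPS1} for the highest weight category structure, the poset being $(S^3(x),\leq)$ from Definition \ref{D1}, with standard objects the Verma modules $Z(x')$ and costandard objects $F(Z(F(x')))$; axioms (the standard objects have the right composition factors, and the projective cover $P(x')$ has a $\Delta$-filtration starting with $Z(x')$ and further terms $Z(x'')$ with $x''>x'$) follow from Propositions \ref{P8}, \ref{moreoncalo}, and the Verma-flag rearrangement in the proof of Proposition \ref{P5}.

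The main obstacle I expect is the finite-dimensionality of $B$ and, relatedly, showing that the block-truncation of $P(x',l)$ is genuinely projective \emph{in the block} and has a finite $\Delta$-filtration — i.e. controlling how the infinitely many Verma subquotients of the ``untruncated'' $\calo(\la_{x'},l)$ collapse once one restricts to $\calo(x)$. This is exactly where Condition (S3) (finiteness of $S^3(x)$) must be fed in: it bounds which $V(x'')$, hence which $Z(x'')$, can occur, so that choosing $l$ uniformly over the finite set $S^3(x)$ makes $P$ finitely generated with a finite Verma flag. The rest — exactness, faithfulness, fullness of $\hhh_{\ag}(P,-)$, and verification of the Cline–Parshall–Scott axioms — is routine given the homological input already assembled in Propositions \ref{P8} and \ref{moreoncalo}, so I would present those steps briskly.
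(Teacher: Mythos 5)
Your proposal follows essentially the same route as the paper's (very terse) proof: the paper obtains projectives from the $x$-component in the block decomposition of suitable $P(y,l)$'s as in \cite{BGG1}, and then uses a progenerator together with the usual Fitting-lemma/Morita arguments as in \cite{Kh} for the equivalence with $\bfg$. Your write-up just fills in the details that the paper delegates to those references, so it is correct and not a genuinely different argument.
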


\begin{proof}
The proof of the first part uses the $x$-component in the (block)
decomposition of some $P(y,l)$, as in \cite{BGG1}. The second part uses
the existence of progenerators and the usual Fitting lemma arguments, as
in \cite{Kh}.
\end{proof}

\noindent Let us denote the projective cover of $V(x)$ by $P(x)$.

\begin{prop}\label{Pskewformula}
For all $x \in X$ and $M \in \calo$, we have
\begin{eqnarray*}
\hhom(P(x),V(x')) & = & \delta_{x,x'} \End_{\calo} V(x),\\
\dim_k (\hhom(P(x), M)) & = & [M : V(x)] \dim_k \End_{\calo} V(x).
\end{eqnarray*}
\end{prop}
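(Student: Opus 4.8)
The plan is to deduce both formulas from the projectivity of $P(x)$ in the block $\calo(x)$, the exactness of $\hhom(P(x),-)$ that follows, and Schurianness/semisimplicity facts already available. First I would establish the first formula. Since $P(x)$ is the projective cover of $V(x)$ in $\calo(x)$ (Proposition \ref{Pproj}, Lemma \ref{Lproj}), any nonzero map $P(x) \to V(x')$ is surjective, because $V(x')$ is simple; composing with the projection $P(x) \twoheadrightarrow V(x)$ shows $V(x') \cong V(x)$, i.e.\ $x = x'$ (here one uses that $V(x')$ is a quotient of $P(x)$, so it lies in $\calo(x)$, forcing $x' \in S^3(x)$, and the top of $P(x)$ is $V(x)$). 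When $x = x'$, a map $P(x) \to V(x)$ factors through the top $V(x)$ of $P(x)$, giving $\hhom(P(x),V(x)) \cong \End_{\calo}V(x)$; this is the asserted equality $\hhom(P(x),V(x')) = \delta_{x,x'}\End_{\calo}V(x)$.

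Next I would prove the dimension formula by dévissage on a composition series of $M$. By Theorem \ref{Timplies}(1) (Condition (S1), which is implied by (S2) or (S3) used in Proposition \ref{Pproj}), every $M \in \calo$ has finite length, so pick a composition series $0 = M_0 \subset M_1 \subset \cdots \subset M_r = M$ with $M_j/M_{j-1} \cong V(x_j)$. Applying the exact functor $\hhom(P(x),-)$ — exact because $P(x)$ is projective in the block $\calo(x)$, and a general $M$ decomposes as a direct sum over blocks by Theorem \ref{Timplies}(1), with $\hhom(P(x),-)$ killing the summands in blocks $\ne \calo(x)$ — to each short exact sequence $0 \to M_{j-1} \to M_j \to V(x_j) \to 0$ gives
\[
\dim_k \hhom(P(x),M) = \sum_{j=1}^r \dim_k \hhom(P(x),V(x_j)) = \sum_{j : x_j = x} \dim_k \End_\calo V(x) = [M:V(x)]\,\dim_k \End_\calo V(x),
\]
using the first formula at each step. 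The Jordan--Hölder multiplicity $[M:V(x)]$ is well-defined since $\calo$ is finite length.

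The main obstacle — really the only subtle point — is justifying that $\hhom(P(x),-)$ is exact on all of $\calo$, not merely on the block $\calo(x)$: $P(x)$ is projective inside $\calo(x)$, so one must first invoke the block decomposition $\calo = \bigoplus_{x'} \calo(x')$ (Theorem \ref{Timplies}(1)) to write any $M$ as $\bigoplus M(x')$ with $M(x) \in \calo(x)$, note that $\hhom(P(x),M(x')) = 0$ for $x' \notin S^3(x)$ (no nonzero morphisms between distinct blocks, since a nonzero $P(x) \to M(x')$ would have image in $\calo(x) \cap \calo(x') = 0$), and thus reduce exactness of $\hhom(P(x),-)$ on $\calo$ to exactness on $\calo(x)$, where it holds by projectivity. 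Everything else is a routine application of the long exact sequence. I would also remark that both formulas are the standard companions of BGG reciprocity and are proved exactly as in \cite{BGG1,Kh}.
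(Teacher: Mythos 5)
Your proposal is correct and follows essentially the same route as the paper: the first formula comes from the general properties of projective covers (unique simple quotient $V(x)$ of $P(x)$), and the second follows by additivity of both sides over short exact sequences, i.e.\ d\'evissage along a composition series, reducing to the first formula. The paper states this in two lines; your version merely spells out the exactness of $\hhom(P(x),-)$ via the block decomposition, which is a reasonable elaboration rather than a different argument.
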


\begin{proof}
Both sides of the second equation are additive in $M$, over short exact
sequences. This reduces it to the case $M = V(x')$, i.e., the previous
equation, which holds by general properties of projective covers.
\end{proof}

To show that each block $\calo(x)$ is a highest weight category (see
\cite{CPS1} for the definition), we need a result from \cite{BGG1}; its
proof is also valid here.

\begin{prop}\label{Pfilt}
Recall what a $p$-filtration means, in Definition \ref{D2} above.
\begin{enumerate}
\item If $M \in \calo$ has a $p$-filtration, and $x \in X$ is maximal
(minimal) in the set of Verma subquotients $Z(x)$ of $M$, then $M$ has a
submodule (quotient) $Z(x)$, and the quotient (kernel of the quotient,
respectively) has a simple Verma flag.

\item Given $M_1, M_2 \in \calo$, $M = M_1 \oplus M_2$ has a
$p$-filtration if and only if each of $M_1$ and $M_2$ has a simple Verma
flag.
\end{enumerate}
\end{prop}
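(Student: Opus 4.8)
The plan is to prove both parts by induction on the length of the $p$-filtration, following the approach of \cite{BGG1}, with the two main inputs being the character-theoretic ordering argument (as in the proof of Proposition \ref{P5}) and the $\Ext^1$-vanishing from Proposition \ref{moreoncalo}(2).

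For part (1), suppose $M$ has a $p$-filtration $0 = M_0 \subset M_1 \subset \dots \subset M_r = M$ with $M_j / M_{j-1} \cong Z(x_j)$, and let $x \in X$ be maximal among $\{x_1,\dots,x_r\}$. First I would handle the ``submodule'' claim: using formal characters (recall $\ch_{Z(x)} = \ch_{B_-} e(\la_x)$ up to $\G$-twists, from Proposition \ref{P2} and Remark \ref{R4}), the weight space $M_{\la_x}$ of top weight $\la_x$ sees only the subquotient $Z(x)$ among the $Z(x_j)$, by maximality of $x$ in the partial order on $X$ (here I use that $\pi(\la_{x'}) \not> \pi(\la_x)$ for $x' \ne x$ below $x$, and that $\g(\la_x) \not< \la_x$ via Lemma \ref{L2.1}). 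Hence there is a maximal weight vector of weight $\la_x$ in $M$ generating a copy of $Z(x)$; more carefully, one picks the lowest index $j$ with $x_j = x$ and observes that $M_{j-1}$ contributes nothing in the relevant weight space, so the preimage of a generator of $Z(x_j)/Z(x_{j-1})$ can be chosen maximal, giving an embedding $Z(x) \hookrightarrow M$. That the quotient $M / Z(x)$ has a $p$-filtration is then immediate by deleting the $j$-th step. To upgrade ``$p$-filtration'' to ``simple Verma flag'' on the quotient: this is where one iterates — after repeatedly splitting off maximal Verma modules, one reorders the filtration so that $x_i \geq x_j \Rightarrow i \leq j$ (exactly the rearrangement lemma used in the proof of Proposition \ref{P5}, justified by character theory), and then re-runs the argument with $x$ ranging over maximal elements successively. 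The ``quotient $Z(x)$ when $x$ is minimal'' statement is dual: either dualize via $F$ (using $F(Z(x))$ has socle $V(F(x))$, Proposition \ref{P10}(1), and $F$ exact contravariant) and apply the submodule case, or argue directly that the bottom step $M_1 \cong Z(x_1)$ can be chosen with $x_1$ minimal after the same reordering.

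For part (2), the nontrivial direction is that $M = M_1 \oplus M_2$ having a $p$-filtration forces each $M_i$ to have a \emph{simple} Verma flag (not merely a $p$-filtration). The key point: if $M$ has a $p$-filtration then in particular $\Ext^1(Z(x), M) $ is controlled — more precisely, I would show that a $p$-filtration whose Verma subquotients are indexed by $X$ is the same as a simple Verma flag, so it suffices to see $M$ itself has a simple Verma flag, and then that this property passes to direct summands. For the latter, use Proposition \ref{moreoncalo}(2): if $N$ and $F(N')$ have simple Verma flags then $\Ext^1(N,N') = 0$; applying this with the Verma flag filtration of one summand against the other summand, all the relevant extension classes vanish, so a simple Verma flag of $M$ restricted/projected to $M_i$ splits off compatibly. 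Concretely, induct on the flag length of $M$: the top Verma $Z(x)$ of $M$ (with $x$ maximal) must map nontrivially to $M_1$ or $M_2$, say $M_1$; since $x$ is maximal, $Z(x) \hookrightarrow M_1$ with quotient in $\calo^{<x}$, and the complementary summand structure is preserved because $\Ext^1(Z(x), M_2) = 0$ (as $M_2 \subset M$ has no composition factor $V(x')$ with $x' > x$, by Proposition \ref{moreoncalo}(1)). Then $M_1/Z(x) \oplus M_2 \cong M/Z(x)$ has a shorter simple Verma flag, and induction finishes. The converse (each $M_i$ has a simple Verma flag $\Rightarrow M = M_1 \oplus M_2$ has a $p$-filtration) is trivial — concatenate the flags.

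The main obstacle I anticipate is the bookkeeping in part (1) to pass from an arbitrary $p$-filtration (subquotients are standard cyclic modules $Z(x') \twoheadrightarrow V$, i.e.\ possibly proper quotients of Vermas, not Vermas themselves) to controlling the top weight; one must be careful that a standard cyclic quotient of $Z(x')$ with $x' \ne x$ genuinely cannot contribute weight $\la_x$, which again comes down to the partial order (a quotient only has weights $\leq \la_{x'}$, and $\G$-twisting does not produce $\la_x$ by Lemma \ref{L2.1}). In part (2), the subtlety is ensuring the $\Ext^1$-vanishing of Proposition \ref{moreoncalo}(2) is genuinely applicable — it requires \emph{simple} Verma flags on both arguments, so the induction must be set up to maintain that invariant at every stage rather than working with general $p$-filtrations, which is exactly why part (1)'s ``simple Verma flag'' upgrade is needed as a lemma first.
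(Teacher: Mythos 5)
Your proposal is correct and matches the paper's approach exactly: the paper gives no proof of Proposition \ref{Pfilt} beyond the remark that the argument of \cite{BGG1} carries over verbatim, and what you have written out is precisely that argument (maximality of $x$ controlling the top weight space, the character-theoretic reordering from the proof of Proposition \ref{P5}, the $\Ext^1$-vanishing of Proposition \ref{moreoncalo}, induction on flag length, and duality for the minimal/quotient case). The one step I would tighten is the assertion in part (2) that $Z(x)$ embeds into the summand $M_1$ --- the inclusion $Z(x)\hookrightarrow M$ from part (1) need not land in $M_1$, and injectivity of the projection is obtained, as in \cite{BGG1}, by a character count on the submodule generated by the top weight space (which splits as a direct sum of its intersections with $M_1$ and $M_2$), not from maximality of $x$ alone.
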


\begin{cor}\label{Cproj}
If $\ag$ satisfies Condition (S2), then every $P(x)$ has a
$p$-filtration, with one subquotient (the ``first" one) $Z(x)$, and all
others $Z(x')$ for some $x' > x$.
\end{cor}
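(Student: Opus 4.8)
The plan is to obtain $P(x)$ as a direct summand of a module of the form $P(x,l)$, for which the Verma-flag structure is already recorded, and then read off the assertion. Since $\ag$ satisfies Condition (S2), it satisfies Condition (S1) (Theorem \ref{Tconditions}), so $\calo$ is finite length and splits into blocks $\calo(x)$, and by Proposition \ref{Pproj} each block has enough projectives; write $P(x)$ for the projective cover of $V(x)$. Recall from the discussion after Proposition \ref{P8} that, for every $l$, the module $P(x,l)$ has a simple Verma flag in which $Z(x)$ occurs exactly once and every other subquotient is $Z(x')$ for some $x' > x$. First I would invoke the construction used in the proof of Proposition \ref{Pproj} (following \cite{BGG1}): for $l \gg 0$ one has $P(x) \in \calo(x) \cap \calo(\la_x, l)$, and the $\calo(x)$-block component of the projective $P(x,l)$ surjects onto $V(x)$, so $P(x)$ is a direct summand of $P(x,l)$; write $P(x,l) \cong P(x) \oplus Q$.

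Next I would apply Proposition \ref{Pfilt}(2) to this decomposition: as $P(x,l)$ has a simple Verma flag, both $P(x)$ and $Q$ do. Since the multiset of Verma subquotients is additive over direct sums (compare formal characters), the Verma subquotients of $P(x)$ form a sub-multiset of those of $P(x,l)$; hence every one is $Z(x')$ with $x' \geq x$, and $Z(x)$ occurs in $P(x)$ at most once. To see it occurs \emph{exactly} once, suppose instead that every Verma subquotient of $P(x)$ is some $Z(x')$ with $x' > x$; choosing $x'$ minimal among these and applying Proposition \ref{Pfilt}(1) produces a surjection $P(x) \twoheadrightarrow Z(x') \twoheadrightarrow V(x')$ with $x' \neq x$, contradicting that $V(x)$ is the unique simple quotient of its projective cover $P(x)$.

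Finally, among the indices of the Verma subquotients of $P(x)$ the element $x$ is the unique minimal one, every other being $> x$. Proposition \ref{Pfilt}(1) then yields a surjection $P(x) \twoheadrightarrow Z(x)$ whose kernel carries a simple Verma flag with all subquotients $Z(x')$, $x' > x$; placing $Z(x)$ as the top quotient of the flag gives the asserted $p$-filtration (this is the sense in which $Z(x)$ is the ``first" subquotient). The main points to be careful about are the multiplicity-one claim and the bookkeeping with the partial order; once those are in place, everything else follows immediately from Propositions \ref{P8}, \ref{Pproj}, and \ref{Pfilt}.
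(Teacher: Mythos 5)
Your proof is correct and follows essentially the same route as the paper: realize $P(x)$ as a direct summand of some $P(x,l)$, transfer the simple Verma flag to the summand via Proposition \ref{Pfilt}(2), and use the known flag of $P(x,l)$ together with Proposition \ref{Pfilt}(1) to place the unique copy of $Z(x)$ as the top quotient. The only cosmetic difference is that where the paper invokes Proposition \ref{Pskewformula} (as in \cite[Theorem 6]{Kh}) to control multiplicities, you argue directly from the fact that $V(x)$ is the unique simple quotient of its projective cover; both are valid.
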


\begin{proof}
This is because each $P(x,l)$ (see Proposition \ref{P8}) has a simple
Verma flag, with one subquotient $Z(x)$, and every other subquotient
$Z(x')$ for some $x' \geq x$. Now use Propositions \ref{Pfilt}, \ref{P8},
and \ref{Pskewformula}, as in \cite[Theorem 6]{Kh}. That $Z(x)$ is the
``highest" subquotient is as in Proposition \ref{Pfilt} (or from
\cite[(A3.1)(i)]{Don}).
\end{proof}

\begin{theorem}\label{Tskewhighest}
Each block $\calo(x)$ is a highest weight category if $\ag$ satisfies
Condition (S3).
\end{theorem}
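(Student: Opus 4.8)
The plan is to exhibit the data of a highest weight category in the sense of Cline--Parshall--Scott \cite{CPS1} for $\calo(x)$ and to verify each axiom by assembling the results above. Since $(S3)\Rightarrow(S2)\Rightarrow(S1)$ by Theorem \ref{Tconditions}(1), we already know that $\calo$ is finite length (Theorem \ref{Timplies}(1)), that $\calo(x)$ is abelian and of finite length with only finitely many isomorphism classes of simple objects $\{V(x'):x'\in S^3(x)\}$ (finiteness of $S^3(x)$ being precisely Condition (S3)), and that $\calo(x)\simeq\bfg$ for a finite-dimensional $k$-algebra $B=B_x$ (Proposition \ref{Pproj}); in particular $\calo(x)$ is artinian with enough projectives, so all the finiteness hypotheses in the definition are automatic. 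I would take the poset to be $\Lambda:=S^3(x)$ with the order induced from Definition \ref{D1}(4), the simple objects $L(x'):=V(x')$, and the standard objects $\Delta(x'):=Z(x')$. Each $Z(x')$ does lie in $\calo(x)$, since by the definition of the relation generating $S^3$ every composition factor of $Z(x')$ is some $V(x'')$ with $x''\in S^3(x')=S^3(x)$.

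Next I would check the axioms governing the standard objects. By Proposition \ref{P6}(2), $Z(x')$ has simple head $V(x')$. The key local fact is that the weight spaces of $Z(x')$ of restricted weight $\pi(\la_{x'})$ (the maximal restricted weight occurring) constitute exactly the \emph{simple} $(H\rtimes\G)$-module $M_{x'}$, by Proposition \ref{P6}(1); since the maximal submodule $Y(x')$ is a proper submodule, it meets those weight spaces trivially. Hence $[Z(x'):V(x')]=1$, and any other composition factor $V(x'')$ of $Z(x')$ must have its maximal vectors of restricted weight strictly below $\pi(\la_{x'})$ --- otherwise $M_{x''}$ would be a subquotient of the simple module $M_{x'}$, forcing $x''=x'$ --- so $\pi(\la_{x''})<\pi(\la_{x'})$, i.e. $x''<x'$ in $\Lambda$ (here Standing Assumption \ref{St2} ensures the order behaves correctly). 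Swapping the roles of $x'$ and $x''$, the same analysis shows $\hhh_\calo(Z(x'),Z(x''))\neq0$ forces $V(x')$ to be a composition factor of $Z(x'')$, hence $x'\leq x''$; and $\End_{\calo}(Z(x'))=k$ by Proposition \ref{P10}(4), an endomorphism being a scalar on the simple top $M_{x'}$.

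Then I would treat the projective objects. Let $P(x')$ be the projective cover of $V(x')$ in $\calo(x)$, which exists by Condition (S2) and Proposition \ref{Pproj}; it lies in $\calo(x)$ and is of finite length. By Corollary \ref{Cproj}, $P(x')$ has a $p$-filtration (good filtration) whose first subquotient is $Z(x')$ and all of whose remaining (finitely many) subquotients are of the form $Z(x'')$ with $x''>x'$ in $\Lambda$. Combined with the properties of the standard objects verified above, this is exactly the defining data of a highest weight category; the costandard objects and the dual axioms are obtained by applying the exact contravariant involution $F$, using $F(V(x'))=V(F(x'))$ (Proposition \ref{P4}), $F(Z(x'))$ having socle $V(F(x'))$ (Proposition \ref{P10}(1)), and $F$-duality of $p$-filtrations. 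One then also records, as in \cite{BGG1,Kh}, the modified form of BGG Reciprocity relating $(P(x'):Z(x''))$ to the composition multiplicities, together with the symmetry of the modified Cartan matrix, via the $\hhh$- and $\mathrm{Ext}$-computations of Propositions \ref{P10} and \ref{moreoncalo}. The main obstacle is the bookkeeping in the second paragraph --- separating the diagonal case $x''=x'$ from the strictly-lower case in a composition series of $Z(x')$ --- which rests on the simplicity of the top piece $M_{x'}$ and on $\G$ acting by order-preserving transformations on $G_0$; the remainder is assembly of the preceding results.
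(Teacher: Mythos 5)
Your proof is correct and takes essentially the same route as the paper: the paper's own argument is just the observation that Condition (S3) makes the poset $S^3(x)$ interval-finite, together with an appeal to Corollary \ref{Cproj} (each projective cover has a $p$-filtration with top subquotient $Z(x')$ and the rest of the form $Z(x'')$ with $x''>x'$). Your explicit verification of the remaining axioms --- simple head and multiplicity one at the top via Proposition \ref{P6}, strictly lower composition factors, Schurian standard objects via Proposition \ref{P10}, and costandard objects via the duality $F$ --- is precisely the material the paper leaves implicit in the results it cites.
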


\begin{proof}
We need Condition (S3) to ensure that the set of simple objects is
``interval-finite" with respect to the partial order. We are now done by
Corollary \ref{Cproj}.
\end{proof}

\section{BGG Reciprocity and the (symmetric) Cartan matrix}

\begin{stand}
For this section, $\ag$ satisfies Condition (S3), and if $\G$ is
nontrivial, we require that $k$ is algebraically closed.
\end{stand}

\begin{definition}
Fix a block $\calo(x)$, and order $S^3(x)$ such that $x_i \geq x_j
\Rightarrow i \leq j$. We then define the
\begin{itemize}
\item {\em decomposition matrix} $D_x$ by: $(D_x)_{ij} := [Z(x_i) :
V(x_j)]$.

\item {\em duality matrix} $F_x$ by: $(F_x)_{ij} := \delta_{x_i,
F(x_j)}$.

\item {\em Cartan matrix} $C_x$ by: $(C_x)_{ij} := [P(x_i) : V(x_j)]$.

\item {\em modified Cartan matrix} $C'_x$ by: $(C'_x)_{ij} := [P(x_i) :
V(F(x_j))]$.
\end{itemize}
\end{definition}

Note that the duality functor $F$ does not preserve each $V(x)$, so the
``usual" notion of the Cartan matrix is not symmetric, as in the
classical case of $\mf{Ug}$. However, a variant is.

\begin{prop}[BGG Reciprocity]\label{Precip}
For all $x,x' \in X$, we have
\begin{equation}\label{Erecip}
[P(x') : Z(x)] = [Z(F(x)) : V(F(x'))]
\end{equation}

\noindent and hence $C'_x$ is symmetric; more precisely, $C'_x = F_x
D^T_x F_x D_x F_x$.
\end{prop}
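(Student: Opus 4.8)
The plan is to establish the multiplicity identity \eqref{Erecip} first, and then derive the matrix factorization $C'_x = F_x D^T_x F_x D_x F_x$ as a purely formal consequence. For the multiplicity identity, I would argue as in \cite{BGG1}: since $\ag$ satisfies Condition (S3), each block $\calo(x)$ is a highest weight category (Theorem \ref{Tskewhighest}), so $P(x')$ has a $p$-filtration with subquotients Verma modules (Corollary \ref{Cproj}), and the multiplicity $[P(x') : Z(x)]$ is well-defined, independent of the filtration — this follows by computing it homologically. Concretely, I would show
\[ [P(x') : Z(x)] = \dim_k \hhom(P(x'), F(Z(x))) / \dim_k \End_\calo V(x), \]
using that $\Ext^1(Z(y), F(Z(x))) = 0$ for all $y$ by Proposition \ref{moreoncalo}(2) (both $Z(y)$ and $F(F(Z(x))) = Z(x)$ have simple — hence $p$- — Verma flags), so $\hhom(-, F(Z(x)))$ is exact on the $p$-filtration of $P(x')$, together with the fact that $\hhom(Z(y), F(Z(x)))$ is $\delta_{y,x} \End_\calo V(x)$ by Proposition \ref{P10}(2) (rewriting $\hhom(Z(y), F(Z(x))) = \hhom(Z(y), F(Z(F(F(x)))))$ and reading off the chain of equalities there). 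On the other hand, Proposition \ref{Pskewformula} gives $\dim_k \hhom(P(x'), F(Z(x))) = [F(Z(x)) : V(x')] \dim_k \End_\calo V(x')$, and since $\End_\calo V(x) \cong \End_\calo V(F(x))$ (Proposition \ref{P10}(3)) and $F$ is exact and involutive with $F(V(y)) = V(F(y))$ (Proposition \ref{P4}), one has $[F(Z(x)) : V(x')] = [Z(F(x)) : F(V(x'))] = [Z(F(x)) : V(F(x'))]$ and $\dim_k \End_\calo V(x') = \dim_k \End_\calo V(x)$, which collapses everything to \eqref{Erecip}.

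For the matrix statement, I would translate \eqref{Erecip} into linear algebra over the ordered index set $\{x_i\}$ of $S^3(x)$. The Cartan matrix entry is $(C'_x)_{ij} = [P(x_i) : V(F(x_j))] = \sum_k [P(x_i) : Z(x_k)][Z(x_k) : V(F(x_j))] = \sum_k [P(x_i):Z(x_k)] (D_x F_x)_{kj}$, using the $p$-filtration of $P(x_i)$ and the definitions of $D_x$ (decomposition matrix, $(D_x)_{k j} = [Z(x_k):V(x_j)]$) and $F_x$ (with $(F_x)_{jl} = \delta_{x_j, F(x_l)}$, so right-multiplication by $F_x$ relabels $V(x_j) \rightsquigarrow V(F(x_j))$). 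Now \eqref{Erecip} says $[P(x_i) : Z(x_k)] = [Z(F(x_k)) : V(F(x_i))] = (F_x D_x F_x)_{ki}$, i.e., the "BGG matrix" $B := ([P(x_i):Z(x_k)])_{ik}$ equals $(F_x D_x F_x)^T = F_x D^T_x F_x$ (since $F_x$ is symmetric and $F_x^2 = I$, as $F$ is an involution permuting $S^3(x)$). Hence $C'_x = B \cdot D_x F_x = F_x D^T_x F_x D_x F_x$, as claimed; symmetry of $C'_x$ then follows since $(F_x D^T_x F_x D_x F_x)^T = F_x D^T_x F_x D_x F_x$, using $F_x^T = F_x$.

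The main obstacle I anticipate is the careful bookkeeping of the $F$-twist throughout — in the classical $\mf{Ug}$ case one has $F(V(\la)) = V(\la)$, so $F_x = I$ and the identity reduces to the familiar $C_x = D^T_x D_x$; here the duality functor genuinely permutes simple objects, so one must track precisely where each $F$ enters (in the multiplicity $[F(Z(x)):V(x')]$ versus $[Z(F(x)):V(F(x'))]$, and in the ordering: one should check that $x_i \mapsto F(x_i)$ need not be order-preserving, but this is harmless since $F_x$ merely records a permutation and the matrix identity is basis-independent in that sense). A secondary point to get right is the vanishing $\Ext^1(Z(y), F(Z(x))) = 0$ needed for exactness along the $p$-filtration — this is exactly Proposition \ref{moreoncalo}(2), whose hypothesis "$X$ and $F(Y)$ have simple Verma flags" is met because Verma modules trivially have simple Verma flags and $F(F(Z(x))) = Z(x)$.
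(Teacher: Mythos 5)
Your strategy is the same as the paper's: compute $[P(x') : Z(\cdot)]$ homologically via Proposition \ref{moreoncalo}(2), compare with Proposition \ref{Pskewformula}, and use exactness and involutivity of $F$ together with Proposition \ref{P4}; the matrix manipulation at the end is also correct. However, the execution contains two compensating $F$-twist errors --- exactly the bookkeeping you flagged as the main danger. First, your identity $\hhom(Z(y), F(Z(x))) = \delta_{y,x} \End_{\calo} V(x)$ is false: Proposition \ref{P10}(2) reads $\hhom(Z(y), F(Z(F(x')))) = \delta_{y,x'} \End(M_y)$, so writing $F(Z(x)) = F(Z(F(F(x))))$ forces $x' = F(x)$ and yields $\delta_{y, F(x)}$, not $\delta_{y,x}$. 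Consequently what your first step actually proves is $[P(x') : Z(F(x))] = \dim_k \hhom(P(x'), F(Z(x)))/\dim_k \End_{\calo} V(F(x))$. Second, your equality $[F(Z(x)) : V(x')] = [Z(F(x)) : F(V(x'))]$ is also false: applying the involution $F$ to both arguments gives $[F(Z(x)) : V(x')] = [F(F(Z(x))) : F(V(x'))] = [Z(x) : V(F(x'))]$, and $F(F(Z(x))) = Z(x)$ is not $Z(F(x))$ ($F$ of a Verma module is a dual Verma, not a Verma). The two errors cancel: the corrected chain gives $[P(x') : Z(F(x))] = [Z(x) : V(F(x'))]$, which is \eqref{Erecip} with $x$ replaced by $F(x)$, so your conclusion survives. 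The clean fix is the paper's: take the Hom-target to be $F(Z(F(x)))$ rather than $F(Z(x))$, so that Proposition \ref{moreoncalo}(2) produces $\delta_{x'',x}$ directly and one gets
\begin{equation*}
[P(x') : Z(x)] = \dim_k \hhom(P(x'), F(Z(F(x)))) = [F(Z(F(x))) : V(x')] = [Z(F(x)) : V(F(x'))].
\end{equation*}

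A minor secondary point: you justify $\dim_k \End_{\calo} V(x') = \dim_k \End_{\calo} V(x)$ by Proposition \ref{P10}(3), but that result only relates $x$ to $F(x)$, not arbitrary pairs. The correct justification (and the one the paper leads with) is Proposition \ref{P10}(4): under the standing assumption for this section ($k$ algebraically closed when $\G$ is nontrivial), every $V(x)$ is Schurian, so all these endomorphism algebras are one-dimensional and the normalizing factors disappear.
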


\noindent As a consequence, \cite[Theorem 9.1]{GGK} holds, reconciling
various notions of block decomposition; in particular, $S^3(x) = T(x)$
(recall Proposition \ref{Pcentfunct}).

\begin{proof}
Proposition \ref{P10} implies that $V(x)$ is Schurian now, for all $x$.
Hence using Propositions \ref{moreoncalo} and \ref{Pskewformula}, we get
that
\begin{eqnarray*}
[P(x') : Z(x)] & = & \dim_k \hhom(P(x'), F(Z(F(x)))) = [F(Z(F(x))) :
V(x')]\\
& = & [Z(F(x)) : F(V(x'))] = [Z(F(x)) : V(F(x'))].
\end{eqnarray*}

\noindent The second part is also standard, say, from the following
results.
\end{proof}

\begin{prop}
Let ${\rm Grot}_{\mathcal{D}}$ be the Grothendieck group of a category
$\mathcal{D}$.
\begin{enumerate}
\item $\groto = \oplus {\rm Grot}_{\calo(x)}$.

\item $F_x$ is symmetric and has order at most two.

\item $C_x = F_x D_x^T F_x D_x$. In particular, if $\G = 1$ then $C_x$ is
symmetric.

\item Each of the following sets is a $\Z$-basis for $\groto$:
\[ \{ [V(x)] : x \in X \},\ \{ [Z(x)] : x \in X \},\ \{ [P(x)] : x \in X
\}. \]
\end{enumerate}
\end{prop}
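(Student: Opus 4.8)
The plan is to obtain all four parts quickly from results already established, the only subtle point being the index bookkeeping in part (3). For part (1), Condition (S3) implies (S1) by Theorem \ref{Tconditions}(1), so $\calo$ is finite length (Theorem \ref{Tconditions}(3)) and splits as the direct sum of its blocks $\calo(x)$ (Theorem \ref{Tconditions}(4), Theorem \ref{Timplies}(1)). The Grothendieck group of a direct sum of finite length abelian subcategories is the direct sum of their Grothendieck groups, which is exactly $\groto = \oplus_x {\rm Grot}_{\calo(x)}$.

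For part (2), recall that $F$ is involutive on $\calo$ and hence induces an involution $x \mapsto F(x)$ of $X$; moreover $F(x) \in S^3(x)$, since the relation $x \to_F F(x)$ occurs in the definition of $S^3(x)$ and $S^3(x)$ is taken to be symmetric and transitively closed, so $F$ preserves each block. Thus $(F_x)_{ij} = \delta_{x_i, F(x_j)}$ is the permutation matrix of the involution $F$ restricted to $S^3(x)$, and a permutation matrix of an involution is symmetric with square the identity; this is precisely the claim.

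Part (3) is the heart. Since $\ag$ satisfies (S3), Corollary \ref{Cproj} gives each $P(x)$ a $p$-filtration, so the multiplicities $[P(x_i):Z(x_k)]$ are well defined and $[P(x_i):V(x_j)] = \sum_k [P(x_i):Z(x_k)]\,[Z(x_k):V(x_j)]$. Write $\sigma$ for the involution of the index set determined by $F(x_j) = x_{\sigma(j)}$, so that $F_x$ is the permutation matrix of $\sigma$, $\sigma^2 = \id$, and $(F_x M F_x)_{ab} = M_{\sigma(a),\sigma(b)}$ for any matrix $M$. BGG Reciprocity (Proposition \ref{Precip}) reads $[P(x_i):Z(x_k)] = [Z(F(x_k)):V(F(x_i))] = (D_x)_{\sigma(k),\sigma(i)} = (D_x^T)_{\sigma(i),\sigma(k)} = (F_x D_x^T F_x)_{ik}$. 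Substituting into the filtration identity yields $C_x = (F_x D_x^T F_x)D_x = F_x D_x^T F_x D_x$. When $\G = 1$ every simple is $F$-fixed, so $F_x = I$ and $C_x = D_x^T D_x$ is symmetric. The one thing to be careful about here is exactly this translation between the module-theoretic identity of Proposition \ref{Precip} and the matrix product, using $\sigma = \sigma^{-1}$ throughout; everything else is bookkeeping.

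For part (4), work block by block, which is legitimate by part (1). Each block $\calo(x)$ is finite length abelian, so the simple classes $\{[V(x')] : x' \in S^3(x)\}$ form a $\Z$-basis of ${\rm Grot}_{\calo(x)}$; summing over blocks gives the first basis. Within a block, $[Z(x_i)] = \sum_j (D_x)_{ij}[V(x_j)]$, and the ordering $x_i \geq x_j \Rightarrow i \leq j$ makes $D_x$ upper unitriangular: the diagonal entries $[Z(x_i):V(x_i)] = 1$ by Proposition \ref{P6}, and $[Z(x_i):V(x_j)] \neq 0$ forces $x_j < x_i$, i.e.\ $j > i$. Hence $\det D_x = 1$ and $\{[Z(x')]\}$ is a $\Z$-basis. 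Likewise, by Corollary \ref{Cproj} the matrix $(\,[P(x_i):Z(x_k)]\,)$ is lower unitriangular (the ``first'' Verma subquotient is $Z(x_i)$, the others are $Z(x')$ with $x' > x_i$), so passing from $\{[P(x')]\}$ to $\{[Z(x')]\}$ and then to $\{[V(x')]\}$ is a composite of $\Z$-invertible changes of basis; equivalently $\det C_x = \det(F_x)^2 (\det D_x)^2 = 1$. Therefore $\{[P(x')]\}$ is a $\Z$-basis of $\groto$ as well.
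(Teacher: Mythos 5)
Your proof is correct and follows essentially the same route as the paper: parts (1) and (4) by the standard finite-length/unitriangularity arguments in a highest weight category, part (2) from $F$ being an involution on $X$, and part (3) by combining the Verma flag of $P(x)$ with BGG Reciprocity (equation \eqref{Erecip}). The index bookkeeping with $\sigma$ in part (3) is handled correctly and is consistent with the paper's formula $C'_x = C_x F_x = F_x D_x^T F_x D_x F_x$.
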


\begin{proof}
The first and last part are standard in a highest weight category; the
second part holds because $F(F(V(x))) = V(x)\ \forall x$; and the third
part follows from equation \eqref{Erecip} above.
\end{proof}

\section{The second setup - tensor products}\label{Sectensor}

We now look at the representation theory of tensor products of skew group
rings over regular triangular algebras. More precisely, we relate the
category \calo over such a product, to the respective categories
$\calo_i$ over each factor.

\subsection{Notation}

Fix $n \in \N$. We fix skew group rings $A_i \rtimes \G_i$ over RTAs
$A_i$, that satisfy the standing assumptions \ref{St1} and \ref{St2}.
Then (mentioned above) so does $\ag := \otimes_{i=1}^n (A_i \rtimes
\G_i)$, where $A = \otimes_i A_i,\ \G = \times_i \G_i$, and $G = \times_i
G_i$.

\subsection{Duality and tensor product decomposition}

We first mention some exact functors on tensor products of $\calo_j$'s.
For this subsection, we work with the setup mentioned in Section
\ref{Sduality} (on duality) above: we have associative $k$-algebras
$A'_j$, each containing a unital $k$-subalgebra $H'_j$; moreover, there
exist anti-involutions $i_j$ of each $A'_j$, that extend $\id_{H'_j}$.

We can now define $A' = \otimes_j A'_j$, and similarly, $H',i$, and the
Harish-Chandra categories $\calh'_j$ and $\calh' \supset \otimes_j
\calh'_j$. Objects in these categories all have ``formal characters", and
the (restricted) duality functor $F$ operates on each of these
categories. The following is now standard.

\begin{prop}\label{Pexact}
Fix $V_j \in \calh'_j$ for all $j$, and fix $1 \leq i \leq n$.
\begin{enumerate}
\item $F(\otimes_j V_j) \cong \otimes_j F(V_j)$, and $\ch_{\otimes_j V_j}
= \prod_j \ch_{V_j}$.

\item The functor $\tv : \calh'_i \to \calh'$, sending an $A'_i$-module
$M$ to the $A'$-module $\tv(M) := (\otimes_{j < i} V_j) \otimes M \otimes
(\otimes_{j > i} V_j)$, is exact.

\item As $A'_i$-modules, $\tv(M)$ is a direct sum of copies of $M$; more
precisely, $\tv(M) \cong M \otimes W$, for the vector space $W =
\otimes_{j \neq i} V_j$.
\end{enumerate}
\end{prop}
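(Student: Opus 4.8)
The plan is to reduce all three parts to two elementary facts: that $-\otimes_k-$ is exact over the field $k$, and the defining recipe for the duality functor $F$. First I would record the weight-space bookkeeping. Each $V_j\in\calh'_j$ splits as a direct sum of finite-dimensional $H'_j$-weight spaces, so $\otimes_jV_j$ — and, for $M\in\calh'_i$, also $\tv(M)$ — carries a simultaneous weight decomposition for $H'=\otimes_jH'_j$ (identifying weights of $H'$ with tuples of weights of the $H'_j$), with $(\otimes_jV_j)_{(\mu_j)_j}=\otimes_j(V_j)_{\mu_j}$ finite-dimensional, and the analogous formula for $\tv(M)$ with the $i$-th factor replaced by $M_{\mu_i}$. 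Hence both objects genuinely lie in $\calh'$. The character identity $\ch_{\otimes_jV_j}=\prod_j\ch_{V_j}$ then falls out of the multiplicativity $\dim(\otimes_j(V_j)_{\mu_j})=\prod_j\dim(V_j)_{\mu_j}$ together with the product structure on the group algebra of weights.

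For the duality statement in (1), I would use that $\otimes_jV_j$ is the direct sum of its finite-dimensional weight spaces, so that the span of $H'$-weight vectors in its $k$-dual equals $\bigoplus_{(\mu_j)_j}\big(\otimes_j(V_j)_{\mu_j}\big)^*$. Applying in each weight the canonical isomorphism $\big(\otimes_jW_j\big)^*\cong\otimes_jW_j^*$ for finite-dimensional $W_j$, and reassembling over the index tuples, produces a natural $k$-linear isomorphism $F(\otimes_jV_j)\cong\otimes_jF(V_j)$. The one point that needs checking is that this map is $A'$-linear. Here I would invoke $i=\otimes_ji_j$: for simple tensors $a'=\otimes_ja'_j$ and $m=\otimes_jm_j$ one has $i(a')m=\otimes_ji_j(a'_j)m_j$, so by the rule $\tangle{a'f,m}=\tangle{f,i(a')m}$ the action of $\otimes_ja'_j$ on $F(\otimes_jV_j)$ transports under the isomorphism to $\otimes_j(a'_j\cdot)$ on $\otimes_jF(V_j)$; extending by linearity, since simple tensors span $A'$, gives $A'$-linearity.

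Parts (3) and (2) are quickest handled together, starting with (3). Reordering tensor factors is a $k$-linear isomorphism $\tv(M)\cong M\otimes_kW$ with $W=\otimes_{j\neq i}V_j$. Since $A'_i$ sits inside $A'=\otimes_jA'_j$ acting only in the $i$-th slot, this is in fact an isomorphism of $A'_i$-modules, with $W$ an inert multiplicity space; choosing a $k$-basis of $W$ then displays $\tv(M)$ as a direct sum of copies of $M$, which establishes (3). For (2), I would note that on underlying vector spaces $\tv$ is the functor $(\otimes_{j<i}V_j)\otimes_k(-)\otimes_k(\otimes_{j>i}V_j)$, which is exact because $k$ is a field, and that it carries $A'_i$-module maps to $A'$-module maps because the $A'$-action on $\tv(-)$ is natural in the argument slot; hence $\tv$ is exact.

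I do not anticipate a genuine obstacle: the proposition is of a standard type, and the only mildly delicate point is the bookkeeping — keeping the reordering of tensor factors and the weight-index tuples straight, and confirming that the canonical vector-space isomorphisms used above respect the module structures built from $i=\otimes_ji_j$ and the slotwise $A'_i$-action. Everything substantive reduces to exactness of tensoring over a field.
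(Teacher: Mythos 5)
Your proof is correct, and it supplies exactly the standard argument that the paper leaves implicit (the paper states the proposition with no proof beyond the remark that it ``is now standard''). All three parts reduce, as you say, to the weight-space decomposition $(\otimes_j V_j)_{(\mu_j)_j}=\otimes_j(V_j)_{\mu_j}$, the finite-dimensional identification $(\otimes_j W_j)^*\cong\otimes_j W_j^*$ made $A'$-equivariant via $i=\otimes_j i_j$, and exactness of $\otimes_k$ over a field.
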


We now apply this in our setup. Define the Harish-Chandra and BGG
Categories $\calh$ (or $\calh_i$) and $\calo$ (or $\calo_i$)
respectively, for $A \rtimes \G \supset H$ (or $A_i \rtimes \G_i \supset
H_i$ respectively). Then the above result holds.

\begin{cor}\label{Cverma}
If $\otimes_i V_i(x_i) \cong \otimes_i V_i(x'_i)$ for $x_i, x'_i \in X_i$
for all $i$, then $x_i = x'_i\ \forall i$. Moreover, if $\y = \otimes_i
y_i$, then $Z(\y) \cong \otimes_i Z_i(y_i)$.
\end{cor}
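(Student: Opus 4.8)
The plan is to prove the two assertions of Corollary \ref{Cverma} separately, deducing both from Proposition \ref{Pexact}. For the first statement, suppose $\otimes_i V_i(x_i) \cong \otimes_i V_i(x_i')$ as $\ag$-modules. Restricting along $A_1 \rtimes \G_1 \hookrightarrow \ag$ and applying part (3) of Proposition \ref{Pexact} (with $V_j = V_j(x_j)$ on one side and $V_j = V_j(x_j')$ on the other, $i=1$), we see that $V_1(x_1) \otimes W \cong V_1(x_1') \otimes W'$ as $A_1 \rtimes \G_1$-modules, where $W, W'$ are nonzero finite-dimensional vector spaces. Comparing formal characters via part (1) of Proposition \ref{Pexact}, $\ch_{V_1(x_1)} \cdot \dim W = \ch_{V_1(x_1')} \cdot \dim W'$; evaluating total dimensions (or comparing the ``highest'' weight spaces using that each $V_i(x_i)$ lies in $\calo_i$ and has a well-defined set of maximal weights $\la_{x_i}$) forces $\la_{x_1} = \la_{x_1'}$ in $G_1/\G_1$, and then $\ch_{V_1(x_1)} = \ch_{V_1(x_1')}$. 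Since a simple object in $\calo_i$ is determined by its isomorphism class and the classification of Theorem \ref{Tfacts} together with Proposition \ref{P7} shows a simple module is pinned down by its ``highest'' $H_i \rtimes \G_i$-module $M_{x_i}$, I need to upgrade character equality to $x_1 = x_1'$. Here I would argue that $V_1(x_1)$ is isomorphic to a summand — in fact the unique simple summand generated by a maximal vector — of the restriction, and the isomorphism $\otimes_i V_i(x_i) \cong \otimes_i V_i(x_i')$ is $A_1\rtimes\G_1$-linear, so it identifies the $A_1\rtimes\G_1$-isotypic pieces; matching the simple $A_1\rtimes\G_1$-constituents generated by maximal weight vectors yields $V_1(x_1) \cong V_1(x_1')$, hence $x_1 = x_1'$. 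Running the same argument for each index $i$ gives $x_i = x_i'$ for all $i$.

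For the second statement, I want $Z(\y) \cong \otimes_i Z_i(y_i)$ where $\y = \otimes_i y_i \in Y$. Recall $Z(\y) = \dd(M_\y)$ is induced from $\hbg$, and by construction (see the Examples, item 5, in \S2) the triangular decomposition of $A = \otimes_i A_i$ has $B_\pm = \otimes_i (B_i)_\pm$, $H = \otimes_i H_i$, and the ``highest'' $H\rtimes\G$-module attached to $\y$ is $M_\y = \otimes_i M_{y_i}$. By Proposition \ref{P2}(1), $\Ind E \cong B_- \otimes_k E$ as $B_-$-modules for any finite-dimensional $\hbg$-module $E$; applying this with $E = M_\y$ and separately with each $E_i = M_{y_i}$, and using $B_- = \otimes_i (B_i)_-$, both $Z(\y)$ and $\otimes_i Z_i(y_i)$ are identified with $\otimes_i \big( (B_i)_- \otimes_k M_{y_i}\big)$ as vector spaces. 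To see this identification is $\ag$-linear, I would check it on generators: the canonical generator $1 \otimes v_{\la_\y}$ of $Z(\y)$ maps to $\otimes_i (1 \otimes v_{\la_{y_i}})$, and since $\ag = \otimes_i (A_i \rtimes \G_i)$ acts factorwise and both modules are cyclic generated by these compatible maximal vectors killed by $N_+ = \sum_i (N_i)_+ \otimes (\text{rest})$ and carrying the weight $\la_\y = (\la_{y_i})_i$, the universal property of the induced module $\dd(M_\y)$ gives a surjection $Z(\y) \twoheadrightarrow \otimes_i Z_i(y_i)$; a dimension/character count (both have formal character $\prod_i \ch_{(B_i)_-} \ch_{M_{y_i}} = \ch_{B_-}\ch_{M_\y}$ by Proposition \ref{P2}(2)) shows it is an isomorphism.

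The main obstacle I anticipate is the rigidity step in the first part: passing from equality of formal characters of restricted modules to an actual isomorphism of the simple $\calo_i$-objects, and hence $x_i = x_i'$. Character equality alone is not obviously enough without invoking the classification of simples in $\calo_i$ (Theorem \ref{Tfacts}, Proposition \ref{P7}) plus the fact that the given $\ag$-isomorphism is in particular $A_i\rtimes\G_i$-linear, so it carries the simple $A_i\rtimes\G_i$-submodule generated by a maximal vector of top weight to an isomorphic one. Making that last identification precise — locating $V_i(x_i)$ as a canonical piece of $\Res V(\x)$ that is preserved by any $\ag$-isomorphism — is where care is needed; everything else is a routine combination of exactness of $\tv$, multiplicativity of characters, and the explicit form of the tensor-product RTA structure from \S2.
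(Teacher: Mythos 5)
Your proposal is correct and follows essentially the same route as the paper: for the first assertion, restrict to each $A_i \rtimes \G_i$, use Proposition \ref{Pexact}(3) to see each side is a direct sum of copies of a single simple, and match the unique simple constituent (the paper packages this as Lemma \ref{Lsummand}); for the second, produce the surjection $Z(\y) \twoheadrightarrow \otimes_i Z_i(y_i)$ from the universal property and conclude by comparing formal characters via Proposition \ref{P2}. The character detour in your first part is unnecessary but harmless, since the isotypic argument you fall back on is exactly what closes the gap.
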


\begin{proof}
For the first part, apply Proposition \ref{Pexact} to both sides (having
first fixed an $i$). Thus, the left side is a direct sum of copies of
$V_i(x_i)$, and similarly for the other side. Now apply Lemma
\ref{Lsummand} (for $R = A_i \rtimes \G_i$); thus $V_i(x_i) \cong
V_i(x'_i)$ is the only simple summand on both sides, and we are done.

For the second part, we note that $\times_i Y_i \subset Y$, so if we
define $\y$ as above, then $Z(\y) \to \otimes_i Z_i(y_i) \to 0$.
Moreover, properties of induction functors (and the triangular
decomposition) imply that we can compare their formal characters:
\[ \ch_{Z(\y)} = \ch_{B_-} \ch_{\y} = \ch_{\otimes_i B_{i,-}}
\ch_{\otimes_i y_i} = \prod_i \ch_{B_{i,-}} \ch_{y_i} = \prod_i
\ch_{Z_i(y_i)}, \]

\noindent whence the two must be isomorphic.
\end{proof}

\section{Complete reducibility}

We now show that all notions of complete reducibility (i.e., in the four
setups at the start, and always only for finite-dimensional objects in
$\calo$) are equivalent. We need a small result first, since two of the
parts below are similar. For this section, we do not need $k$ to be
algebraically closed.

\begin{prop}\label{Pcr}
Suppose $A'$ and $A$ are RTAs, and $A \rtimes \G$ is a skew group ring
satisfying the Standing Assumptions \ref{St1} and \ref{St2}. Also say
there is a finite-dimensional vector space $U$ and $0 \neq u \in U$, such
that
\begin{enumerate}
\item $T : M \mapsto U \otimes_k M$ is an exact covariant functor $:
\calo_{A'} \to \calo_{A \rtimes \G}$.

\item If $v_{\mu'}$ has highest weight in the $A'$-Verma module
$Z_{A'}(\mu')$, then $v_{\mu'} \otimes u$ has highest weight in
$T(Z_{A'}(\mu'))$, which is also standard cyclic.

\item The map: $- \otimes u$ takes weight spaces to weight spaces, and
the induced map $: G_{A'} \to G_A$ is compatible with the partial orders
on $G_{A'}, G_A$.
\end{enumerate}

\noindent Then if complete reducibility holds for finite-dimensional
modules in $\calo_{A \rtimes \G}$, the same holds in $\calo_{A'}$.
\end{prop}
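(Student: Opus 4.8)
The plan is to show that a finite-dimensional module $N \in \calo_{A'}$ is completely reducible by transporting the question through the functor $T$ and using the hypothesis in $\calo_{A \rtimes \G}$. First I would reduce to showing that $N$ has no self-extensions, or more precisely that every short exact sequence $0 \to N' \to N \to N'' \to 0$ of finite-dimensional objects in $\calo_{A'}$ splits; equivalently, by a standard argument it suffices to show that if $V_{A'}(\mu')$ is a simple submodule (equivalently subquotient) appearing in $N$, then the corresponding extension class vanishes. Since $T$ is exact and covariant, applying $T$ gives a short exact sequence $0 \to T(N') \to T(N) \to T(N'') \to 0$ in $\calo_{A \rtimes \G}$, and by hypothesis (complete reducibility there) this sequence splits, say via a section $s : T(N'') \to T(N)$ of $A \rtimes \G$-modules.

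The heart of the argument is to descend the splitting $s$ back to an $A'$-module splitting of the original sequence. Here is where hypotheses (2) and (3) enter. By (3), the map $-\otimes u : N \to T(N)$, $m \mapsto m \otimes u$, is a weight-space-preserving injection compatible with partial orders; one checks it is $A'$-linear once we regard $T(N)$ as an $A'$-module via the first hypothesis and the construction of $T$. The key point is to identify, inside $T(N)$, a canonical $A'$-submodule isomorphic to $N$ onto which one can project the section. Concretely, I would argue that the $\mu'$-weight spaces behave well: if $v_{\mu'}$ is a highest-weight (maximal) vector generating a copy of $V_{A'}(\mu')$ or $Z_{A'}(\mu')$ inside $N$, then by (2) the vector $v_{\mu'} \otimes u$ is a maximal vector in $T(N)$ generating a standard cyclic module, and the splitting $s$ produced upstairs restricts to send maximal vectors of a given weight to maximal vectors of that weight. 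Composing $-\otimes u$ with $s$ and then with a fixed linear projection $U \to k \cdot u$ (extended to $T(N) = U \otimes N \twoheadrightarrow N$) should produce an $A'$-linear retraction $N \to N'$ of the inclusion $N' \hookrightarrow N$, because all the maps in sight preserve weight spaces and the partial-order compatibility of (3) prevents the retraction from ``mixing'' the submodule with higher-weight pieces.

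I expect the main obstacle to be verifying carefully that the retraction $N \twoheadrightarrow N' $ obtained this way is genuinely $A'$-linear, rather than merely $H'$-linear: the functor $T$ tensors with $U$ on the left and right (in the tensor-product RTA setup $T = \tv$ of Proposition~\ref{Pexact}), so the $A'$-action on $T(N)$ and the ``projection to $u$'' do not obviously commute unless one uses that the highest-weight vector $u$ is, in an appropriate sense, $B'_+$-invariant and $H'$-semisimple of a definite weight — which is exactly what hypothesis (2) encodes. Once that compatibility is pinned down, the rest is the routine observation that a functor which is exact, sends Verma-like objects to standard cyclic objects, and is ``injective on weights compatibly with the order'' reflects splitness of short exact sequences of finite-dimensional modules; I would phrase the final step as: the nonvanishing of $\Ext^1_{\calo_{A'}}(V_{A'}(\mu'), V_{A'}(\nu'))$ would, via $T$ and hypotheses (1)--(3), force a nonsplit extension among the images in $\calo_{A \rtimes \G}$, contradicting complete reducibility there, so $\calo_{A'}$ has no nonsplit extensions between finite-dimensional simples and hence finite-dimensional modules in $\calo_{A'}$ are completely reducible.
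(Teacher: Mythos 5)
The core of your proposal --- descending the splitting $s$ of $T(N)$ back to an $A'$-linear splitting of $N$ by composing $-\otimes u$ with $s$ and a linear projection $U \to k\cdot u$ --- is exactly the step that cannot be extracted from hypotheses (1)--(3), and you correctly flag but do not close this gap. Nothing in the statement gives you an $A'$-module structure on $T(N)$ (note $A'$ and $A\rtimes\G$ are different algebras; hypothesis (1) only says $T$ lands in $\calo_{A\rtimes\G}$), let alone an $A'$- or $A\rtimes\G$-linear projection $U\otimes N \to N$. In the two instances where the proposition is later applied ($U = k\G$ with $T = \Ind$, and $U = \otimes_{j\neq i}V_j$ with $T = \tv$) such a projection happens to exist, but the proposition is stated axiomatically and must be proved from (1)--(3) alone. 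So as written your only concrete mechanism does not go through, and your closing sentence (``the nonvanishing of $\Ext^1$ would \dots force a nonsplit extension among the images'') asserts the needed reflection property without supplying an argument for it.

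The actual proof never maps back from $T(N)$ to $N$. Argue by contradiction: reduce to a nonsplit extension $0 \to V(\la') \to V \to V(\mu') \to 0$ of finite-dimensional simples in $\calo_{A'}$, and use Proposition \ref{Pext} (with $|\G|=1$) together with the duality functor $F$ --- which fixes each simple since $\G$ is trivial for $A'$ --- to arrange $\mu' > \la'$. Then $V$ is standard cyclic, generated by a maximal vector $v_{\mu'}$, so it is a quotient of $Z_{A'}(\mu')$; by exactness of $T$ and hypothesis (2), $T(V)$ is standard cyclic generated by $v_{\mu'}\otimes u$. Now apply $T$ to the sequence and split it upstairs: $T(V) = T(V(\la')) \oplus C$. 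By hypothesis (3) the weight of $v_{\mu'}\otimes u$ is strictly above the weights occurring in $T(V(\la'))$, so $v_{\mu'}\otimes u$ lies in the complement $C$ and hence generates a proper submodule of $T(V)$ --- contradicting that it generates all of $T(V)$. You had the key ingredient in hand (that $v_{\mu'}\otimes u$ generates $T(V)$ as a standard cyclic module) but deployed it toward constructing a retraction rather than toward this one-line contradiction, which is what the hypotheses are actually designed to deliver.
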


\begin{proof}
We will show that every short exact sequence between simple
finite-dimensional objects $0 \to V(\la') \to V \to V(\mu') \to 0$ in
$\calo_{A'}$ splits. Now assume that there is some such nonsplit
sequence. We may assume (using Proposition \ref{Pext} with $|\G| = 1$,
and) using the duality functor $F$ if necessary, that $\mu' > \la'$. Then
$V$ is standard cyclic; say $v_{\mu'}$ spans its $\mu'$-weight space.

Now apply $T$ to the sequence; by assumption, we get a short exact
sequence in $\calo_{A \rtimes \G}$, and each object is finite-dimensional
since $U$ is. On the one hand, $T(V)$ is standard cyclic, and generated
by $v_{\mu'} \otimes u$ (by assumption). On the other hand, the short
exact sequence in $\calo_{A \rtimes \G}$ splits, and by assumption,
$v_{\mu'} \otimes u$ has higher weight than the weights for $T(V(\la'))$
- whence it must lie in the complement to $T(V(\la'))$. In particular, it
cannot generate the entire module $T(V)$, a contradiction.
\end{proof}

We now prove the equivalence of complete reducibility in the four setups
(we do this in two stages).
We assume that all these setups involve (skew group rings over) RTAs,
satisfying Standing Assumptions \ref{St1} and \ref{St2}, but not
necessarily any of the Conditions (S).

\begin{theorem}\label{Tssskew}
Given such a skew group ring $A \rtimes \G$, complete reducibility holds
(for finite-dimensional modules) in $\calo_A \subset A$-{\em mod}, if and
only if it holds in $\calo_{A \rtimes \G}$.
\end{theorem}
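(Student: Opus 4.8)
The plan is to prove the two implications separately. The ``easy'' direction, that complete reducibility in $\calo_A$ forces it in $\calo_{\ag}$, is an immediate application of Proposition \ref{Pss}: take $R = A$, let $\calp$ be the abelian category of finite-dimensional objects of $\calo_A$, and let $\cald$ be that of finite-dimensional objects of $\calo_{\ag}$. By Lemma \ref{LO}, every $D \in \cald$ has ${\rm Res}^{\ag}_A D \in \calo_A$, hence in $\calp$ since it is automatically finite-dimensional; so if $\calp$ is semisimple, Proposition \ref{Pss} yields that $\cald$ is semisimple, i.e.\ complete reducibility holds in $\calo_{\ag}$.

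For the converse I would invoke Proposition \ref{Pcr} with $A' = A$, the same skew group ring $\ag$, the vector space $U = k\G$ and the vector $u = 1_{\G} \in k\G$, taking $T$ to be the induction functor $\Ind^{\ag}_A \cong k\G \otimes_k (-)$. It then remains to verify the three hypotheses of Proposition \ref{Pcr}. First, $T$ is exact (as $\ag$ is free over $A$) and covariant, and it really lands in $\calo_{\ag}$: by Lemma \ref{LO} it suffices that ${\rm Res}^{\ag}_A \Ind^{\ag}_A M \cong \bigoplus_{\g \in \G} M^{\g}$ lie in $\calo_A$, and each $\G$-conjugate $M^{\g}$ does, because $\Ad \g$ preserves $B_\pm$ and $H$ and permutes the weight spaces (Lemmas \ref{L1} and \ref{Lcompat}), so $M^{\g}$ is again finitely generated, $H$-semisimple, with finite-dimensional weight spaces and a locally finite $B_+$-action. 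Second, given a Verma module $Z_A(\mu)$ with maximal vector $v_\mu$, the vector $1_{\G} \otimes v_\mu \in T(Z_A(\mu))$ is killed by $N_+$, has weight $\mu$, and generates the whole module since $\g \cdot (1_{\G} \otimes v_\mu) = \g \otimes v_\mu$ and $Z_A(\mu) = A v_\mu$; thus $T(Z_A(\mu))$ is standard cyclic, and $\mu$ is a highest weight in it because every weight of $T(Z_A(\mu))$ has the form $\g(\nu)$ with $\nu \leq \mu$, whence $\g(\nu) \leq \g(\mu)$ while $\g(\mu) \not> \mu$, so $\g(\nu) \not> \mu$ (using Standing Assumption \ref{St2} and Lemma \ref{L2.1}). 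Third, $m \mapsto 1_{\G} \otimes m$ carries the $\la$-weight space of $M$ into the $\la$-weight space of $T(M)$, hence induces the identity on $G$, which is trivially order-compatible. Proposition \ref{Pcr} then delivers complete reducibility in $\calo_A$.

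The only real work is the verification of hypothesis (2) of Proposition \ref{Pcr} above --- that induction of a Verma module is standard cyclic with $1_{\G} \otimes v_\mu$ a highest-weight generator --- which is precisely where the order-preservation of the $\G$-action enters; the remaining checks are routine, and the containment $\Ind^{\ag}_A M \in \calo_{\ag}$ is the usual observation that $\G$ permutes the structural data of an RTA. One could instead argue the converse by hand: a splitting of $0 \to \Ind^{\ag}_A M' \to \Ind^{\ag}_A M \to \Ind^{\ag}_A M'' \to 0$ in $\calo_{\ag}$ --- which exists since $\Ind^{\ag}_A M$ is then semisimple --- restricts to $A$ and, after projecting onto the $1_{\G}$-component (an $A$-module direct summand of ${\rm Res}^{\ag}_A \Ind^{\ag}_A M$), yields a section of $M \twoheadrightarrow M''$, so every finite-dimensional short exact sequence in $\calo_A$ splits; but routing through Proposition \ref{Pcr} is cleaner and reuses existing machinery.
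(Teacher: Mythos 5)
Your proof is correct and follows essentially the same route as the paper: one direction via Proposition \ref{Pss} together with Lemma \ref{LO}, and the converse via Proposition \ref{Pcr} with $A' = A$, $U = k\G$, $u = 1$, so that $T = \Ind^{A \rtimes \G}_A$. The only difference is that you spell out the verification of the hypotheses of Proposition \ref{Pcr}, which the paper leaves as a parenthetical remark; your checks are accurate.
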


\begin{proof}
First note that the existence of finite-dimensional (simple) modules in
$\calo_A$ and in $\calo_{A \rtimes \G}$ is equivalent, by Theorems
\ref{T2} and \ref{Tfacts}.
Now suppose that complete reducibility holds in $\calo_A$. Apply
Proposition \ref{Pss} (using Lemma \ref{LO} above), to the abelian
subcategories $\calp,\cald$ of $H$-semisimple finite-dimensional modules
inside $\calo_A, \calo_{A \rtimes \G}$ respectively. This proves one
implication.

Conversely, apply Proposition \ref{Pcr}, with $A' = A, U = k\G, u = 1 \in
U$. (Then $T = \Ind^{A \rtimes \G}_A$ is exact, and the other assumptions
also hold; for instance, $T(V_A(\la)) = V(y)$, where $y = \Ind^{H \rtimes
\G}_H k^{\la}$ for $\la \in G$.)
\end{proof}\medskip

Now suppose we are working with skew group rings (as above) $A_i \rtimes
\G_i$, and we define $A := \otimes_i A_i, \G := \times_i \G_i$.

\begin{theorem}\label{Tsstensor}
Complete reducibility holds in $\calo$ if and only if it does so in all
$\calo_i$.
\end{theorem}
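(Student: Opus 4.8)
The plan is to strip off the finite groups, reduce to a two-fold tensor product of honest RTAs, and then argue the two implications separately. First I would invoke Theorem~\ref{Tssskew} three times. Applied with $A:=\otimes_i A_i$ and $\G:=\times_i\G_i$ (so that $\ag=\otimes_i(A_i\rtimes\G_i)$), it shows complete reducibility in $\calo=\calo_{\ag}$ is equivalent to complete reducibility in $\calo_{\otimes_i A_i}$; applied to each factor $A_i\rtimes\G_i$, it shows complete reducibility in $\calo_{A_i\rtimes\G_i}$ is equivalent to that in $\calo_{A_i}$. So it suffices to prove that, for RTAs $A_i$, complete reducibility holds in $\calo_{\otimes_i A_i}$ if and only if it holds in every $\calo_{A_i}$. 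Since a tensor product of RTAs is again an RTA, writing $\otimes_{i=1}^n A_i=A_1\otimes(\otimes_{i\ge 2}A_i)$ and inducting on $n$ reduces this to the two-factor case $A=A_1\otimes A_2$. We may also assume each $\calo_{A_i}$ contains a nonzero finite-dimensional object: a finite-dimensional object of $\calo_A$ restricts to one in each $\calo_{A_i}$, finite-dimensional objects of the $\calo_{A_i}$ tensor together to one in $\calo_A$, and the complementary case is vacuous.

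For the implication ``$\calo_A$ completely reducible $\Rightarrow\calo_{A_1}$ completely reducible'' I would apply Proposition~\ref{Pcr} with $A'=A_1$ and with the ambient skew group ring taken to be $A_1\otimes A_2$. Choose a nonzero finite-dimensional object of $\calo_{A_2}$, let $u$ be a weight vector in it of maximal weight $\nu_2$, and put $U:=A_2u=B_{2,-}u$, a finite-dimensional standard cyclic object of $\calo_{A_2}$. Up to the identification $A_2\otimes A_1\cong A_1\otimes A_2$, the functor $T\colon M\mapsto U\otimes_k M$ is the functor $\tv$ of Proposition~\ref{Pexact}, hence exact $\calo_{A_1}\to\calo_{A_1\otimes A_2}$. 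For a Verma module $Z_{A_1}(\mu')$ with maximal vector $v_{\mu'}$, one checks readily that $v_{\mu'}\otimes u$ is a $B_+$-maximal weight vector of weight $(\mu',\nu_2)$ that spans the top weight space of $T(Z_{A_1}(\mu'))=Z_{A_1}(\mu')\otimes U=B_-(v_{\mu'}\otimes u)$ and generates it, so $T(Z_{A_1}(\mu'))$ is standard cyclic; and $-\otimes u$ sends weight spaces to weight spaces compatibly with the partial orders. Proposition~\ref{Pcr} then gives complete reducibility in $\calo_{A_1}$, and symmetrically in $\calo_{A_2}$.

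For the converse I would use an isotypic-component argument. Let $M\in\calo_{A_1\otimes A_2}$ be finite-dimensional; restriction along $A_1\hookrightarrow A_1\otimes A_2$ places it in $\calo_{A_1}$, so it is a semisimple $A_1$-module by hypothesis. Since $1\otimes A_2$ centralizes $A_1\otimes 1$, the $A_2$-action preserves every $A_1$-isotypic component of $M$, yielding an $A_1\otimes A_2$-module decomposition $M=\bigoplus_\nu M_\nu$ into $A_1$-isotypic pieces. Now each simple $V_1(\nu)\in\calo_{A_1}$ has a one-dimensional top weight space, hence is Schurian, $\End_{A_1}V_1(\nu)=k$; this gives an $A_1\otimes A_2$-equivariant isomorphism $M_\nu\cong V_1(\nu)\otimes_k W_\nu$ with $W_\nu:=\hhh_{A_1}(V_1(\nu),M)$ a finite-dimensional $A_2$-module, and identifying $W_\nu$ with the space of $A_1$-highest weight vectors of $H_1$-weight $\nu$ inside $M$ shows $W_\nu\in\calo_{A_2}$, so $W_\nu$ is a semisimple $A_2$-module by hypothesis. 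Finally, because $\End_{A_1}V_1(\nu)=k$, the Jacobson density theorem for the action of $A_1$ on $V_1(\nu)$, combined with the simplicity of each $V_2(\rho)$, shows $V_1(\nu)\otimes_k V_2(\rho)$ is a simple $A_1\otimes A_2$-module; writing $W_\nu$ as a direct sum of $V_2(\rho)$'s then exhibits $M_\nu$, and hence $M$, as semisimple over $A_1\otimes A_2$.

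The main obstacle is the converse direction, and within it two points: checking that $\hhh_{A_1}(V_1(\nu),M)$ really lies in $\calo_{A_2}$ (so the hypothesis applies to it), and---the genuine crux---showing that $V_1(\nu)\otimes_k V_2(\rho)$ stays simple over $A_1\otimes A_2$ without assuming $k$ algebraically closed. The one-dimensionality of the top weight space of a simple object of $\calo_{A_1}$, which is exactly what forces $\End_{A_1}V_1(\nu)=k$, is what makes the density argument valid; it is also why the reduction to trivial groups in the first paragraph is essential, since for a genuine skew group ring the top weight space $M_x$ of $V(x)$ need not be one-dimensional. Everything else---the $B_+$-maximality and order compatibilities needed for Proposition~\ref{Pcr}, and the reductions of the first paragraph---is routine.
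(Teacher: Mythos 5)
Your proof is correct, and while your reduction to the case $|\G|=1$ via Theorem~\ref{Tssskew} and your treatment of the direction ``complete reducibility in $\calo$ implies it in each $\calo_i$'' (via Proposition~\ref{Pcr} applied to the exact functor $U\otimes_k-$) coincide with the paper's argument -- the paper takes $U$ to be a tensor product of simple objects rather than an arbitrary finite-dimensional standard cyclic one, an immaterial difference -- your converse direction is genuinely different. The paper reduces complete reducibility to the vanishing of $\Ext^1$ between finite-dimensional simples, takes a hypothetical non-split sequence $0\to V(\la)\to V\to V(\mu)\to 0$, uses Proposition~\ref{Pext} to order $\la$ and $\mu$, and derives a contradiction by tracking where the generating maximal vector $v_\mu$ lands after restricting to a single factor $A_i$ and invoking Lemma~\ref{Lsummand}. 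You instead decompose an arbitrary finite-dimensional $M\in\calo_{A_1\otimes A_2}$ directly: semisimplicity over $A_1$, the isotypic decomposition preserved by the commuting $A_2$-action, the identification $M_\nu\cong V_1(\nu)\otimes_k\hhh_{A_1}(V_1(\nu),M)$ made possible by $\End_{A_1}V_1(\nu)=k$ (one-dimensional top weight space, so no algebraic closure is needed), the observation that the multiplicity space sits inside $\calo_{A_2}$, and Jacobson density to see that $V_1(\nu)\otimes_k V_2(\rho)$ remains simple. Your route is longer but more structural: it exhibits the semisimple decomposition explicitly rather than arguing by contradiction on length-two modules, it handles arbitrary finite length in one stroke, and as a byproduct it reproves Proposition~\ref{P3} (simplicity of $V_1(\nu)\otimes V_2(\rho)$ when $|\G|=1$) without passing through Proposition~\ref{Pmitya} and its algebraic-closedness hypothesis. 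The induction on $n$ is harmless since a tensor product of RTAs is again an RTA. The one point you share with the paper rather than improve on is the dispatching of the degenerate case where finite-dimensional objects are absent; both treatments handle it at the same (brisk) level of detail.
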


\begin{proof}
We point out that we will use previous results, as well as Proposition
\ref{P3} (below) in the setting $|\G| = 1$.

We now show the result. First, by Theorem \ref{Tssskew}, it is enough to
check this for $|\G| = 1$. Hence $\calo_i = \calo_{A_i}$ and $\calo =
\calo_A$. Next, the existence of finite-dimensional modules in $\calo$ is
equivalent to that in $\calo_i$ for all $i$; this follows from
Proposition \ref{P3} (when $|\G| = 1$).\medskip

Therefore we now assume that there exist finite-dimensional modules in
each $\calo_i$ (and in $\calo$). Suppose complete reducibility holds in
each $\calo_i$, and we have a non-split short exact sequence $0 \to
V(\la) \to V \to V(\mu) \to 0$ of finite-dimensional modules (i.e., an
indecomposable module of length 2).
From Proposition \ref{Pext}, $\la > \mu$ or $\la < \mu$ if the sequence
does not split; using the duality functor $F$ if necessary, assume that
$\la < \mu$.

Fix an $i$ such that $\mu_i > \la_i$; now by assumption, the sequence
does split as finite-dimensional $A_i$-modules. Suppose $V = V(\la)
\oplus M$ in $\calo_i$. By Proposition \ref{Pext}, $V$ is a standard
cyclic $A$-module; say $V = A v_\mu$. Then by $H_i$-semisimplicity and
Proposition \ref{Pexact}, $v_\mu$ is in the $H_i$-weight space $V_{\mu_i}
= V(\la)_{\mu_i} \oplus M_{\mu_i} = M_{\mu_i}$ (since $\la_i < \mu_i$,
and using Proposition \ref{P3} below, for $|\G| = 1$).
But $M \cong V(\mu)$ is a direct sum of copies of $V_i(\mu_i)$. Hence so
is $A_i v_\mu \subset M$, and hence also, $V = A v_\mu = (\otimes_{j \neq
i} A_j) \cdot A_i v$ - hence, its $A_i$-submodule $V(\la) \cong \bigoplus
V_i(\la_i)$ (as $A_i$-modules) as well - and we get a contradiction by
Lemma \ref{Lsummand}.

Hence all extensions with $\la < \mu$ split, and by duality, so do all
extensions with $\la > \mu$. Thus complete reducibility holds in
$\calo$.\medskip

Conversely, fix $i$, and for all $j \neq i$, fix simple
finite-dimensional modules $V_j(\la_j) \in \calo_j$ (these exist by above
remarks). Define the functor $\tv : \calo_i \to \calo$ as in Proposition
\ref{Pexact} above; thus, $\tv$ is exact.

Now apply Proposition \ref{Pcr} with $A' = A_i, |\G| = 1$, $A$ as above,
$U = \otimes_{j \neq i} V_j(\la_j)$ (so $T = \tv$), and $u = \otimes_{j
\neq i} v_{j,\la_j}$, the unique (up to scalars) highest weight vector in
$U$. (Note that $\tv(V_i(\la_i)) = V_A(\la_1, \dots, \la_n)$ by
Proposition \ref{P3} again.)
\end{proof}

\section{Condition (S) for tensor products}

The idea now, is to relate the Categories $\calo_i$ for $A_i \rtimes
\G_i$, to $\calo = \calo_{A \rtimes \G}$. We need to characterize the
simple objects in the latter, in terms of those in the former categories.
As above, we have the various sets $X_i$ ($X$) of simple $H_i \rtimes
\G_i$- ($H \rtimes \G$-) modules that are $H_i$- (respectively $H$-)
semisimple.

\begin{stand}
For this section and the next, assume that Standing Assumption \ref{St3}
holds.
\end{stand}

\begin{theorem}\label{Tsimples}
$X = \times_j X_j$.
\end{theorem}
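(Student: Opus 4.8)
The plan is to exploit the algebra isomorphism $H \rtimes \G \cong \bigotimes_{j=1}^n (H_j \rtimes \G_j)$ recorded in the setup of Section 2 (Example 5), together with $G = \times_j G_j$, and then match simple objects of $\calc = \calc_{H \rtimes \G}$ with tensor products of simple objects of the $\calc_j = \calc_{H_j \rtimes \G_j}$. If all the $\G_j$ are trivial there is nothing to do: a simple $H$-semisimple module over $H$ is one-dimensional (a weight), so $X = G = \times_j G_j = \times_j X_j$. So in the main argument I assume $k$ algebraically closed (this is exactly the remaining case of Standing Assumption \ref{St3}); then by Schur's lemma every finite-dimensional simple module over any of $H_i \rtimes \G_i$ or $H \rtimes \G$ has endomorphism ring $k$.

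First I would build the map $\Phi : \times_j X_j \to X$. Given $\x_j \in X_j$, put $\x := \bigotimes_j \x_j$, an $H \rtimes \G$-module via the isomorphism above. It is $H$-semisimple, since its $(\la_1,\dots,\la_n)$-weight space is $\bigotimes_j (\x_j)_{\la_j}$ and $\times_j G_j = G$; and it is simple by the standard fact that a tensor product of finite-dimensional simple modules over $k$-algebras, each with endomorphism ring $k$, is simple (replace each $H_j \rtimes \G_j$ by its finite-dimensional image in $\End_k(\x_j)$, apply Jacobson density/Wedderburn, and induct on $n$). Hence $\x \in X$. Injectivity of $\Phi$ is then easy: if $\bigotimes_j \x_j \cong \bigotimes_j \x'_j$, restrict the $H \rtimes \G$-action to the subalgebra $1 \otimes \cdots \otimes (H_i \rtimes \G_i) \otimes \cdots \otimes 1$; the module becomes a direct sum of $\prod_{j \neq i} \dim_k \x_j$ copies of $\x_i$, so by Lemma \ref{Lsummand} its unique simple $H_i \rtimes \G_i$-summand up to isomorphism is $\x_i$, forcing $\x_i \cong \x'_i$ for each $i$.

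The substance is surjectivity. Given a simple $M \in X$, let $\bar R_j$ denote the image of $H_j \rtimes \G_j$ in $\End_k(M)$; it is finite-dimensional since $M$ is. Since $H_i \rtimes \G_i$ and $H_j \rtimes \G_j$ commute in $H \rtimes \G$ for $i \neq j$, the $\bar R_j$ commute pairwise in $\End_k(M)$, so multiplication gives a surjection $\bigotimes_j \bar R_j \twoheadrightarrow \bar R$, where $\bar R$ is the image of $H \rtimes \G$. Now $M$ is a faithful simple $\bar R$-module, hence a simple $\bigotimes_j \bar R_j$-module, and since $k$ is algebraically closed the standard decomposition of simple modules over a tensor product of finite-dimensional $k$-algebras yields $M \cong \bigotimes_j N_j$ with each $N_j$ a finite-dimensional simple $\bar R_j$-module. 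Pulling back along $H_j \rtimes \G_j \twoheadrightarrow \bar R_j$, each $N_j$ is a simple $H_j \rtimes \G_j$-module; it is $H_j$-semisimple because the image $\bar H_j$ of $H_j$ in $\End_k(M)$ lies in the image $\bar H$ of $H$, and $\bar H$ embeds (via $h \mapsto (\la(h))_\la$, using that $M$ is $H$-semisimple) into $\prod_{\la : M_\la \neq 0} k$, hence is a product of copies of $k$; so $\bar H_j$ is semisimple, $N_j$ is a semisimple $H_j$-module with one-dimensional constituents, i.e.\ with weights in $G_j$. Thus $N_j \in X_j$ and, since pullback along $H \rtimes \G \cong \bigotimes_j(H_j \rtimes \G_j) \to \bigotimes_j \bar R_j$ is compatible with tensor factors, $M \cong \Phi((N_j)_j)$. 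Combining, $\Phi$ is a bijection, i.e.\ $X = \times_j X_j$.

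The step I expect to be the main obstacle is this last one: because the $H_j$ (and $H$) are infinite-dimensional, one cannot directly invoke the tensor-product theory for finite-dimensional algebras, and one must first pass to the finite-dimensional quotients realized as images in $\End_k(M)$ and check they are well-behaved — precisely, finite-dimensional and, for the $H_j$-part, semisimple with one-dimensional simples. That control is supplied exactly by finite-dimensionality and $H$-semisimplicity of objects of $\calc$, so no further hypotheses are needed beyond those already in force.
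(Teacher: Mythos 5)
Your proof is correct and follows essentially the same route as the paper: the paper reduces the statement to Proposition \ref{Pmitya} (the Wedderburn-theoretic classification of finite-dimensional simples over a tensor product of $k$-algebras, $k$ algebraically closed), whose proof is exactly the density/Wedderburn argument you spell out inline, and then checks that $H$-semisimplicity transfers between $M_{\x}$ and the factors $M_{x_i}$. Your only deviation is cosmetic — you verify the semisimplicity transfer via the image of $H$ in $\End_k(M)$ being split commutative semisimple, where the paper uses the weight-space inclusion chain $(M_{\x})_\la \subset \otimes_i (M_{x_i})_{\la_i}$ — so the two arguments are interchangeable.
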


\noindent The example one should have in mind, is the case $|\G| = 1$;
then $X_j = G_j = \hhh_{k-alg}(H_j, k)$, and $X = G = \times_j G_j$. Also
note that if $\x \in X$, then $\la_{\x} = (\la_{x_1}, \dots, \la_{x_n})
\in \times_j G_j$.

\begin{proof}
For this, we need the following ``general" result\footnote{I thank Mitya
Boyarchenko for telling me this general result.}.

\begin{prop}\label{Pmitya}
Let $k$ be an algebraically closed field; all tensor products are over
$k$. Let $R_i$ be unital $k$-algebras for $1 \leq i \leq n$, and define
$R := \otimes_i R_i$. If $P_i$ (or $P$) is the set of (isomorphism
classes of) finite-dimensional simple modules over $R_i$ (or $R$,
respectively) (for each $i$), then the map $\otimes : \times_i P_i \to
R-\Mod$, taking $([M_1], \dots, [M_n])$ to $\left[ \otimes_i M_i
\right]$, is a bijection onto $P$.
\end{prop}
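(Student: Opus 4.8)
The plan is to induct on $n$: writing $R = R_1 \otimes (R_2 \otimes \cdots \otimes R_n)$ reduces everything to the case $n = 2$, so set $R = R_1 \otimes R_2$ and let $P_1, P_2, P$ be the sets of isomorphism classes of finite-dimensional simple modules over $R_1, R_2, R$. Three things must be checked: that $\otimes$ sends $P_1 \times P_2$ into $P$, that it is injective, and that it is onto $P$.

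For the first, given $M_i \in P_i$ I would use that $k$ is algebraically closed: Schur's lemma gives $\End_{R_i}(M_i) = k$, so the Jacobson density theorem (Burnside's theorem, since $M_i$ is finite-dimensional) makes the structure map $R_i \to \End_k(M_i)$ surjective. Tensoring, $R \to \End_k(M_1) \otimes_k \End_k(M_2) = \End_k(M_1 \otimes M_2)$ is surjective, and a finite-dimensional module on which the algebra acts with image its full endomorphism ring is automatically simple; hence $M_1 \otimes M_2 \in P$. Injectivity is then quick: from an isomorphism $M_1 \otimes M_2 \cong M_1' \otimes M_2'$ of $R$-modules, restriction along $R_1 = R_1 \otimes 1 \hookrightarrow R$ gives $M_1^{\oplus \dim_k M_2} \cong (M_1')^{\oplus \dim_k M_2'}$ as $R_1$-modules, whose only simple subquotient is $M_1$ (resp.\ $M_1'$) by Lemma \ref{Lsummand}, so $M_1 \cong M_1'$; restricting along $1 \otimes R_2$ gives $M_2 \cong M_2'$.

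The substance is surjectivity. Let $V \in P$, viewed as an $R_1$-module via $R_1 \otimes 1$, pick a simple $R_1$-submodule $M_1 \subseteq V$, and set $N := \hhh_{R_1}(M_1, V)$. Since $1 \otimes R_2$ centralizes $R_1 \otimes 1$ in $R$, the algebra $R_2$ acts on $N$, making it a nonzero (it contains the inclusion $M_1 \hookrightarrow V$) finite-dimensional $R_2$-module, and the evaluation map
\[
f : M_1 \otimes_k N \longrightarrow V, \qquad m \otimes \varphi \longmapsto \varphi(m),
\]
is $R$-linear, hence surjective because $V$ is simple and $\im f \neq 0$. The step I expect to be the main obstacle is the injectivity of $f$: as an $R_1$-module $M_1 \otimes_k N$ is isotypic of type $M_1$ (the action is through the first factor), so if $\ker f \neq 0$ it would contain a copy of $M_1$; but $\hhh_{R_1}(M_1, M_1 \otimes_k N) \cong \End_{R_1}(M_1) \otimes_k N = k \otimes_k N$, and under this identification the composite of $f$ with the embedding of $M_1$ attached to $v \in N$ is exactly $v$ regarded as an element of $N \subseteq \hhh_{R_1}(M_1, V)$, which is nonzero whenever $v \neq 0$ — a contradiction. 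So $\ker f = 0$, i.e.\ $V \cong M_1 \otimes_k N$ over $R$. Finally $N$ must be simple over $R_2$, since a proper nonzero $R_2$-submodule $N' \subsetneq N$ would produce the proper nonzero $R$-submodule $M_1 \otimes_k N' \subsetneq V$; thus $V$ lies in the image of $\otimes$. Assembling the $n = 2$ case with the inductive hypothesis on $R_2 \otimes \cdots \otimes R_n$ (and running the same restriction argument for injectivity in general) finishes the proof.
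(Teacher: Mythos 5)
Your proof is correct. The paper itself only sketches the argument as ``an exercise in Wedderburn theory'': the image of $R$ in $\End_k(V)$ is a simple Artinian algebra, hence a matrix algebra over $k$ by algebraic closedness, and one is meant to decompose that matrix algebra as a tensor product of the (matrix-algebra) images of the $R_i$. Your well-definedness step is exactly this idea run in the forward direction (Burnside gives $R_i \twoheadrightarrow \End_k(M_i)$, so $R \twoheadrightarrow \End_k(M_1\otimes M_2)$). For surjectivity you take a different, more hands-on route: rather than analyzing the image of $R$ in $\End_k(V)$ as an abstract simple algebra, you build the factorization directly via the evaluation map $M_1\otimes_k \hhh_{R_1}(M_1,V)\to V$ and check it is an isomorphism of $R$-modules with the outer tensor-product action. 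This double-centralizer-style argument is complete as written --- the key points (that $1\otimes R_2$ acts on $\hhh_{R_1}(M_1,V)$ because it centralizes $R_1\otimes 1$, that $M_1\otimes_k N$ is $M_1$-isotypic over $R_1$ so a nonzero kernel of $f$ would be detected by $\hhh_{R_1}(M_1,-)$, and that $\End_{R_1}(M_1)=k$ by Schur over an algebraically closed field) are all in place --- and it has the advantage of making explicit where simplicity of $N$ over $R_2$ comes from, which the paper's one-line hint leaves implicit. The injectivity argument via restriction and Lemma \ref{Lsummand}, and the reduction from general $n$ to $n=2$ by induction, are both fine.
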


\noindent The proof of this result is an exercise in Wedderburn theory,
keeping in mind that if a simple $A$-module $M$ is finite-dimensional
(over $k$, where $A$ is a $k$-algebra), then $A$ acts on $M$ via a
subalgebra $A' \subset \mf{gl}_k(M)$. (So $A'$ is Artinian.) Moreover,
$M$ is a faithful simple $A'$-module - which makes $A'$ a simple algebra,
hence of the form $\End_k(k^l)$.\medskip

We now prove the theorem. If $|\G| = 1$ then the result is clear, since
$G = \times_j G_j$. If not, then $k$ is algebraically closed of
characteristic zero, by assumption. By Proposition \ref{Pmitya}, we only
need to show that if $M_{\x} := \otimes_i M_{x_i}$, then the $M_{x_i}$
are $H_i$-semisimple if and only if $M_{\x}$ is $H$-semisimple. The
``only if" part is clear, and for the ``if" part, it is not hard to show
that for each $\la \in G$, $(M_{\x})_\la \subset \otimes_i
(M_{x_i})_{\la_i}$. Therefore
\[ M_{\x} = \bigoplus_{\la \in G = \times_i G_i} (M_{\x})_\la \subset
\bigoplus_{\la \in G} \bigotimes_i (M_{x_i})_{\la_i} = \bigotimes_i
\bigoplus_{\la_i \in G_i} (M_{x_i})_{\la_i} \subset \otimes_i M_{x_i} =
M_{\x} \]

\noindent whence all inclusions are equalities, and $M_{x_i} =
\bigoplus_{\la_i \in G_i} (M_{x_i})_{\la_i}\ \forall i$.
\end{proof}

\noindent It is now easy to determine the simple objects in $\calo$ (and
their characters in terms of those of the simple objects in $\calo_i$):

\begin{prop}[``Weyl Character Formula 2"]\label{P3}
$V \in \calo$ is simple if and only if  $V = \otimes_i V_i(x_i)$ for some
simple $V_i(x_i) \in \calo_i$. (Moreover, $\ch_V = \prod_i
\ch_{V_i(x_i)}$.)
\end{prop}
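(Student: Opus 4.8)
The plan is to combine the classification of simple objects in $\calo$ (Theorem \ref{Tfacts}) with the tensor-product description of simple $H \rtimes \G$-modules (Theorem \ref{Tsimples}), and to verify compatibility of characters using the exactness results of Proposition \ref{Pexact}. Concretely: by Theorem \ref{Tfacts}, every simple object of $\calo$ is of the form $V(\x)$ for a unique $\x \in X$, and by Theorem \ref{Tsimples}, $X = \times_i X_i$, so $\x = (x_1, \dots, x_n)$ with $x_i \in X_i$. Thus it suffices to prove that $V(\x) \cong \otimes_i V_i(x_i)$ as $\ag$-modules, for then the ``only if'' direction follows, and conversely any $\otimes_i V_i(x_i)$ is then a $V(\x)$, hence simple and in $\calo$. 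The character formula $\ch_V = \prod_i \ch_{V_i(x_i)}$ is then immediate from part (1) of Proposition \ref{Pexact}.

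First I would check that $W := \otimes_i V_i(x_i)$ actually lies in $\calo = \calo_{A \rtimes \G}$: it is finitely generated (each $V_i(x_i)$ is), $H$-semisimple (as in the proof of Theorem \ref{Tsimples}, weight spaces of the tensor product decompose as tensor products of weight spaces of the factors, using $H = \otimes_i H_i$ and $G = \times_i G_i$), has finite-dimensional weight spaces, and $B_+ = \otimes_i B_{i,+}$ acts locally finitely since each $B_{i,+}$ does and everything is built from finite tensor products. Next I would show $W$ is simple: pick any nonzero submodule $N \subset W$; restricting to $A_i \rtimes \G_i$ via the functor $\tv$ with the other factors fixed and applying Proposition \ref{Pexact}(3) together with Lemma \ref{Lsummand} shows, one index at a time, that $N$ must contain a full tensor factor, and iterating over all $i$ forces $N = W$. (Alternatively, one can directly invoke Proposition \ref{Pmitya}-style reasoning at the level of the finite-dimensional ``Cartan'' pieces, but the $\tv$/Lemma \ref{Lsummand} argument is already available and self-contained.)

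Having established that $W \in \calo$ is simple, it equals $V(\x')$ for some $\x' \in X$ by Theorem \ref{Tfacts}(1), and it remains to identify $\x' = \x$. For this I would locate the ``highest weight'' of $W$: the maximal vectors of $V_i(x_i)$ span $M_{x_i}$ (Proposition \ref{P6}), so the subspace $\otimes_i M_{x_i} \subset W$ consists of vectors killed by $N_+ = \sum_i (\otimes_{j<i} 1) \otimes N_{i,+} \otimes (\otimes_{j>i}1)$, and by the weight-ordering compatibility ($G_0 = \times_i (G_0)_i$ with the product partial order, and $\dd = \coprod_i \dd_i$) these sit at the top. As an $H \rtimes \G$-module this highest piece is $\otimes_i M_{x_i} = M_{\x}$ under the identification $X = \times_i X_i$ from Theorem \ref{Tsimples}. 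Since the maximal vectors of the simple module $V(\x')$ span $M_{\x'}$, we get $M_{\x'} \cong M_{\x}$, hence $\x' = \x$.

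The main obstacle I anticipate is the simplicity argument for $W$: one needs to rule out proper submodules, and the cleanest route is the inductive application of Proposition \ref{Pexact}(3) (which presents $\tv(M)$ as a direct sum of copies of $M$ over $A_i \rtimes \G_i$) combined with Lemma \ref{Lsummand}, carefully peeling off one tensor factor at a time; the bookkeeping of ``which factor is fixed'' must be done for all $i$ in turn, and one must be sure a nonzero submodule really does meet the top weight space so that the peeling can start. Everything else — membership in $\calo$, the character formula, and the identification of the highest-weight datum — is routine given the earlier results, principally Theorems \ref{Tfacts} and \ref{Tsimples} and Proposition \ref{Pexact}.
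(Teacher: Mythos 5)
Your overall architecture matches the paper's: show $W=\otimes_i V_i(x_i)$ lies in $\calo$ and is simple, identify it with $V(\x)$ via its top piece $M_{\x}=\otimes_i M_{x_i}$, and then get surjectivity from $X=\times_i X_i$ (Theorem \ref{Tsimples}) and injectivity from Corollary \ref{Cverma}. Where you diverge is the simplicity step. The paper does not peel submodules; it uses Proposition \ref{Pexact}(3) to see that any \emph{maximal vector} of $W$ generates, over each $A_i\rtimes\G_i$, a sum of copies of $V_i(x_i)$, hence has $i$-th weight component $\g_i(\la_{x_i})$ for every $i$; so all maximal vectors lie in $M_{\x}$, which is simple over $H\rtimes\G$ and generates $W$ — giving simplicity and the identification of the highest-weight datum in one stroke. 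Your Wedderburn-style peeling also works, but as sketched it is underpowered: Lemma \ref{Lsummand} (about simple subquotients of direct sums) is not the right tool for showing a nonzero submodule $N$ is all of $W$; you need the fact that a submodule of the $V_i(x_i)$-isotypic module $V_i(x_i)\otimes W'$ has the form $V_i(x_i)\otimes U$ for a subspace $U\subseteq W'$, which requires $\End(V_i(x_i))=k$, i.e.\ Proposition \ref{P10}(4) and hence the algebraic-closedness hypothesis; merely knowing that $N$ ``contains a full tensor factor for each $i$'' does not by itself force $N=W$. Also note that Proposition \ref{Pmitya} is stated only for finite-dimensional simples, so it cannot be invoked directly for the (generally infinite-dimensional) $V_i(x_i)$; and your worry about $N$ ``meeting the top weight space'' belongs to the maximal-vector argument, not the peeling one — in $\calo$ a nonzero submodule automatically contains a maximal vector by local finiteness of $B_+$. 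With the Schur-plus-isotypic step made explicit, your route is correct; the paper's route is slightly more economical because the maximal-vector analysis does the identification $\x'=\x$ for free.
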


\begin{proof}
Fix $i$. Given $x_j \in X_j$ for all $j$, $V := \otimes_j V_j(x_j)$ is
isomorphic to $V_i(x_i) \otimes W$ (as $A_i \rtimes \G_i$-modules) by
Proposition \ref{Pexact} - where $W$ is the vector space $\otimes_{j \neq
i} V_j(x_j)$.
In particular, any maximal vector in $V$ generates an $A_i \rtimes
\G_i$-submodule; this submodule must also be a direct sum of copies of
$V_i(x_i)$, whence the vector has $i$th weight component $\g_i(\la_i)$
for some $\g_i \in \G_i$.

This holds for every $1 \leq i \leq n$; thus any maximal vector in $V$
has weight $\g(\la)$, whence it is in $\otimes_i M_{x_i} = M_{\x}$. But
$M_{\x}$ is simple by Theorem \ref{Tsimples}, so the standard cyclic
module $\otimes_j V_j(x_j)$ is simple, whence it equals $V(\x)$.

Thus, the map $\Psi : \times_i X_i \to X$ (taking $(V_i(x_i))_i$ to
$\otimes_i V_i(x_i)$) is surjective. By Corollary \ref{Cverma} above,
$\Psi$ is also injective; hence we are done.
\end{proof}

\begin{cor}\label{Ctensor}
$F(\x) = (F(x_j))_j$ in $\calc$, and $F(V(\x)) = \otimes_j F(V_j(x_j))$
in $\calo$.
\end{cor}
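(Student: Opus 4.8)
The plan is to deduce both equalities from the fact that duality commutes with tensor products (Proposition \ref{Pexact}(1)), combined with the tensor-product parametrization of simple objects and the identity $F(V(x)) = V(F(x))$ (Propositions \ref{P3}, \ref{P4}, Theorem \ref{Tsimples}). Throughout one reads ``$F(\x) = (F(x_j))_j$'' as an equality of isomorphism classes, and recalls that the duality functor on $\calc$ is the restriction of the one on $\calh_{H \rtimes \G}$ (Lemma \ref{Lduality}).

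First I would settle the assertion in $\calo$, which is almost immediate: by Proposition \ref{P3} we have $V(\x) \cong \otimes_j V_j(x_j)$, and Proposition \ref{Pexact}(1) --- applied with $A'_j = A_j \rtimes \G_j$, $H'_j = H_j$, and $i_j$ the induced anti-involution --- gives $F(V(\x)) \cong \otimes_j F(V_j(x_j))$. Using Proposition \ref{P4} inside each $\calo_j$ this is $\otimes_j V_j(F(x_j))$; on the other hand Proposition \ref{P4} in $\calo$ gives $F(V(\x)) = V(F(\x))$, and $V(F(\x)) \cong \otimes_j V_j((F(\x))_j)$ by Proposition \ref{P3}. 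Comparing these and invoking the injectivity of the parametrization $\times_j X_j \to X$ (Corollary \ref{Cverma}, or Theorem \ref{Tsimples}) yields $F(\x) = (F(x_j))_j$, which is the statement in $\calc$.

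Alternatively, and more directly, the $\calc$ statement can be obtained on its own: by Theorem \ref{Tsimples} the simple object attached to $\x \in X = \times_j X_j$ is $M_{\x} \cong \otimes_j M_{x_j}$ as $H \rtimes \G$-modules, so Proposition \ref{Pexact}(1) in the setup $A'_j = H_j \rtimes \G_j$, $H'_j = H_j$ gives $F(M_{\x}) \cong \otimes_j M_{F(x_j)}$; since $F$ permutes the simples of $\calc$, the left side is $M_{F(\x)}$, and injectivity again forces $F(\x) = (F(x_j))_j$. (These two routes are ``the same statement'' on highest weight spaces via Lemma \ref{Lduality}, since the top weight spaces of $V(\x)$ are precisely $M_{\x}$.)

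All the ingredients are already in place, so there is no real obstacle; the only care needed is bookkeeping --- verifying that Proposition \ref{Pexact} is invoked within an allowed instance of its hypotheses (namely that $H_j \rtimes \G_j$ carries an anti-involution fixing $H_j$, which holds by Lemma \ref{L1}(2) and the standing assumptions), and that the duality functors at the $A_j \rtimes \G_j$- and $H_j \rtimes \G_j$-levels are compatible with restriction, which is exactly Lemma \ref{Lduality}.
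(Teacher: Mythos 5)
Your proposal is correct and uses exactly the ingredients of the paper's own (very terse) proof --- Proposition \ref{Pexact}(1), Proposition \ref{P3}, Proposition \ref{P4}, and the injectivity of the parametrization from Theorem \ref{Tsimples}/Corollary \ref{Cverma}; the only difference is that your primary route establishes the $\calo$-identity first and deduces the $\calc$-identity by injectivity, whereas the paper proves the $\calc$-statement first (your ``alternative'' route) and then obtains the $\calo$-statement from Propositions \ref{P3} and \ref{P4}. This is a harmless reordering, and your bookkeeping of the hypotheses of Proposition \ref{Pexact} and of Lemma \ref{Lduality} is accurate.
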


\begin{proof}
The statement in $\calc$ follows from Proposition \ref{Pexact}, and the
other equation now follows by using Propositions \ref{P3} and \ref{P4}.
\end{proof}\medskip

We now relate Condition (S3) for $A \rtimes \G$, with the same condition
for all $A_i \rtimes \G_i$. Define the sets $S'_i(x_i) \subset S^3_i(x_i)
\subset X_i$ (and $S'(\x) \subset S^3(\x) \subset X$) as in Definition
\ref{D1} above.

\begin{theorem}\label{Ptensor}\hfill
\begin{enumerate}
\item For each $\x,\ Z(\x)$ has finite length if and only if each
$Z_j(x_j)$ does.

\item $\calo$ is finite length if and only if all $\calo_i$'s are finite
length.

\item Given $x_j \in X_j$ for all $j$, $S'(\x) \subset \times_j S'_j(x_j)
\subset S^3(\x) \subset \times_j S^3_j(x_j)$.
\end{enumerate}
\end{theorem}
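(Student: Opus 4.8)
The plan is to reduce all three parts to one structural fact about how ``is a subquotient of'' behaves under the tensor product, and then feed this into (a) a composition‑series argument for parts (1)--(2) and (b) a ``change one coordinate at a time'' argument for part (3). The key fact I will establish first: for $\x=(x_1,\dots,x_n)$ and $\x'=(x_1',\dots,x_n')$ in $X=\times_j X_j$ (Theorem~\ref{Tsimples}), the simple module $V(\x')=\otimes_j V_j(x_j')$ (Proposition~\ref{P3}) is a subquotient of the Verma module $Z(\x)=\otimes_j Z_j(x_j)$ (Corollary~\ref{Cverma}) \emph{if and only if} $V_j(x_j')$ is a subquotient of $Z_j(x_j)$ for every $j$. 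The ``if'' direction is a direct construction: given $N_j\subseteq M_j\subseteq Z_j(x_j)$ with $M_j/N_j\cong V_j(x_j')$, the module $\otimes_j M_j$ is an $\ag$‑submodule of $Z(\x)$, and its quotient by $\sum_j(\otimes_{i<j}M_i)\otimes N_j\otimes(\otimes_{i>j}M_i)$ is $\otimes_j(M_j/N_j)=V(\x')$. For the ``only if'' direction I restrict to $A_j\rtimes\G_j$: by Proposition~\ref{Pexact}(3), $V(\x')$ restricts to a direct sum of copies of $V_j(x_j')$ and $Z(\x)$ restricts to a direct sum of copies of $Z_j(x_j)$; since restriction preserves subquotients, $V_j(x_j')$ is a simple subquotient of a direct sum of copies of $Z_j(x_j)$, hence of $Z_j(x_j)$ itself by Lemma~\ref{Lsummand}.

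For parts (1) and (2): for the ``if'' half of (1) I will refine a composition series of one tensor factor of $Z(\x)=\otimes_j Z_j(x_j)$ by composition series of the remaining factors; since tensoring over $k$ is exact, the successive subquotients are tensor products of simple modules, hence simple by Proposition~\ref{P3}, so $Z(\x)$ has a finite composition series. For the ``only if'' half I note that $Z(\x)\cong Z_j(x_j)\otimes_k W$ as an $A_j\rtimes\G_j$‑module with $W:=\otimes_{i\ne j}Z_i(x_i)\ne 0$; since $-\otimes_k W$ is faithfully flat over the field $k$, it embeds the lattice of $A_j\rtimes\G_j$‑submodules of $Z_j(x_j)$ into the lattice of $\ag$‑submodules of $Z(\x)$ compatibly with, and reflecting, inclusions, so finite length of $Z(\x)$ forces finite length of $Z_j(x_j)$. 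Part (2) then follows formally: by Proposition~\ref{Pfinlength}, $\calo$ (resp.\ $\calo_i$) is finite length iff every $Z(\x)$ (resp.\ every $Z_i(x_i)$) is; each $X_j$ is nonempty (Theorem~\ref{T2}); and part (1) converts the two conditions into each other.

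For part (3), write $x\leftrightarrow_j x'$ for ``$V_j(x)$ is a subquotient of $Z_j(x')$ or vice versa'' and $\leftrightarrow$ for the analogous relation on $X$, so that $S'_j(x_j)$ (resp.\ $S'(\x)$) is the $\leftrightarrow_j$‑ (resp.\ $\leftrightarrow$‑)connected component and $S^3$ is obtained by additionally allowing the involution $F$. By the key fact, a $\leftrightarrow$‑edge of $X$ projects coordinatewise to $\leftrightarrow_j$‑edges, and by Corollary~\ref{Ctensor} an $F$‑edge projects to $F$‑edges; applying this along a connecting chain gives $S'(\x)\subseteq\times_j S'_j(x_j)$ and $S^3(\x)\subseteq\times_j S^3_j(x_j)$. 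For the middle inclusion I change one coordinate at a time: since $V_j(w)$ is always a quotient of $Z_j(w)$ (Proposition~\ref{P6}), the ``if'' direction of the key fact shows that a single edge $w_j\leftrightarrow_j\tilde w_j$ lifts to the edge $(\dots,w_j,\dots)\leftrightarrow(\dots,\tilde w_j,\dots)$ of $X$; concatenating such lifts over $j=1,\dots,n$ connects $\x$ to any prescribed element of $\times_j S'_j(x_j)$ within $S'(\x)\subseteq S^3(\x)$ (in fact this yields the stronger statement $S'(\x)=\times_j S'_j(x_j)$).

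The main obstacle is the ``only if'' direction of the key fact --- that a product subquotient descends to a coordinatewise subquotient --- since it is where restriction, the structure of tensor products under restriction (Proposition~\ref{Pexact}(3)), and the summand lemma (Lemma~\ref{Lsummand}) all have to be combined; once that is in place, the remaining steps are routine manipulations of composition series and of connected components of the linkage graph.
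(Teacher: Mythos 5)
Your proof is correct and follows essentially the same route as the paper: your ``key fact'' packages the paper's Lemma \ref{Lelem}-type tensor-chain construction (for the ``if'' direction) together with the restriction-plus-Proposition \ref{Pexact}(3)-plus-Lemma \ref{Lsummand} argument (for the ``only if'' direction), and parts (1)--(3) are then deduced exactly as in the paper's proof. Your observation that the argument actually yields the equality $S'(\x)=\times_j S'_j(x_j)$ is a correct (slightly stronger) byproduct that the paper does not state.
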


\begin{proof}
We will need the following elementary result.

\begin{lemma}\label{Lelem}
Given a ring $R_j$ and an $R_j$-module $M_j$ for each $1 \leq j \leq n$,
define $R := \otimes_j R_j$ and $M := \otimes_j M_j$. If each $M_j$ has a
chain of submodules
$M_j = M_{j,0} \supsetneq M_{j,1} \supsetneq \dots \supsetneq M_{j,l_j}
= 0$,
then $M$ has a chain of length $\prod_j l_j$, with set of subquotients
$\{ \otimes_j \left( M_{j, i_j-1} / M_{j, i_j} \right) : 1 \leq i_j \leq
l_j\ \forall j \}$.
\end{lemma}

\begin{enumerate}
\item If $l_j = l(Z_j(x_j))$, then $Z(\x)$ has length $\prod_j l_j$ by
Lemma \ref{Lelem} and Proposition \ref{P3}.
Conversely, suppose some $Z(\x)$ has finite length, say $n$, but some
$Z_i(x_i)$ does not. Then we can construct an arbitrarily long filtration
of $Z_i(x_i)$, say of length $> n$. By Lemma \ref{Lelem}, this gives a
filtration of $Z(\x)$ of length larger than $n$, a contradiction.\medskip

\item This follows from Proposition \ref{Pfinlength} and the previous
part.\medskip

\item This is done in stages.\medskip

\noindent {\bf Step 1.}
We show the first inclusion (and will use it later, to show the third
inclusion). We are to show that if $\x' \in \times_j S'_j(x_j)$, and
$[Z(\x') : V(\x'')]$ or $[Z(\x'') : V(\x')]$ is nonzero, then $\x'' \in
\times_j S'_j(x_j)$.

Suppose $[Z(\x') : V(\x'')] > 0$ (the proof is similar in the other
case). Fix $i$. As $A_i \rtimes \G_i$-modules, $Z(\x') = \otimes_j
Z_j(x'_j)$ (or $V(\x'') = \otimes_j V_j(x''_j)$) is isomorphic (by above
results) to a direct sum of copies of $Z_i(x'_i)$ (or $V_i(x''_i)$,
respectively).
Thus, $V_i(x''_i)$ is a subquotient of $V(\x'')$ (as $A_i \rtimes
\G_i$-modules), hence also of $Z(\x') = \oplus Z_i(x'_i)$. Now use Lemma
\ref{Lsummand} - so $x''_i \in S'_i(x'_i) = S'_i(x_i)$ (by transitivity)
for all $i$.\medskip

\noindent {\bf Step 2.}
Since in a Cartesian product of graphs, the connected components are the
products of connected components in each factor, the second inclusion
holds if we show that if $\x' \in S^3(\x)$ and $x''_i \in S'_i(x_i)$ for
some $i$, then $\x'' := (x'_1, \dots, x'_{i-1}, x''_i, x'_{i+1}, \dots,
x'_n)$ is in $S^3(\x)$. By transitivity, this further reduces to showing
the same when $[Z_i(x'_i) : V_i(x''_i)]$ or $[Z_i(x''_i) : V_i(x'_i)]$ is
nonzero.

We show the proof in the second case (the proof in the first case is
similar). Apply Lemma \ref{Lelem} to the chains of submodules
\[ M_j = Z_j(x'_j) \supsetneq Y_j(x'_j) \supset 0 \ \forall j \neq i,
\qquad M_i = Z_i(x''_i) \supset M' \supsetneq N' \supset 0, \]

\noindent where $M'/N' \cong V_i(x'_i)$. Then $[Z(\x'') : V(\x')] > 0$,
whence $\x'' \in S^3(\x') = S^3(\x)$ (by transitivity).\medskip

\noindent {\bf Step 3.}
Finally, the third inclusion follows from Step 1 and Corollary
\ref{Ctensor}, since $\times_j S^3_j(x_j)$ is now closed under both
operations.
\end{enumerate}
\end{proof}

\noindent Thus, the last part is a step towards showing the equivalence
of Condition (S3) in the two setups. In fact, it is enough in the case
that we need:

\begin{cor}\label{Cprod}
If $|\G| = 1$, then $S^3_A(\la_1, \dots, \la_n) = \times_j S^3_j(\la_j)\
\forall i,\la_i \in G_i$; hence Condition (S3) holds for $A$ if and only
if it holds for every $A_i$.
\end{cor}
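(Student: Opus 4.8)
Corollary \ref{Cprod} is the specialization of Theorem \ref{Ptensor} to the case where all the groups $\G_i$ (hence $\G$) are trivial. The plan is to show that in this situation the two outer inclusions in part (3) of Theorem \ref{Ptensor} collapse onto the middle term, forcing equality throughout.

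First I would recall the set-up: when $|\G| = 1$ we have $X_i = G_i = \hhh_{k-alg}(H_i,k)$ and $X = G = \times_i G_i$, so that $\x$ is literally the tuple $(\la_1,\dots,\la_n)$, and $S^3(\x)$ becomes $S^3_A(\la_1,\dots,\la_n)$. Moreover the relation ``$x \to_F x'$ if $F(x)=x'$'' is vacuous here, because $F$ fixes every simple object when $\G$ is trivial (as noted after Proposition \ref{P4}); consequently $S^3_i(x_i) = S'_i(x_i)$ for every $i$, and likewise $S^3(\x) = S'(\x)$. So the chain of inclusions in Theorem \ref{Ptensor}(3),
\[
S'(\x) \subset \times_j S'_j(x_j) \subset S^3(\x) \subset \times_j S^3_j(x_j),
\]
reads $S^3(\x) \subset \times_j S^3_j(\la_j) \subset S^3(\x) \subset \times_j S^3_j(\la_j)$, and hence all four sets are equal. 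This proves $S^3_A(\la_1,\dots,\la_n) = \times_j S^3_j(\la_j)$.

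For the final clause, I would argue as follows. Condition (S3) for $A$ means that $S^3_A(\mu)$ is finite for every $\mu \in G = \times_j G_j$. Since every weight $\mu$ of $A$ is of the form $(\la_1,\dots,\la_n)$ with $\la_j \in G_j$, the equality just established shows that $S^3_A(\mu)$ is finite for all $\mu$ if and only if $\times_j S^3_j(\la_j)$ is finite for all choices of $\la_j \in G_j$. A finite product of sets is finite precisely when each factor is finite, and conversely a finite factor $S^3_j(\la_j)$ can always be embedded as a slice of such a product (fix the other coordinates arbitrarily); so $\times_j S^3_j(\la_j)$ is finite for all tuples if and only if each $S^3_j(\la_j)$ is finite for all $\la_j \in G_j$ — that is, if and only if each $A_j$ satisfies Condition (S3). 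This gives the stated equivalence.

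There is no real obstacle here: the corollary is a bookkeeping consequence of the already-proved Theorem \ref{Ptensor}(3), and the only thing to be careful about is the observation that $F$ acts trivially on simple objects when $\G$ is trivial, which is what makes $S'$ and $S^3$ coincide and thereby forces the sandwich of inclusions to collapse. If one wished to avoid invoking Theorem \ref{Ptensor}(3) with its $S'$ notation, one could instead cite Corollary \ref{Ctensor} (so that $\times_j S^3_j(\la_j)$ is closed under $F$, which is automatic here) together with Step 1 and Step 2 of that proof; but the cleanest route is simply to read off the degenerate case.
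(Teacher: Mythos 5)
Your proposal is correct and follows essentially the same route as the paper: the paper's proof likewise observes that when $|\G|=1$ one has $X=G$, $X_i=G_i$, and $S'(\la)=S^3(\la)$ because $F(\la)=\la$, so the chain of inclusions in Theorem \ref{Ptensor}(3) collapses to equalities. Your additional spelling-out of the finiteness equivalence (a product of nonempty sets is finite iff each factor is) is a correct filling-in of a step the paper leaves implicit.
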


\begin{proof}
If $|\G| = 1$, $X = G$, $X_i = G_i\ \forall i$, and $S'(\la) = S^3(\la)\
\forall \la$ (since $F(\la) = \la$); thus, all inclusions in the final
part of the above result are equalities.
\end{proof}

We conclude this section by simultaneously doing two things. First, the
last part of Theorem \ref{Ptensor} suggests that some generalization of
the formula in Corollary \ref{Cprod} is possible. Moreover, we wish to
complete the following ``commuting" cube (relating the ``$S^3$-sets" in
various setups), given $\g \in \G$, $\la \in G$, and $\x \in X$ such that
$\la = \la_{\x}$ (also see Proposition \ref{Pfunct}); note that $F$
preserves every (known) vertex.
\begin{equation}\label{Ediag2}
\xymatrix{
  & \{ S^3_i(\g(\la_i)) \}
     \ar[rr]^{\rtimes}
     \ar[dd]^{\times}
  && \{ S^3_i(x_i) \}
     \ar[dd]^{\times}\\
  \{ S^3_i(\la_i) \}
     \ar[rr]^{\rtimes}
     \ar[dd]^{\times}
     \ar[ru]^{\g(\cdot)}
  && \{ S^3_i(x_i) \}
     \ar[dd]^{\times}
     \ar[ru]^{\g(\cdot) = \id}\\
  & S^3_A(\g(\la))
     \ar[rr]^{\rtimes}
  && \bigcirc\hspace*{-1.7ex}?\\
  S^3_A(\la)
     \ar[rr]^{\rtimes}
     \ar[ru]^{\g(\cdot)}
  && \bigcirc\hspace*{-1.7ex}?
     \ar[ru]^{\g(\cdot) = ?}\\
}
\end{equation}

To do this, we introduce the following notation. Define $F^0(x) := x$ and
$F^1(x) := F(x)$; for all $\vi = (\vi_1, \dots, \vi_n) \in (\Z / 2
\Z)^n$, define $F^\vi (\x) := (F^{\vi_1}(x_1), \dots, F^{\vi_n}(x_n))$.
For all $\vi,\vi'$, we thus have: $F^\vi(F^{\vi'}(\x)) = F^{\vi +
\vi'}(\x)$.

Now note that the cube above is identical to the one in diagram
\eqref{Ediag3} above, with the $Z$'s replaced by $S^3$. However, the
missing set of objects $\bigcirc\hspace*{-1.7ex}?$ (which should be
contained in $\times_i S^3_i(x_i)$) {\it cannot} just be $S^3(\x)$, since
it could also have been $S^3(F^{\vi}(\x))$ in its place. So the natural
candidate now, would be the finite union $\bigcup_{\vi}
S^3(F^{\vi}(\x))$. The question now is: when is this set closed under all
the $F^{\vi}$? We present a sufficient condition.

\begin{theorem}\label{Ttensor}
If $F(S'_i(x_i)) = S'_i(F(x_i))\ \forall i,x_i$, then
\begin{equation}\label{E3}
\times_j S^3_j(x_j) = \bigcup_{\vi \in \mathbb{P}(\Z / 2 \Z)^n}
S^3(F^\vi(\x)).
\end{equation}
\end{theorem}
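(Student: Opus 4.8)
The plan is to establish the two inclusions in \eqref{E3} separately, the ``$\supseteq$'' one being essentially formal and the ``$\subseteq$'' one carrying the content of the hypothesis. For ``$\supseteq$'', I would first note that for every $j$ and every $x_j$, relation (b) of Definition \ref{D1} puts $F(x_j)$ in the $S^3_j$-equivalence class of $x_j$, so $S^3_j(F(x_j)) = S^3_j(x_j)$; hence for any $\vi$ one has $\times_j S^3_j\big((F^\vi(\x))_j\big) = \times_j S^3_j(x_j)$. Feeding $\y := F^\vi(\x)$ into the inclusion $S^3(\y) \subseteq \times_j S^3_j(y_j)$ of Theorem \ref{Ptensor}(3) then gives $S^3(F^\vi(\x)) \subseteq \times_j S^3_j(x_j)$ for every $\vi$, and taking the union proves this inclusion. (This direction uses neither the hypothesis nor $F^2 = \id$.)

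For ``$\subseteq$'' the key step is to show that the hypothesis forces, for each $j$,
\[
S^3_j(x_j) = S'_j(x_j) \cup S'_j(F(x_j)).
\]
Here $\supseteq$ is immediate: $S'_j(x_j) \subseteq S^3_j(x_j)$ because relation (a) is among the generators of $S^3_j$, and $S'_j(F(x_j)) \subseteq S^3_j(F(x_j)) = S^3_j(x_j)$ by the previous paragraph. For $\subseteq$, I would check that the set $T := S'_j(x_j) \cup S'_j(F(x_j))$ contains $x_j$ and is stable under the relations generating $S^3_j$ (and their inverses): stability under (a) is clear, as each of the two summands is a full (a)-equivalence class; stability under (b) is precisely where the hypothesis enters --- if $z \in S'_j(x_j)$ then $F(z) \in F(S'_j(x_j)) = S'_j(F(x_j)) \subseteq T$, while if $z \in S'_j(F(x_j))$ then $F(z) \in F(S'_j(F(x_j))) = S'_j(F^2(x_j)) = S'_j(x_j) \subseteq T$, using $F^2 = \id$ on $X_j$. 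Since $T$ contains $x_j$ and is closed under the generating relations, it contains the whole equivalence class $S^3_j(x_j)$.

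With the displayed identity in hand, ``$\subseteq$'' in \eqref{E3} is quick. Given $\x' \in \times_j S^3_j(x_j)$, for each $j$ pick $\vi_j \in \Z/2\Z$ with $x'_j \in S'_j(F^{\vi_j}(x_j))$ --- possible since $x'_j \in S^3_j(x_j) = S'_j(x_j) \cup S'_j(F(x_j))$ --- and set $\vi := (\vi_1, \dots, \vi_n)$. Then $\x' \in \times_j S'_j\big((F^\vi(\x))_j\big)$, and the inclusion $\times_j S'_j(y_j) \subseteq S^3(\y)$ of Theorem \ref{Ptensor}(3), applied with $\y := F^\vi(\x) \in X = \times_j X_j$ (using $F : X_j \to X_j$), gives $\x' \in S^3(F^\vi(\x))$, which lies in the right-hand side of \eqref{E3}.

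The only genuinely non-routine point --- the one to write out carefully --- is the identity $S^3_j(x_j) = S'_j(x_j) \cup S'_j(F(x_j))$: it says that, under the hypothesis, the $F$-edges inside a single factor $X_j$ merely interchange the two (a)-components $S'_j(x_j)$ and $S'_j(F(x_j))$ of $S^3_j(x_j)$ rather than scattering across more of them; everything else is bookkeeping together with direct appeals to Theorem \ref{Ptensor}(3). It is also worth remarking that when $|\G| = 1$ one has $F = \id$, $S'_j = S^3_j$, and the union on the right of \eqref{E3} collapses to $S^3(\x)$, so the theorem recovers Corollary \ref{Cprod}.
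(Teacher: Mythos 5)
Your proof is correct, but it takes a genuinely different route from the paper's. The paper organizes the argument around the equivariance claim $F^{\vi}(S^3(\x)) = S^3(F^{\vi}(\x))$, which it proves by showing that the set of $\x' \in S^3(\x)$ with $F^{e_i}(\x') \in S^3(F^{e_i}(\x))$ is stable under the generating relations --- the hypothesis enters there via $F^{e_i}\bigl( \times_j S'_j(x'_j) \bigr) = S'_i(F(x'_i)) \times \bigl( \times_{j \neq i} S'_j(x'_j) \bigr)$ --- and then concludes by a closure/covering argument: both sides of \eqref{E3} contain $\x$ and are stable under relation (a) and all $F^{\vi}$, while the left side is exactly what these operations generate from $\x$ (Theorem \ref{Ptensor}). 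You instead localize the hypothesis entirely inside each factor, proving the identity $S^3_j(x_j) = S'_j(x_j) \cup S'_j(F(x_j))$ --- under the hypothesis the $F$-edges in $X_j$ merely interchange the two (a)-components --- and then assemble \eqref{E3} directly from the inclusions $\times_j S'_j(y_j) \subset S^3(\y) \subset \times_j S^3_j(y_j)$ of Theorem \ref{Ptensor}(3). This is shorter and isolates a clean structural consequence of the hypothesis that the paper never states explicitly, at the price of not establishing the equivariance claim itself (which completes the cube in diagram \eqref{Ediag2} and so has independent interest). One cosmetic point: your construction yields a $\vi \in (\Z/2\Z)^n$ rather than a class in $\mathbb{P}(\Z/2\Z)^n$; to land in the union as written you should add the one-line observation (made in the paper's remark) that $S^3(F^{\vi + (1,\dots,1)}(\x)) = S^3(F^{\vi}(\x))$ because $F = F^{(1,\dots,1)}$ preserves each $S^3$-class, so the terms of the union depend only on the class of $\vi$.
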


\begin{remark}\hfill
\begin{enumerate}
\item Here, $\mathbb{P}(\Z / 2 \Z)^n$ denotes the quotient of $(\Z / 2
\Z)^n$ by the diagonal copy of $\Z / 2 \Z$ sitting in it as $\{ (0,
\dots, 0), (1, \dots, 1) \}$ (we abuse notation). This is because by
Corollary \ref{Ctensor}, $F(\x) = F^{(1, \dots, 1)}(\x) \in S^3(\x)$.

\item In general, it is not clear that $F(S'(x)) = S'(F(x))$. In the
degenerate case $\G = 1$, this holds because $F(\la) = \la$ for any RTA
$A$ and any $\la \in G$. (So $F(S'(\la)) = S'(\la) = S^3(\la)$.) 
\end{enumerate}
\end{remark}

\begin{proof}
Note that $S^3(F^\vi(\x)) \subset \times_j S^3_j(F^{\vi_j}(x_j)) =
\times_j S^3_j(x_j)\ \forall \vi$. Moreover, Theorem \ref{Ptensor}
implies that we can cover all of $\times_j S^3_j(x_j)$ from any fixed
$\x$, using the duality functors $F^\vi$, and the relation of being a
simple subquotient of a Verma module.\medskip

\noindent {\bf Claim.} $F^\vi(S^3(\x)) = S^3(F^\vi(\x))\ \forall
\vi$.\medskip

\noindent Before we show the claim, let us use it to prove the theorem.
Note, by the first paragraph in this proof, that the right-hand side of
equation \eqref{E3} is contained in the left side. Moreover, both sides
are closed under the ``Verma-to-simple relation", as well as all possible
$F^\vi$'s (by the claim). But both sides also partition $X$ (since $X_j =
\coprod S^3_j(x_j)$ for all $j$), whence all inclusions are now equalities,
as desired.

It remains to prove the claim. Moreover, it suffices to show, for any
coordinate vector $e_i$, that
$F^{e_i} (S^3(\x))$ $\subset S^3(F^{e_i}(\x))$, for then we get that
\[ S^3(F^{e_i}(\x)) = F^{e_i}(F^{e_i}(S^3(F^{e_i}(\x)))) \subset
F^{e_i}(S^3(\x)) \subset S^3(F^{e_i}(\x)) \]

\noindent from above, whence all inclusions become equalities. The
statement for general $\vi$ follows by a series of compositions of
various $F^{e_j}$'s.

We now conclude the proof. Say $\x' \in S^3(\x)$ satisfies $F^{e_i}(\x')
\in S^3(F^{e_i}(\x))$, and $[Z(\x') : V(\x'')] > 0$ (the case when
$[Z(\x'') : V(\x')] > 0$ is similar). By the proof of the previous
theorem, $[Z_j(x'_j) : V_j(x''_j)] > 0\ \forall j$. Hence by the given
assumption,
\[ F^{e_i}(\x'') \in F^{e_i} \left( \times_j S'_j(x'_j) \right) =
S'_i(F(x'_i)) \times \left( \times_{j \neq i} S'_j(x'_j) \right) \subset
S^3(F^{e_i}(\x')), \]

\noindent where the last inclusion follows from Theorem \ref{Ptensor}.
But then by choice of $\x'$, we have $F^{e_i}(\x') \in S^3(F^{e_i}(\x))$,
so $F^{e_i}(\x'') \in S^3(F^{e_i}(\x))$ too.

The other relation is that of duality. But if $\x' \in S^3(\x)$ satisfies
$F^{e_i}(\x') \in S^3(F^{e_i}(\x))$, then $F^{e_i}(F(\x')) =
F(F^{e_i}(\x')) \in S^3(F^{e_i}(\x))$, since this latter set is also
closed under duality ($F = F^{(1,\dots,1)}$).

Hence the closure under both these relations (of $\x$, which does
satisfy: $F^{e_i}(\x) \in S^3(F^{e_i}(\x))$) also satisfies the same
property. Thus, $F^{e_i}(S^3(\x)) \subset S^3(F^{e_i}(\x))$ for all $i$,
which finishes the proof of the claim.
\end{proof}

\section{Functoriality of the BGG Category: combining the
setups}\label{Sfinal}

Combining all of the above analysis, we see that we have four setups in
which it makes sense to consider the BGG Category $\calo$, as given in
\eqref{E1}. We now sketch a proof of Proposition \ref{Pdiag} above:

That the first diagram commutes, is proved in Theorem \ref{Tsimples}; for
the second diagram, use Proposition \ref{P3} and Corollary \ref{Ctensor}.
The commuting cube in the $\calc$'s uses the first diagram in Proposition
\ref{Pdiag}, Proposition \ref{P4}, Theorem \ref{Tsimples}, and Corollary
\ref{Ctensor}.
Finally, the result on Verma modules uses Remark \ref{R4} and Corollary
\ref{Cverma}. (The cube mentioned immediately after Proposition
\ref{Pdiag}, also uses these results.)\medskip

We now show the main theorems, after stating a part of Theorem
\ref{Tmain} in full detail:

\begin{prop}\label{Pequiv}
Given $\la_i \in G_i$ for all $i$, the following are equivalent:
\begin{enumerate}
\item $V_{A_i}(\la_i)$ is finite-dimensional for all $i$.

\item $V_A(\la)$ is finite-dimensional, where $\la = (\la_1, \dots,
\la_n)$.

\item $V_{A_i \rtimes \G_i}(x_i)$ is finite-dimensional for all $i$ and
any $x_i$ with $\la_i = \la_{x_i}$.

\item $V_{A \rtimes \G}(\x)$ is finite-dimensional for any $\x$ with $\la
= \la_{\x}$.
\end{enumerate}

\noindent and the following are equivalent:

\begin{enumerate}
\item $Z_{A_i}(\la_i)$ has finite length for all $i$.

\item $Z_A(\la)$ has finite length, where $\la = (\la_1, \dots, \la_n)$.

\item $Z_{A_i \rtimes \G_i}(x_i)$ has finite length for any $i$ and $x_i$
with $\la_i = \la_{x_i}$.

\item $Z_{A \rtimes \G}(\x)$ has finite length for any $\x$ with $\la =
\la_{\x}$.
\end{enumerate}

\noindent In all cases, $V$ and $Z$ stand for simple and Verma
objects in the appropriate Category $\calo$, respectively.
\end{prop}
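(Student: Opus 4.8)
The plan is to reduce everything to results already in hand. For each of the two lists it suffices to establish three equivalences, say $(1)\Leftrightarrow(2)$, $(3)\Leftrightarrow(4)$, and $(1)\Leftrightarrow(3)$; these together make all four conditions equivalent. Throughout I will use that $X=\times_j X_j$ (Theorem~\ref{Tsimples}), so that any $\x\in X$ over $\la$ is of the form $\x=(x_1,\dots,x_n)$ with $\la_{x_i}=\la_i$; in particular such $\x$ and such $x_i$ exist by Theorem~\ref{T2}, which is what makes the quantifiers in (3) and (4) meaningful and mutually consistent.

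For the two ``tensor'' equivalences I would argue as follows. For $(3)\Leftrightarrow(4)$ in the first list, Proposition~\ref{P3} gives $V_{A\rtimes\G}(\x)\cong\otimes_i V_{A_i\rtimes\G_i}(x_i)$, so that $\dim_k V_{A\rtimes\G}(\x)=\prod_i\dim_k V_{A_i\rtimes\G_i}(x_i)$ and the left-hand module is finite-dimensional exactly when every factor is. For $(3)\Leftrightarrow(4)$ in the second list one first notes $Z_{A\rtimes\G}(\x)\cong\otimes_i Z_{A_i\rtimes\G_i}(x_i)$ by Corollary~\ref{Cverma} (using $X_i\subset Y_i$), and then invokes Theorem~\ref{Ptensor}(1) --- which is precisely the statement that $Z(\x)$ has finite length iff each $Z_j(x_j)$ does --- with Lemma~\ref{Lelem} supplying the underlying bookkeeping. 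Specialising both arguments to the case where all $\G_i$ are trivial (so $X_i=G_i$, $x_i=\la_i$, $\x=\la$, and $V_{A\rtimes\{1\}}=V_A$, $Z_{A\rtimes\{1\}}=Z_A$) turns $(3)\Leftrightarrow(4)$ into $(1)\Leftrightarrow(2)$ in each list; alternatively one may quote Corollary~\ref{Cprod} for the Verma version.

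The ``Clifford'' equivalence $(1)\Leftrightarrow(3)$ I would get componentwise from Theorem~\ref{Tfacts}, applied to each $A_i\rtimes\G_i$: part~(2) there says $V_{A_i\rtimes\G_i}(x_i)$ is finite-dimensional iff $V_{A_i}(\la_{x_i})=V_{A_i}(\la_i)$ is, and part~(3) says $Z_{A_i\rtimes\G_i}(x_i)$ has finite length iff $Z_{A_i}(\la_i)$ does. Since in each case the answer depends on $x_i$ only through $\la_{x_i}=\la_i$, condition~(3), quantified over all $i$ and all $x_i$ above $\la_i$, is equivalent to the corresponding condition for the $A_i$, i.e.\ to~(1). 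Combined with the previous paragraph this closes both cycles.

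I do not expect a real obstacle here: the argument is entirely a matter of chaining cited results in the right order. The one subtle point --- and the reason $(1)\Leftrightarrow(3)$ must be routed through Theorem~\ref{Tfacts} rather than proved by a head-on comparison of $V_A(\la)$ and $V_{A\rtimes\G}(\x)$ --- is that (3) and (4) quantify over \emph{all} simple objects lying over the prescribed weight, so one genuinely needs both that such objects exist (Theorem~\ref{T2}) and that the property in question is independent of the choice (this ``depends only on $\la_x$'' phenomenon is exactly Theorem~\ref{Tfacts}).
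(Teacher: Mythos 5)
Your proof is correct and follows essentially the same route as the paper, which likewise derives the tensor-product equivalences from Proposition~\ref{P3} (resp.\ Corollary~\ref{Cverma} and Theorem~\ref{Ptensor} for Verma modules) and the skew-group-ring equivalences from Theorem~\ref{Tfacts}. The only difference is organizational: the paper invokes both horizontal arrows of diagram~\eqref{Ediag1} while you use a minimal spanning set of equivalences, which changes nothing of substance.
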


\begin{proof}
The first set of equivalences follows from Proposition \ref{P3} (for the
vertical arrows in diagram \eqref{Ediag1}) and Theorem \ref{Tfacts} (for
the horizontal arrows). The second set of equivalences follows from
Theorems \ref{Tfacts} and \ref{Ptensor}.
\end{proof}

Finally, we ``collect together" a proof of the main results of this
article.

\begin{proof}[Proof of Theorem \ref{Teqs}]
The first part is Theorem \ref{Tsimples}; the second comes from
Propositions \ref{Pfunct} and \ref{Pcentfunct}. For the third part, for
$m=3$ and 4, use Corollary \ref{Cprod} and Lemma \ref{Lcenter}
respectively. Given the result for $m=3$, apply $\pi = \times_i \pi_i$ to
get it for $m=2$; moreover, this implies the $m=1$ statement because of
the partial order on $G$.
\end{proof}

\begin{proof}[Proof of Theorem \ref{Tmain}]\hfill
\begin{enumerate}
\item The equivalence of complete reducibility holding in the four
setups, follows from Theorems \ref{Tssskew} and \ref{Tsstensor}. The
second and third parts are shown above. The fourth part (about $\calo$
and Verma modules being of finite length) follows from Proposition
\ref{Pfinlength}. Finally, the equivalences of the Conditions (S) across
the four setups follow from Theorems \ref{Tconditions} and \ref{Teqs} and
Proposition \ref{Pcentfunct}.

\item This follows from Theorem \ref{Tconditions} and Proposition
\ref{Pcentfunct}.
\end{enumerate}
\end{proof}

\begin{proof}[Proof of Theorem \ref{Timplies}]
The first part follows from Theorem \ref{Tconditions}, and the final two
parts from Theorem \ref{Tskewhighest}, Corollary \ref{Cproj}, and
Proposition \ref{Pproj}.
\end{proof}

\appendix
\section{Application: symplectic oscillator algebras}

\subsection{Definitions}

Now fix $A = H_f$ for a polynomial $f$ (see \cite{Kh}) and $n \in \N$.
(Note that $H_f$ is an {\em infinitesimal Hecke algebra over}
$\mf{sl}_2$, as defined by Etingof, Gan, and Ginzburg in \cite{EGG}.)
Define $H_{f,n} := (H_f)^{\otimes n} \rtimes S_n = S_n \wr H_f$. Then (we
work here over $k = \C$, and) the analysis above yields:

\begin{theorem}
$H_{f,n}$ has a triangular decomposition. Moreover, if $1 + f \neq 0$,
then $\calo$ is an abelian, finite length, self-dual category with block
decomposition, as above. Each block is a highest weight category.
\end{theorem}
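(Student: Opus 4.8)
The plan is to recognise $H_{f,n}$ as a wreath product skew group ring over a strict Hopf RTA and then to feed it into the machinery developed above, the only non-formal ingredient being the known validity of Condition (S3) for $H_f$ itself. First I would recall from \cite{EGG,Kh,Kh3} that $A := H_f$ is a strict Hopf RTA over $\spl$: its Cartan part is $H = H_0 = \sym\mf{h}$ (a primitively generated, hence Hopf, polynomial algebra, with $\mf{h}$ the Cartan subalgebra of $\spl$), it has triangular decomposition $B_- \otimes H \otimes B_+ \to H_f$ (a vector space isomorphism) with root lattice $\calp = \calp_0 = \Z\dd$, and it carries the standard Chevalley-type anti-involution $i$ with $i|_H = \id|_H$. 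By \cite{Kh3}, $H_f^{\otimes n}$ is again a strict Hopf RTA, with $B_\pm^{\otimes n}$, Cartan part $\sym(\mf{h}^{\oplus n})$, simple roots $\coprod_i \dd_i$, and anti-involution $i^{\otimes n}$; the group $S_n$, permuting the tensor factors, acts by algebra automorphisms on $B_\pm^{\otimes n}$, by Hopf algebra automorphisms on $\sym(\mf{h}^{\oplus n})$ (it permutes the primitive generators), commutes with $i^{\otimes n}$, and preserves $\coprod_i\dd_i$ --- this is precisely the wreath-product example of \S 2. Thus $H_{f,n} = H_f^{\otimes n}\rtimes S_n$ is a skew group ring over a strict Hopf RTA, satisfying Standing Assumptions \ref{St1}, \ref{St2}, and (viewing $H_{f,n}$ as a single $A_1\rtimes\G_1$ with $A_1 = H_f^{\otimes n}$, $\G_1 = S_n$, over $k = \C$) also \ref{St3}. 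In particular, by Proposition \ref{Pbabe} it has a triangular decomposition over $\sym(\mf{h}^{\oplus n})\rtimes S_n$; this proves the first assertion, with no hypothesis on $f$.

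Next, assuming $1 + f \neq 0$, I would invoke \cite{Kh}: in this (``nontrivially deformed'') case $H_f$ satisfies Condition (S3) --- in fact Condition (S4) --- as a consequence of the description of its centre and a Harish-Chandra type theorem. By Corollary \ref{Cprod} (applied with trivial groups and $A_i = H_f$ for all $i$), Condition (S3) then holds for $H_f^{\otimes n}$, and by Theorem \ref{Tconditions}(2) it descends along the skew group ring construction to $H_{f,n} = H_f^{\otimes n}\rtimes S_n$. Hence $H_{f,n}$ satisfies Condition (S3), and therefore, by Theorem \ref{Tconditions}(1), also (S2) and (S1).

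With Condition (S3) in hand it remains to apply the structural results. By Theorem \ref{Timplies}(1) (using $(S3)\Rightarrow(S1)$), $\calo = \calo_{H_{f,n}}$ is finite length and decomposes as a finite direct sum $\bigoplus_x\calo(x)$ of abelian, finite-length blocks; $\calo$ is itself abelian, being closed under subobjects, quotients and finite direct sums. The duality functor $F$ of \S\ref{Sduality} is an exact, contravariant, involutive autoequivalence of $\calo$ with $F(V(x)) = V(F(x))$ (Proposition \ref{P4}) that preserves each block (Proposition \ref{Pskewblock}), so $\calo$ and every block $\calo(x)$ are self-dual. Finally, by Theorem \ref{Timplies}(3) (equivalently Theorem \ref{Tskewhighest}), each block $\calo(x)$ is a highest weight category, equivalent to the category of finitely generated right modules over a finite-dimensional $\C$-algebra. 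This yields the remaining assertions.

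The only genuinely non-formal point is that $H_f$ satisfies Condition (S3) when $1+f\neq 0$; this is exactly where the structure of the centre of an infinitesimal Hecke algebra over $\spl$ enters, and it is this input --- carried out in \cite{Kh}, and responsible for the hypothesis $1+f\neq 0$ --- that is the real work. Everything else is plumbing: the passage to the wreath product via the Clifford theory and tensor-product constructions, using the functoriality established in this article.
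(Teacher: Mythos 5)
Your proposal is correct and follows essentially the same route as the paper: the paper's own justification is precisely that $H_f$ is a strict Hopf RTA satisfying Condition (S3) when $1+f \neq 0$ (citing \cite{Kh}), after which the wreath-product and Clifford-theory machinery (Theorems \ref{Tconditions}, \ref{Timplies}, Corollary \ref{Cprod}, and the self-duality of blocks from Proposition \ref{Pskewblock}) delivers all the stated conclusions. Your write-up simply makes explicit the chain of citations that the paper compresses into two sentences.
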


\noindent This is because $A = H_f$ is then a strict Hopf RTA that
satisfies Condition (S3), from \cite{Kh}. Moreover, from \cite{KT},
$CC_A(\la)$ is finite for all $\la \in G$, since the center is ``large
enough"). We can also characterize all finite-dimensional simple modules
in $\calo$, by \cite[Theorem 11]{Kh}, and the values of $f$ for which
complete reducibility holds in $\calo$ (see \cite[Theorem 10.1]{GGK}).

\subsection{Digression: Deforming a smash product}

Suppose a Lie algebra $\mf{g}$ acts on a vector space $V_0$. One defines
a Lie algebra extension $V_0 \rtimes \mf{g}$, via:
\[ [v,v'] := 0 \qquad [X,v] := X(v)\ \forall v,v' \in V_0,\ X,X' \in
\mf{g} \]

Now suppose that $V,V' \subset V_0$ are finite-dimensional
$\mf{g}$-submodules, and we want to deform the relations $[v,v']$ (inside
the algebra $\mf{U}(V_0 \rtimes \mf{g})$), with desired images in
$\mf{Ug}$. In particular, $[X,[v,v']]$ should equal $\ad (X)([v,v']) \in
\mf{Ug}$. Thus we need a $\mf{g}$-invariant element $\omega$ of $\hhh_k(V
\wedge V',\mf{Ug})$, i.e., $\omega \in \hhh_{\mf{g}}(V \wedge V',\mf{Ug})
= ((V \wedge V')^* \otimes_k \mf{Ug})^{\mf{g}}$. In particular, if $V=V'$
has dimension 2, then $\omega \in (k \otimes_k \mf{Ug})^{\mf{g}} =
\mf{Z}(\mf{Ug})$, so $[v,v']$ must be central.

Now apply this to $H_{f,n}$, where $\mf{sl}_2$ acts on $V = kX \oplus kY$
inside each copy $(H_f)_i$. Hence any deformed relations $[Y_i,X_i]$ must
be of the form $f(\dd_i)$.
Moreover, the element $\sum_i [Y_i,X_i]$ must be central in the skew
group ring $\mf{Ug} \rtimes S_n$ (for $\mf{g} = \mf{sl}_2^{\oplus n}$),
for it is central in $\mf{Ug}$ as above, and it is invariant under any
transposition, hence under $S_n$.

\subsection{Certain other deformations do not preserve the triangular
decomposition}

We consider certain deformations of $H_{f,n}$ and show that they do not
preserve the triangular decomposition. More precisely, consider relations
of the form
\begin{equation}\label{E2}
[Y_i,X_j] =
\begin{cases}
	f(\dd_i) + \sum_{l \neq i} (c s_{il} + d)(m(1 \otimes S)
	\dd_{il})(\dd) & \text{if $i = j$}\\
	u s_{ij} + v s_{ij}(m(1 \otimes S) \dd_{ij})(\dd) +
	w_{ij} & \text{if $i \neq j$}\\
\end{cases}
\end{equation}
where

\noindent $c,d,u,v \in k$, and $w_{ij} \in \mf{Ug}$ for all $i \neq j$
(where $\mf{g} = \spl^{\oplus n}$),

\noindent $m$ is the multiplication map $: H_{f,n} \otimes H_{f,n} \to
H_{f,n}$,

\noindent $\dd_{ij}$ is the comultiplication in $\usl$, with image in
$\usl_i \otimes_k \usl_j \subset H_{f,n}$,

\noindent $S$ is the Hopf algebra antipode map on $\mf{U}(\spl^{\oplus
n})$, taking $X \in \spl^{\oplus n}$ to $-X$,

\noindent and $\dd$ is the Casimir element in $\usl$.

\noindent (Note that together with this subalgebra $\mf{Ug}$ comes its
{\em Cartan subalgebra} $\mf{h} = \oplus \mf{h}_i$, where $\mf{h}_i = k
\cdot H_i \subset (H_f)_i$. Moreover, $H_f^{\otimes n}$ is
$\mf{h}$-semisimple.)

We consider these relations because they are similar to those found in
certain wreath-product-type deformations (see \cite{EM}). Here, we find
symplectic reflections of the form $s_{ij} \gamma_i \gamma_j^{-1}$, which
equals $s_{ij} (m(1 \otimes S)\dd_{ij})(\gamma)$ under the Hopf algebra
structure (note that $\gamma_i = f_i(\gamma)$ here). However, we would
still like our deformations to have the triangular decomposition.

\begin{theorem}
The only deformations of the type in equation \eqref{E2}, that also have
the triangular decomposition, occur when $c,d,u,v,w_{ij}$ are all zero.
\end{theorem}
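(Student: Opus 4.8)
The plan is to work with the deformed algebra $\tilde A$ presented by the generators of $H_{f,n}$ with the relations $[Y_i,X_j]$ replaced by \eqref{E2} and all the other defining relations of $H_{f,n}$ kept (the action of $\spl^{\oplus n}$ on each $kX_i\oplus kY_i$; $[X_i,X_j]=[Y_i,Y_j]=0$; the $S_n$-relations; and everything internal to $B_\pm$ and to the Cartan part $H=k[h_1,\dots,h_n]\rtimes S_n$), and to show that a triangular decomposition of $\tilde A$ forces $c=d=u=v=w_{ij}=0$. The converse is immediate, since then $\tilde A=H_{f,n}$. (If $n=1$ there is nothing to prove, so assume $n\ge2$.) First I would note that \eqref{E2} alters no relation lying within a single one of $B_-$, $H$, $B_+$, nor any relation between $H$ and $B_\pm$, so a triangular decomposition of $\tilde A$ must retain the very $B_\pm$ and $H$ of $H_{f,n}$; hence $\tilde A$ has a PBW basis of the form (monomial in $B_-$)(monomial in the $h_i$)($\sigma\in S_n$)(monomial in $B_+$), and the argument becomes a matter of comparing coefficients in it. I would also use throughout that $\tilde A=A\rtimes S_n=\bigoplus_\sigma A\sigma$ is $S_n$-graded with each $\ad h_i$ grading-preserving, so the $\sigma$-homogeneous components of any identity may be handled one at a time.

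The next step is to squeeze the Jacobi identity. Since the $\spl^{\oplus n}$-module relations $[h_m,Y_i]=-\delta_{mi}Y_i$, $[h_m,X_i]=\delta_{mi}X_i$, $[e_i,Y_i]=X_i$, $[f_i,X_i]=Y_i$, $[e_i,X_i]=[f_i,Y_i]=0$, the vanishing of brackets of generators from distinct copies, and $[X_i,X_j]=0$ are all undeformed, the identity $[z,[Y_i,X_j]]=[[z,Y_i],X_j]+[Y_i,[z,X_j]]$ forces the right-hand side of \eqref{E2} to transform under $\ad(\spl^{\oplus n})$ exactly like $Y_i\otimes X_j$: for $i\ne j$ it must be annihilated by $\ad e_i,\ad f_i,\ad e_j,\ad f_j$ and by every $\ad(\spl_m)$ with $m\ne i,j$, and must have $\ad h_i$-weight $-1$ and $\ad h_j$-weight $+1$; for $i=j$ it must be $\ad(\spl^{\oplus n})$-invariant. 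This is the wreath-product incarnation of the paper's Digression.

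Then I would make the term $\zeta_{ij}:=(m(1\otimes S)\dd_{ij})(\dd)$ explicit: computing with the coproduct of the Casimir $\dd$ (of PBW-degree $2$; concretely $\dd=\tfrac{1}{2}h^2+2fe+h$) and $S(\xi)=-\xi$ for $\xi\in\spl$ gives $\zeta_{ij}=\dd_i+\dd_j-h_ih_j-2f_ie_j-2e_if_j$; in particular $\zeta_{ij}$ is symmetric in $i,j$ (so $s_{ij}\zeta_{ij}=\zeta_{ij}s_{ij}$), $[h_i,\zeta_{ij}]=4f_ie_j-4e_if_j$, and the top (PBW-degree $2$) part of $\zeta_{ij}$ is a nonzero element of $\sym(\spl^{\oplus n})$. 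Writing $[Y_i,X_j]$ for $i\ne j$ as $w_{ij}\cdot 1+(u+v\zeta_{ij})\cdot s_{ij}$: the $\sigma=1$ part of the constraint $[h_i,[Y_i,X_j]]=-[Y_i,X_j]$ reads $[h_i,w_{ij}]=-w_{ij}$, which is impossible in $\usln$ (whose $\ad h_i$-eigenvalues are all even) unless $w_{ij}=0$; its $\sigma=s_{ij}$ part, after $s_{ij}h_i=h_js_{ij}$ and the formula for $[h_i,\zeta_{ij}]$, becomes $(u+v\zeta_{ij})(h_i-h_j+1)=-4v(f_ie_j-e_if_j)$ in $\usln$, whose degree-$3$ component is $v\cdot(\text{top of }\zeta_{ij})\cdot(h_i-h_j)$ while the right side has degree $\le2$, so $v=0$ (as $\sym(\spl^{\oplus n})$ is a domain) and then $u=0$. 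For $i=j$, the $s_{il}$-component of $[h_i,[Y_i,X_i]]=0$ is $c\big(4f_ie_l-4e_if_l+\zeta_{il}(h_i-h_l)\big)=0$, which kills $c$ by the same degree-$3$ argument, and the $\sigma=1$ component is $d\sum_{l\ne i}(f_ie_l-e_if_l)=0$, a nonzero element of $\usln$ since the degree-$2$ monomials $f_ie_l,e_if_l$ ($l\ne i$) are pairwise distinct, so $d=0$. This accounts for all five parameters.

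I expect the real work to be twofold. First, producing and double-checking the closed form for $\zeta_{ij}$: without it the term $m(1\otimes S)\dd_{ij}$ is opaque, and it is precisely the nonvanishing of its top-degree part that powers every parameter-killing step. Second, the bookkeeping — separating the $S_n$-graded components, tracking signs through $s_{ij}h_i=h_js_{ij}$ and the various $\ad h_i$-brackets, and comparing PBW-degrees — must be carried out carefully enough to peel the parameters off one at a time. The one genuinely structural point, flagged above, is the claim that a triangular decomposition of $\tilde A$ necessarily uses the same $B_\pm$ and $H$ as $H_{f,n}$; this is what legitimizes all the coefficient comparisons, and it holds because \eqref{E2} modifies no relation among the generators of a single factor $B_-$, $H$, or $B_+$.
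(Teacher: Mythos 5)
Your proposal is correct and follows essentially the same route as the paper: compute $\zeta_{ij}=(m(1\otimes S)\dd_{ij})(\dd)$ explicitly, use that $[Y_i,X_j]$ must be an $\ad \mf{h}$-weight vector of weight $\eta_j-\eta_i$ (equivalently the Jacobi constraint with the undeformed $\spl^{\oplus n}$-relations), and compare $S_n$-homogeneous PBW components to kill $w_{ij}$, then $v,u$, then $c,d$. The only cosmetic difference is that for the diagonal case the paper brackets $\sum_i [Y_i,X_i]$ with $e_k$ whereas you bracket each $[Y_i,X_i]$ with $h_i$; both implement the same $\ad\mf{g}$-invariance constraint, and your normalization of the Casimir only changes $\zeta_{ij}$ by an immaterial scalar on the cross terms.
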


\begin{proof}
Check that
$(m(1 \otimes S)\dd_{ij})(\dd) = \dd_i + \dd_j + (e_i f_j + f_i e_j
+(h_i h_j) / 2))$.
Now note that $X_j$ and $Y_i$ are all weight vectors for $\ad \mf{h}$;
hence so also must be their commutator. Suppose $X_j \in
(H_{f,n})_{\eta_j}$ and $Y_i \in (H_{f,n})_{-\eta_i}$ for all $i,j$. Then
we should get $[Y_i,X_j] \in (H_{f,n})_{\eta_j - \eta_i}$ for all $i,j$.

Let us denote $e_i f_j + f_i e_j + (h_i h_j)/2$, by $m_{ij} = m_{ji}$. We
now check the relations for $i=j$, for we know that $\sum_i [Y_i,X_i]$
must be central in $\mf{Ug} \rtimes S_n$:
\[ \sum_i [Y_i,X_i] = \sum_i f(\dd_i) + \sum_{i \neq j} (c s_{ij} +
d)(\dd_i + \dd_j + m_{ij}) \]

\noindent The first term is obviously central. We now use that the
commutator of $e_k$ with this sum is zero (for fixed $k$), to show
$c=d=0$. This commutator equals (up to scalar multiples)
\[ \sum_{i \neq k} \bigg( [e_k,c s_{ik}](\dd_i + \dd_k + m_{ik}) + (c
s_{ik} + d)[e_k,\dd_i + \dd_k + m_{ik}] \bigg) \]

\noindent The first term equals $cs_{ik}(e_i-e_k)(\dd_i + \dd_k +
m_{ik})$ and the second term is $(c s_{ik} + d)[e_k,m_{ik}]$, so we get
that $[e_k, \sum_i [Y_i,X_i]]$ equals
\[ = \sum_{i \neq k} cs_{ik}(e_i - e_k)(\dd_i + \dd_k + m_{ik}) + (c
s_{ik} + d)(e_i h_k - h_i e_k) \]

\noindent To satisfy the triangular decomposition, we must have $c=0$
(consider the coefficient of $s_{ik} f_i e_i^2$, in $c s_{ik} e_i \dd_i =
c s_{ik} \dd_i e_i$), and hence $d=0$.

Next, when $i \neq j$, we need $[Y_i,X_j]$ to be an $\ad \mf{h}$-weight
vector of weight $\eta_j - \eta_i$. But $w_{ij} \in \usln$, and
$\usln_{\eta_j - \eta_i} = 0$ if $i \neq j$, since each weight space has
weight in $\oplus_i \Z (2 \eta_i)$.
Similarly, the $\ad h_i$-actions on $[Y_i,X_j]$ and the other terms do
not agree:
\[ [h_i, s_{ij}(u + v m_{ij})] = s_{ij}(h_j - h_i)(u + v m_{ij}) + v
s_{ij}[h_i,m_{ij}] \]

\noindent and thus the LHS is not a weight vector, unless $u = v = 0$ as
well.
\end{proof}

\subsection*{Acknowledgements}
I thank Mitya Boyarchenko, Vyjayanthi Chari, Victor Ginzburg, Nicolas
Guay, and Akaki Tikaradze, for suggestions and valuable discussions.

\end{document}